\documentclass{acta_2024}
\usepackage{amssymb}
\usepackage{amsmath}

\usepackage{booktabs}

\def\R{\mathbb{R}}
\def\C{\mathbb{C}}
\def\F{\mathbb{F}}
\DeclareMathOperator{\mvec}{vec}
\DeclareMathOperator{\cvec}{cvec}
\DeclareMathOperator{\diag}{diag}
\DeclareMathOperator{\diagm}{diagm}

\DeclareMathOperator{\spann}{span}
\DeclareMathOperator{\traccia}{tr}
\DeclareMathOperator{\rank}{rank}
\DeclareMathOperator{\range}{range}

\usepackage[longnamesfirst]{natbib} \usepackage{har2nat}
\setcitestyle{comma,aysep={}} 
\shortcites{} 

\usepackage{newtxtext} 
\usepackage{newtxmath} 
\DeclareSymbolFont{CMlargesymbols}{OMX}{cmex}{m}{n} 
\DeclareMathDelimiter{(}{\mathopen} {operators}{"28}{CMlargesymbols}{"00}
\DeclareMathDelimiter{)}{\mathclose}{operators}{"29}{CMlargesymbols}{"01}
\DeclareMathAlphabet\mathcal{OMS}{cmsy}{m}{n} 
\SetMathAlphabet\mathcal{bold}{OMS}{cmsy}{b}{n} 
\pagerange{\pageref{firstpage}--\pageref{lastpage}}
\doi{XXXXXXXX}  
\usepackage[twoside,
	paperheight=247mm,
	paperwidth=174mm,
	marginratio={3:5,2:3},
	left=18mm,
	top=20mm]{geometry}
\usepackage{amssymb,mathtools}
\numberwithin{figure}{section}
\numberwithin{table}{section}
\usepackage[cmyk]{xcolor}
\usepackage{graphics,graphicx}
\usepackage[hang]{subfigure}
\usepackage[UKenglish]{babel}
\usepackage{enumerate} 
\newcommand{\ignore}[1]{}
\allowdisplaybreaks

\DeclarePairedDelimiter{\norm}{\lVert}{\rVert}
\DeclarePairedDelimiter{\abs}{\lvert}{\rvert}
\DeclarePairedDelimiter{\scalar}{\langle}{\rangle}
\newcommand{\subjectto}{\ensuremath{\ \text{s.t.}\ }}
\DeclareMathOperator{\grad}{grad}

\definecolor{brilliantrose}{rgb}{1.0, 0.33, 0.64}
\definecolor{myviolet}{rgb}{0.21, 0.0, 0.85}
\definecolor{amethyst}{rgb}{0.6, 0.4, 0.8}
\definecolor{carrotorange}{rgb}{0.93, 0.57, 0.13}
\definecolor{cutepink}{rgb}{1.0, 0.2, 0.6}

\definecolor{psA}{HTML}{2166AC}
\definecolor{psB}{HTML}{1B9E77}
\definecolor{psC}{HTML}{E6820E}
\definecolor{psD}{HTML}{D7263D}

\newtheorem{theorem}{Theorem}[section]
\newtheorem{lemma}[theorem]{Lemma}
\newtheorem{proposition}[theorem]{Proposition}
\newtheorem{corollary}[theorem]{Corollary}
\newtheorem{definition}[theorem]{Definition}
\newtheorem{remark}[theorem]{Remark}
\newtheorem{example}[theorem]{Example}

\usepackage{etoolbox}
\newcommand{\exampletombstone}{\rule{0.6em}{0.6em}}
\AtEndEnvironment{example}{\nobreak\hfill\exampletombstone}
\AtEndEnvironment{remark}{\nobreak\hfill\exampletombstone}

\usepackage{tikz,pgfplots}
\usetikzlibrary{angles,quotes,arrows.meta,calc,decorations.markings}

\title[Matrix Nearness Problems Revisited]{Matrix Nearness Problems Revisited}

\author[V. Noferini and F. Poloni]{Vanni Noferini\thanks{Corresponding author. Supported by a Research Council of Finland grant (decision number 370932).}\\
Department of Mathematics and Systems Analysis, Aalto University,\\ P.O. Box 11100, FI-00076, Aalto, Finland.\\
E-mail: {vanni.noferini@aalto.fi}
\and
Federico Poloni\thanks{Member of INDAM-GNCS. Supported by PRIN 2022ZK5ME7 funded by the Italian MUR.}\\
Department of Computer Science, University of Pisa,\\ Largo Pontecorvo 3, 56127 Pisa, Italy\\
E-mail: {federico.poloni@unipi.it}}

\begin{document}
\maketitle \label{firstpage}
 \vspace{2cm}
 
\begin{abstract}
Given a nonempty set of matrices $\mathcal{E} \subseteq \C^{m \times n}$ and a matrix $A \not\in\mathcal{E}$, a \emph{matrix nearness problem} (or matrix distance problem) consists in finding a matrix in $\mathcal{E}$ that is closest to $A$, in some norm, as well as the corresponding distance. This article is a survey of matrix nearness problems and of numerical algorithms for their solution. We pay special attention to modern related topics, such as Riemannian optimization and pseudospectra, that were not discussed in depth in previous reviews about this topic. Besides collecting known facts, we also prove some foundational results that, at least in the present form, we could not locate in the literature.
\end{abstract}

\tableofcontents

\section{Introduction}

A matrix nearness problem can be described very simply: We are given a matrix $A$ and a nonempty closed set $\mathcal{E}$, often described as the locus of a property which is not enjoyed by $A$. Can we find a matrix $X_\star \in \mathcal{E}$ nearest to $A$? If not, can we at least compute, or bound effectively, the distance from $\mathcal{E}$ to $A$? 

Such problems arise naturally across a range of fields. In some matrix nearness problems \citep[e.g.,][]{Higham2002}, the input matrix $A$ is supposed to have the desired property that characterizes the set $\mathcal{E}$, but in reality it does not because errors of various nature (modelling, measurement, numerical approximation\dots) have been made in forming $A$, and one seeks to correct these errors. On other occasions \citep[e.g.,][]{VanLoan1985} the property is on the contrary unwanted: The target set $\mathcal{E}$ contains matrices
associated with ill-posedness of a class of problems (linear systems, generalized eigenvalue problems, differential equations...), and thus the distance from it can be used as a proxy for the robustness of the instance of the problem associated with the input $A$. This interpretation connects matrix nearness problems with
the study of conditioning via the distance to ill-posedness
\cite{BurgisserCucker2013,Demmel87, Rump99}. This viewpoint is not completely uncontroversial; \citet{Trefethen2007note} discusses further subtleties, focusing in particular on structured perturbations.

There are many concrete applications of matrix nearness problems. We collect but a few illustrative examples in the non-exhaustive list below.
\begin{itemize}
    \item Control theory: nearest stable matrix \cite{Higham1989, Gillis2017, Guglielmi2018, Noferini2021a} or nearest unstable matrix \cite{VanLoan1985, Byers1988a, Kressner2006, Oracle, GuglielmiLubichBook};
    \item Finance: nearest correlation matrix \cite{Higham2002};
    \item Numerical algebraic geometry: approximate GCD \cite{Bini2010, Zeng2011, Usevich2017, Oracle};
    \item Numerical linear algebra: low-rank approximation \cite{Schmidt1907,EY36,Mirsky60}, distance to defective matrices \cite{malyshev1999,Alam2011},
or topics closely related to nearness problems such as matrix completion \cite{Keshavan2010,Vandereycken2013};
    \item Quantum information theory: nearest unitary gate or density matrix \cite{Nielsen_Chuang_2010};
    \item Signal processing and machine learning: nearest unitary matrix \cite{Higham1986} or improving the robustness of neural ODEs \cite{DGST25}.
\end{itemize}
From both the analytic and the numerical perspective, the difficulty of a matrix nearness problem is highly dependent on the choice of the target set $\mathcal{E}$, as well as on the choice of the bivariate function used to measure distances between matrices. The dependence on $\mathcal{E}$ is clear:
\begin{itemize}
    \item Certain problems admit a closed-form solution, say, the Hermitian matrix nearest to $A$ in the Frobenius distance is its Hermitian part $\frac12(A+A^*)$ \cite{FH55, Higham1989};
    \item Others can be solved by very robust algorithms, say, the nearest matrix of rank $\leq 1$ in the Frobenius distance can be easily computed via the Singular Value Decomposition (henceforth abbreviated SVD \cite{Schmidt1907, EY36, Mirsky60});
    \item Some more are provably difficult: For example, even deciding whether an affine subspace of real matrices contains a singular matrix is NP-hard \cite{ZZ23};
    \item And many more have been explored only marginally, if at all: For example, how do we find the matrix nearest to a given one and having a prescribed numerical range?
\end{itemize}
The choice of the distance can also dramatically influence the hardness of the problem. The Euclidean (Frobenius) distance combines two nice properties: It is unitarily invariant, and it leads to a convenient Hilbert space setting.  Arguably, these features make the Frobenius distance the most comfortable choice mathematically, but applications sometimes call for other choices, the most common of which are perhaps the distances induced by the spectral norm or other unitarily invariant norms.

We are not the first to review theoretical results and numerical algorithms related to matrix nearness problems. In particular, we would like to acknowledge one source of inspiration from the beginning. Starting very early on in his career, \citet{Higham1985, Higham1986, Higham1988} studied matrix nearness problems, and already in 1989 he wrote a highly influential survey \cite{Higham1989} that identified the most crucial and essential mathematical aspects for this class of problems, with his characteristic clarity and elegance. Our aim is not to replace Higham's review, but to complement it. To ensure self-containedness, we will inevitably have to overlap somewhat with the material in \citet{Higham1989}; however, we regard our paper as a natural sequel. We therefore focus primarily on developments that have taken place in the last four decades, and on advanced analytic and algorithmic aspects that were not treated in the earlier survey. 

We shall explore this subject along two dimensions: numerical methods for matrix nearness problems, and representative classes of problems to which these methods apply. Our treatment is (perhaps necessarily, to keep the length reasonable) selective, with
particular emphasis on the geometry of nearest-matrix projections, spectral constraints, and structured perturbations. A recurring theme that we have tried to emphasize throughout our review is how these viewpoints lead to useful reformulations and numerical algorithms. This editorial choice had some painful side effects, and we had to leave some important areas outside the scope of our detailed treatment; notably, among other topics, matrix completion \cite[Section~4.4]{WrightMa2022} and nonnegative matrix factorization \cite{GillisNMF}. 

The topic of matrix nearness problems ramifies into fairly many branches of mathematics, and to keep our treatment concise we will sometimes assume without justification basic results that can be found in textbooks such as (for example): \citet{HJtopics, Horn2013, Hogben2014} for matrix theory, \citet{TrefethenBau1997, GolubVLoan} for numerical linear algebra, \citet{Petersen2016, Lee2018} for differential geometry, \citet{Rockafellar1970, HL1993} for convex analysis, \citet{Rudin, RudinFunctional} for mathematical analysis, or \citet{Boumal2023,  Nocedal2006} for optimization.

As can be expected for a survey article, we have mostly collected results that have previously appeared elsewhere. Nonetheless, when a concise and instructive proof of a known result can be given, we have done so for the sake of self-containedness; our proofs are usually original and may sometimes depart significantly from the  classic ones. The article also includes results that, at least in the form given here, we have not been able to easily locate in the literature. These novelties include:
\label{intro:novelties}
\begin{itemize}
 \item Several characterizations of which nearness problems admit a common minimizer   for every distance induced by a unitarily invariant norm. Theorem~\ref{thm:dom} analyzes the general case and Theorem~\ref{thm:sunui} focuses on unitarily invariant sets; moreover, when the target set is a real affine subspace, we also obtain further equivalent conditions (Theorem~\ref{thm:projectorsnorm1} and Theorem~\ref{thm:bigthmminsubspaces}). Some auxiliary results that we developed to this goal, such as items (ii)-(iii) in Lemma~\ref{lem:subgradient} or
    Proposition~\ref{prop:genodd}, may also be new.
\item Theorem~\ref{thm:rank exactly n-1}, which shows that, under some differentiability assumptions of the chosen norm, the rank deficient matrix nearest to an input of full rank has rank drop precisely $1$; and Corollary~\ref{cor:allinD}, which under the same hypotheses on the norm analogously proves that any matrix with multiple eigenvalues and nearest to one with simple eigenvalues is defective.
  \item Theorem~\ref{thm:equivalentgeneral}, which describes global and local optimality criteria for reformulations of matrix nearness problems as optimization tasks over unitary matrices; on  the same theme, Proposition~\ref{prop:RGfQ} extends the corresponding gradient formula beyond what already exists in the literature.
    \item Theorem~\ref{thm:V82}, which is a description of the properties of optimal  perturbations when further linear constraints are added to a nearness problem, obtained via the theory of normal cones. This theorem also allows us to extend and generalize a recent result by \citet{GuglielmiLubichBook} for the structured  distance to singularity (Corollary~\ref{cor:V82 for singular}).
    \item Several examples that we developed to highlight specific phenomena or clarify the limitations of theoretical results; among others in this category, we may mention Example~\ref{ex:3}, the counterexample within Remark~\ref{rem:nolabel}, Example~\ref{ex:mustbeforall}, Example~\ref{ex:jordanblocks}, Example~\ref{ex:glarewrong}, and Example~\ref{ex:nearestdefective_diagonal}.
\end{itemize}

The structure of our review paper is the following. We first continue this introduction with an overview of basic properties of distance functions, nearest points, and normal cones, as well as a more formal definition of matrix nearness problems.  In Section~\ref{sec:uinorms} we discuss the choice of the distance, paying particular attention to  unitarily invariant norms and to the question of whether a matrix nearness problem admits a solution that simultaneously minimizes the distances induced by all these norms.  In Section~\ref{sec:directmethods} we discuss nearness problems that can be solved directly; our definition of ``direct methods" is broad and includes those that exploit a classic matrix decomposition of the input, e.g., the SVD\@. We then turn our attention to spectral nearness problems in Section~\ref{sec:spectralnearness}, discussing their connections with pseudospectra and with numerical methods based on singular value optimization, optimization over the manifold of unitary matrices, and gradient systems. In Section~\ref{sec:projections} we describe alternating projection methods for distances to intersections of sets. In the next two sections we study the additional complications (or, occasionally, simplifications) that arise when an extra constraint is added to a nearness problem, focusing on distance to singularity (Section~\ref{sec:structuredsingular}) and on spectral nearness (Section~\ref{sec:otherstruct}). In Section~\ref{sec:othernearness} we cover further matrix nearness problems beyond the previous classes, including distance to uncontrollability and distance to matrices with multiple eigenvalues. Finally, in Section~\ref{sec:beyond}, we briefly consider the extension of nearness problems from matrices to matrix pencils, polynomial matrices, matrix-valued functions, and tensors. We conclude the article by offering some final remarks in Section~\ref{sec:conc}.

\subsection{Distance from sets} \label{subsec:distsets}

We begin by giving a general overview of the analytic properties of distance functions to a set.  Consider a proper metric space $(\mathcal{M},d)$, i.e., a metric space such  that every closed ball in $\mathcal{M}$ is compact. Take a point $a \in \mathcal{M}$, and a set $\mathcal{E} \subseteq \mathcal{M}$. The distance from $a$ to $\mathcal{E}$ is defined as 
\begin{equation}\label{eq:distance}
    d(a,\mathcal{E}):=\inf_{x \in \mathcal{E}} d(a,x).
\end{equation}
Generally, the infimum may be not achieved at any point of $\mathcal{E}$, which is potentially problematic if one wants to study \eqref{eq:distance} algorithmically. Reassuringly, an effective medicine is to replace $\mathcal{E}$ by its closure $\overline{\mathcal{E}}$.
\begin{proposition}\label{prop:lauriwaswrong}
Let $\varnothing \neq \mathcal{E} \subseteq \mathcal{M}$ and $a \in \mathcal{M}$, where $\mathcal{M}$ is a proper metric space. Then:
    \begin{enumerate}
        \item There exists $x \in \overline{\mathcal{E}}$ such that $d(a,\mathcal{E})=d(a,\overline{\mathcal{E}})=d(a,x)$;
        \vskip1pt
        \item $\overline{\mathcal{E}}=\{a \in \mathcal{M} : d(a,\mathcal{E})=0\}$.
    \end{enumerate}
\end{proposition}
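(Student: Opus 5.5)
The plan is to prove the equality $d(a,\mathcal{E})=d(a,\overline{\mathcal{E}})$ first, then get existence of a minimizer from properness, and finally deduce item~2 from the same ingredients.

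For the equality, the inequality $d(a,\overline{\mathcal{E}})\le d(a,\mathcal{E})$ is immediate from $\mathcal{E}\subseteq\overline{\mathcal{E}}$. For the reverse inequality, take $y\in\overline{\mathcal{E}}$ and $\varepsilon>0$. Pick $x\in\mathcal{E}$ with $d(x,y)<\varepsilon$. The triangle inequality then gives $d(a,\mathcal{E})\le d(a,x)<d(a,y)+\varepsilon$. Taking the infimum over $y$ and letting $\varepsilon\to0$ yields the equality. The same computation shows that $x\mapsto d(x,\mathcal{E})$ is $1$-Lipschitz, which I will reuse below.

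Existence of a minimizer is the only step that really needs properness of $\mathcal{M}$. Set $r:=d(a,\mathcal{E})<\infty$; it is finite since $\mathcal{E}\neq\varnothing$. Consider $K:=\overline{\mathcal{E}}\cap \overline{B}(a,r+1)$. This set is nonempty, because some point of $\mathcal{E}$ lies within distance $r+1$ of $a$. It is also compact, being a closed subset of a closed ball, which is compact by hypothesis. The function $x\mapsto d(a,x)$ is continuous, so it attains its minimum on $K$ at some $x$. Points of $\overline{\mathcal{E}}$ outside $K$ lie at distance greater than $r+1$ from $a$. Hence $d(a,x)=\inf_{y\in\overline{\mathcal{E}}}d(a,y)=d(a,\overline{\mathcal{E}})=r$, which proves item~1. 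Alternatively, one could take a minimizing sequence in $\mathcal{E}$, which is bounded, and extract a convergent subsequence; its limit lies in $\overline{\mathcal{E}}$.

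For item~2, I will rename the variable, since $a$ is already fixed. First, if $b\in\overline{\mathcal{E}}$, then $d(b,\mathcal{E})=d(b,\overline{\mathcal{E}})\le d(b,b)=0$. Conversely, if $d(b,\mathcal{E})=0$, item~1 applied at $b$ gives $x\in\overline{\mathcal{E}}$ with $d(b,x)=0$, so $b=x\in\overline{\mathcal{E}}$. This direction can also be argued directly, without properness: every ball around $b$ meets $\mathcal{E}$, so $b$ is a limit point. The main, albeit mild, obstacle is the compactness argument: one must restrict to a bounded region before invoking properness, and remember that in general metric spaces item~1 may fail.
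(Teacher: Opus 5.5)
Your proof is correct, but it is organized differently from the paper's. The paper first takes a minimizing sequence $(x_n)\subset\mathcal{E}$ and uses compactness of the ball $\{z : d(a,z)\le 1+d(a,\mathcal{E})\}$ to get a limit $x\in\overline{\mathcal{E}}$ with $d(a,x)=d(a,\mathcal{E})\ge d(a,\overline{\mathcal{E}})$. It then rules out strict inequality by contradiction: it reruns the existence argument on $\overline{\mathcal{E}}$ to obtain a nearest point $z\in\overline{\mathcal{E}}$, and approximates $z$ by points of $\mathcal{E}$.

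You reverse the order. You prove $d(a,\mathcal{E})=d(a,\overline{\mathcal{E}})$ first, by a direct density-plus-triangle-inequality argument. Only then do you use properness, minimizing the continuous function $d(a,\cdot)$ over the compact set $\overline{\mathcal{E}}\cap\{z : d(a,z)\le r+1\}$.

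Your ordering separates the hypotheses more cleanly. It shows that the equality of distances and item~2 hold in every metric space, and that properness is needed only for the infimum to be attained. This is consistent with the $\ell^1$ example in Remark~\ref{rem1}, where $\mathcal{E}$ is closed but has no nearest point. The paper's contradiction step, by contrast, invokes compactness a second time where it is not actually needed.

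The paper's route does get $d(a,x)=d(a,\mathcal{E})$ immediately by continuity along a minimizing sequence in $\mathcal{E}$ itself. The price is a more roundabout argument for the identity with $d(a,\overline{\mathcal{E}})$. Your treatment of item~2 is essentially the paper's.
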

\begin{proof}
    \begin{enumerate}
        \item By definition of infimum, for all positive integers $n$ there exists a point $x_n \in \mathcal{E}$ such that $d(a,\mathcal{E})\leq d(a,x_n) < d(a,\mathcal{E})+\frac1n$. The sequence $(x_n)_n$ is contained in the closed ball $\{ z \in \mathcal{M} : d(a,z) \leq 1 + d(a,\mathcal{E}) \}$, which is compact by assumption. Hence, there exists a convergent subsequence $(x_{n_k})_k \rightarrow x \in \overline{\mathcal{E}}$ and $\displaystyle d(a,\mathcal{E}) = \lim_{k \rightarrow \infty} d(a,x_{n_k}) = d(a,x) \geq d(a,\overline{\mathcal{E}}).$
        Suppose for a contradiction that the last inequality is strict; by the first part of the argument, we can find $z \in \overline{\mathcal{E}}$ such that $d(a,z) = d(a,\overline{\mathcal{E}}) < d(a,\mathcal{E})=d(a,x)$. Take a sequence $(z_n)_n \subset \mathcal{E}$ with $d(z_n,z) < \frac1n < d(a,x)-d(a,z)$ for large enough $n$. Then, by the triangle inequality,
        \[ d(a,z_n) \leq d(a,z)+d(z,z_n) < d(a,x) \leq d(a,z_n). \]
        \item If $d(a,\mathcal{E})=0$, then  by the previous item $a \in \overline{\mathcal{E}}$. If $a \in \overline{\mathcal{E}}$, take a sequence $(x_n)_n \subset \mathcal{E}, \ x_n \rightarrow a$ to see that $0 \leq d(a,\mathcal{E}) \leq d(a,x_n) \rightarrow 0$.
    \end{enumerate}
\end{proof}
\begin{remark}\label{rem1}
    If the metric space $\mathcal{M}$ is not proper, it may not be true that there exists a point of $\overline{\mathcal{E}}$ where the distance is attained. For example, if $\mathcal{M}=\ell^1$ is the space of summable sequences, $x=0$ and $\mathcal{E}=\left\{\frac{k+2}{k+1} \ e_{k+1} \right\}_{k \in \mathbb{N}}$, which is closed, then $d(x,\mathcal{E})=1$ but the infimum is never attained.
\end{remark}
In solidarity with any reader who may have been understandably frightened by Remark~\ref{rem1}, let us immediately retreat to the comfort of finite dimensional vector spaces and consider the (proper) metric space $\mathcal{M}=\R^N$, endowed with the distance $d(x_1,x_2)=\| x_2-x_1\|_2$, the Euclidean length of the difference between the vectors $x_1$ and $x_2$. By Proposition~\ref{prop:lauriwaswrong}, the distance between a point and a set $\mathcal{E} \subseteq \R^N$ is equal to the distance between the point and the set's closure $\overline{\mathcal{E}}$, and moreover \emph{closed} sets always contain (at least) one element whose distance from an arbitrary point $a \in \R^N$ is equal to the distance of the set itself from $a$. For this reason, by replacing with no loss of generality a given set by its closure if necessary, one may focus on distances to nonempty closed sets. From now on, we shall do so throughout the paper.

 Given $a \in \R^N$ and a nonempty closed set $\mathcal{E} \subseteq \R^N$, by the previous observations we can both study the distance from $a$ to $\mathcal{E}$ and look for (at least) one point in $\mathcal{E}$ nearest to $a$:
\begin{equation}\label{eq:euclid}
  \mathrm{Compute} \  d(a,\mathcal{E}) = \min_{x \in \mathcal{E}} \|x-a\|_2, \qquad \mathrm{find} \ \mathrm{one} \ \mathrm{point} \ x_\star \in \mathrm{argmin}_{x \in \mathcal{E}} \| x-a\|_2.
\end{equation}

In the following, we study the differentiability of $d(a,\mathcal{E})$ as a function of the point $a$. For this (and for further purposes later on in the article), it is convenient to recall the definition and some properties of subgradients of a convex function.

\begin{definition}\cite{Rockafellar1970}
    Let $f(x): \R^N \rightarrow \R$ be a convex function. A vector $g \in \R^N$ is a \emph{subgradient} of $f$ at $x \in \R^N$ if, for every $z \in \R^N$, 
    \[  f(z)-f(x) \geq (z-x)^T g.  \]
\end{definition}

\begin{proposition}\cite{Rockafellar1970}
    Let $f(x): \R^N \rightarrow \R$ be a convex function. Then:
    \begin{enumerate}
        \item For every point $x \in \R^N$, the set of subgradients of $f$ at $x$ is not empty;
        \item The point $x \in \R^N$ is a global minimum of $f$ if and only if $0$ is a subgradient of $f$ at $x$;
        \item $f$ is differentiable at $x \in \R^N$ if and only if the set of subgradients of $f$ at $x$ has cardinality $1$, and in this case the Jacobian of $f$ at $x$ is the transpose of the (unique) subgradient of $f$ at $x$.
    \end{enumerate}
\end{proposition}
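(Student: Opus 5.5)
Item~1 is the most delicate part, and I will handle it by building a supporting hyperplane to the epigraph $\operatorname{epi} f=\{(z,t)\in\R^N\times\R : t\ge f(z)\}$. Since $f$ is convex, this set is convex. The first fact I need is that a finite convex function on $\R^N$ is continuous. I would prove this by bounding $f$ above on a small cube around $x$, using convexity applied to the vertices, and then turning that upper bound into a two-sided bound via $f(x)\le \tfrac12 f(x+h)+\tfrac12 f(x-h)$. Continuity gives that the point $(x,f(x))$ lies on the boundary of the epigraph, while the epigraph has nonempty interior (it contains $(x,f(x)+1)$ together with a neighbourhood).

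The supporting hyperplane theorem then yields $(g_0,\beta)\neq 0$ with
\[ g_0^T z+\beta t\le g_0^T x+\beta f(x) \quad \text{for all } (z,t)\in\operatorname{epi} f. \]
Letting $t\to+\infty$ forces $\beta\le 0$. If $\beta=0$, we would get $g_0^T(z-x)\le 0$ for all $z$, so $g_0=0$, which contradicts $(g_0,\beta)\neq0$. Hence $\beta<0$. Setting $g=-g_0/\beta$ and $t=f(z)$ turns the inequality into $f(z)-f(x)\ge (z-x)^Tg$, so $g$ is a subgradient. I expect the main obstacle to be justifying the boundary and interior claims, which rest on the continuity of $f$.

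Item~2 follows directly from the definition. If $0$ is a subgradient at $x$, then $f(z)\ge f(x)$ for every $z$, so $x$ is a global minimum. Conversely, if $f(z)\ge f(x)$ for all $z$, the subgradient inequality holds with $g=0$.

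For item~3, I will go through the one-sided directional derivative
\[ f'(x;v)=\lim_{s\downarrow0}\frac{f(x+sv)-f(x)}{s}. \]
This limit exists because convexity makes the difference quotient monotone in $s$. I will also use the characterization that $g$ is a subgradient if and only if $v^Tg\le f'(x;v)$ for all $v$. The forward direction of this equivalence comes from dividing the subgradient inequality by $s$. The reverse direction uses monotonicity of the quotient, $f(x+v)-f(x)\ge f'(x;v)$.

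\emph{If $f$ is differentiable at $x$:} then $f'(x;v)=\nabla f(x)^Tv$ is linear in $v$. The inequality $v^Tg\le \nabla f(x)^Tv$ for both $v$ and $-v$ forces $g=\nabla f(x)$, so the subgradient is unique.

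\emph{If the subgradient $g$ is unique:} the function $v\mapsto f'(x;v)$ is finite, convex and positively homogeneous, i.e.\ sublinear. Applying item~1 to it at any point $v_0$ gives a subgradient $h$ of $f'(x;\cdot)$, and $h$ is also a subgradient of $f$ at $x$. Uniqueness then gives $h=g$, so $f'(x;v_0)=g^Tv_0$ for every $v_0$. Hence $f$ has linear directional derivatives, i.e.\ it is Gateaux differentiable at $x$. For a convex function on $\R^N$, Gateaux differentiability upgrades to Fréchet differentiability. I will check this by noting that the quotients $(f(x+sv)-f(x))/s-g^Tv$ converge monotonically to $0$ for each fixed $v$. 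They are also equi-Lipschitz in $v$, because $f$ is locally Lipschitz (again a consequence of its local boundedness). Dini's theorem on the compact unit sphere then gives uniform convergence, which is exactly Fréchet differentiability. Finally, the Jacobian of $f$ at $x$ is $g^T$.
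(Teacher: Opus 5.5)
Your proof is correct. The paper gives no argument of its own for this proposition and simply defers to \citet{Rockafellar1970}. Your route is essentially the classical one, with two mild differences from Rockafellar's presentation.

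For item~1, Rockafellar obtains nonemptiness through the directional derivative: he identifies the finite (hence closed) sublinear function $f'(x;\cdot)$ with the support function of $\partial f(x)$. You instead separate $(x,f(x))$ from the epigraph, which is more elementary. You then recover the max formula almost for free by applying item~1 to $f'(x;\cdot)$ in item~3.

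For the upgrade from linear directional derivatives to Fr\'echet differentiability, Rockafellar appeals to a general theorem: pointwise convergence of finite convex functions is uniform on compact sets. Your equi-Lipschitz-plus-Dini argument proves exactly the special case that is needed.

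When you write it out, spell out three routine points.

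First, $f'(x;v)$ is finite. The monotone quotient is bounded below by $f(x)-f(x-v)$, which follows by writing $x$ as a convex combination of $x+sv$ and $x-v$. Alternatively, it is bounded below by $v^Tg$ for any subgradient $g$ from item~1.

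Second, suppose $h$ is a subgradient of $\varphi:=f'(x;\cdot)$ at $v_0$. Testing the subgradient inequality at $w=0$ and at $w=2v_0$, and using positive homogeneity, gives $\varphi(v_0)=v_0^Th$. The inequality itself then reads $\varphi(w)\ge w^Th$ for all $w$. This is precisely what shows $h\in\partial f(x)$ and yields $f'(x;v_0)=g^Tv_0$; as written, you assert both without this step.

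Third, the obstacle you anticipate in item~1 is smaller than you fear. The point $(x,f(x))$ is a boundary point of $\operatorname{epi} f$ with no continuity needed, since $(x,f(x)-\epsilon)\notin\operatorname{epi} f$. Nonempty interior only requires $f$ to be bounded above on a neighbourhood of $x$, and your vertex argument already provides that.
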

Since the function $z \mapsto z^2$ is increasing over the set of nonnegative real numbers, we can conveniently replace the distance by its square in the minimization problem \eqref{eq:euclid}. The advantage is that the gradient of $d(a,\mathcal{E})^2$, which is differentiable at almost every point $a \in \R^N$, has a very nice form. 

\begin{theorem}\label{thm:ae}
    Given a nonempty closed set $\mathcal{E} \subseteq \R^N$, consider the function $s_\mathcal{E}(a)=d(a,\mathcal{E})^2 = \min_{x \in \mathcal{E}} \| x-a \|_2^2$. Then, $s_\mathcal{E}$ is everywhere continuous and almost everywhere differentiable. In particular, $s_\mathcal{E}$ is differentiable at $a$ if and only if $a$ is such that the set of argument minima in \eqref{eq:euclid} consists of a single point $x_\star \in \R^N$; in that case, its gradient is $\nabla_a s_\mathcal{E}(a) = 2 (a-x_\star)$.
\end{theorem}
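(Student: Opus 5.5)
The standard trick is to write
\[
s_\mathcal{E}(a)=\|a\|_2^2-\max_{x\in\mathcal{E}}\bigl(2a^Tx-\|x\|_2^2\bigr)=\|a\|_2^2-\phi(a),
\]
where $\phi(a):=\sup_{x\in\mathcal{E}}(2a^Tx-\|x\|_2^2)$. Each map $a\mapsto 2a^Tx-\|x\|_2^2$ is affine, so $\phi$ is a pointwise supremum of affine functions and hence convex. It is finite everywhere, because the supremum is attained by Proposition~\ref{prop:lauriwaswrong}. A finite convex function on $\R^N$ is continuous, and by Rademacher's theorem it is differentiable almost everywhere. Alexandrov's theorem is not needed; the classical fact that convex functions are a.e. differentiable suffices. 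Since $\|a\|_2^2$ is smooth, $s_\mathcal{E}$ inherits both properties. This settles continuity and a.e. differentiability. Continuity can also be seen directly, since $d(\cdot,\mathcal{E})$ is $1$-Lipschitz by the triangle inequality.

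For the characterization, I will compute the subdifferential of $\phi$ at $a$. Let $M(a)$ be the set of minimizers in \eqref{eq:euclid}, equivalently the maximizers defining $\phi(a)$.

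First, every $x_\star\in M(a)$ gives a subgradient $2x_\star$. Indeed, for all $z$,
\[
\phi(z)\ge 2z^Tx_\star-\|x_\star\|^2=\phi(a)+(z-a)^T(2x_\star).
\]
So if $M(a)$ contains two distinct points, $\phi$ has two distinct subgradients at $a$. By the cited proposition on subgradients, $\phi$ is then not differentiable at $a$, and neither is $s_\mathcal{E}$.

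Conversely, suppose $M(a)=\{x_\star\}$. I will show $\phi$ is differentiable at $a$ with gradient $2x_\star$. This is the main step, a Danskin-type argument. The plan is to prove that any selection $x(b)\in M(b)$ satisfies $x(b)\to x_\star$ as $b\to a$. The points $x(b)$ stay bounded: $\|x(b)-b\|\le d(b,\mathcal{E})\le d(a,\mathcal{E})+\|b-a\|$. Any limit point $\bar x$ lies in $\mathcal{E}$ because $\mathcal{E}$ is closed, and by continuity of $d$ it satisfies $\|\bar x-a\|=d(a,\mathcal{E})$. Uniqueness of the minimizer then forces $\bar x=x_\star$.

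With this convergence, I sandwich $\phi$ near $a$:
\[
2(b-a)^Tx_\star\le\phi(b)-\phi(a)\le 2(b-a)^Tx(b).
\]
The lower bound uses $x_\star$ as a competitor at $b$, and the upper bound uses $x(b)$ as a competitor at $a$. The two sides differ by $2(b-a)^T(x(b)-x_\star)=o(\|b-a\|)$, which gives differentiability with $\nabla\phi(a)=2x_\star$. Therefore
\[
\nabla s_\mathcal{E}(a)=2a-2x_\star=2(a-x_\star).
\]

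The only delicate point is the convergence $x(b)\to x_\star$, which needs the compactness and closedness argument above. Everything else is routine convex analysis.
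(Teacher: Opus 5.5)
Your proposal is correct and follows essentially the same route as the paper. It uses the same decomposition $s_\mathcal{E}(a)=\|a\|_2^2-\phi(a)$ (the paper calls it $f$) with Rademacher's theorem, the same subgradients $2x_\star$ coming from each nearest point, and the same compactness/uniqueness argument giving $x(b)\to x_\star$ followed by the sandwich $2(b-a)^Tx_\star\le\phi(b)-\phi(a)\le 2(b-a)^Tx(b)$; the only cosmetic difference is that the paper phrases the last step with sequences $a_n\to a$ instead of a selection $x(b)$.
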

\begin{proof}
    Expanding $\|x-a\|_2^2=(x-a)^T(x-a)$, we have that $s_\mathcal{E}(a)=\|a\|_2^2-f(a)$, having defined
    \[ f(a):= \sup_{x \in \mathcal{E}} \ (2 a^T x - \|x\|_2^2).\]
    Hence, $f(a)=\|a\|_2^2-s_\mathcal{E}(a)$ is the supremum of a family of affine functions, therefore convex, therefore locally Lipschitz. By Rademacher's Theorem, it follows that $f(a)$ is everywhere continuous and almost everywhere differentiable. Since $\|a\|_2^2$ is a differentiable function, the same two properties hold for $s_\mathcal{E}(a)$.

    Fix now $a \in \R^N$ and observe that, for all $b \in \R^N$ and for all points $x$ where the minimum of \eqref{eq:euclid} is attained, we have 
    \[ f(b) \geq 2b^T x - \| x\|_2^2, \qquad f(a)=2a^Tx - \|x\|_2^2.\]
    Thus, $f(b) -f(a) \geq (b-a)^T (2x)$, implying that $2x$ is a subgradient of $f$ at $a$. Hence, if $a$ is such that there is more than one argument minimum of \eqref{eq:euclid}, then $f$ (and therefore $s_\mathcal{E}$) is not differentiable at $a$. Conversely, if $a$ is such that there is only one point $x_\star \in \mathcal{E}$ nearest to $a$, let $(a_n)_n$ be an arbitrary sequence such that $a_n \neq a$ for almost all $n$ and $a_n \to a$. Next, construct a sequence $(x_n)_n \subset \mathcal{E}$ in such a way that $x_n$ is a point nearest to $a_n$, for all $n$. Then, for large enough $n$,
    \[ \|x_n-a_n\|_2 \leq \|x_\star-a_n\|_2 \leq \|x_\star-a\|+1,\]
    and hence $(x_n)_n$ is bounded. Moreover, any convergent subsequence $(y_n)_n$ must converge to $x_\star$, because the limit $y \in \mathcal{E}$ must satisfy $\|y-a\|_2 \leq \|x_\star - a\|$ implying $y=x_\star$ by uniqueness. Suppose $x_n \not\to x_\star$, then there are $\varepsilon > 0$ and a subsequence $(x_{n_k})_k$ such that $\|x_{n_k}-x_\star\|_2 \geq \varepsilon$. Taking a convergent subsubsequence yields a contradiction, and hence $x_n \to x_\star$. Observe now that $2x_\star$ is a subgradient of $f$ at $a$ and $2x_n$ is a subgradient of $f$ at $a_n$. Hence,
\[ 2x_\star^T(a_n-a)\leq f(a_n)-f(a)\leq2x_n^T(a_n-a). \]
Therefore, using also the Cauchy-Schwarz inequality,
\[ 0\leq \frac{f(a_n)-f(a)-2x_\star^T(a_n-a)}{\|a_n-a\|_2} \leq \frac{2(x_n-x_\star)^T(a_n-a)}{\|a_n-a\|_2} \leq 2 \|x_n-x_\star\| \to 0,   \]
and hence $f$ is differentiable at $a$ and its gradient is $2 x_\star$. The differentiability and the gradient formula for $s_{\mathcal{E}}$ follow immediately.
\end{proof}
 As an immediate consequence of Theorem~\ref{thm:ae} we get Corollary~\ref{cor:erdos}, which was first proved by \citet{erdos} by more abstract arguments.
\begin{corollary}\label{cor:erdos}
    Given a nonempty closed set $\mathcal{E} \subseteq \R^N$, its \emph{medial axis} \[M(\mathcal{E}):=\{ a \in \R^N : \text{there is more than one point in $\mathcal{E}$ nearest to $a$}\}\] has Lebesgue measure $0$.
\end{corollary}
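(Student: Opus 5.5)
The plan is to read Corollary~\ref{cor:erdos} directly off Theorem~\ref{thm:ae}. Let $s_\mathcal{E}(a)=d(a,\mathcal{E})^2$ and write $D\subseteq\R^N$ for the set of points where $s_\mathcal{E}$ is differentiable. The theorem gives two facts. First, $s_\mathcal{E}$ is differentiable at $a$ if and only if the set of nearest points in \eqref{eq:euclid} is a singleton. Since $\mathcal{E}$ is nonempty and closed, Proposition~\ref{prop:lauriwaswrong} ensures this set is never empty. Hence ``more than one nearest point'' is exactly the negation of ``a unique nearest point'', and so
\[ M(\mathcal{E}) = \R^N \setminus D. \]
Second, $s_\mathcal{E}$ is differentiable almost everywhere, so $\R^N\setminus D$ is contained in a Lebesgue null set.

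It remains to check that $M(\mathcal{E})$ is actually measurable, so that the statement ``$M(\mathcal{E})$ has measure $0$'' makes sense as stated rather than only in the sense of outer measure. There are two routes, and I would give either one in a single sentence.

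\begin{enumerate}
\item Lebesgue measure is complete, so any subset of a null set is measurable and itself null.
\item One can show directly that $D$ is Borel. The function $f(a)=\|a\|_2^2-s_\mathcal{E}(a)$ from the proof of Theorem~\ref{thm:ae} is convex. For a convex function, differentiability at $a$ is equivalent to the existence and linearity of the directional derivatives, which are limits of difference quotients that are continuous in $a$. These are countable limit conditions, so the set where they hold is Borel.
\end{enumerate}

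Route (i) is the quickest. Combining it with the identification $M(\mathcal{E})=\R^N\setminus D$ gives $\lambda(M(\mathcal{E}))=0$.

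The only subtle step is the identification of $M(\mathcal{E})$ with the nondifferentiability set. It relies on the ``if and only if'' in Theorem~\ref{thm:ae}, together with the existence of at least one minimizer, which rules out the degenerate case of zero minimizers. Everything else follows at once, since Rademacher's theorem has already been invoked inside Theorem~\ref{thm:ae}.
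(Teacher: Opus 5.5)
Your proposal is correct and follows the paper's own argument. The paper obtains the corollary as an immediate consequence of Theorem~\ref{thm:ae}: $M(\mathcal{E})$ is exactly the set where $s_\mathcal{E}$ fails to be differentiable (a nearest point always exists because $\mathcal{E}$ is closed), and that set is null. Your extra remark on measurability, via completeness of Lebesgue measure, is a harmless refinement that the paper leaves implicit.
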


Famously, the even stronger statement $M(\mathcal{E})=\varnothing$ holds if and only if $\mathcal{E}$ is convex. In other words, if $\mathcal{E}$ is a convex set then every point $x$ has unique projection onto $\mathcal{E}$; whereas if it is not convex then exceptions exist but they are rare in the sense of Lebesgue.

Since \eqref{eq:euclid} is an optimization problem, it makes sense to also study the existence of the Hessian of $s_\mathcal{E}(x)$ in order to consider second order methods.  In the statement of Theorem~\ref{thm:alexandrov}, twice differentiable refers to the existence of a second order Taylor expansion. Moreover, in its proof and throughout the paper, $I$ denotes an identity matrix of appropriate size; when a possible ambiguity arises, we will specify its size by writing $I_n$.

\begin{theorem}\label{thm:alexandrov}
    Let $\mathcal{E} \subseteq \R^N$ be nonempty and closed. The function $s_\mathcal{E}(a)$ as defined in Theorem~\ref{thm:ae} is twice differentiable at almost every point $a$.
\end{theorem}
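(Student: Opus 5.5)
The plan is to reuse the decomposition from the proof of Theorem~\ref{thm:ae}: we wrote $s_\mathcal{E}(a)=\|a\|_2^2-f(a)$ with
\[ f(a)=\sup_{x\in\mathcal{E}}\,(2a^Tx-\|x\|_2^2). \]
There we already showed that $f$ is convex and finite on all of $\R^N$. Since $\|a\|_2^2$ is a quadratic polynomial, it has an exact second order Taylor expansion at every point. Hence $s_\mathcal{E}$ has a second order Taylor expansion at $a$ if and only if $f$ does, and the statement reduces to a claim about convex functions.

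The key step is to invoke Alexandrov's theorem: a convex function $f:\R^N\to\R$ is twice differentiable almost everywhere. Concretely, for almost every $a$ there exist a vector $g$ and a symmetric matrix $H$ such that
\[ f(b)=f(a)+g^T(b-a)+\tfrac12 (b-a)^TH(b-a)+o(\|b-a\|_2^2). \]
At such a point $a$ we also have $g=2x_\star$, by Theorem~\ref{thm:ae}, since twice differentiability implies differentiability. Subtracting from $\|b\|_2^2$ then yields the expansion
\[ s_\mathcal{E}(b)=s_\mathcal{E}(a)+2(a-x_\star)^T(b-a)+\tfrac12(b-a)^T(2I-H)(b-a)+o(\|b-a\|_2^2). \]
This proves the theorem, with Hessian $2I-H$.

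I expect the only real obstacle to be Alexandrov's theorem itself, which is deep. If I had to sketch it rather than cite it, I would proceed in four steps.
\begin{enumerate}
\item The subdifferential $\partial f$ is a maximal monotone multifunction.
\item By Minty's theorem, the resolvent $J=(I+\partial f)^{-1}$ is single valued and $1$-Lipschitz.
\item By Rademacher's Theorem, $J$ is differentiable almost everywhere. Moreover, the set of points where its derivative is singular has image of measure zero under $J^{-1}$, by the area formula applied to the Lipschitz map $J$.
\item Inverting $J$ at points where its derivative is invertible shows that $\partial f$ is differentiable in the sense of a first order expansion of the multifunction. One then integrates along segments to obtain the second order Taylor expansion of $f$.
\end{enumerate}
In a survey I would simply cite Alexandrov's theorem, for instance from the standard literature on convex analysis.

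Note that one cannot just apply Rademacher's Theorem to the gradient $2(a-x_\star)$. That map is defined only almost everywhere, and it need not be locally Lipschitz when $\mathcal{E}$ is nonconvex. This is why convexity of $f$, through Alexandrov's theorem, is the essential ingredient.
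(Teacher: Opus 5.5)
Your proposal is correct and is the paper's proof: write $s_\mathcal{E}(a)=\|a\|_2^2-f(a)$ with $f$ convex, apply Alexandrov's theorem to $f$, and read off the Hessian $2I-H_f$. One slip in your optional sketch of Alexandrov's theorem: in step 3 the null set needed is the image of the singular set under $J$, not under $J^{-1}$. This does not affect the argument, since you cite the theorem.
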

\begin{proof}
    Write $s_\mathcal{E}(a)=\|a\|_2^2 - f(a)$ as in the proof of Theorem~\ref{thm:ae}. Since $f(a)$ is convex, by Alexandrov's Theorem it has second derivatives almost everywhere. Whenever $f(a)$ is twice differentiable, so must be $s_\mathcal{E}(a)$ with Hessian $H_s = 2 I - H_f$, where $H_f$ denotes the Hessian of $f$.
\end{proof}

\begin{example}
    Generally, one cannot strengthen the statement of Theorem~\ref{thm:alexandrov} to conclude that the function $s_\mathcal{E}(a)$ is of class $C^2$ almost everywhere.   For a counterexample with $N=1$, let $\mathcal{E} \subsetneq \R$ be a Smith-Volterra-Cantor set \cite{fatcantor}. This is a closed subset of $[0,1]$ having empty interior, no isolated points, and positive Lebesgue measure. On each point $a$ in a connected component $(x_\ell,x_r)$ of $[0,1] \setminus \mathcal{E}$, it holds $s_\mathcal{E}(a) = \min \{ (a-x_\ell)^2, (a-x_r)^2\}$. Hence, the function is not twice differentiable at any midpoint of these intervals. While there are only countably many such midpoints, which does not contradict Theorem~\ref{thm:alexandrov} since a countable set has zero measure, they nevertheless  accumulate everywhere on the positive-measure set $\mathcal{E}$, excluding that $s_\mathcal{E}$ can be $C^2$ almost everywhere.
\end{example}

An explicit formula of the Hessian of $f(a)$, and hence of $s_\mathcal{E}(a)$, is elusive for general closed sets $\mathcal{E}$, but closed  form expressions are typically available when $\mathcal{E}$ is a manifold, making Theorem~\ref{thm:alexandrov} practically applicable. (See also \citet{Edelman1998} for the case of matrix optimization with orthogonality constraints, and \citet{Absil2013} for the more general topic of Riemannian Hessians in matrix optimization.) We record this in Theorem~\ref{thm:hessian}.

\begin{theorem}\label{thm:hessian}
    Let $\mathcal{E} \subseteq \R^N$ be an embedded Riemannian submanifold of real dimension $n$, and $a \not\in M(\mathcal{E})$. Denote by $x$ the unique projection of $a$ onto $\mathcal{E}$ and let  $s_\mathcal{E}$ be the squared distance function defined in Theorem~\ref{thm:ae}. Let $Q \in \R^{N \times n}$ be a matrix whose columns form an orthonormal basis of the tangent space of $\mathcal{E}$ at $x$, and let $W_u \in \R^{n \times n}$ be the matrix representing, in the basis $Q$, the Weingarten map \cite[Chapter 8]{Lee2018} in the direction of $u:=a-x$. 
    
    Assume in addition that $I-W_u$ is nonsingular. Then, $s_\mathcal{E}$ is twice differentiable at $a$, with Hessian
    \[ H_s(a) = 2 (I -Q(I- W_u)^{-1}Q^T).      \]
\end{theorem}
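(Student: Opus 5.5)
By Theorem~\ref{thm:ae}, $s_\mathcal{E}$ is differentiable near $a$ with $\nabla s_\mathcal{E}(b) = 2(b - \pi(b))$, where $\pi$ is the nearest-point map. The plan is therefore to show that $\pi$ is differentiable at $a$ with Jacobian $D\pi(a) = Q(I-W_u)^{-1}Q^T$. The Hessian is then $2(I - D\pi(a))$, and a first-order Taylor expansion of the gradient gives the second-order expansion of $s_\mathcal{E}$.

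Three points need care.

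\emph{Open set of uniqueness and continuity of $\pi$.} Since $a \notin M(\mathcal{E})$, the proof of Theorem~\ref{thm:ae} shows that nearest points $x_n$ of any $a_n \to a$ converge to $x$. Hence $\pi$ is continuous at $a$, in the sense of any selection of nearest points. In particular, nearest points of $b$ near $a$ lie in a small chart neighbourhood of $x$ on $\mathcal{E}$.

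\emph{Local equation for nearest points.} If $y$ is a nearest point of $b$, then $y$ is a critical point of $z\mapsto \|z-b\|_2^2$ on $\mathcal{E}$. So $y$ satisfies the first-order condition $P_{T_y}(b-y)=0$, where $P_{T_y}$ is the orthogonal projector onto $T_y\mathcal{E}$.

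Parametrize $\mathcal{E}$ near $x$ as $y=\phi(t)$, with $\phi(0)=x$ and $D\phi(0)=Q$; for instance, take $\phi$ to be the graph over the tangent space. Then consider
\[ F(t,b) := D\phi(t)^T(b-\phi(t)) = 0 . \]
At $(0,a)$ we have
\[ \partial_t F = -D\phi(0)^TD\phi(0) + \bigl[\partial_t D\phi(t)^T\bigr](a-x). \]
In the graph chart $D\phi(0)^TD\phi(0)=I$, and the second term is the matrix of second fundamental form pairings $\langle \mathrm{II}(\cdot,\cdot),u\rangle$. That is exactly $W_u$ in the basis $Q$, by the Weingarten equation $\langle \mathrm{II}(X,Y),u\rangle=\langle W_u X,Y\rangle$. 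So $\partial_tF(0,a) = -(I-W_u)$, which is invertible by hypothesis. Also $\partial_b F(0,a) = Q^T$.

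The implicit function theorem then yields a $C^1$ map $t(b)$ near $a$ with $Dt(a) = (I-W_u)^{-1}Q^T$. Because nearest points of nearby $b$ lie in the chart and satisfy $F=0$, and the implicit function theorem gives local uniqueness of solutions, we get $\pi(b)=\phi(t(b))$. Hence $D\pi(a)=Q(I-W_u)^{-1}Q^T$.

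\emph{Hessian.} This follows as above. One can note symmetry as a check: $W_u$ is self-adjoint, so $(I-W_u)^{-1}$ is symmetric.

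The main obstacle is the sign and normalization bookkeeping in identifying $\partial_t F$ with $-(I-W_u)$, i.e.\ matching the paper's convention for the Weingarten map (Lee's $W_u X = -(\overline\nabla_X U)^\top$) with the second-derivative term. The secondary subtlety is justifying that the nearest point equals the implicit-function solution rather than some other critical point. This is handled by the continuity argument in the first step combined with the local uniqueness from the implicit function theorem. Only $C^2$ regularity of the embedding is needed.
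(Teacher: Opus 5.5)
Your proof is correct and is essentially the paper's argument. Both proofs apply the implicit function theorem to the first-order normality condition, use the continuity of nearest points from the proof of Theorem~\ref{thm:ae} to ensure that the implicit solution is the actual (unique) projection for $b$ near $a$, and identify the relevant derivative with $I-W_u$ through the Weingarten equation. The only difference is bookkeeping. You read $D\pi(a)=Q(I-W_u)^{-1}Q^T$ directly off the implicit-function formula in a chart with $D\phi(0)=Q$. The paper instead obtains it by a separate chain-rule computation: it differentiates $u=a-x(a)$ along tangent directions and uses that small normal perturbations of $a$ leave $x$ fixed.
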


\begin{proof}
By the implicit function theorem, and since $I-W_u$ is nonsingular, for every point $b$ sufficiently close to $a$ the function mapping $b$ to the point on $\mathcal{E}$ nearest to $b$ is well defined and continuously differentiable (see the proof of Theorem~\ref{thm:ae} and \citet[Theorem~2]{LS21}).

    In view of Theorem~\ref{thm:ae} and defining $f(a)$ as in the proof of Theorem~\ref{thm:ae}, it suffices to prove that the Hessian of $\frac12 f(a)$ is $Q(I-W_{a-x})^{-1}Q^T$. Equivalently, we need to verify that such expression describes the Jacobian of the projection point $x$ with respect to $a$. Note first that a sufficiently small variation of $a$ in a direction normal to $\mathcal{E}$ at $x$ does not change its projection point; it thus suffices to consider perturbations in the tangent space $T_x \mathcal{E}$ to obtain a formula for $Q^T J_x(a) Q$, and then observe that $J_x(a)=QQ^T J_x(a) QQ^T$.
    
    By construction $u=a-x$ is normal to $\mathcal{E}$ at $x$. Differentiating with respect to $a$ yields $J_u(a)=I-J_x(a)$. On the other hand, by \cite[Proposition 8.4]{Lee2018}, for every tangential vector $v \in T_x \mathcal{E}$ it holds $\nabla_v u = - Q W_u (Q^T v)$. Hence, by the chain rule and noting that the normal component vanishes because $J_u(a)Q=Q-J_x(a)Q$ has columns in $T_x\mathcal E$, $J_u(a) Q = -QW_uQ^TJ_x(a)Q$. Since $Q^T Q=I$, we conclude that $Q^T J_x(a) Q  = (I-W_u)^{-1},$ yielding the statement.
\end{proof}

For example, for an affine subspace $W_u=0$ identically, and thus the formula in Theorem~\ref{thm:hessian} reduces to twice the orthogonal projector onto the normal space. More generally, manifolds are often given by constraints, that is, $\mathcal{E} = \{ z \in \R^N : h(z)=0   \}$ where $h: \R^N \mapsto \R^{N-n}$ is a submersion when restricted to $\mathcal{E}$, i.e., the Jacobian $J_h(x) \in \R^{(N-n)\times N}$ has full row rank for all $x \in \mathcal{E}$. It follows that the Gram matrix $G(x)=J_h(x) J_h(x)^T$ is everywhere invertible. In this case, by repeated application of the chain rule and having fixed an orthonormal basis $Q$ for $\ker J_h(x)$, we have that the matrix $W_u$ in Theorem~\ref{thm:hessian} can be expressed as
\begin{equation*}
    W_u = -Q^T \left( \sum_{i=1}^{N-n} c_i H_{h_i} \right)  Q , 
\end{equation*} 
where $H_{h_i} \in \R^{N \times N}$ is the Hessian matrix of the $i$-th component of $h(z)$, and the vector $c=G(x)^{-1} J_h(x) (a-x) \in \R^{N-n}$.

\begin{example}
Consider the manifold $\mathcal{E}$ of $2 \times 2$ real matrices (represented as vectors in $\R^4$ by vectorization, i.e., stacking their columns on top of each other) with singular values equal to $1$ and $0$. This is described by the constraints $h(z)=\begin{bmatrix}
    z_1^2 + z_2^2 + z_3^2 + z_4^2-1\\
    z_1 z_4 - z_2 z_3
\end{bmatrix}=0$. Take the point $a=\begin{bsmallmatrix}
    2\\
    9\\
    -6\\
    -2
\end{bsmallmatrix}$, whose unique projection is $x= \frac15 \begin{bsmallmatrix}
    2\\
    4\\
    -1\\
    -2
\end{bsmallmatrix}$, as can be easily verified (see  Theorem~\ref{thm:minimizerforunitinv} below) from an SVD of the matrix $A=\begin{bsmallmatrix}
    2 & -6\\
    9 & -2
\end{bsmallmatrix}$ so that $a=\mvec(A)$. 

The Jacobian matrix is $J_h(x) =  \frac15 \begin{bsmallmatrix}
    4&8&-2&-4\\
    -2&1&-4&2
\end{bsmallmatrix}$, and an orthonormal basis for its null space is $Q =  \frac{1}{5\sqrt{2}}\begin{bsmallmatrix}
    5 & -3\\
    0 & 4\\
    0 & 4\\
    5 & 3
\end{bsmallmatrix}$. Hence,
\[ W_u = -Q^T \left( \frac92 \cdot 2 I_4 + 5 \cdot \begin{bmatrix}
    0&0&0&1\\
    0&0&-1&0\\
    0&-1&0&0\\
    1&0&0&0
\end{bmatrix}   \right) Q = \begin{bmatrix}
    -14&0\\
    0 & -4
\end{bmatrix}.   \]
Hence, by Theorem~\ref{thm:hessian}, the Hessian of the square distance $s_\mathcal{E}$ at $a$ is the symmetric positive definite matrix
\[ H_s(a) = \frac{1}{375} \begin{bmatrix}
    698 & 36 & 36 & 2\\
    36 & 702 & -48 & -36\\
    36 & -48 & 702 & -36\\
    2 & -36 & -36 & 698
\end{bmatrix} \succ 0. \]
 \end{example}

\subsection{Matrix nearness problems}

Let $\F^{m \times n}$ be the vector space of $m \times n$ matrices with entries in the field $\F$, where $\F$ is either $\R$ or $\C$. We will endow it with the Frobenius norm 
\begin{equation}\label{eq:fro}
  \| A \|_F := \sqrt{\traccia(A^* A)} = \sqrt{\sum_{i=1}^m \sum_{j=1}^n |A_{ij}|^2 }. 
\end{equation}
and the associated distance $d(X,Y)=\|X-Y\|_F$. Throughout the paper, it is also convenient to denote $\nu:=\min\{m,n\}$. 

If $\F=\R$, the vectorization map $A \mapsto a=\mvec(A)$, where $\mvec(A)$ is the vector obtained by stacking the columns of $A$ on top of each other, induces a Hilbert space isomorphism $\R^{m \times n} \cong \R^{mn}$. Indeed, the scalar product $\langle A,B \rangle = \traccia(A^T B)$, which induces the Frobenius norm \eqref{eq:fro}, is mapped to the usual dot product $\langle a,b \rangle = a^T b$ of $\R^{mn}$. Hence, Theorem~\ref{thm:ae}, Corollary~\ref{cor:erdos}, Theorem~\ref{thm:alexandrov} and Theorem~\ref{thm:hessian} all apply to $\R^{m \times n}$. A similar machinery is useful when $\F=\C$, provided  that we view $\C^{m \times n}$ as a $2mn$-dimensional real vector space with inner product $\langle A,B \rangle = \Re (\traccia(A^*B))$; note that, for this choice of an inner product, the induced norm $\sqrt{\langle A,A \rangle}$ is still \eqref{eq:fro}, because $\traccia(A^*A) \in \R$. In this setting, the canonical Hilbert space isomorphism $\C^{m \times n} \rightarrow \R^{2mn}$ is given by
\begin{equation}\label{eq:cvec}
    A \mapsto a=\cvec(A):=\begin{bmatrix}
    \mvec(\Re A )\\
    \mvec(\Im A )
\end{bmatrix},
\end{equation}
where $\Re A$ and $\Im A$ denote, respectively, the elementwise real and imaginary part of the matrix $A$. Under this isomorphism, the above mentioned real scalar product is mapped to the usual dot product of $\R^{2mn}$. Throughout the paper, we often take this viewpoint and see $\C^{m \times n}$ as a \emph{real} Hilbert space. Again, all the results developed in the previous section still apply in this sense (note, in particular, that Theorem~\ref{thm:hessian} holds for \emph{real} submanifolds of $\C^{m \times n}$).

Given $A \in \F^{m \times n}$ and a nonempty closed set $\mathcal{E} \subseteq \F^{m \times n}$, which is often defined in terms of matrix properties (e.g., the set of all rank deficient matrices, or the set of all matrices with orthonormal columns), the corresponding \emph{matrix nearness problem} is defined as the task of finding the minimum $d(A,\mathcal{E})$ and (at least) one argument minimum $X_\star$ of the distance function $\|X-A\|_F$. It is in fact often convenient, at least from the perspective of a developer of numerical algorithms, to equivalently pose the problem with respect to the square distance function:
\begin{equation}\label{eq:matrixdistfrob}
   \mathrm{Compute} \ d(A,\mathcal{E})^2 = \min_{X \in \mathcal{E}} \| X-A \|_F^2,  \ \mathrm{find} \ \mathrm{one} \ \mathrm{matrix} \ X_\star \in \mathrm{argmin}_{X \in \mathcal{E}} \| X-A \|_F^2.   
\end{equation}  

\begin{remark}
An interesting question arises if one chooses $\F=\R$ for the input but $\F=\C$ for the target set. In other words, for which nearness problems is it true that every real input $A$  has a real nearest matrix, when the target set $\mathcal{E}$ includes complex matrices? \citet{NZ26b} provide some analysis in the following setting: Let $\mathcal{E}\subseteq\C^{n\times n}$ be closed, invariant under complex conjugation, and contain at least one real matrix. Hence, for an input $A\in\R^{n\times n}$, if $X_\star \in \mathcal{E}$ minimizes the distance then so must its complex conjugate matrix $\overline{X_\star}$. Therefore, if the minimizer is unique then it must be real; this happens, for example,  when $\mathcal{E}$ is convex.

More generally, \citet[Theorem~2.3]{NZ26b} show that, for every nonempty closed feasible set $\mathcal{E}$ and for Lebesgue-almost every real input, either the nearest matrix is unique and real, or none of the nearest matrices is real. For two problems of practical interest, the nearest normal and nearest Hurwitz
stable matrix\footnote{As the set of Hurwitz stable matrices is open, a nearest matrix must be sought in the closure of such set. In this paper, we call a matrix in this closure \emph{weakly Hurwitz stable}; see Section~\ref{sec:spectralnearness}.} problems, both situations happen with positive probability when $n\geq3$ and $A$ is sampled uniformly from the Frobenius unit sphere in $\R^{n\times n}$. For more details, see \citet[Sections~3 and~4]{NZ26b}.
\end{remark}

In the literature on matrix nearness problems, the seemingly most common convention is to call $X$ the minimizer, and $\Delta = X - A$ the corresponding perturbation of $A$. In this work, we reverse a sign, defining $\Delta := A - X$, so that $A = X + \Delta$ is a decomposition of $A$ into two summands: the minimizer $X\in\mathcal{E}$, and the perturbation $\Delta$. The reason for this choice is that with this definition $\Delta$ lies in a \emph{normal cone} of $\mathcal{E}$, a classical object from variational analysis. In the next subsection, we formally introduce normal cones and discuss their properties.

\subsection{Nearest points and normal cones} \label{subsec:normal cones}
Some tools from variational analysis are useful to formalize the idea that, if $x$ is the point in $\mathcal{E}$ nearest to $a$, then the difference $a-x$ must be orthogonal to $\mathcal{E}$. The \emph{regular} (also \emph{Fréchet}, or \emph{Bouligand}) \emph{normal cone} of a set $\mathcal{E} \subseteq \R^N$ at a point $x\in\mathcal{E}$ is
\[
N^r_x \mathcal{E} := \{\delta \in \R^N \colon \scalar{\delta, z-x} \leq o(\norm{z-x}) \text{ for all $z\in\mathcal{E}$ as $z\to x$}\},
\]
i.e., the set of vectors $\delta$ that form a right or obtuse angle with every possible tangent direction along which $z\to x$ in $\mathcal{E}$. For the regular cone, the following result holds.
\begin{theorem}[\protect{\citet[Theorem~6.12] {Rockafellar1998}}] \label{thm:reg min in normal}
    Let $\mathcal{E} \subseteq \mathbb{R}^N$, and $f$ be a real-valued function. Let $x\in\mathcal{E}$ be a local minimizer of $f$ in $\mathcal{E}$, and assume that $f$ is differentiable in a neighborhood of $x$. Then, $-\nabla f(x) \in N^r_x \mathcal{E}$.
\end{theorem}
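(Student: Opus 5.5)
The plan is to argue directly from the definition of the regular normal cone, using a first-order Taylor expansion of $f$ at $x$. The one property we need to establish is this: for every $z\in\mathcal{E}$,
\[
\scalar{-\nabla f(x), z-x} \leq o(\norm{z-x}) \qquad\text{as } z\to x.
\]

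First, we write the differentiability of $f$ at $x$ as
\[
f(z) = f(x) + \scalar{\nabla f(x), z-x} + r(z), \qquad \frac{r(z)}{\norm{z-x}}\to 0 \text{ as } z\to x.
\]
Differentiability at the single point $x$ is enough here; the neighbourhood hypothesis in the statement is more than we use. Next, local minimality gives a radius $\rho>0$ such that $f(z)\geq f(x)$ for all $z\in\mathcal{E}$ with $\norm{z-x}<\rho$. For such $z$ we therefore get
\[
\scalar{-\nabla f(x), z-x} = f(x)-f(z) + r(z) \leq r(z).
\]
Since $r(z)=o(\norm{z-x})$, this is exactly the defining inequality of $N^r_x\mathcal{E}$ for $\delta=-\nabla f(x)$, and the conclusion follows.

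The only delicate point is interpretive: what "$\leq o(\norm{z-x})$ as $z\to x$ in $\mathcal{E}$" means. I will read it as $\limsup_{z\to x,\, z\in\mathcal{E}\setminus\{x\}} \scalar{\delta,z-x}/\norm{z-x}\leq 0$. Under that reading, dividing the last inequality by $\norm{z-x}$ and taking the $\limsup$ gives the claim directly. If $x$ is an isolated point of $\mathcal{E}$, the condition is vacuous, so the claim holds trivially in that case. I do not expect any step to be a genuine obstacle.
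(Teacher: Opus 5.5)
Your proof is correct. The first-order expansion at $x$, combined with $f(z)\geq f(x)$ for $z\in\mathcal{E}$ near $x$, gives exactly $\scalar{-\nabla f(x),z-x}\leq o(\norm{z-x})$, and you are right that only (Fréchet) differentiability at $x$ itself is needed. The paper gives no proof of its own and instead cites \citet[Theorem~6.12]{Rockafellar1998}, which rests on the same first-order expansion (stated there as a tangent-cone inequality and passed to $N^r_x\mathcal{E}$ by polarity), so your argument straight from the definition is the standard self-contained proof.
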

In particular, with $f(x) = \frac12 \norm{x-a}_2^2$, Theorem~\ref{thm:reg min in normal} implies that a distance minimizer satisfies $a-x = \delta \in N^r_x(\mathcal{E})$.

More general results hold only for a different version of the normal cone. The \emph{limit} (also called basic, or Mordukhovich) \emph{normal cone} is
\[
N_x^\ell \mathcal{E} := \{\delta \in \mathbb{R}^N \colon \ \exists \ \text{$(x_k)_k\subset\mathcal{E},(\delta_k)_k \subset \R^N$ with $x_k \to x$, $\delta_k \to \delta$, $\delta_k \in N_{x_k}^r (\mathcal{E})$}\}.
\]
For instance, the following result holds.
\begin{theorem}[\protect{\citet[Theorem~8.15]{Rockafellar1998}}] \label{thm:nonreg min in normal}
    Let $\mathcal{E} \subseteq \mathbb{R}^N$ be closed, 
    and $f$ be a real-valued function.
    Let $x\in\mathcal{E}$ be a local minimizer of $f$ in $\mathcal{E}$, and assume $f$ is locally Lipschitz in a neighborhood of $x$. Then, there exists a subgradient $g \in \partial f(x)$ (the limit subdifferential) such that $-g \in N^\ell_x \mathcal{E}$.
\end{theorem}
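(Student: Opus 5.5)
We need to prove Theorem~\ref{thm:nonreg min in normal}, the Rockafellar--Wets result: if $x$ is a local minimizer of a locally Lipschitz $f$ over a closed set $\mathcal{E}$, then $0 \in \partial f(x) + N^\ell_x\mathcal{E}$. The plan is the standard exact-penalization plus fuzzy-sum-rule route.

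\textbf{Step 1: exact penalization.} Let $L$ be a Lipschitz constant of $f$ on a ball $B(x,2r)$ on which $x$ minimizes $f$ over $\mathcal{E}$. Fix $K > L$. We claim that $x$ is a local minimizer of the unconstrained function $f + K\, d(\cdot,\mathcal{E})$ on $B(x,r)$.

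To see this, take $z \in B(x,r)$. By Proposition~\ref{prop:lauriwaswrong} there is a nearest point $p \in \mathcal{E}$ to $z$. Since $x\in\mathcal{E}$, we have $\|p-z\| \le \|x-z\| < r$, so $p \in B(x,2r)$. Then
\[
f(z) + K\, d(z,\mathcal{E}) \;\ge\; f(p) - L\|z-p\| + K\|z-p\| \;\ge\; f(p) \;\ge\; f(x).
\]
This gives the claim.

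\textbf{Step 2: subdifferential sum rule.} Fermat's rule for the limiting subdifferential gives
\[
0 \in \partial\bigl(f + K d_{\mathcal{E}}\bigr)(x).
\]
Both summands are locally Lipschitz, so the limiting sum rule gives
\[
\partial\bigl(f + K d_{\mathcal{E}}\bigr)(x) \subseteq \partial f(x) + K\,\partial d_{\mathcal{E}}(x).
\]
This sum rule is the main obstacle. I would prove it via the fuzzy sum rule for Fréchet subgradients, which comes from a smooth variational principle (or, in finite dimensions, from a direct argument perturbing by $\|\cdot\|^2$). Limits are then taken using the local boundedness of subgradients of Lipschitz functions, and compactness then lets us pass to convergent subsequences.

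\textbf{Step 3: subgradients of the distance function lie in the normal cone.} It remains to show $\partial d_{\mathcal{E}}(x) \subseteq N^\ell_x\mathcal{E}$ for $x \in \mathcal{E}$.

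First consider a Fréchet subgradient $v$ of $d_{\mathcal{E}}$ at a point $y\in\mathcal{E}$. For $z\in\mathcal{E}$ near $y$ we have $d_{\mathcal{E}}(z)=0$, so the subgradient inequality gives
\[
0 \ge \langle v, z-y\rangle - o(\|z-y\|),
\]
which says exactly $v \in N^r_y\mathcal{E}$.

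Next consider Fréchet subgradients at points $y\notin\mathcal{E}$ with $y\to x$. Here I would use the proof idea of Theorem~\ref{thm:ae}. Write $d_\mathcal{E} = \sqrt{s_\mathcal{E}}$; any Fréchet subgradient at $y$ has the form
\[
v = \frac{y-p}{\|y-p\|}
\]
for the (necessarily unique) projection $p$ of $y$. Moreover $y - p \in N^r_p\mathcal{E}$ by Theorem~\ref{thm:reg min in normal} applied to $\tfrac12\|\cdot-y\|^2$. Since $p\to x$ as $y\to x$, the limits of such $v$ lie in $N^\ell_x\mathcal{E}$.

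\textbf{Conclusion.} Combining Steps 1--3, there exist $g\in\partial f(x)$ and $w \in N^\ell_x\mathcal{E}$ with $0 = g + Kw$. Since $N^\ell_x\mathcal{E}$ is a cone, $-g = Kw \in N^\ell_x\mathcal{E}$, as required.
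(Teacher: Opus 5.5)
The paper gives no proof of its own here, only the pointer to Rockafellar--Wets. Their version is stated for proper lsc $f$ under the qualification $\partial^\infty f(x)\cap(-N^\ell_x\mathcal{E})=\{0\}$. This is automatic for locally Lipschitz $f$, since then $\partial^\infty f(x)=\{0\}$. In effect, it is Fermat's rule $0\in\partial(f+\delta_{\mathcal{E}})(x)$ for the indicator $\delta_{\mathcal{E}}$, combined with a sum rule for extended-valued functions.

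Your argument is correct and takes a genuinely different route. Clarke's exact penalization replaces the indicator by the Lipschitz function $Kd_{\mathcal{E}}$. You therefore only need the limiting sum rule for two Lipschitz functions: the bound $L$ on Fréchet subgradients supplies the compactness, and no qualification condition appears. The price is the extra lemma $\partial d_{\mathcal{E}}(x)\subseteq N^\ell_x\mathcal{E}$, which you obtain neatly from Theorems~\ref{thm:ae} and~\ref{thm:reg min in normal}. Conversely, the indicator route buys generality: your Step~1 genuinely needs $f$ Lipschitz near $x$, whereas Rockafellar--Wets cover extended-valued lsc $f$.

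Two spots deserve tightening.
\begin{enumerate}
\item The sum rule in Step~2 carries essentially all the analytic weight, so it should be written out. The standard way is a fuzzy sum rule obtained by decoupling the two variables with a penalty $k\|y_1-y_2\|^2$ on a compact ball, followed by a limit that uses the bound on subgradients of the Lipschitz summand.
\item Theorem~\ref{thm:ae} concerns differentiability, not Fréchet subgradients, so your description of the Fréchet subgradients $v$ of $d_{\mathcal{E}}$ at $y\notin\mathcal{E}$ needs one more line. A direct one: for any projection $p$ of $y$, $d_{\mathcal{E}}$ decreases at unit rate from $y$ toward $p$, which gives $\langle v,y-p\rangle\ge\|y-p\|$. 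Also $\|v\|\le 1$ because $d_{\mathcal{E}}$ is $1$-Lipschitz, so equality in Cauchy--Schwarz forces $v=(y-p)/\|y-p\|$, which also forces uniqueness of $p$.
\end{enumerate}
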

(We have slightly changed the assumptions on $f$ with respect to \citet{Rockafellar1998}, to get a simpler statement.) Another remarkable property of the limit cone is the sum rule, which we record as Theorem~\ref{thm:sum rule}.
\begin{theorem}[\protect{\citet[Theorem~6.42]{Rockafellar1998}}] \label{thm:sum rule}
Let $\mathcal{E}_1,\mathcal{E}_2 \subseteq \R^N$ be closed sets. If  \emph{constraint qualification condition} $N^\ell_x\mathcal{E}_1 \cap (-N^\ell_x \mathcal{E}_2) = \{0\}$ is satisfied, then for every $x\in \mathcal{E}_1 \cap \mathcal{E}_2$ it holds
\begin{equation} \label{sum rule with inclusion}
    N^\ell_x (\mathcal{E}_1\cap \mathcal{E}_2) \subseteq N^\ell_x \mathcal{E}_1 + N^\ell_x\mathcal{E}_2.
\end{equation}
\end{theorem}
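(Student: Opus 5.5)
The last statement is Theorem~\ref{thm:sum rule}, the sum rule for limit normal cones under the constraint qualification $N^\ell_x\mathcal{E}_1 \cap (-N^\ell_x\mathcal{E}_2)=\{0\}$. I would prove it with the standard penalization-plus-compactness argument, using only the definitions of $N^r$ and $N^\ell$ and Theorem~\ref{thm:nonreg min in normal}.

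\textbf{Step 1: reduce to regular normals.} Since $N^\ell_x\mathcal{E}_1+N^\ell_x\mathcal{E}_2$ is closed under the constraint qualification, as shown in Step~3, it suffices to take $\delta\in N^r_{\bar x}(\mathcal{E}_1\cap\mathcal{E}_2)$ at points $\bar x\in\mathcal{E}_1\cap\mathcal{E}_2$ near $x$. For such $\delta$ I would produce a decomposition $\delta=\delta_1+\delta_2$ with $\delta_i\in N^\ell_{\bar x}\mathcal{E}_i$. The general case then follows by a diagonal limit.

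\textbf{Step 2: penalized problem.} Fix a regular normal $\delta$ at $\bar x$. The standard variational fact for regular normals says that $\bar x$ is a local maximizer of $\langle\delta,z\rangle-\varepsilon\|z-\bar x\|$ on $\mathcal{E}_1\cap\mathcal{E}_2$; alternatively one can use the smooth-function characterization behind Theorem~\ref{thm:reg min in normal}. Decouple the constraint by considering pairs $(z_1,z_2)\in\mathcal{E}_1\times\mathcal{E}_2$ and minimizing
\[
\phi_k(z_1,z_2)=-\langle\delta,z_1\rangle+\varepsilon\|z_1-\bar x\|+\|z_1-\bar x\|^2+k\|z_1-z_2\|^2
\]
over a small closed ball. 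By properness a minimizer $(z_1^k,z_2^k)$ exists. As $k\to\infty$, $z_i^k\to\bar x$ because the quadratic term makes $\bar x$ the strict local minimizer. Theorem~\ref{thm:nonreg min in normal} is applied to the product set $\mathcal{E}_1\times\mathcal{E}_2$, whose limit normal cone is the product of the cones. Interior to the ball, it yields $\delta-\varepsilon g-2(z_1^k-\bar x)-2k(z_1^k-z_2^k)\in N^\ell_{z_1^k}\mathcal{E}_1$ with $\|g\|\le1$, and $2k(z_1^k-z_2^k)\in N^\ell_{z_2^k}\mathcal{E}_2$. Set $w_k=2k(z_1^k-z_2^k)$.

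\textbf{Step 3: boundedness via the qualification.} This is the main obstacle. Suppose $\|w_k\|\to\infty$ along a subsequence. Normalize by $\|w_k\|$ and extract a convergent subsequence $w_k/\|w_k\|\to w$ with $\|w\|=1$. Limit normal cones have closed graphs, which follows from a diagonal argument on the definition. Hence $-w\in N^\ell_{\bar x}\mathcal{E}_1$ and $w\in N^\ell_{\bar x}\mathcal{E}_2$. This is a nonzero element of $N^\ell\mathcal{E}_1\cap(-N^\ell\mathcal{E}_2)$, a contradiction with the constraint qualification applied at $\bar x$. Note that the qualification is assumed at $x$; it persists for $\bar x$ near $x$ by the same closed-graph argument.

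So $(w_k)$ is bounded. Pass to limits to get $\delta-\varepsilon g-w\in N^\ell\mathcal{E}_1$ and $w\in N^\ell\mathcal{E}_2$. Finally let $\varepsilon\to0$, again using boundedness from the qualification and the closedness of the cones. The same normalization argument shows that $N^\ell_x\mathcal{E}_1+N^\ell_x\mathcal{E}_2$ is closed, which justifies the limiting step in Step~1 and gives \eqref{sum rule with inclusion}.
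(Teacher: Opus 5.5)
The paper does not prove this statement; it imports it from Rockafellar--Wets. There, the intersection rule is derived from a general normal-cone rule for constraint systems $\{x\in X\colon F(x)\in D\}$, and that rule is proved by the same decoupling argument you use: penalize $\|z_1-z_2\|^2$ over the product set, read off a multiplier $w_k$, bound it by normalization and the qualification, and pass to the limit using the closed graph of $N^\ell$. Your proof is correct. The localization $z_i^k\to\bar x$, the two normal-cone relations, the persistence of the qualification near $x$ and the $\varepsilon\to0$ limit all check out.

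The one real difference is how you turn a regular normal into a minimization problem. The standard proof uses the gradient characterization you mention in passing: $\delta=-\nabla h(\bar x)$ for some $C^1$ function $h$ locally minimized on $\mathcal{E}_1\cap\mathcal{E}_2$ at $\bar x$. That way only the elementary Theorem~\ref{thm:reg min in normal} is needed, and there is no $\varepsilon$-layer. Your $\varepsilon$-route instead calls on Theorem~\ref{thm:nonreg min in normal}, which in Rockafellar--Wets is itself deduced from the intersection rule, applied to the epigraph of the objective and the cylinder over the constraint set. Inside the paper this is formally acceptable, since both results are cited black boxes. To make the proof non-circular, replace $\varepsilon\|z_1-\bar x\|$ by $\varepsilon\sqrt{\|z_1-\bar x\|^2+\eta_k^2}$ with $\eta_k\to0$. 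The localization argument survives and $\phi_k$ becomes smooth. Theorem~\ref{thm:reg min in normal} on $\mathcal{E}_1\times\mathcal{E}_2$, whose regular normal cone is trivially the product of the two, then gives the same relations with regular normals and $\|g_k\|<1$.

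Finally, correct the justification in Step~1. Closedness of $N^\ell_x\mathcal{E}_1+N^\ell_x\mathcal{E}_2$ at the single point $x$ does not license the final limit. The decompositions $\delta^j=a_j+b_j$ produced by the regular case live in $N^\ell_{\bar x_j}\mathcal{E}_1+N^\ell_{\bar x_j}\mathcal{E}_2$ with $\bar x_j\to x$, not in the cones at $x$. What you need is boundedness of $(a_j)$. That is exactly your Step~3 normalization with moving base points, using the qualification at $x$ and the closed graphs. Then, along a subsequence, $a_j\to a\in N^\ell_x\mathcal{E}_1$ and $b_j\to b\in N^\ell_x\mathcal{E}_2$, so $\delta=a+b$.
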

In general, $N_x^r \mathcal{E} \subseteq N_x^\ell \mathcal{E}$, without equality, but luckily there is a large class of ``tame" sets (known as \emph{Clarke regular} sets) for which the two definitions of normal cone coincide. Moreover, for Clarke regular sets Theorem~\ref{thm:sum rule} holds with an equality in~\eqref{sum rule with inclusion} rather than an inclusion. Notable examples of Clarke regular sets include:
\begin{itemize}
    \item If $\mathcal{E} = B+\mathcal{V}$ is an affine subspace of $\R^N$, then $N^r_x(\mathcal{E}) = N^\ell_x(\mathcal{E}) = \mathcal{V}^\perp$, the familiar orthogonal complement from linear algebra. For affine subspaces, the sum rule condition $(\mathcal{V}_1\cap \mathcal{V}_2)^\perp = \mathcal{V}_1^\perp + \mathcal{V}_2^\perp$ holds for any $x \in \mathcal{E}_1\cap \mathcal{E}_2$ without any further assumptions.
    \item If $\mathcal{E}$ is (locally around $x$) a $C^1$ embedded manifold, $N_x^r \mathcal{E} = N_x^\ell \mathcal{E} = N_x \mathcal{E}$, where $N_x \mathcal{E}$ is the orthogonal complement of the tangent space $T_x\mathcal{E}$. For two embedded manifolds, the property $T_x(\mathcal{E}_1 \cap \mathcal{E}_2) = T_x\mathcal{E}_1 \cap T_x\mathcal{E}_2$, which is equivalent to the sum rule $N_x(\mathcal{E}_1 \cap \mathcal{E}_2) = N_x \mathcal{E}_1 + N_x \mathcal{E}_2$, is called \emph{clean intersection} in the language of differential geometry \cite{LeeSmooth}; and the stronger condition $T_x(\mathcal{E}_1) + T_x(\mathcal{E}_2) = \R^N$, which is equivalent to $N_x \mathcal{E}_1 \cap (-N_x \mathcal{E}_2) = \{0\}$ and implies the former one, is called \emph{transversal intersection}.
    \item If $\mathcal{E}$ is a convex set, $N_x^r \mathcal{E} = N_x^\ell \mathcal{E} = N_x \mathcal{E}$, where 
    \begin{equation} \label{normal cone for convex E}
        N_x \mathcal{E} = \{\delta \in \mathbb{R}^N \colon \scalar{\delta,z-x}\leq 0 \text{ for all $z\in\mathcal{E}$} \},    
    \end{equation}
    and when $f$ is a convex function the condition $-g \in N_x \mathcal{E}$ is not only necessary but also sufficient to have a global minimum. In particular, for the squared distance function, this condition $a-x =\delta \in N_x \mathcal{E}$ can be interpreted geometrically as stating that $a$ and $\mathcal{E}$ lie on opposite sides of the supporting hyperplane $\{z\in\mathbb{R}^N \colon \scalar{\delta, z-x}=0\}$. For convex sets, a stronger version of Theorem~\ref{thm:sum rule} holds: If $\operatorname{relint}(\mathcal{E}_1) \cap \operatorname{relint}(\mathcal{E}_2) \neq \emptyset$, then $N_x(\mathcal{E}) = N_x(\mathcal{E}_1) + N_x(\mathcal{E}_2)$ \cite[Corollary~23.8.1]{Rockafellar1970}. Here, $\operatorname{relint}(\mathcal{E})$ denotes the \emph{relative interior} of $\mathcal{E}$, i.e., the interior of $\mathcal{E}$ inside the ambient space of its affine hull.
\end{itemize}

\begin{figure}
    \centering
    \colorlet{tancol}{red!80!black}
    \colorlet{normcol}{blue!75!black}
    \colorlet{conefill}{cyan!20}
    \colorlet{hatchcol}{green!45!black}
     
    \tikzset{
      setbdry/.style = {thick},
      tanvec/.style  = {tancol, thick, -{Stealth[length=2.2mm]}},
      normvec/.style = {normcol, thick, -{Stealth[length=2.2mm]}},
      setside/.style n args = {3}{postaction={decorate}, decoration={markings,
          mark=between positions #2 and #3 step 4.5mm with
            {\draw[hatchcol, semithick, -{Stealth[length=1.3mm, width=1.4mm]}]
               (0,0) -- (0,#1*3.5mm);}}},
    }
     
    \newcommand{\rightangle}[4][normcol]{%
      \draw[#1, thin] ($(#2)+(#3:0.3)$) -- ($(#2)+(#3:0.3)+(#4:0.3)$)
                      -- ($(#2)+(#4:0.3)$);}
     
    \begin{tikzpicture}[scale=0.5, baseline=(x)]
      \coordinate (x) at (0,0);
     
      \draw[setbdry, domain=-4.8:5.2, smooth, samples=60]
        plot (\x, {-0.2*\x - 0.045*\x*\x});
      \node[above left] at (-4.8,-0.08) {$\mathcal{E}$};
     
      \draw[tanvec]  (x) -- (168.69:4.4);
      \draw[tanvec] (x) -- (-11.31:5.6);
      \node[tancol, above] at (-11.31:5.1) {$T_x\mathcal{E}$};
     
      \draw[normcol, thick, {Stealth[length=2.2mm]}-{Stealth[length=2.2mm]}]
        (258.69:2.8) -- (78.69:2.8) node[right] {$N_x\mathcal{E}$};
     
      \rightangle{x}{78.69}{168.69}
     
      \fill (x) circle (1.6pt);
      \node at (0.5,-0.6) {$x$};
    \end{tikzpicture}%
    \hspace{1cm}%
    \begin{tikzpicture}[scale=0.5, baseline=(x)]
      \coordinate (x) at (0,0);
     
      \fill[conefill] (x) -- (90:3.4) arc[start angle=90, end angle=-20, radius=3.4]
                      -- cycle;
     
      \draw[setbdry, setside={-1}{2mm}{-6mm}]
        (-5,-3.6) .. controls (-4.5,-1) and (-2.5,0) .. (x);
      \draw[setbdry, setside={-1}{14mm}{-2mm}]
        (x) .. controls (-0.55,-1.5) and (0.1,-4) .. (4.2,-4);
      \node at (-5.4,-3.6) {$\mathcal{E}$};
     
      \draw[tanvec] (x) -- (180:3.8);
      \draw[tanvec] (x) -- (250:3.8);
      \node[tancol, above] at (180:2.9) {$T_x\mathcal{E}$};
     
      \foreach \a in {90, 52, 15, -20}
        \draw[normvec] (x) -- (\a:3.2);
      \node[normcol, fill=conefill, inner sep=1pt] at (33:2.45) {$N^r_x\mathcal{E}$};
     
      \rightangle{x}{90}{180}
      \rightangle{x}{-20}{-110}
     
      \fill (x) circle (1.6pt);
      \node[below] at (0.5,-0.35) {$x$};
    \end{tikzpicture}
        \caption{Two examples of normal cones of a set $\mathcal{E}$ (drawn in black). Tangent directions are displayed in red, directions in the regular normal cone are displayed in blue. In the picture on the left, $\mathcal{E}$ is locally a one-dimensional smooth manifold around $x$; the tangent space $T_x\mathcal{E}$ and normal space $N_x\mathcal{E}$ are affine spaces. In the picture on the right, $\mathcal{E}$ has a kink in $x$, and the normal directions in $N_x^r\mathcal{E}$ fill the whole cone shaded in light blue. The directions marked with smaller green arrows belong to normal cones to $\mathcal{E}$ in points $z\neq x$; hence their limits when $z\to x$ must belong to the limit normal cone $N_x^\ell\mathcal{E}$. Indeed, $N_x^\ell \mathcal{E}$ is the union of the blue cone $N_x^r\mathcal{E}$ and two half-lines coming from the limits of the green arrows (not drawn). Note that $N_x^\ell\mathcal{E}$ is non-convex in this example.}
    \label{fig:cones}
\end{figure}
Two illustrative examples are drawn in Figure~\ref{fig:cones}.

Normal cones give constraints on the possible candidate values for $\delta = a-x$, as the following example shows.

\begin{example} \label{ex: rank deficient via normal}
    Having fixed $\F \in \{\C,\R\}$, let $\mathcal{E}_0 = \{X \in \mathbb{F}^{m\times n} \colon \rank(X) < \nu\}$ be the set of rank-deficient matrices with coefficients in $\F$. The set $\mathcal{E}_0$ is locally a manifold in the neighborhood of any $X\in\mathcal{E}_0$ with rank exactly $\nu-1$, and 
    \[
    N_X \mathcal{E}_0 = \operatorname{span}\{uv^*\colon u \in \ker X^*, \, v \in \ker X\}.
    \]
    On the other hand, if $X \in \mathcal{E}_0$ has rank strictly smaller than $\nu-1$, then $N^r_X \mathcal{E}_0 = \{0\}$: Indeed, for any pair of vectors $u,v$, the curve $z(t) = X + t uv^* \in \mathcal{E}_0$ converges to $X$, so any matrix $\Delta$ in the normal cone must satisfy $\scalar{\Delta, uv^*} \leq 0$ for every pair of vectors $u,v$, and only $\Delta=0$ satisfies this condition. 
    
    Given $A\in\F^{m\times n}$, for any  minimizer $X$ of
    \begin{equation} \label{nearest_singular_example_cones}
        \min \norm{X-A}_F \subjectto X \in \mathcal{E}_0, 
    \end{equation}
    we must have $\Delta = A-X \in N^r_X \mathcal{E}_0$: This follows by applying Theorem~\ref{thm:reg min in normal} to $F(X) = \frac12\norm{X-A}_F^2$. In particular,
    \begin{itemize}
        \item A matrix $X$ with rank strictly smaller than $\nu-1$ is never a distance minimizer in $\mathcal{E}_0$, with the sole exception of the trivial case when $A=X$;
        \item When $\rank(A)=\nu$, all minimizers are matrices of the form $X = A - \Delta$, $\Delta = uv^*$, i.e., rank-1 modifications of $A$. Moreover, if $X$ is such a minimizer, then the low-rank factors $u,v$ of $\Delta$ satisfy $u^*X = 0$, $Xv=0$.
    \end{itemize}
    It is one of the most celebrated results in the numerical linear algebra literature that the solution to problem~\eqref{nearest_singular_example_cones} can be expressed explicitly via the SVD of the input (Schmidt-Eckart-Young-Mirsky Theorem, see Example~\ref{ex:seym} below).
    Hence, we can verify directly that the claimed properties hold, even in the case of a matrix $A$ where the smallest singular value has multiplicity higher than 1. But for more complicated problems where a closed-form solution is not available, the information from normal cones is useful to restrict the search; we shall see examples in Section~\ref{sec:structured distance from instability}.
\end{example}
\citet{hosseini2019} study systematically the normal cones of the rank varieties $\mathcal{R}_k = \{X \colon \rank(X)=k\}$ and $\mathcal{R}_{\leq k} = \{X \colon \rank(X) \leq k\}$, and obtain more general results than Example~\ref{ex: rank deficient via normal}. 
\begin{theorem}[\citet{hosseini2019}] \label{thm: normals of rank varieties}
Let $X \in \F^{m\times n}$ have rank $k$. Let $U \in \F^{m\times (m-k)}$ and $V \in \F^{n\times (n-k)}$ be basis matrices for its left and right kernel, respectively. Then,
    \begin{enumerate}
        \item  $N_X \mathcal{R}_{\leq k} =N_X \mathcal{R}_k = \{U Y V^* \colon Y \in \F^{(m-k)\times (n-k)}\}$;
        \item For any $r$ with $k < r \leq \nu$, $N^r_X \mathcal{R}_{\leq r}=\{0\}$, and
        \[
        N^\ell_X \mathcal{R}_{\leq r} = \{U Y V^* \colon Y \in \F^{(m-k)\times (n-k)}, \, \rank(Y)\leq \nu-r \}.
        \]
    \end{enumerate}
\end{theorem}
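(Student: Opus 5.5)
The plan is to prove item~1 by identifying $\mathcal{R}_k$ locally as a smooth embedded manifold and computing its tangent space. Item~2 then follows from item~1, using a rank-one curve argument like the one in Example~\ref{ex: rank deficient via normal} together with a limiting argument.

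Since replacing $U$ by $UG$ and $V$ by $VH$ with $G,H$ invertible does not change the sets on the right-hand sides (absorb $G,H$ into $Y$, which preserves $\rank Y$), I will assume throughout that $U$ and $V$ have orthonormal columns.

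\emph{Item 1.} By lower semicontinuity of the rank, every matrix in a small neighbourhood of $X$ has rank at least $k$. Hence $\mathcal{R}_{\le k}$ and $\mathcal{R}_k$ coincide near $X$, and their normal cones at $X$ agree. It is standard that $\mathcal{R}_k$ is a smooth embedded manifold of real dimension $k(m+n-k)$ (over $\R$; double it over $\C$). So by the bullet list above, the regular and limit normal cones both equal the orthogonal complement of $T_X\mathcal{R}_k$.

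To compute the tangent space, write $X=PSQ^*$ with $[P\ U]$ and $[Q\ V]$ unitary and $S$ invertible. Differentiating the curves $(P+tA)S(Q+tB)^*$ and $P(S+tC)Q^*$ at $t=0$ gives
\[
T_X\mathcal{R}_k\supseteq\{PW_1Q^*+PW_2V^*+UW_3Q^*\}.
\]
The right-hand side has the correct dimension, so equality holds. Expanding any $\Delta$ in the orthogonal block decomposition induced by $[P\ U]$ and $[Q\ V]$, one sees that $\Delta\perp T_X\mathcal{R}_k$ if and only if its $PP^*$, $PV^*$ and $UQ^*$ blocks vanish. That is, $\Delta=UYV^*$. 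I expect this tangent-space and dimension bookkeeping to be the main technical step, though it is routine.

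\emph{Item 2, regular cone.} Let $k<r$. For any $a\in\F^m$ and $b\in\F^n$, the curve $X+tab^*$ stays in $\mathcal{R}_{\le k+1}\subseteq\mathcal{R}_{\le r}$. Every $\Delta\in N^r_X\mathcal{R}_{\le r}$ must therefore satisfy $\Re\scalar{\Delta,ab^*}\le0$. Replacing $a$ by $-a$ gives $\Re\scalar{\Delta,ab^*}=0$, and since the rank-one matrices span the space, $\Delta=0$.

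\emph{Item 2, limit cone, inclusion $\subseteq$.} By the previous step applied at other base points, the regular normal cone at points of rank $<r$ is $\{0\}$. So a limit normal $\Delta$ arises as $\Delta=\lim\Delta_j$ with $\Delta_j\in N_{X_j}\mathcal{R}_r$, $X_j\to X$ and $\rank X_j=r$ (or else $\Delta=0$). By item~1 at $X_j$, we have $\Delta_j=U_jY_jV_j^*$ with $U_j$ and $V_j$ having $m-r$ and $n-r$ columns, so $\rank\Delta_j\le\nu-r$. We also have $X_j^*\Delta_j=0$ and $\Delta_jX_j^*=0$. Passing to the limit gives $\rank\Delta\le\nu-r$ by lower semicontinuity, together with $X^*\Delta=0$ and $\Delta X^*=0$. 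The latter two say that the columns of $\Delta$ lie in $\ker X^*$ and its rows in $(\ker X)^*$. Hence $\Delta=UYV^*$ with $\rank Y=\rank\Delta$.

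\emph{Item 2, limit cone, inclusion $\supseteq$.} Given $Y$ of rank $s\le\nu-r$, choose $Z\in\F^{(m-k)\times(n-k)}$ of rank $r-k$ whose column space is orthogonal to that of $Y$ and whose row space is orthogonal to that of $Y$. This is possible because $m-k-s\ge r-k$ and $n-k-s\ge r-k$. Set $X_j=X+\frac1j UZV^*$. Then $X_j$ has rank exactly $r$ and $X_j\to X$. Moreover, $UYV^*$ has column space inside $\ker X_j^*$ and row space inside $(\ker X_j)^*$. By item~1 applied at $X_j$, this means $UYV^*\in N_{X_j}\mathcal{R}_r=N^r_{X_j}\mathcal{R}_{\le r}$. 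Taking the constant sequence $\Delta_j=UYV^*$ shows $UYV^*\in N^\ell_X\mathcal{R}_{\le r}$.
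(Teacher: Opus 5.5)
Your proof is correct and complete. The paper does not prove this theorem; it quotes it from \citet{hosseini2019}, so there is no in-paper argument to match. What you have written is a self-contained derivation that uses only tools the paper introduces elsewhere. The first is the bullet in Section~\ref{subsec:normal cones} stating that regular and limit normal cones coincide with the normal space for sets that are locally embedded manifolds. The second is the rank-one curve $X+tab^*$ from Example~\ref{ex: rank deficient via normal}, which you correctly generalize from $\mathcal{E}_0=\mathcal{R}_{\leq\nu-1}$ to arbitrary $\mathcal{R}_{\leq r}$.

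Your treatment of the limit cone is the natural one:
\begin{itemize}
\item The regular cone vanishes at every point of rank $<r$, so only limits of normal spaces $N_{X_j}\mathcal{R}_r$ at rank-$r$ points can contribute.
\item The inclusion $\subseteq$ follows because $\{\rank\le\nu-r\}$ is closed and the conditions $X^*\Delta=0$, $\Delta X^*=0$ pass to the limit.
\item For $\supseteq$, your approximating sequence $X+\frac1j UZV^*$, with $Z$ of rank $r-k$ whose column and row spaces are orthogonal to those of $Y$, is exactly right. The constraint $\rank Y\le\nu-r$ is precisely what makes such a $Z$ exist.
\end{itemize}
Compared with the bare citation, your route makes it explicit where each ingredient of the formula comes from. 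The one external input you rely on is the standard fact that $\mathcal{R}_k$ is an embedded manifold of dimension $k(m+n-k)$; this is what lets the dimension count upgrade your tangent-space inclusion to an equality.

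Two cosmetic points. First, in the block description of $(T_X\mathcal{R}_k)^\perp$ you wrote ``$PP^*$ block'' where you mean the $PQ^*$ block. Second, in the $\subseteq$ step, handle explicitly the case where infinitely many $X_j$ have rank $<r$. Pass to that subsequence: along it $\Delta_j=0$, so $\Delta=0$, which lies in the right-hand side with $Y=0$.
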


\section{The choice of the distance}\label{sec:uinorms}

\subsection{Frobenius norm or other norms?}

For most of this review, we will measure distances using the norm \eqref{eq:fro}. However, occasionally a mathematician may be interested in a variation on the theme of \eqref{eq:matrixdistfrob}, where the Frobenius norm is replaced by a different matrix norm \cite{Byers1988a, GG11, Higham1988}. The spectral norm $\norm{A}_2 = \sigma_{\max}(A)$ is an especially common alternative; see for example its use in the context of computing the distance to uncontrollability
\cite{gu06} or for the nearest defective matrix problem \cite{malyshev1999}. Moreover, beyond norm-based distances, matrix nearness problems can also be formulated using Bregman divergences \cite{Dhillon2007}, which generalize the squared Frobenius distance. When the chosen norm is not induced by an inner product, this carries the big disadvantage of losing the real Hilbert space structure of $\F^{m \times n}$, and in particular we cannot recycle the orthogonality arguments that are used to obtain formulae for the Frobenius projection. For some problems, this disadvantage is tangible. For example, suppose that we seek a symmetric positive semidefinite
matrix nearest to a real input matrix $A$. In the Frobenius norm the solution can be computed by mapping to $0$ all the negative eigenvalues, if any, of the symmetric part $(A+A^T)/2$; but in the spectral norm, the matrix thus constructed may not be the nearest one to $A$ (see Example~\ref{ex:hh} below for a more detailed discussion), and \citet{Higham1988} develops instead iterative methods based on a characterization due to \citet{halmos-positive}. On the other hand, the Hilbert space structure does not by itself make a nearness problem easy to solve. For example, the weighted Frobenius norm given by $\|X\|^2 = \sum_{i,j} \omega_{ij} |X_{ij}|^2$, where the weights $\omega_{ij}$ are positive, is still induced by an inner product. Nevertheless, \citet{GG11} prove that, for real matrices, computing a nearest matrix of rank $\leq 1$ in the associated distance is generally NP-hard; a somewhat surprising result since this problem is relatively easy for a vast class of norms (see Example~\ref{ex:seym} below). Even in the Frobenius distance itself, some matrix nearness problems can be proved to be NP-hard \cite{ZZ23}.

It is clear that the minimizer, in general, depends on the chosen norm. We illustrate this fact in Example~\ref{ex:1} and Example~\ref{ex:1bis}.
\begin{example}\label{ex:1}
    Let $A=0$ and $\mathcal{E}=\left\{ \begin{bsmallmatrix}
        a & 0\\
        0 & b
    \end{bsmallmatrix}, a,b \in \R, a+2b=3\right\}$. Table~\ref{tab:counter} displays minimizers of $\| X-A \|$ over $X \in \mathcal{E}$ for the Frobenius, spectral, and nuclear norms, which are all different. (Recall that the nuclear norm is the sum of the singular values.)
\begin{table}[h!]
    \centering
    \begin{tabular}{ccccc}
    \toprule
    Norm type & $\min \norm{X} \subjectto X\in \mathcal{E}$
         & $\begin{bmatrix}
        \frac35 & 0\\
        0 & \frac65
    \end{bmatrix}$ & $\begin{bmatrix}
        1 & 0\\
        0 & 1
    \end{bmatrix}$ & $\begin{bmatrix}
        0 & 0\\
        0 & \frac32
    \end{bmatrix}$\\
    \midrule
    Frobenius $\norm{\cdot}_F$ & $\frac{3}{\sqrt{5}} \approx 1.34$ & {\color{red}$\frac{3}{\sqrt{5}} \approx 1.34$ } & $\sqrt{2} \approx 1.41$ & $\frac32 = 1.5$ \\[1ex]
    Spectral $\norm{\cdot}_2$& $1$ & $\frac65=1.2$ & {\color{red}$1$} & $\frac32 = 1.5$ \\[1ex]
    Nuclear $\norm{\cdot}_* $& $\frac32 = 1.5$& $\frac95=1.8$ & $2$ & {\color{red}$\frac32=1.5$}\\
    \bottomrule
    \end{tabular}
    
    \caption{Minimizers for Example~\ref{ex:1} in various unitarily invariant norms. The last three columns give the distance, induced by the norms specified in each row, from $A=0$ to the matrices in $\mathcal{E}$ displayed on top. Red colour highlights the global minima of each distance.}
    \label{tab:counter}
\end{table}    
\end{example}
Our second example highlights a surprising connection with some elementary results in Euclidean geometry.
\begin{example}\label{ex:1bis}
    Let $A \in \C^{3 \times 3}$ be diagonal matrix, such that its diagonal entries  $z_1,z_2,z_3 \in \C$ are the vertices of an acute triangle in the complex plane, i.e., a triangle whose angles are all strictly less than $\pi/2$. Moreover, let $\mathcal{E} := \{\lambda I_3 \colon \lambda\in\mathbb{C}\}$ be the space of multiples of the identity matrix. Then,
    \[
    \min_{X\in\mathcal{E}} \norm{X-A}_2 = \min_{\lambda\in\mathbb{C}} \max(\abs{\lambda-z_1},\abs{\lambda-z_2}, \abs{\lambda-z_3})
    \]
    is attained by $\lambda I_3$ if and only if $\lambda$ is the circumcenter (the center of the circle through the three vertices) of the triangle with vertices $z_1,z_2,z_3$. On the other hand,
    \[
    \min_{X\in\mathcal{E}} \norm{X-A}_F = \min_{\lambda\in\mathbb{C}} (\abs{\lambda-z_1}^2+\abs{\lambda-z_2}^2+\abs{\lambda-z_3}^2)^{1/2}
    \]
    is achieved if and only if $\lambda$ is the centroid (or barycenter, defined as $(z_1+z_2+z_3)/3$) of the same triangle. These two points generally differ.
\end{example}

It is convenient to introduce a notation that will be handy throughout the rest of our review paper. Recalling our notation $\nu=\min\{m,n\}$, for a matrix $X \in \F^{m \times n}$ we denote by $\diag(X) \in \F^{\nu}$ the vector of its diagonal elements; similarly, for a vector $x \in \F^\nu$ and given a pair $m,n$ (often implicit in the context) such that $\nu=\min\{m,n\}$, we denote by $\diagm(x) \in \F^{m \times n}$, or sometimes also $\diagm(x_1,\dots,x_\nu)$, the diagonal matrix whose diagonal entries coincide with the elements of $x$. 

\subsection{Smooth norms}
A matrix norm on $\F^{m\times n}$ is called \emph{smooth} if the function $F(X) = \norm{X}$ is differentiable at every $X\neq 0$. Notable examples of smooth norms are the Schatten $p$-norms with $1<p<\infty$, i.e.,
\[
\norm{X}_{S,p} = \left(\sigma_1(X)^p + \dots + \sigma_{\nu}(X)^p\right)^{1/p},
\]

with gradient
\begin{equation} \label{gradient schatten}
    \nabla F(X) = \frac{1}{\norm{X}_{S,p}^{p-1}} U \diagm(\sigma_1^{p-1},\dots,\sigma_\nu^{p-1})V^*    
\end{equation}
and the elementwise $p$-norms with $1<p<\infty$, i.e.,
\[
\norm{X}_{E,p} = \left(\sum_{i=1}^m\sum_{j=1}^n |X_{ij}|^p \right)^{1/p},
\]
with gradient
\begin{equation} \label{gradient elementwise}
    \left(\nabla F(X)\right)_{ij} = \frac{1}{\norm{X}_{E,p}^{p-1}} X_{ij} \abs{X_{ij}}^{p-2},    
\end{equation}
as long as one follows the convention that $X_{ij} |X_{ij}|^{p-2}=0$ when $1 < p < 2$ and $X_{ij}=0$.
(The function $G(X) = \frac{1}{p}\norm{X}^p$ has a simpler formula for the gradient than $F(X)$, since the denominators in~\eqref{gradient schatten} and~\eqref{gradient elementwise} disappear, but this does not simplify our following arguments involving these norms).

The following easy result will be useful.
\begin{lemma} \label{lem:nonzero gradient}
    Let $\norm{\cdot}$ be a smooth norm, and $X\neq 0$. Then, $\nabla \norm{X} \neq 0$.
\end{lemma}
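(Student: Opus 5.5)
The plan is to use the positive homogeneity of the norm, via Euler's identity for homogeneous functions. Let $F(X)=\norm{X}$, viewed as a real-valued function on the real Hilbert space $\F^{m\times n}$ with inner product $\scalar{A,B}=\Re\traccia(A^*B)$. For $X\neq 0$ and any real $t>0$, homogeneity gives $F(tX)=tF(X)$.

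First, differentiate $t\mapsto F(tX)$ at $t=1$. By smoothness, $F$ is differentiable at $X\neq 0$, so the chain rule applies and
\[
\frac{d}{dt}F(tX)\Big|_{t=1}=\scalar{\nabla F(X), X}.
\]
Second, differentiate the right-hand side $tF(X)$, which gives $F(X)$. Equating the two yields
\[
\scalar{\nabla F(X),X}=\norm{X}>0,
\]
since $X\neq 0$ and $\norm{\cdot}$ is a norm.

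Third, conclude: if $\nabla F(X)$ were $0$, the left-hand side would vanish, a contradiction. In fact this gives more, namely $\norm{\nabla F(X)}_F\ge \norm{X}/\norm{X}_F>0$ by Cauchy--Schwarz.

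There is no real obstacle. The only point requiring care is that, in the complex case, the gradient must be taken with respect to the real inner product introduced earlier, so that the directional derivative along $X$ is indeed $\scalar{\nabla F(X),X}$. A variant that avoids calculus is also available: the directional derivative of the convex function $F$ at $X$ in direction $-X$ equals $-\norm{X}\neq 0$, which is again impossible if the gradient vanishes.
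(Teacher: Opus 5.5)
Your argument is correct, but it takes a different route from the paper. The paper relies on convexity: a vanishing gradient of the convex function $F=\norm{\cdot}$ at $X$ would make $X$ a global minimizer, which is impossible because $F(0)=0<F(X)$.

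You rely instead on positive homogeneity. Differentiating $t\mapsto F(tX)=tF(X)$ at $t=1$ gives Euler's identity $\scalar{\nabla F(X),X}=\norm{X}>0$. This step only needs differentiability at $X$ itself, and it is legitimate because the identity holds for all $t>0$.

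The two proofs generalize along different axes:
\begin{itemize}
\item The paper's proof works for any differentiable convex function at a point that is not a global minimizer, with no homogeneity needed.
\item Your proof works for any positively homogeneous function differentiable at a point where it is nonzero, with no convexity needed. It is also quantitative: it yields the identity $\scalar{\nabla F(X),X}=\norm{X}$ and hence the bound $\norm{\nabla F(X)}_F\geq \norm{X}/\norm{X}_F$.
\end{itemize}

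Your identity is the smooth counterpart of the relation $\sigma^T f=\norm{X}$, which the paper derives in the proof of Lemma~\ref{lem:subgradient} by a similar homogeneity trick: it evaluates the subgradient inequality at $0$ and at $2\sigma$. Your alternative via the directional derivative in direction $-X$ is also correct, but it is again a homogeneity argument rather than the paper's convexity one.
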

\begin{proof}
    Norms are convex functions, so $\nabla F(X) = 0$ would imply that $X$ is a global minimizer for $F$, and this is impossible for $X \neq 0$.
\end{proof}
Smooth norms share some of the properties of the Frobenius norm, like for instance the following one.
\begin{theorem} \label{thm:rank exactly n-1}
    Let $\norm{\cdot}$ be a smooth norm, and $A \in \F^{m\times n}$ a matrix with full rank. Then, every nearest rank-deficient matrix to $A$ has always rank exactly $\nu-1$.
\end{theorem}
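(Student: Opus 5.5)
The plan is to mimic the normal-cone argument of Example~\ref{ex: rank deficient via normal}, replacing the Frobenius gradient with the gradient of the smooth norm. Let $X$ be a nearest rank-deficient matrix to $A$, so $X \in \mathcal{E}_0$. Since $A$ has full rank and $\mathcal{E}_0$ is closed, $A \notin \mathcal{E}_0$ and hence $\Delta := A - X \neq 0$. Consider $F(Z) = \norm{Z - A}$. Because the norm is smooth and $\Delta \neq 0$, $F$ is differentiable in a neighbourhood of $X$ (namely wherever $Z \neq A$). We may therefore apply Theorem~\ref{thm:reg min in normal} to the local minimizer $X$ of $F$ on $\mathcal{E}_0$, which gives $-\nabla F(X) \in N^r_X \mathcal{E}_0$.

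Next we compute $\nabla F(X)$. Write $G = \nabla\norm{\cdot}$, the gradient of the norm with respect to the real inner product $\Re\traccia(\cdot^*\cdot)$. Then $\nabla F(X) = G(X - A) = G(-\Delta)$. By Lemma~\ref{lem:nonzero gradient}, $G(-\Delta) \neq 0$ since $-\Delta \neq 0$. Consequently $N^r_X\mathcal{E}_0$ contains a nonzero element.

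Now suppose, for contradiction, that $\rank X \le \nu - 2$. We repeat the curve argument of Example~\ref{ex: rank deficient via normal}, which is part~2 of Theorem~\ref{thm: normals of rank varieties} with $k = \rank X$ and $r = \nu - 1 > k$. For arbitrary vectors $u, v$ and $t > 0$ small, $X + t uv^*$ has rank at most $\nu - 1$, so it lies in $\mathcal{E}_0$. The defining inequality of the regular normal cone then gives $\scalar{M, uv^*} \le 0$ for every $M \in N^r_X \mathcal{E}_0$ and every $u, v$. Replacing $u$ by $-u$ (and, in the complex case, by $iu$) forces $\scalar{M, uv^*} = 0$ for all $u, v$. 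Since rank-one matrices span $\F^{m \times n}$ over $\R$, this yields $M = 0$, so $N^r_X\mathcal{E}_0 = \{0\}$, contradicting the previous paragraph. Hence $\rank X = \nu - 1$.

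The only delicate point is checking the hypotheses of Theorem~\ref{thm:reg min in normal}: we need differentiability of $F$ in a neighbourhood of $X$, not merely at $X$. This is exactly why smoothness is required at every nonzero point and why $A \notin \mathcal{E}_0$ is needed. In the complex case we also need the gradient understood in the real Hilbert structure via $\cvec$, as set up in the paper. Beyond this, the argument is routine.
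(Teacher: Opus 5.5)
Your proposal is correct and is essentially the paper's proof: apply Theorem~\ref{thm:reg min in normal} to $F(Z)=\norm{Z-A}$, observe that $-\nabla F(X)\neq 0$ because $X\neq A$ (Lemma~\ref{lem:nonzero gradient}), and contradict $N^r_X\mathcal{E}_0=\{0\}$ for $\rank X<\nu-1$ using the curve argument of Example~\ref{ex: rank deficient via normal}. The only difference is that you spell out two points the paper leaves implicit: that the gradient does not vanish, and that $F$ is differentiable in a whole neighbourhood of $X$.
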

\begin{proof}
    The proof is essentially the same as the one for the Frobenius norm in Example~\ref{ex: rank deficient via normal}.
    If $X$ is a minimizer for the problem
    \[
    \min \norm{X-A} \subjectto X \in \mathcal{E}_0,
    \]
    then by Theorem~\ref{thm:reg min in normal}
    \begin{equation} \label{step with normal cones}
        -\nabla_X F(X-A) \in N^r_{X} \mathcal{E}_0.    
    \end{equation}
    Whenever $\rank(X) < \nu-1$, the normal cone in~\eqref{step with normal cones} is $\{0\}$, hence we must have $A-X=0$, which is impossible since $A$ has full rank.
\end{proof}
Notably, a norm that is \emph{not} smooth everywhere outside zero is the spectral norm $\norm{X}_2 = \sigma_1(X)$. Its subdifferential for $X \neq 0$ is \cite{Watson1992}
\[
\partial F(X) = \{U_1WV_1^*: W\succeq 0, \traccia(W)=1,\}
\]
where $U_1$ and $V_1$ are bases, appearing in the \emph{same} SVD, for the spaces of left and right singular vectors associated to $\sigma_1$ (which may have multiplicity larger than 1). For this norm, Theorem~\ref{thm:rank exactly n-1} does not hold: for instance, a nearest singular matrix to the identity matrix $A=I$ is $X=0$, with spectral distance 1. Indeed, the argument in the proof breaks down: $0\not \in \partial F(X)$ is still true for every $X\neq 0$, but we cannot apply Theorem~\ref{thm:reg min in normal} since $F(X)$ is nonsmooth, and we cannot replace it with Theorem~\ref{thm:nonreg min in normal} because $N^\ell_X \mathcal{E}_0 \neq \{0\}$.

\subsection{Unitarily-universal nearest matrices}

Interestingly, there are also some matrix nearness problems for which the same matrix is simultaneously a minimizer for the distances induced by many norms. In particular, there are several remarkable results in this sense if we restrict our attention to unitarily invariant norms. The theme of unitary invariance will reoccur later in our article, see Subsection~\ref{sec:uisets}.

Recall that a matrix norm $\| \cdot \|$ is \emph{unitarily invariant} if $\| A \|=\|UAV\|$ for every pair of unitary matrices $U,V$ of suitable size. For example, the Frobenius norm \eqref{eq:fro} and the spectral norm are unitarily invariant. Following~\citet{bhatia_book}, we typically assume that unitarily invariant norms are always \emph{normalized}, i.e., $\norm{e_1 e_1^*} = 1$. A good amount of interest has been given in the literature to matrix nearness problems admitting a ``unitarily universal" minimizer, that is, a matrix that solves the problem \eqref{eq:matrixdistfrob} for \emph{every }unitarily invariant norm. Historically, this line of  research can be traced back to the classic studies on low-rank approximation \cite{Schmidt1907, EY36, Mirsky60}, where an appropriate truncation of the SVD is a universal minimizer for unitarily invariant norms; indeed, statements about properties that hold in every unitarily invariant norm are also known as \emph{Mirsky-type results}. In the context of nearness problems, there is a strong computational motivation. Indeed, sometimes an application dictates that a unitarily invariant norm other than the Frobenius norm (often the spectral norm \cite{dasbora20,gu06,malyshev1999}) is the preferable choice to compute distances. If a unitarily universal minimizer exists and coincides with the unique Frobenius minimizer, the Hilbert space structure associated with Frobenius optimization can be exploited also to compute the different distance of interest.

The following lemma gives a first noteworthy case.
\begin{lemma} \label{lem: 2 norm implies all ui norms}
    Let $A\in\F^{m\times n}$, and $\mathcal{E}\subseteq \F^{m\times n}$ be a closed set. Suppose that $X_\star  \in \arg \min_{X\in\mathcal{E}} \norm{X-A}_2$ is such that $X_\star -A$ has rank $1$. Then, $X_\star  \in \arg \min_{X\in\mathcal{E}} \norm{X-A}$ for every unitarily invariant norm $\norm{\cdot}$.
\end{lemma}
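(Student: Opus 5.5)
The plan is to compare every unitarily invariant norm with the spectral norm and to use the rank-one structure of $X_\star - A$ to close the gap. The key fact is that for any normalized unitarily invariant norm and any matrix $M$, we have $\norm{M} \geq \norm{M}_2$. Moreover, equality holds when $\rank(M) \leq 1$.

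I would establish both facts first. For the equality on rank-one matrices, write $M = \sigma u v^*$ with $\norm{u}_2=\norm{v}_2=1$. Complete $u$ and $v$ to unitary matrices $U$ and $V$. Then $M = U(\sigma e_1e_1^*)V^*$, so by unitary invariance and normalization $\norm{M} = \sigma\norm{e_1e_1^*} = \sigma = \norm{M}_2$.

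For the inequality, I would use the SVD $M = U\diagm(\sigma_1,\dots,\sigma_\nu)V^*$ and unitary invariance to reduce to $D=\diagm(\sigma_1,\dots,\sigma_\nu)$. The aim is $\norm{D}\geq \sigma_1$. One route is to average over sign flips. Let $S_j$ be the diagonal unitary (of the appropriate size) that negates the $j$-th coordinate, for $j\geq 2$. Averaging $D$ with $S_jD$ kills the $j$-th diagonal entry, and the triangle inequality together with invariance gives $\norm{\tfrac12(D+S_jD)}\leq \norm{D}$. Repeating this for each $j\geq2$ yields $\norm{\sigma_1 e_1e_1^*}\leq\norm{D}$, and the left-hand side equals $\sigma_1$ by normalization. (Alternatively one could cite the von Neumann symmetric gauge function characterization, but the averaging argument is self-contained.)

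With these two facts the conclusion follows. For any $X\in\mathcal{E}$,
\[ \norm{X-A} \geq \norm{X-A}_2 \geq \norm{X_\star-A}_2 = \norm{X_\star-A}, \]
where the middle inequality is the spectral-norm optimality of $X_\star$ and the last equality uses $\rank(X_\star-A)=1$. Hence $X_\star$ minimizes $\norm{X-A}$ over $\mathcal{E}$.

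The main obstacle is the dominance inequality $\norm{M}\geq\norm{M}_2$. It is standard, but it needs the normalization convention and a short averaging argument, and it is the only step where something beyond unitary invariance is used.
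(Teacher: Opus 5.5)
Your proof is correct and follows the paper's argument: both use the chain $\|X-A\|\ge\|X-A\|_2\ge\|X_\star-A\|_2=\|X_\star-A\|$, which rests on $\|M\|_2\le\|M\|$ with equality for rank-one $M$. The paper cites that fact from Bhatia's monograph, whereas you prove it by sign-flip averaging and completion to unitaries. You should also state explicitly, as the paper does, that assuming the norm is normalized loses no generality, since rescaling a norm by a positive constant does not change the set of minimizers.
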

\begin{proof}
    We may suppose without loss of generality (up to rescaling) that $\norm{\cdot}$ is normalized. For any normalized unitarily invariant norm we have $\norm{M}_2 \leq \norm{M}$ for all matrices $M$, with equality if $M$ has rank 1 \cite[(IV.38)]{bhatia_book}. With this result, the proof is simple: For every $X\in\mathcal{E}$,
    \[
    \norm{X_\star -A} = \norm{X_\star -A}_2 \leq \norm{X-A}_2 \leq \norm{X-A}.
    \]
\end{proof}

\begin{example}\label{ex:firstoccurrence}
    Let $A\in\F^{m\times n}$, and assume for simplicity that $m\geq n$ and that $A$ has full column rank $n$. By the Schmidt-Eckart-Young-Mirsky Theorem (Example~\ref{ex:seym}), a nearest rank deficient matrix to $A$ in the spectral norm is $X_\star = A-\Delta_\star$ with $\Delta_\star = u_n \sigma_n v_n^*$, where $u_n, \sigma_n,v_n$ come from a (possibly non-unique) SVD of $A$. Since the optimal perturbation $\Delta_\star$ has rank 1, it is also a unitarily universal minimizer by Lemma~\ref{lem: 2 norm implies all ui norms}.

    Note also that $X_\star$ is not the unique minimizer in the spectral norm: For instance, we can take any matrix of the form $X_\star = U \diagm(\tau_1, \tau_2,\dots,\tau_{n-1},0)V^*$, where $\tau_i \in \F$ with $\abs{\sigma_i-\tau_i} \leq \sigma_n$ for $i=1,2,\dots,n-1$. But not all these matrices are unitarily universal minimizers.
\end{example}
Another pioneering contribution is due to \citet{FH55}, who showed that the (skew-)Hermitian part of a square matrix is the nearest (skew-)Hermitian matrix in every unitarily invariant norm.

\begin{theorem}[\citet{FH55}] \label{thm:FH0}
    Let $A\in \F^{n\times n}$, and $\mathcal{E}_H = \{X \in \F^{n\times n} \colon X = X^*\}$. Then, the Hermitian part $\frac{A+A^*}{2}$ is a minimizer of $d(A, \mathcal{E}_H)$ in every unitarily invariant norm.
\end{theorem}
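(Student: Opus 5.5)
The argument needs only the triangle inequality and unitary invariance. Write $H=\frac{A+A^*}{2}$ and $K=\frac{A-A^*}{2}$, so that $A=H+K$ and $A-H=K$. Fix an arbitrary $X\in\mathcal{E}_H$. It suffices to show $\norm{K}\le\norm{X-A}$ for every unitarily invariant norm.

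\textbf{Step 1: the conjugate transpose preserves the norm.} Every unitarily invariant norm satisfies $\norm{M^*}=\norm{M}$. To see this, take an SVD $M=U\Sigma V^*$, so $M^*=V\Sigma^T U^*$. A unitarily invariant norm depends only on the singular values, and the rectangular transpose $\Sigma^T$ has the same nonzero diagonal entries as $\Sigma$. For square matrices, which is our case, $\Sigma^T=\Sigma$, so the equality is immediate. More generally it follows from the standard fact that a unitarily invariant norm is a symmetric gauge function of the singular values \cite{bhatia_book}.

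\textbf{Step 2: the key identity.} Since $X^*=X$, we have
\[
K=\tfrac12\bigl((A-X)-(A-X)^*\bigr)=\tfrac12\bigl((A-X)+(X-A)^*\bigr).
\]
Applying the triangle inequality and Step 1 gives
\[
\norm{K}\le\tfrac12\norm{A-X}+\tfrac12\norm{(A-X)^*}=\norm{A-X}.
\]
Hence $\norm{A-H}=\norm{K}\le\norm{A-X}$ for all $X\in\mathcal{E}_H$. Since $H\in\mathcal{E}_H$, this shows that $H$ is a minimizer.

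\textbf{Real case and skew-Hermitian case.} When $\F=\R$, the same computation applies with $^*=^T$. The skew-Hermitian statement follows identically after swapping the roles of $H$ and $K$.

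\textbf{Main point to check.} The only nontrivial ingredient is Step 1, the invariance of the norm under the adjoint. I would justify it through the singular-value characterization of unitarily invariant norms. For square matrices it is immediate, since $\Sigma$ is square diagonal with real entries and so $M^*=V\Sigma U^*$, which has the same singular values as $M$.
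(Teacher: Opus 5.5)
Your proposal is correct and follows essentially the same argument as the paper. The paper also writes $\frac{A-A^*}{2}=\frac{\Delta-\Delta^*}{2}$ with $\Delta=A-X$ for an arbitrary Hermitian $X$, and then concludes with the triangle inequality and $\norm{-\Delta^*}=\norm{\Delta}$; your Step~1 SVD justification of that adjoint invariance is exactly the fact the paper notes right after its proof.
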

\begin{proof}
    Let $X_\star = \frac{A+A^*}{2}$, $\Delta_\star = \frac{A-A^*}{2}$ be the (unique) decomposition of $A$ into a Hermitian matrix $X_\star = X_\star^*$ and a skew-Hermitian $\Delta_\star = -\Delta_\star^*$. Let $A = X+\Delta$ be any other decomposition with $X=X^*$. Then,
    \[
    \norm{\Delta_\star} = \norm*{\frac{A-A^*}{2}} = \norm*{\frac{\Delta-\Delta^*}{2}} \leq \frac{\norm{\Delta} + \norm{-\Delta^*}}{2} = \norm{\Delta}.
    \]
\end{proof}
This proof uses only two ingredients: the triangle inequality, and the fact that $\norm{\Delta}=\norm{-\Delta^*}$, which holds for every unitarily invariant norm because $\Delta^*$ can be obtained from $\Delta$ via multiplication by unitary matrices (this fact is easy to prove via the SVD). For this reason, the proof strategy can be extended to a more general setting.

\begin{theorem}
    Let $\mathcal{E} = B + \mathcal{V}$ be a real affine space, where $B\in\F^{m\times n}$ and $\mathcal{V} \subseteq \mathbb{F}^{m\times n}$ is a real vector subspace. Suppose that the projection onto its orthogonal complement $\mathcal{V}^\perp$ can be written as
    \[
    \Pi_{\mathcal{V}^\perp}(X) = \frac{F_1(X) +F_2(X) + \dots + F_k(X)}{k},
    \]
    where the $F_i(X)$ are isometries, i.e., $\R$-linear operators that preserve unitarily invariant norms. Then, the Frobenius norm projection $X_\star = A - \Delta_\star$, where $\Delta_\star = \Pi_{\mathcal{V}^\perp}(A-B)$, is a minimizer of $d(A,\mathcal{E})$ in every unitarily invariant norm.
\end{theorem}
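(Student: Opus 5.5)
Following the proof of Theorem~\ref{thm:FH0}, the plan uses only the triangle inequality and the isometry property of the $F_i$. Any feasible $X\in\mathcal{E}$ gives a decomposition $A = X + \Delta$ with $X = B + V$ for some $V\in\mathcal{V}$. Hence $\Delta = A - B - V$, and
\[
\Pi_{\mathcal{V}^\perp}(\Delta) = \Pi_{\mathcal{V}^\perp}(A-B) - \Pi_{\mathcal{V}^\perp}(V) = \Pi_{\mathcal{V}^\perp}(A-B) = \Delta_\star .
\]
Here we use that $\Pi_{\mathcal{V}^\perp}$ is $\R$-linear and annihilates $\mathcal{V}$; this holds because $\mathcal{V}$ is a real subspace and orthogonality is taken with respect to the real inner product $\Re\traccia(A^*B)$ introduced earlier. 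So every feasible perturbation $\Delta$ has the same orthogonal projection $\Delta_\star$ onto $\mathcal{V}^\perp$.

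We should also check that $X_\star = A-\Delta_\star$ lies in $\mathcal{E}$. Indeed, $X_\star - B = (A-B) - \Pi_{\mathcal{V}^\perp}(A-B) = \Pi_{\mathcal{V}}(A-B)\in\mathcal{V}$. This is precisely the Frobenius projection onto $\mathcal{E}$, as claimed.

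Now fix any unitarily invariant norm $\norm{\cdot}$ and apply the hypothesis on the projector to $\Delta$:
\[
\norm{\Delta_\star} = \norm*{\frac{1}{k}\sum_{i=1}^k F_i(\Delta)} \le \frac1k\sum_{i=1}^k \norm{F_i(\Delta)} = \frac1k \sum_{i=1}^k\norm{\Delta} = \norm{\Delta}.
\]
The inequality is the triangle inequality together with homogeneity (with the real positive scalar $1/k$), and the middle equality is the isometry assumption on each $F_i$. Since $\Delta = A-X$ was arbitrary with $X\in\mathcal{E}$, we obtain $\norm{A-X_\star}\le\norm{A-X}$ for all $X\in\mathcal{E}$, so $X_\star$ is a minimizer in every unitarily invariant norm.

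The only point requiring care is the first step: the identity $\Pi_{\mathcal{V}^\perp}(\Delta)=\Delta_\star$ for every feasible $\Delta$. It relies on $\R$-linearity of the projector, which is why $\mathcal{V}$ must be a real subspace and the real Hilbert space structure must be used. Once this is in place, the rest is a direct generalization of the Fan--Hoffman argument. As a sanity check, Theorem~\ref{thm:FH0} is recovered with $k=2$, $F_1 = \mathrm{id}$ and $F_2(X) = -X^*$.
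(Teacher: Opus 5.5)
Your proposal is correct and matches the paper's proof: decompose $A = X + \Delta$ with $X \in \mathcal{E}$, use $\R$-linearity of $\Pi_{\mathcal{V}^\perp}$ and $X - B \in \mathcal{V}$ to get $\Pi_{\mathcal{V}^\perp}(\Delta) = \Delta_\star$, then conclude with the triangle inequality and the isometry property of the $F_i$. Your explicit check that $X_\star \in \mathcal{E}$ and that it is the Frobenius projection is a small addition that the paper leaves implicit.
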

\begin{proof}
    The proof is a direct extension of the one of Theorem~\ref{thm:FH0}, which is the special case $F_1(X)=X$, $F_2(X)=-X^*$, $B=0$. Let $A=X+\Delta$ be another decomposition of $A$ with $X\in\mathcal{E}$. Then,
    \begin{align*}
        \norm{\Delta_\star} &= \norm{\Pi_{\mathcal{V}^\perp}(A-B)} = \norm{\Pi_{\mathcal{V}^\perp}(X-B)+\Pi_{\mathcal{V}^\perp}(\Delta)} = \norm{\Pi_{\mathcal{V}^\perp}(\Delta)}\\
        &\leq \frac{\norm{F_1(\Delta)} + \dots + \norm{F_k(\Delta)}}{k} = \norm{\Delta}.
    \end{align*}
\end{proof}
This averaging strategy, related to \emph{pinching}, is classic in matrix analysis; see for instance~\citet[Section~IV.2]{bhatia_book}, \citet{bhatia_pinching}. Usually the family of maps $\{F_i\}$ has some structure, such as being a matrix group.

More minimizers can be proved to be unitarily universal using this strategy. 
\begin{theorem} \label{thm:pinching applications}
    Let $A\in \F^{n\times n}$. Then, in every unitarily invariant norm:
    \begin{enumerate}
        \item $\frac{A-A^*}{2}$ is a skew-Hermitian matrix closest to $A$ \cite{FH55};
        \item $A-\diagm(A_{11},\dots,A_{nn})$ is a zero-diagonal matrix closest to $A$ (cfr. \citet[Problem~II.5.5]{bhatia_book}, \citet{bhatia_pinching});
        \item $A - \frac{\traccia(A)}{n}I_n$ is a trace-zero matrix closest to $A$ \cite{zietak_zerotrace}. 
    \end{enumerate}
\end{theorem}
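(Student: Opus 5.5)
Each item is an instance of the preceding pinching theorem. The plan is to identify, for each one, the real vector subspace $\mathcal{V}$ (with $B=0$), write the Frobenius projector onto $\mathcal{V}^\perp$ as an average of norm-preserving $\R$-linear maps, and read off $\Delta_\star=\Pi_{\mathcal{V}^\perp}(A)$ together with $X_\star=A-\Delta_\star$.

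For item 1, take $\mathcal{V}$ to be the skew-Hermitian matrices. Then $\mathcal{V}^\perp$ is the Hermitian matrices, and
\[
\Pi_{\mathcal{V}^\perp}(X)=\frac{X+X^*}{2}.
\]
This is the average of the two isometries $X$ and $X^*$. The map $X\mapsto X^*$ preserves singular values, hence every unitarily invariant norm. The theorem then gives the minimizer $A-\frac{A+A^*}{2}=\frac{A-A^*}{2}$. Alternatively, one can simply apply Theorem~\ref{thm:FH0} to $-A$ or to $iA$.

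For item 2, take $\mathcal{V}$ to be the zero-diagonal matrices, so that $\mathcal{V}^\perp$ is the diagonal matrices and $\Pi_{\mathcal{V}^\perp}(X)=\diagm(X_{11},\dots,X_{nn})$. Write this projector as a pinching average:
\[
\diagm(X_{11},\dots,X_{nn})=\frac{1}{2^n}\sum_{D} DXD^*,
\]
where $D$ ranges over the $2^n$ diagonal matrices with entries $\pm1$. Each map $X\mapsto DXD^*$ is unitary conjugation, hence an isometry. The averaging identity holds because the $(i,j)$ entry of $DXD^*$ is $d_id_jX_{ij}$, and for $i\neq j$ the sum of $d_id_j$ over all sign patterns vanishes. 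One could equally use the $n$ matrices $D_k=\diagm(\omega^k,\omega^{2k},\dots)$ built from an $n$-th root of unity $\omega$. The minimizer is then $A-\diagm(A_{11},\dots,A_{nn})$.

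For item 3, take $\mathcal{V}$ to be the trace-zero matrices (a real subspace, in the complex case as well), so that $\mathcal{V}^\perp=\spann_{\F}\{I\}$ and $\Pi_{\mathcal{V}^\perp}(X)=\frac{\traccia X}{n}I$. This is the step that needs a genuine construction. The plan is to compose the diagonal pinching of item 2 with an average over cyclic permutation conjugations. Let $P$ be the cyclic shift, and take the $n\cdot 2^n$ maps $X\mapsto P^kDXD^*P^{-k}$. Averaging over $D$ first produces $\diagm(X_{11},\dots,X_{nn})$. Averaging over $k$ then sends each diagonal entry to $\frac{1}{n}\sum_i X_{ii}=\frac{\traccia X}{n}$. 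Each composite map is a unitary similarity, hence an isometry, and the total average is exactly $\frac{\traccia X}{n}I$. The minimizer is $A-\frac{\traccia(A)}{n}I_n$.

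The only real work is checking these averaging identities, and item 3 is the main one, since it requires combining two families of unitary similarities. Once the identities hold, the pinching theorem applies verbatim in every case.
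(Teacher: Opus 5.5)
Your proposal is correct and is essentially the paper's proof: both apply the averaging theorem with $B=0$, realizing $\Pi_{\mathcal{V}^\perp}$ as the average of $X$ and $X^*$, then of diagonal unitary conjugations, then of these composed with cyclic-shift conjugations. The only difference is that the paper uses the $n$ matrices $Z^k$ with $Z=\diagm(1,\omega,\dots,\omega^{n-1})$ rather than your $2^n$ sign matrices, and yours are real orthogonal, so everything stays in $\R^{n\times n}$ when $\F=\R$. One minor slip in your aside: applying Theorem~\ref{thm:FH0} to $iA$ does give item 1, but applying it to $-A$ does not, since that only reproduces the statement about the nearest Hermitian matrix.
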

\begin{proof}
    Item 1 follows from taking $\Pi_{\mathcal{V}^\perp}(X) = \frac{X+X^*}{2}$. Item 2 follows from taking $\Pi_{\mathcal{V}^\perp}(X) = \frac1n \sum_{k=0}^{n-1} Z^k X (Z^k)^*$, where $Z = \diagm(1,\omega,\dots,\omega^{n-1})$, $\omega$ being a primitive $n$-th root of unity. Item 3 follows from taking \[ \Pi_{\mathcal{V}^\perp}(X) = \frac{1}{n^2} \sum_{k,\ell=0}^{n-1} S^\ell Z^k X (Z^k)^* (S^\ell)^*,\] where $S$ is the cyclic shift matrix.
\end{proof}
Another classic application where this strategy appears is projecting a Hermitian $A$ onto positive semidefinite matrices. In the following, we use the Löwner order on Hermitian matrices: $A \preceq B$ means $B-A$ is positive semidefinite. Note that $0 \preceq A \preceq B$ implies $\norm{A}\leq \norm{B}$ for every unitarily invariant norm; this is a consequence of Ky Fan's Domination Theorem \cite[Theorem 4]{KyFan51} and Weyl's monotonicity theorem~\cite[Corollary~III.2.3]{bhatia_book}.
\begin{theorem}\cite{BK-positive} \label{thm:positivepart}
    Let $\mathcal{E}_+ = \{X\in \F^{n\times n} \colon X \succeq 0\}$. Let $A\in \F^{n\times n}$ be Hermitian, and let $Q\diagm(\lambda_1,\dots,\lambda_n)Q^*$ be an eigenvalue decomposition. Then, the \emph{positive part}
    \[
    X_\star = Q\diagm(\mu_1,\dots,\mu_n)Q^*, \quad \mu_i = \max(\lambda_i,0) \ \forall \ i=1,\dots,n,
    \]
    is a minimizer of $d(A,\mathcal{E}_+)$.
\end{theorem}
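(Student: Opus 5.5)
The plan is to show that for every $X \in \mathcal{E}_+$ we have $\norm{A - X_\star} \leq \norm{A - X}$ in every unitarily invariant norm. The key object is the negative part $A_- := X_\star - A = Q\diagm(\mu_1-\lambda_1,\dots,\mu_n-\lambda_n)Q^*$. Its eigenvalues are $\max(-\lambda_i,0)\geq 0$, so $A_-\succeq 0$, and $A - X_\star = -A_-$. It therefore suffices to show $\norm{A_-} \leq \norm{X-A}$ for every $X \succeq 0$.

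To get this, I will use a pinching-type compression onto the negative eigenspace of $A$. Let $P$ be the orthogonal projector onto the span of the eigenvectors of $A$ with $\lambda_i<0$; then $A_- = -PAP$. For $X\succeq 0$, set $\Delta = X - A$. This gives
\[
P\Delta P = PXP - PAP = PXP + A_- \succeq A_- \succeq 0,
\]
because $PXP \succeq 0$. By the remark preceding the theorem, $0 \preceq A_- \preceq P\Delta P$ implies $\norm{A_-}\leq \norm{P\Delta P}$ for every unitarily invariant norm.

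It remains to show $\norm{P\Delta P}\leq\norm{\Delta}$. I will write $P\Delta P = \frac12\bigl(\Delta + (2P-I)\Delta(2P-I)\bigr)$ compressed once more. More cleanly, the pinching $\mathcal{C}(\Delta) = \frac12(\Delta + U\Delta U^*)$ with the unitary $U = 2P - I$ equals $P\Delta P + (I-P)\Delta(I-P)$. The triangle inequality and unitary invariance give $\norm{\mathcal{C}(\Delta)}\leq\norm{\Delta}$, which is the same averaging trick used in the proof of Theorem~\ref{thm:FH0}.

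The last step, which I expect to be the only delicate one, is $\norm{P\Delta P}\leq \norm{P\Delta P + (I-P)\Delta(I-P)}$. The two summands act on orthogonal subspaces, so the singular values of $P\Delta P$ form a subset of those of the block-diagonal sum, padded with zeros. Monotonicity of symmetric gauge functions (equivalently, Ky Fan dominance) then yields the inequality. Chaining the three inequalities completes the argument, and the positive part is thus a minimizer simultaneously for all unitarily invariant norms.
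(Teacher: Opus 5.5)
Your proof is correct and follows the paper's argument essentially verbatim: you compress onto the negative eigenspace of $A$ with the projector $P$, use $PXP\succeq 0$ and Löwner monotonicity of unitarily invariant norms to get $\norm{A-X_\star}\leq\norm{P(X-A)P}$, and then use averaging to get $\norm{P(X-A)P}\leq\norm{X-A}$. The only cosmetic difference is in that last step: the paper's averaging can be done in one go by writing $P\Delta P=\frac14\bigl(\Delta+R\Delta+\Delta R+R\Delta R\bigr)$ with the unitary $R=2P-I$ (as in the proof of Theorem~\ref{thm:suntheorem}), which makes your separate block-dropping step via gauge-function monotonicity unnecessary.
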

\begin{proof}
    Let $\Delta_\star = A - X_\star \preceq 0$, and let $A = X + \Delta$ be another decomposition with $X \succeq 0$. Let $P$ be the orthogonal projector onto the image of $\Delta_\star$, so that $\Delta_\star = PAP$. Then,
    \[
    \norm{\Delta_\star} \leq \norm{\Delta_\star - PXP} = \norm{P(A - X)P} \leq \norm{A-X} = \norm{\Delta},
    \]
    where the first inequality holds because $\Delta_\star - PXP \preceq \Delta_\star \preceq 0$, and the second follows from averaging.
\end{proof}

\begin{example}\label{ex:hh}
    The reader may be tempted to combine Theorems~\ref{thm:FH0} and~\ref{thm:positivepart}, and conclude that the positive part of $\frac12(A+A^*)$ is a nearest positive semidefinite matrix to a non-Hermitian $A$ in every unitarily invariant norm. Remarkably, this is false. For instance, when $A = \begin{bsmallmatrix}
    0 & 4\\0 & 0
\end{bsmallmatrix}$, if we project $A$ first onto Hermitian matrices and then onto positive semidefinite matrices we get $H = \Pi_{\mathcal{E}_H}(A) = \begin{bsmallmatrix}
    0 & 2\\
    2 & 0
\end{bsmallmatrix}$ and then $X = \Pi_{\mathcal{E}_+}\left(H\right) =\begin{bsmallmatrix}
    1 & 1\\
    1 & 1
\end{bsmallmatrix}$. This resulting matrix $X$ is a distance minimizer in the Frobenius norm \cite{Higham2002}, but not in the spectral norm: $\norm{A-2X}_2 < \norm{A-X}_2$. See~\citet{halmos-positive,BK-positive} for more analysis and a proof that this double-projection returns the correct minimizer when $A$ is normal.
\end{example}

\citet{sun} proves that, if the target set $\mathcal{E}$ is described by a certain class of linear matrix equations, then the Frobenius projection is a universal minimizer for all unitarily invariant norms. We record this result as Theorem~\ref{thm:suntheorem}; in its statement and proof, and throughout the paper, $X^\dagger$ denotes the Moore-Penrose pseudoinverse of the matrix $X$ \cite[page 290]{GolubVLoan}, whereas $A \otimes B$ denotes the Kronecker product of $A$ and $B$ \cite[Section 1.3.6]{GolubVLoan}.
\begin{theorem}\cite[Lemma 1.3]{sun}\label{thm:suntheorem}\\
        Let $B \in \C^{p \times m}, C \in \C^{n \times q}, D \in \C^{p \times q}, A \in \C^{m \times n}$. Define
        \begin{equation} \label{sunstructure}
            \mathcal{E} = \left\{ X \in \C^{m \times n} : BXC=D\right\}.
        \end{equation} 
         Then:
        \begin{enumerate}
            \item $\mathcal{E} \neq \varnothing$ if and only if $BB^\dagger D C^\dagger C = D$;
            \item Under the condition of the first item, every $X \in \mathcal{E}$ can be written as 
            \[ X= B^\dagger D C^\dagger + Z - B^\dagger B Z CC^\dagger\]
            for some $Z \in \C^{m \times n}$;
            \item Under the conditions of the first item, $X_\star = B^\dagger D C^\dagger + A -B^\dagger B A CC^\dagger \in \mathcal{E}$ is the unique matrix such that, for all unitarily invariant norms,
            \[ \| A-X_\star\|=\min_{X \in \mathcal{E}} \|A-X \|  = \| B^\dagger(D-BAC) C^\dagger \|. \]
        \end{enumerate}
        \label{thm:sun}
    \end{theorem}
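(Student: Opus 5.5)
The plan is to prove the three items in order, using only the Penrose identities $BB^\dagger B=B$, $CC^\dagger C = C$, together with the fact that $P:=B^\dagger B$ and $Q:=CC^\dagger$ are orthogonal projectors (Hermitian and idempotent). The key point for item~3 is that the linear map $M\mapsto PMQ$ is a contraction in every unitarily invariant norm, and an orthogonal projector in the Frobenius inner product.

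For item~1, suppose first that some $X\in\mathcal{E}$ exists. Then $D=BXC$, so $BB^\dagger D C^\dagger C = (BB^\dagger B)X(CC^\dagger C)=BXC=D$. Conversely, if $BB^\dagger DC^\dagger C=D$, then $X_0:=B^\dagger D C^\dagger$ satisfies $BX_0C=D$, so $\mathcal{E}\neq\varnothing$.

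For item~2, we check two inclusions. Given any $Z$, set $X=X_0+Z-PZQ$; then $BXC=D+BZC-BB^\dagger BZCC^\dagger C=D$, so $X\in\mathcal{E}$. Conversely, if $X\in\mathcal{E}$, choose $Z=X$. Then $PXQ=B^\dagger(BXC)C^\dagger=B^\dagger DC^\dagger$, and hence $X=X_0+X-PXQ$ has the required form.

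For item~3, item~2 with $Z=A$ already gives $X_\star\in\mathcal{E}$. A direct computation yields
\[
A-X_\star=PAQ-B^\dagger DC^\dagger=B^\dagger(BAC-D)C^\dagger .
\]
This gives the claimed value of the minimum, since $\norm{-M}=\norm{M}$. Now take any $X\in\mathcal{E}$. Because $PXQ=B^\dagger DC^\dagger$, we can rewrite
\[
A-X_\star=P(A-X)Q .
\]
For a unitarily invariant norm, the inequality $\norm{PMQ}\le\norm{P}_2\norm{M}\norm{Q}_2$ follows from the singular value inequalities $\sigma_i(PMQ)\le\norm{P}_2\norm{Q}_2\sigma_i(M)$ together with the Ky Fan dominance theorem, which is already used before Theorem~\ref{thm:positivepart}. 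Since $\norm{P}_2,\norm{Q}_2\le1$, we get $\norm{A-X_\star}\le\norm{A-X}$, so $X_\star$ minimizes in every unitarily invariant norm.

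The step needing most care is uniqueness, and it suffices to establish it for the Frobenius norm alone. The map $\Pi(M)=PMQ$ is idempotent and self-adjoint for $\langle M,N\rangle=\Re\traccia(M^*N)$, because $P$ and $Q$ are Hermitian. Hence $\Pi$ is an orthogonal projector, and Pythagoras gives
\[
\norm{M}_F^2=\norm{\Pi M}_F^2+\norm{M-\Pi M}_F^2 .
\]
Apply this with $M=A-X$. If $X\in\mathcal{E}$ is also a Frobenius minimizer, then $\norm{\Pi(A-X)}_F=\norm{A-X}_F$, which forces $A-X=\Pi(A-X)=A-X_\star$, i.e., $X=X_\star$. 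Since any matrix that minimizes for all unitarily invariant norms must in particular minimize in the Frobenius norm, $X_\star$ is the unique such matrix.
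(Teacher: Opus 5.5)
Your proof is correct, but it takes a different route from the paper's.

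For items~1 and~2, the paper vectorizes with $\mathbf{M}=C^T\otimes B$ and $b=\mvec(D)$. It then reads both statements off the least-norm formulas of Theorem~\ref{thm:affinedist}, using $(C^T\otimes B)^\dagger=(C^\dagger)^T\otimes B^\dagger$. You verify them directly from the Penrose identities. That is shorter and self-contained, although it presupposes the formulas rather than deriving them.

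For item~3, both arguments rest on the same identity $A-X_\star=P(A-X)Q$ for every $X\in\mathcal{E}$. In the paper, the map $Y\mapsto PYQ$ appears as $\Pi_{\mathcal{V}^\perp}$, the vectorized $\mathbf{M}^\dagger\mathbf{M}$. The two arguments differ in how they get contractivity in every unitarily invariant norm:
\begin{itemize}
\item The paper writes $P=\frac12(I+R_1)$ and $Q=\frac12(I+R_2)$ with unitary reflections $R_j$. Then $Y\mapsto PYQ$ is an average of four isometries, and the triangle inequality suffices. This pinching argument needs nothing beyond unitary invariance, and it places Sun's result in the same framework as Theorems~\ref{thm:FH0} and~\ref{thm:pinching applications}.
\item You use $\sigma_i(PYQ)\le\|P\|_2\|Q\|_2\,\sigma_i(Y)$ together with Ky Fan dominance. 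This is a heavier black box, but it applies verbatim to arbitrary contractions $P,Q$, without exhibiting them as averages of unitaries.
\end{itemize}

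Finally, you prove uniqueness explicitly, via Pythagoras for the self-adjoint idempotent $Y\mapsto PYQ$. The paper leaves uniqueness implicit: it follows from uniqueness of the Frobenius projection onto the convex set $\mathcal{E}$, cf.\ Theorem~\ref{thm:dom}.
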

    \begin{proof}
        The first two points follow from the setup in Theorem~\ref{thm:affinedist}, whose proof is postponed to Section \ref{sec:directmethods}, with $\mathbf{M} = C^T \otimes B$ and $b = \mvec(D)$. For the set $\mathcal{E}$ not to be empty, we need to have $b\in \range(\mathbf{M})$, or equivalently $b = \mathbf{M} \mathbf{M^\dagger}b$, and since $\mathbf{M}^\dagger = (C^\dagger)^T \otimes B^\dagger$ this produces the formula in item 1. Item 2 is now simply~\eqref{deltastarlstnrm}.
    
        We prove item 3 by averaging/pinching. The projection $Y = \Pi_{\mathcal{V}^\perp}(X)$ is given by $\mvec(Y) = \mathbf{M}^\dagger \mathbf{M} \mvec(X)$, or using the properties of vectorization
        \[
    Y = B^\dagger B X C C^\dagger = \Pi_{\mathcal{U}_1} X \Pi_{\mathcal{U}_2}.
    \]
    with $\mathcal{U}_1 = \range(B^*)$, $\mathcal{U}_2 = \range(C)$. For $j=1,2$, the projection $\Pi_{\mathcal{U}_j}$ is the average of $I$ and the orthogonal reflection $R_j = 2\Pi_{\mathcal{U}_j}-I$. Hence
    \begin{equation} \label{fouriso}
    \Pi_{\mathcal{V}^\perp}(X) = \Pi_{\mathcal{U}_1} X \Pi_{\mathcal{U}_2} = \frac{I+R_1}{2} X \frac{I+R_2}{2};    
    \end{equation}
    expanding parentheses we get an expression of $\Pi_{\mathcal{V}^\perp}$ as the average of 4 isometries.
    \end{proof}
    The last step in the proof is well visualized if one applies changes of bases on both sides to reduce to the case where $\mathcal{U}_1$ and $\mathcal{U}_2$ take the form $\range(\begin{bsmallmatrix}
        I\\0
    \end{bsmallmatrix})$: In that case, the four isometries in~\eqref{fouriso} are
    $
    X \mapsto \begin{bsmallmatrix}
        I & 0\\ 0 & \pm I
    \end{bsmallmatrix}X \begin{bsmallmatrix}
        I & 0\\ 0 & \pm I
    \end{bsmallmatrix},
    $
    whose average is $\begin{bsmallmatrix}
        X_{11} & 0\\
        0 & 0
    \end{bsmallmatrix}$.
    
Generalizing the affine subspaces in the above examples, we say that a nonempty closed subset $\mathcal{E} \subseteq \C^{m \times n}$ has the \emph{strong UUM property}\footnote{UUM is an abbreviation for unitarily-universal minimizer.} if, for every $A \in \C^{m \times n}$, there exists a unique $X_\star \in \mathcal{E}$ such that $\|X_\star-A\| \leq \|X-A\|$ for every unitarily invariant norm $\| \cdot \|$ and every $X \in \mathcal{E}$. We emphasize that it is not required that for each norm the minimizer is unique, just that there is a unique minimizer which is valid for every norm.

If the uniqueness requirement is removed and only existence is required for the unitarily-universal minimizer $X_\star$, we say that $\mathcal{E}$ has the \emph{weak UUM property}. For example, the set of unitary matrices or the set of matrices of rank at most $r$ have the weak UUM property. Motivated to study which sets have the (weak or strong) UUM property, let us recall in Lemma~\ref{lem:majorizedonsphere} a useful property of weak majorization.
    \begin{lemma}\label{lem:majorizedonsphere}
    Let $x,y \in \R^N$ be two nonnegative vectors, both ordered nonincreasingly, i.e., $i \geq j \Rightarrow x_i \leq x_j$ and similarly for $y$. If $\|x\|_2=\|y\|_2$ and $\sum_{i=1}^k x_i \leq \sum_{i=1}^k y_i$ holds for all $k=1,\dots,N$, then $x=y$.
\end{lemma}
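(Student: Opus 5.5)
We prove Lemma~\ref{lem:majorizedonsphere} by Abel summation, turning the partial-sum hypothesis into an inequality between inner products. Set $X_k=\sum_{i=1}^k x_i$ and $Y_k=\sum_{i=1}^k y_i$, so that $Y_k-X_k\ge 0$ for all $k$, with $X_0=Y_0=0$. For any nonincreasing nonnegative vector $w$ (with the convention $w_{N+1}=0$), summation by parts gives
\[
\sum_{i=1}^N w_i (y_i-x_i) = \sum_{k=1}^{N} (w_k-w_{k+1})(Y_k-X_k) \;\geq\; 0,
\]
because every factor $w_k-w_{k+1}$ is nonnegative. Thus $\langle w, y-x\rangle\ge 0$ for every nonnegative nonincreasing $w$.

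We apply this twice. With $w=x$ we get $x^Ty\ge \|x\|_2^2$. With $w=y$ we get $\|y\|_2^2\ge x^Ty$. Now expand
\[
\|x-y\|_2^2=\|x\|_2^2+\|y\|_2^2-2x^Ty .
\]
Using $\|x\|_2=\|y\|_2$ and the first inequality $x^Ty\ge\|x\|_2^2$, we obtain
\[
\|x-y\|_2^2 \le 2\|x\|_2^2-2\|x\|_2^2=0 .
\]
Hence $x=y$. Only the choice $w=x$ is actually needed; the second inequality is a consistency check.

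No step here is a real obstacle. The only points to verify are the summation-by-parts identity itself and the boundary term. The boundary term is $w_N(Y_N-X_N)$, which arises from $w_N-w_{N+1}=w_N\ge0$ in the last summand; it is nonnegative because $w_N\ge 0$. This is exactly where the nonnegativity hypothesis on the vectors enters.
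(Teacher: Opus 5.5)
Your proof is correct, and it takes a genuinely shorter route than the paper's. Both arguments apply Abel summation against the nonnegative partial-sum differences $s_k=\sum_{i\le k}(y_i-x_i)$; the difference is the weight vector.

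The paper pairs $y-x$ with $w=x+y$. This turns the norm hypothesis into the exact identity $0=\|y\|_2^2-\|x\|_2^2=s^Tv$ with $s,v\ge 0$. That identity only gives complementary slackness $s_jv_j=0$. Since $v_j=x_j-x_{j+1}+y_j-y_{j+1}$ can vanish on runs of ties, the paper then needs an inductive propagation argument. It starts from the first index $j_0$ with $s_{j_0}>0$ and reaches the contradiction $s_Nv_N>0$.

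You pair with $w=x$ alone, which gives the one-sided inequality $x^Ty\ge\|x\|_2^2$. You then close at once by expanding $\|x-y\|_2^2$, so no case analysis on ties is needed.

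Your route also gives a little more. It uses the ordering and nonnegativity of $x$ only, and nothing about $y$ beyond the partial-sum and norm hypotheses. Without assuming $\|x\|_2=\|y\|_2$, it yields the quantitative bound $\|x-y\|_2^2\le\|y\|_2^2-\|x\|_2^2$. So weak majorization by $y$ forces $\|y\|_2\ge\|x\|_2$, with equality only if $x=y$.

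The paper's symmetric weight gains nothing here beyond producing an identity instead of an inequality, and it pays for that with the tie-propagation step.
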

\begin{proof}
Define the nonnegative vector $0 \leq s \in \R^N$ as $s_j := \sum_{i=1}^j (y_i -x_i)$. Moreover, set conventionally $x_{N+1}=y_{N+1}=0$. Then,
\[  0 = \norm{y}_2^2-\norm{x}_2^2 = s_1 (y_1+x_1) + \sum_{i=2}^N (s_i - s_{i-1} )  (y_i+x_i) = s^T v   \]
where
\[  v =   \begin{bmatrix}
    x_1-x_2 + y_1 -y_2\\
    x_2-x_3+y_2-y_3\\
    \vdots\\
    x_{N-1}-x_{N} + y_{N-1}-y_N\\
    x_N + y_N
\end{bmatrix} \geq 0.\]
Hence, $s_j v_j = 0$ for all $j$. Let $j_0$ be the smallest value of $j$ such that $s_j > 0$. Then, by the minimality of $j_0$, the $j$-th components of $x$ and $y$ coincide for $j < j_0$, whereas $y_{j_0} > x_{j_0}$. If $j_0=N$, we conclude $v_N s_N > 0$ which is a contradiction. If instead $j_0 < N$, observe that it must be $v_{j_0}=0$, implying $x_{j_0}=x_{j_0+1}$, $y_{j_0}=y_{j_0+1}$ and in particular $y_{j_0+1} > x_{j_0+1}$. Moreover,
\[ s_{j_0+1} = s_{j_0} + y_{j_0+1}-x_{j_0+1} = 2 s_{j_0} > 0.\]
Propagating inductively the arguments above we conclude that $s_N > 0$ and $y_N>x_N$, yielding the contradiction $v_N s_N > 0$. Hence, $s=0$ and thus $x=y$.
\end{proof}

    Recall that if $\sigma_1(A) \geq \dots \geq \sigma_\nu(A)$ are the singular values of $A$ then the $i$-th Ky Fan norm of $A$ ($1 \leq i \leq \nu$) is defined as $\|A\|_{(i)}=\sum_{j=1}^i \sigma_j(A)$. We argue in Theorem~\ref{thm:dom} that these norms, which are manifestly unitarily invariant, play a central role in characterizing sets that have the UUM properties.
\begin{theorem}\label{thm:dom}
    Let $\mathcal{E} \subseteq \C^{m \times n}$ be closed. Then:
    \begin{enumerate}
        \item $\mathcal{E}$ has the weak UUM property if and only if, for every $A \in \C^{m \times n}$, there exists $X_\star \in \mathcal{E}$ such that $X_\star$ is a projection of $A$ onto $\mathcal{E}$ in the Frobenius distance and $\|X_\star-A\|_{(i)} \leq \|X-A\|_{(i)}$ for every Ky Fan norm and every $X \in \mathcal{E}$;
        \item If $\mathcal{E}$ has the weak UUM property then, for every $A \in \C^{m \times n}$, every Frobenius projection $X$ of $A$ onto $\mathcal{E}$ is also a matrix in $\mathcal{E}$ nearest to $A$ in every distance induced by a unitarily invariant norm. Moreover, the matrices $X-A$ all have the same singular values;
        \item $\mathcal{E}$ has the strong UUM property if and only if it is convex and has the weak UUM property.
    \end{enumerate}
\end{theorem}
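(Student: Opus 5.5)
The whole argument rests on the Ky Fan Dominance Theorem \cite[Theorem 4]{KyFan51}: $\|M\|\leq\|N\|$ for every unitarily invariant norm if and only if $\|M\|_{(i)}\leq\|N\|_{(i)}$ for every $i$, i.e., the singular values of $M$ are weakly majorized by those of $N$. We combine it with Lemma~\ref{lem:majorizedonsphere} and the strict convexity of the Frobenius norm.

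\emph{Item 1.} For the direction ``$\Leftarrow$'', take $X_\star$ as in the hypothesis. By Ky Fan dominance, the inequalities $\|X_\star-A\|_{(i)}\leq\|X-A\|_{(i)}$ for all $i$ give $\|X_\star-A\|\leq\|X-A\|$ in every unitarily invariant norm, so $\mathcal{E}$ has the weak UUM property. For ``$\Rightarrow$'', the unitarily-universal minimizer $X_\star$ minimizes in particular the Frobenius norm and every Ky Fan norm, since all of these are unitarily invariant.

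\emph{Item 2.} Let $X_\star$ be a unitarily-universal minimizer for $A$, and let $X$ be any Frobenius projection of $A$ onto $\mathcal{E}$. Set $x=\sigma(X_\star-A)$ and $y=\sigma(X-A)$, both sorted nonincreasingly. Universality of $X_\star$ gives $\sum_{j\le k}x_j\leq\sum_{j\le k}y_j$ for every $k$, via the Ky Fan norms. It also gives $\|x\|_2\leq\|y\|_2$, via the Frobenius norm. Since $X$ is a Frobenius minimizer, $\|y\|_2\leq\|x\|_2$, so the two Frobenius norms are equal. Lemma~\ref{lem:majorizedonsphere} then yields $x=y$. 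Hence $X-A$ has the same singular values as $X_\star-A$, so every unitarily invariant norm of $X-A$ equals that of $X_\star-A$, and $X$ is a universal minimizer as well.

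\emph{Item 3.} For ``$\Leftarrow$'', assume $\mathcal{E}$ is convex and has the weak UUM property. A closed convex set has a unique Frobenius projection $\Pi(A)$, because the Frobenius norm comes from an inner product and is therefore strictly convex. Any universal minimizer is in particular a Frobenius minimizer, so it equals $\Pi(A)$, which gives uniqueness; existence comes from the weak property. For ``$\Rightarrow$'', the weak property is immediate. For convexity, item 2 shows that every Frobenius projection of $A$ is a universal minimizer. Strong UUM allows only one universal minimizer, so every $A$ has a unique Frobenius projection, i.e., the medial axis $M(\mathcal{E})$ is empty. By the classical characterization recalled after Corollary~\ref{cor:erdos}, a closed set in $\R^N$ with empty medial axis is convex; this is the Motzkin/Bunt theorem, applied after identifying $\C^{m\times n}\cong\R^{2mn}$ via $\cvec$.

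The main obstacle is the convexity direction of item 3. It depends on citing the Bunt--Motzkin theorem correctly, and on item 2 to turn ``unique universal minimizer'' into ``unique Frobenius projection''. That step is needed because a priori several Frobenius projections could exist while only one of them was universal; item 2 rules this out. Everything else is bookkeeping with Ky Fan dominance.
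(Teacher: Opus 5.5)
Your proof is correct and follows essentially the same route as the paper. Item 1 is Ky Fan dominance. Item 2 applies Lemma~\ref{lem:majorizedonsphere} to the singular values of $X_\star-A$ and $X-A$, using equal Frobenius norms together with the Ky Fan inequalities. For item 3, one direction uses uniqueness of the Frobenius projection onto a closed convex set; the other combines item 2 with the fact that a nonconvex closed set has points with non-unique projection, which is the Bunt--Motzkin characterization the paper also relies on. Your version of item 2 skips the paper's case split on whether the Frobenius projection is unique, which is a harmless streamlining.
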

\begin{proof}
\begin{enumerate}
    \item This is a corollary of Ky Fan's Domination Theorem \cite[Theorem 4]{KyFan51}: The inequality $\|X\| \leq \|Y\|$ holds for every unitarily invariant norm if and only if it holds for every Ky Fan norm.
    \item Let $X_\star$ be a universal minimizer. If $X_\star$ is the unique Frobenius projection, there is nothing to prove. Otherwise, let $X_1 \neq X_\star$ be another minimizer in the Frobenius norm. Then, 
    $\|X_\star-A\|_F=\|X_1-A\|_F$ and, for every Ky Fan norm $\| \cdot \|_{(i)}$ ($i=1,\dots,\nu$), $\|X_\star-A\|_{(i)} \leq \|X_1-A\|_{(i)}$. Applying Lemma~\ref{lem:majorizedonsphere} to the vectors of the singular values of $X_\star-A$ and $X_1-A$, we deduce that these vectors are equal. As a further consequence, $X_1$ must be a minimizer over $Y \in \mathcal{E}$ of the distance $\|Y-A\|$ induced by every unitarily invariant norm.
    \item Let  first $\mathcal{E}$ have the strong UUM property; trivially, it must then also have the weak UUM property. Suppose that $\mathcal{E}$ is not convex. Then, there exists $A \in \C^{m \times n}$ lacking a unique projection onto $\mathcal{E}$ in the Frobenius distance. Applying item 2 to such $A$, we reach a contradiction.

    Conversely, suppose that $\mathcal{E}$ is convex and has the weak UUM property. Having fixed $A$, let $X_\star$ be its unique projection onto $\mathcal{E}$. Since there are no other minimizers for the Frobenius norm, it is the only candidate to be the universal minimizer for every unitarily invariant norm, and the statement follows.
\end{enumerate}
\end{proof}

\subsection{The UUM property for affine subspaces}

It is interesting to note that Theorem~\ref{thm:sun} does not exhaustively characterize all the affine subspaces that have the strong UUM property. For example, the Hermitian matrices considered in Theorem~\ref{thm:FH0} are a real subspace
but not a complex subspace; this is in contrast with Theorem~\ref{thm:sun}, because every (nonempty) set of the form \eqref{sunstructure} is a complex affine subspace.

Motivated by this observation, let us now focus on sets of the form $\mathcal{E}=\{ X \in \C^{m \times n} : X - B \in \mathcal{V} \}$ where $B \in \C^{m \times n}$ and $\mathcal V$ is a real subspace of $\C^{m \times n}$, that is, affine real subspaces. Obviously, affine subspaces are always convex sets, and hence they either have the strong UUM property or do not even have the weak UUM property. Below in this section, we will provide in Theorem~\ref{thm:projectorsnorm1} and in Theorem~\ref{thm:bigthmminsubspaces} two characterizations of affine real subspaces that satisfy the UUM property.

In Theorem~\ref{thm:projectorsnorm1} we provide an operator-theoretic characterization of the strong UUM property for subspaces.
     \begin{theorem}\label{thm:projectorsnorm1}
         Let $\Pi_\mathcal{U}$ denote the orthogonal (Frobenius) projection onto  the real subspace $\mathcal{U} \subseteq \C^{m \times n}$. Fix a matrix $B \in \C^{m \times n}$ and a proper real subspace $\mathcal{V} \subsetneq \C^{m \times n}$. The affine real subspace  $\mathcal{E}=\{X \in \C^{m \times n} : X - B  \in \mathcal{V}\}$ has the strong UUM property if and only if $\|  \Pi_{\mathcal{V}^\perp} \| = 1$ for every operator norm induced by a Ky Fan norm on $\C^{m \times n}$.
     \end{theorem}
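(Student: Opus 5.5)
The plan is to reduce both directions to statements about the single linear map $\Pi:=\Pi_{\mathcal{V}^\perp}$, which is $\R$-linear, idempotent and self-adjoint with respect to $\langle A,B\rangle=\Re\traccia(A^*B)$. For any input $A$, the Frobenius projection onto $\mathcal{E}$ is $X_\star=A-\Pi(A-B)$, so $\Delta_\star=\Pi(A-B)$. Every other decomposition $A=X+\Delta$ with $X\in\mathcal{E}$ has $\Delta=(A-B)-(X-B)$ with $X-B\in\mathcal{V}$, hence $\Pi(\Delta)=\Pi(A-B)=\Delta_\star$. As $X$ ranges over $\mathcal{E}$, $\Delta$ ranges over the whole coset $\Pi(A-B)+\mathcal{V}$, i.e.\ over all matrices $\Delta$ with $\Pi(\Delta)=\Delta_\star$. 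Since $\mathcal{E}$ is convex, Theorem~\ref{thm:dom}(1) and (3) say that the strong UUM property is equivalent to the following: for every $A$ and every $X\in\mathcal{E}$, we have $\|\Delta_\star\|_{(i)}\le\|\Delta\|_{(i)}$ for every Ky Fan norm. By the description of the coset above, this is the statement
\[
\|\Pi(\Delta)\|_{(i)}\le \|\Delta\|_{(i)}\qquad\text{for all } \Delta\in\C^{m\times n},\ i=1,\dots,\nu .
\]

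For the direction ``$\Leftarrow$'', I will assume the operator norm of $\Pi$ induced by each $\|\cdot\|_{(i)}$ equals $1$. Then the displayed inequality holds for every $\Delta$, and so $X_\star$ is a Ky Fan minimizer for every $A$. It is also the Frobenius projection, so Theorem~\ref{thm:dom}(1) gives the weak UUM property, and convexity together with Theorem~\ref{thm:dom}(3) gives the strong one.

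For ``$\Rightarrow$'', I will fix an arbitrary $\Delta$ and choose $A:=B+\Delta$ and $X:=B\in\mathcal{E}$. Then $A-X=\Delta$ and $A-X_\star=\Pi(\Delta)$. The strong UUM property says $X_\star$ is the universal minimizer; this is justified by Theorem~\ref{thm:dom}(2), which makes every Frobenius projection universal, and here the Frobenius projection is unique. It follows that $\|\Pi(\Delta)\|_{(i)}\le\|\Delta\|_{(i)}$, so each induced operator norm is at most $1$. For the reverse bound I use that $\mathcal{V}$ is proper, so $\mathcal{V}^\perp\neq\{0\}$: any nonzero $\Delta\in\mathcal{V}^\perp$ satisfies $\Pi(\Delta)=\Delta$, which gives operator norm exactly $1$.

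The main obstacle is the bookkeeping in the first step. I must make sure the Frobenius projection onto the real affine space is indeed $A-\Pi(A-B)$ when $\C^{m\times n}$ is viewed as a real Hilbert space with $\Re\traccia$. I must also check that the set of admissible perturbations is exactly the fibre $\Pi^{-1}(\Delta_\star)$, so that the quantifier ``for all $A$, for all $X\in\mathcal{E}$'' matches ``for all $\Delta$''. Both reduce to the orthogonal decomposition $\C^{m\times n}=\mathcal{V}\oplus\mathcal{V}^\perp$. The remaining point to note is that Ky Fan norms suffice in place of all unitarily invariant norms, which is Ky Fan's Domination Theorem as already used in Theorem~\ref{thm:dom}.
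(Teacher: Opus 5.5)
Your proposal is correct and follows essentially the same route as the paper: you write the Frobenius projection as $A-\Pi_{\mathcal{V}^\perp}(A-B)$, use Theorem~\ref{thm:dom} to reduce the strong UUM property to Ky Fan minimality, and note that the admissible perturbations sweep all of $\C^{m\times n}$, so the condition is exactly $\|\Pi_{\mathcal{V}^\perp}\|\le 1$ in each induced Ky Fan operator norm. The reverse bound $\ge 1$ then comes from $\mathcal{V}^\perp\neq\{0\}$; you simply spell out the two directions and the fibre bookkeeping more explicitly than the paper does.
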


  \begin{proof}
    Fix a Ky Fan norm $\| \cdot  \|_{(i)}$.  For each $A \in \C^{m \times n}$, the point in $\mathcal{E}$ where the minimal Frobenius distance is achieved is $X_\star = B + \Pi_{\mathcal{V}}(A-B)$. By Theorem~\ref{thm:dom},  $\mathcal{E}$ has the strong UUM property if and only if $X_\star$ is the minimizer of the distance induced by every Ky Fan norm. Denoting $C=B-A$, the Ky Fan distance from $A$ to $X_\star$ is
    \[ \| \Pi_{\mathcal{V}}(C)-C  \|_{(i)} = \|\Pi_{\mathcal{V}^\perp}(C)  \|_{(i)} = \| \Pi_{\mathcal{V}^\perp}(B-A+W)  \|_{(i)} \ \forall \ A \in \C^{m\times n}, \  \forall \ W \in \mathcal{V} .\] On the other hand, the Ky Fan distance from $A$ to an arbitrary point $B+W$ in the target set is $\| B+W-A \| _{(i)}$. Since $B+W-A$ can take any value in $\C^{m \times n}$, we deduce that the strong UUM property is equivalent to $\| \Pi_{\mathcal{V}^\perp}\|  \leq 1$ in each operator norm induced by a Ky Fan norm.

    To complete the proof, observe that $\mathcal{V}^\perp \neq 0$ and $\Pi_{\mathcal{V}^\perp}$ is a projection. Therefore, it must be $\| \Pi_{\mathcal{V}^\perp}\|  \geq 1$ in every operator norm, and the statement follows.
  \end{proof}

\emph{Complex} subspaces that are ranges of
projections having unit operator norm induced by a Schatten $p$-norm, for
$1<p<\infty$ and $p\neq2$, are classified by
\citet{ArazyFriedman92} (see also \cite{ArazyFriedman78} for $p=1$ and $p=\infty$). However, Theorem~\ref{thm:projectorsnorm1} analyzes specifically the Frobenius orthogonal projection, and the existence of
a contractive projection alone does not allow us to infer much.

For a complex linear subspace
$\mathcal{U}\subseteq\C^{m\times n}$, consider the condition
\begin{equation}\label{eq:abc}
    A,B,C \in \mathcal{U} \Rightarrow \frac{1}{2} \left(A B^* C + C B^* A \right) \in \mathcal{U}.
\end{equation}
It is a consequence of Theorem~\ref{thm:bigthmminsubspaces},
that we state and prove below, that a complex affine subspace $B+\mathcal{V}$ has the strong UUM property if and only if $\mathcal{V}^{\perp}$ satisfies~\eqref{eq:abc}.
This conclusion does not generally hold, however, for \emph{real} affine subspaces. In particular, we illustrate in Example~\ref{ex:3} that this condition is not necessary.

\begin{example}\label{ex:3}
    In $\C^{2 \times 1}$, every unitarily invariant norm is a multiple of the Frobenius norm, and therefore every nonempty convex subset has the strong UUM property. Consider the subspace
    \[  \mathcal{V}^\perp = \mathrm{span}_\R \left( \begin{bmatrix}
        1\\
        0
    \end{bmatrix}, \begin{bmatrix}
        i\\
        1
    \end{bmatrix}\right).  \]
    Taking $A=C=\begin{bmatrix}
        1\\
        0
    \end{bmatrix}$, $B=\begin{bmatrix}
        i\\
        1
    \end{bmatrix}$ we obtain
$ \displaystyle \frac{1}{2} \left(A B^* C + C B^* A \right) = -\begin{bmatrix}
        i\\
        0
    \end{bmatrix} \notin \mathcal{V}^\perp$, showing that the subspace does not satisfy \eqref{eq:abc}.
 \end{example}
For complex subspaces, but not for real subspaces, \eqref{eq:abc} is equivalent to
\begin{equation}\label{eq:xxstarx}
    X \in \mathcal{U} \Rightarrow XX^*X \in \mathcal{U}.
\end{equation} 
(The nontrivial implication is seen by taking $X=A+i^\alpha B+(-1)^\beta C$, $\alpha \in \{0,1,2,3\}, \beta \in \{0,1\}$, and performing tedious but straightforward manipulations.) It turns out that, not only for complex subspaces but even for real subspaces of $\C^{m \times n}$, \eqref{eq:xxstarx} (for the orthogonal complement) is in fact equivalent to the strong UUM property. We will prove this linear algebraic characterization in Theorem~\ref{thm:bigthmminsubspaces}. While it is relatively easy to prove that the UUM property implies that \eqref{eq:xxstarx} holds for the normal space of the affine subspace, our proof of sufficiency in Theorem~\ref{thm:bigthmminsubspaces} is not very immediate, and thus we find it convenient for the exposition to first establish some auxiliary results.

    \begin{lemma}\label{lem:oddstaysin}
        Let $\mathcal{U}$ satisfy \eqref{eq:xxstarx}. Then, 
        \begin{enumerate}
            \item $Y X^* X + X Y^* X+X X^* Y \in \mathcal{U}$ for all $X,Y \in \mathcal{U}$;
            \item $X \in \mathcal{U} \Rightarrow (XX^*)^kX \in \mathcal{U}$ for all $k \in \mathbb{N}$.
        \end{enumerate}
    \end{lemma}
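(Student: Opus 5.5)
For item 1 we polarize \eqref{eq:xxstarx}, using only that $\mathcal{U}$ is a \emph{real} subspace. Fix $X,Y\in\mathcal{U}$ and $t\in\R$. Then $X+tY\in\mathcal{U}$, so by \eqref{eq:xxstarx}
\[
p(t):=(X+tY)(X+tY)^*(X+tY)\in\mathcal{U}\qquad\text{for every } t\in\R .
\]
Expanding, $p(t)=P_0+tP_1+t^2P_2+t^3P_3$, where
\[
P_0=XX^*X,\qquad P_1=YX^*X+XY^*X+XX^*Y,
\]
and $P_2,P_3$ are the analogous terms with $Y$ taken twice or three times. Note that $t$ is real, so $\overline{t}=t$ and the adjoint factor contributes $t$, not $\overline{t}$; this is why only real scalars are used.

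Evaluate $p$ at four distinct real values, for instance $t=0,\pm1,2$. The Vandermonde matrix of these nodes is invertible, so each coefficient $P_j$ is a real linear combination of the values $p(t_k)$. Since $\mathcal{U}$ is a real subspace, $P_1\in\mathcal{U}$, which is item 1.

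A shortcut is also available: $P_1=\lim_{t\to0}\bigl(p(t)-p(0)\bigr)/t$. Because $\mathcal{U}$ is a finite-dimensional subspace, it is closed, so the limit lies in $\mathcal{U}$. Either argument avoids complex scalars, which matters because $\mathcal{U}$ is only assumed real.

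For item 2 we induct on $k$. The cases $k=0,1$ are trivial or are \eqref{eq:xxstarx} itself. For the inductive step, suppose $Z_k:=(XX^*)^kX\in\mathcal{U}$ for all $k\le K$. The aim is to obtain $Z_{K+1}$ by applying item 1 to suitable pairs from $\{Z_0,\dots,Z_K\}$.

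The basic observation is that
\[
Z_aZ_b^*Z_c=(XX^*)^aX\,X^*(XX^*)^b\,(XX^*)^cX=(XX^*)^{a+b+c+1}X=Z_{a+b+c+1}.
\]
Taking $X\mapsto Z_a$ and $Y\mapsto Z_b$ in item 1 gives $2Z_{2a+b+1}+Z_{a+2b+1}\in\mathcal{U}$.

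Choosing $a=0$, $b=j$ gives $2Z_{j+1}+Z_{2j+1}\in\mathcal{U}$. Choosing $a=j$, $b=0$ gives $2Z_{2j+1}+Z_{j+1}\in\mathcal{U}$. These two combinations have determinant $4-1=3\neq0$, so solving the $2\times2$ system shows $Z_{j+1},Z_{2j+1}\in\mathcal{U}$ separately, provided $Z_j\in\mathcal{U}$. With $j=K$ this gives $Z_{K+1}\in\mathcal{U}$ and completes the induction. Alternatively, \eqref{eq:xxstarx} applied directly to $Z_j$ gives $Z_j Z_j^*Z_j=Z_{3j+1}$, but the two-equation trick is cleaner.

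The main obstacle is bookkeeping. One must confirm that the identity $Z_aZ_b^*Z_c=Z_{a+b+c+1}$ holds, which follows from the associativity regrouping shown above. One must also confirm that the coefficients produced by item 1 give an invertible real linear system. Both are routine.
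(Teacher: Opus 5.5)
Item 1 is fine. Your limit argument is exactly the paper's, and the Vandermonde interpolation at four real nodes is an equally valid, purely algebraic variant.

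Item 2, however, rests on a miscomputed expansion. Substituting $X\mapsto Z_a$, $Y\mapsto Z_b$ into item 1 gives $Z_bZ_a^*Z_a+Z_aZ_b^*Z_a+Z_aZ_a^*Z_b$. In each of these three terms $Z_a$ occurs twice and $Z_b$ once. By your own (correct) identity $Z_pZ_q^*Z_r=Z_{p+q+r+1}$, every term therefore equals $Z_{2a+b+1}$, and the sum is $3Z_{2a+b+1}$. A term $Z_{a+2b+1}$ would need $Y$ to occur twice; such terms belong to the coefficient $P_2$ of $t^2$, about which item 1 says nothing. So the memberships $2Z_{j+1}+Z_{2j+1}\in\mathcal{U}$ and $2Z_{2j+1}+Z_{j+1}\in\mathcal{U}$ are not what item 1 delivers. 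The $2\times 2$ system with determinant $3$ is spurious, and the inductive step as written is unjustified.

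The repair is immediate and shortens the argument. With $a=0$, $b=K$, the correct computation gives $3Z_{K+1}\in\mathcal{U}$, hence $Z_{K+1}\in\mathcal{U}$, using only $Z_0=X$ and $Z_K$. Ordinary rather than strong induction therefore suffices. This is precisely the paper's proof: apply item 1 with $Y=(XX^*)^kX$ and observe that $YX^*X$, $XY^*X$ and $XX^*Y$ all equal $(XX^*)^{k+1}X$.
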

    \begin{proof}
    \leavevmode
    \begin{enumerate}
        \item For all $X,Y \in \mathcal{U}$ and all $\varepsilon \in \R$, we can apply \eqref{eq:xxstarx} to $Z(\varepsilon):=X+\varepsilon Y$. Expanding,
        \[  f(\varepsilon):=Z(\varepsilon)Z(\varepsilon)^*Z(\varepsilon)= XX^*X + \varepsilon \left( Y X^* X+ X Y^* X + X X^* Y \right) + O(\varepsilon^2).    \]
        Since $\mathcal{U}$ is a finite-dimensional real vector space, it is topologically closed. Moreover, $f(\varepsilon) \in \mathcal{U}$ for all $\varepsilon$, whence we conclude that \[ f'(0) = \lim_{\varepsilon \rightarrow 0} \frac{f(\varepsilon)-f(0)}{\varepsilon}  \in \mathcal{U}.\] This implies the statement.
        \item We give a proof by induction on $k$. The base case $k=0$ is obvious. By applying the property in the previous item to $Y=(X X^*)^k X$, the induction step follows.
    \end{enumerate}
    \end{proof}

Lemma~\ref{lem:subgradient} below enhances the characterizations of subdifferentials given by \citet[Theorem 2]{Watson1992}
and \citet[Corollaries 2.5 and 2.6]{Lewis1995}. In particular, item (i) can be found in these references. We include a self-contained proof, including a novel ``symmetrization trick" that proves also items (ii) and (iii).

    \begin{lemma}\label{lem:subgradient}
        Let $\| \cdot \|$ be a unitarily invariant norm of $\C^{m \times n}$. Suppose $X=U \Sigma V^* \in \C^{m \times n}$ is an SVD, where $U \in \C^{m \times m}$ and $V \in \C^{n \times n}$ are unitary matrices and $\Sigma = \mathrm{diagm}(\sigma_1,\sigma_2,\dots,\sigma_{\nu}) \succeq 0 \in \R^{m \times n}$. Then, the subgradient of $\| \cdot \|$ at $X$ contains a matrix of the form $G=U F V^*$ where $F=\mathrm{diagm}(f) \in \R^{m \times n}$ is a real diagonal matrix, and $f \in \R^{\nu}$ has the properties
        \begin{itemize}
            \item[(i)] $f \geq 0$ and ordered in a non-increasing manner,
            \item[(ii)] $\sigma_i = \sigma_j \Rightarrow f_{i} = f_{j}$, and
            \item[(iii)]$\sigma_i=0 \Rightarrow f_{i}=0$.
        \end{itemize}
    \end{lemma}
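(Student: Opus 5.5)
We reduce to the symmetric gauge function and then symmetrize. By von Neumann's theorem, a unitarily invariant norm has the form $\|X\| = \phi(\sigma(X))$, where $\phi$ is a symmetric gauge function on $\R^\nu$: a norm that is invariant under permutations and sign changes of the entries.

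\textbf{Step 1: a diagonal subgradient.} Take any subgradient $g \in \partial\phi(\sigma)$ of $\phi$ at $\sigma=(\sigma_1,\dots,\sigma_\nu)$, and set $G = U\,\mathrm{diagm}(g)\,V^*$. We verify that $G$ is a subgradient of $\|\cdot\|$ at $X$. Since $\|\cdot\|$ is a norm, this amounts to two checks. First, $\Re\traccia(G^*X) = g^T\sigma = \phi(\sigma) = \|X\|$, using the standard fact that subgradients of a norm satisfy $g^T\sigma=\phi(\sigma)$ and $\phi^D(g)\le 1$. Second, $\|G\|^D = \phi^D(|g|) \le 1$, where we use that the dual of a unitarily invariant norm is the unitarily invariant norm associated with the dual gauge $\phi^D$. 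These two facts give $\|Z\| \ge \Re\traccia(G^*Z)$ for all $Z$, so $\|Z\|-\|X\| \ge \Re\traccia(G^*(Z-X))$.

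\textbf{Step 2: symmetrization.} Let $\mathcal{G}$ be the finite group generated by two kinds of maps on $\R^\nu$. The first kind consists of permutations preserving $\sigma$, i.e.\ permuting indices within blocks of equal singular values. The second kind consists of sign flips of coordinates $i$ with $\sigma_i=0$. Each $T\in\mathcal{G}$ fixes $\sigma$ and preserves $\phi$, so $T g \in \partial\phi(\sigma)$. The subdifferential is convex, hence the average
\[
f = \frac{1}{|\mathcal{G}|}\sum_{T\in\mathcal{G}} T g
\]
also lies in $\partial\phi(\sigma)$. Averaging over the permutations makes $f$ constant on each block of equal $\sigma_i$, which gives (ii). Averaging over the sign flips kills the coordinates with $\sigma_i=0$, which gives (iii).

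\textbf{Step 3: nonnegativity and ordering (item (i)).} For nonnegativity, replace $f$ by $|f|$, the entrywise absolute value. This is still a subgradient: $\phi^D(|f|)=\phi^D(f)\le1$ and $|f|^T\sigma \ge f^T\sigma = \phi(\sigma)$. The bound $|f|^T\sigma \le \phi(\sigma)\phi^D(|f|)$ then forces equality. Taking absolute values does not disturb (ii) or (iii).

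For the ordering, suppose $\sigma_i>\sigma_j$ but $f_i<f_j$. Swapping $f_i$ and $f_j$ changes $f^T\sigma$ by $(f_j-f_i)(\sigma_i-\sigma_j)>0$ and leaves $\phi^D$ unchanged. This contradicts $f^T\sigma \le \phi(\sigma)\phi^D(f)=\phi(\sigma)$. Hence $f$ is nonincreasing across distinct values of $\sigma$, and constant within blocks by (ii). So $f$ is nonincreasing overall, since $\sigma$ itself is ordered nonincreasingly. Each of these modifications preserves the properties already obtained.

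The main obstacle is Step 1, namely justifying the dual-norm identity $\|UFV^*\|^D=\phi^D(f)$. We would either cite von Neumann's trace inequality, $\Re\traccia(G^*Z)\le \sum_i \sigma_i(G)\sigma_i(Z)$, or derive it directly. With that inequality, $\Re\traccia(G^*Z) \le \sigma(G)^T\sigma(Z) \le \phi^D(\sigma(G))\,\phi(\sigma(Z))$, which is exactly the subgradient inequality needed, without passing through dual-norm theory.
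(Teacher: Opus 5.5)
Your proof is correct and uses the same key idea as the paper: average an arbitrary subgradient of the gauge function over the finite group of signed permutations fixing $\sigma$ (your $\mathcal{G}$ is exactly that stabilizer), which yields (ii) and (iii). The remaining differences are minor. You handle the lift to $U\diagm(g)V^*$ and item (i) through the dual gauge $\phi^D$, so your Step~1 holds for every $g\in\partial\phi(\sigma)$, and you pass to $|f|$. The paper instead tests the subgradient inequality at the points obtained from $\sigma$ by flipping the sign of one positive entry or swapping two unequal entries, which shows $f$ is already nonnegative and ordered, and only then applies von Neumann's trace inequality.
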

\begin{proof}
By \citet[Theorem 3.5.18]{HJtopics} (see also \citet{vonneumann,Watson1992}), $\| X \| = \Psi(\sigma)$ where $\sigma \in \R^\nu$ is the vector of singular values of $X$ and $\Psi : \R^\nu\rightarrow \R$ is a symmetric gauge function. In particular, $\Psi(\sigma)=\Psi(P\sigma)$ for every signed permutation matrix $P$. We claim that there exists a vector $f$ with the stated properties which is a subgradient of $\Psi$ at $\sigma$. Note in particular that from the claim it follows
\[ 0= \Psi(0) \geq \Psi(\sigma) - \sigma^T f, \ 2 \Psi(\sigma)=\Psi(2\sigma) \geq \Psi(\sigma) + \sigma^T f \]
and therefore
\begin{equation}\label{eq:subgrsvequalnorm}
     \sigma^T f = \Psi(\sigma) = \| X \|.
\end{equation}
Letting $F=\diagm(f) \in \R^{m \times n}$, define now $G:=U F V^* \in \C^{m \times n}$ as in the statement. Observe that 
\[  \Re \traccia (G^* X) = \Re \traccia ( F^T \Sigma ) = f^T \sigma = \| X\|.  \]
For every $Y \in \C^{m \times n}$ with vector of singular values $y \in \R^\nu$, \eqref{eq:subgrsvequalnorm} and the fact that $f$ is a subgradient of $\Psi$ at $\sigma$ imply
that 
\begin{equation}\label{eq:subgrsvleqothernorms}
    \| Y \| = \Psi(y) \geq f^T y.
\end{equation} 
Therefore, using \eqref{eq:subgrsvequalnorm} and \eqref{eq:subgrsvleqothernorms}, and also applying Von Neumann's trace inequality \cite[Problem 10, Section 3.3]{HJtopics} by observing that (up to permutation) $f$ and $y$ contain the singular values of $G$ and $Y$,
\[  \langle G, Y - X \rangle =  \Re \traccia(G^*Y) - \Re \traccia (G^* X) \leq f^T y - \| X \| \leq \| Y \| - \| X \|,  \]
proving that $G$ is a subgradient of $\| \cdot \|$ at $X$.

It remains to prove the claim. Let $g \in \R^\nu$ be an arbitrary subgradient of $\Psi$ at $\sigma$ (the existence of $g$ is guaranteed by the convexity of $\Psi$). By definition of subgradient, $\Psi(y) - \Psi(\sigma) \geq (y-\sigma)^T g$ for all $y \in \R^\nu$. Let $\mathbb{S}$ be the set of all signed permutation matrices $P$ such that $P\sigma=\sigma$; obviously $\# \mathbb{S} \geq 1$ since $I \in \mathbb{S}$, and it is easy to verify that $\mathbb{S}$ is a finite group. Fix $P \in \mathbb{S}$ and define $z:=Py$. Then, $\Psi(z) - \Psi(\sigma) \geq (P^T z - \sigma)^T g = (z-\sigma)^T P g$ and therefore $Pg$ is also a subgradient.  Define now
\[  f : = (\# \mathbb{S})^{-1} \sum_{P \in \mathbb{S}} P g .  \]
For all $P \in \mathbb{S}$, the action of $P$ on the group $\mathbb{S}$ is a permutation of the elements of $\mathbb{S}$, and hence $Pf = f$. Now observe that if $\sigma_i=0$ then $ I - 2 e_ie_i^T \in  \mathbb{S}$, implying $f_i=0$. Similarly, if $\sigma_i=\sigma_j$ then $I + e_i e_j^T + e_j e_i^T - e_i e_i^T - e_j e_j^T \in \mathbb{S}$, and thus $f_i=f_j$. On the other hand, for every $y \in \R^\nu$ and using that $P g$ is a subgradient of $\Psi$ at $\sigma$ for all $P \in \mathbb{S}$,
\[  (y-\sigma)^T f = (\# \mathbb{S})^{-1} \sum_{P \in \mathbb{S}}(y-\sigma)^T Pg \leq (\# \mathbb{S})^{-1} \sum_{P \in \mathbb{S}} (\Psi(y)-\Psi(\sigma)) = \Psi(y)-\Psi(\sigma), \]
and hence $f$ is a subgradient of $\Psi$ at $\sigma$. 

Finally, we must show that $f \geq 0$ and non-increasing. If $\sigma_i>0$, let $P_i$ be the signed permutation whose only effect is changing the sign of the $i$-th
coordinate. Then, \(0=\Psi(P_i\sigma)-\Psi(\sigma)
 \geq (P_i\sigma-\sigma)^Tf
 =-2\sigma_i f_i\), and hence $f_i\geq0$. To conclude, suppose $\sigma_i>\sigma_j$, and let $P_{ij}$ be the permutation that swaps the $(i,j)$
coordinates leaving everything else unchanged. By a similar argument,
\( 0=\Psi(P_{ij}\sigma)-\Psi(\sigma)
 \geq (P_{ij}\sigma-\sigma)^Tf
 =(\sigma_i-\sigma_j)(f_j-f_i)\) implying $f_i\geq f_j$.
\end{proof}

To state Proposition~\ref{prop:genodd}, for an odd real polynomial $p(z) = \sum_{j=0}^k p_j z^{2j+1} \in \R[z]$ and a matrix $X \in \C^{m \times n}$, we define $\displaystyle p^\diamond(X)=\sum_{j=0}^{k} p_j (X X^*)^j X$. See \citet{Noferini2017} and the references therein for some properties of such ``generalized matrix functions".

\begin{proposition}\label{prop:genodd}
    Let $\| \cdot \|$ be a unitarily invariant norm, let $X \in \C^{m \times n}$, and let $G \in \C^{m \times n}$ be a subgradient of $\| \cdot \|$ at $X$ having the properties described in Lemma~\ref{lem:subgradient}. Then, there exists an odd real polynomial $p(z)$ such that $G = p^\diamond (X)$.
\end{proposition}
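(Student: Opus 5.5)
Write $X = U\Sigma V^*$ for the SVD used in Lemma~\ref{lem:subgradient}, so that $G = UFV^*$ with $F=\diagm(f)$ and $f$ satisfying properties (i)--(iii). The plan is to reduce everything to a scalar interpolation problem on the singular values.

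First, compute $p^\diamond(X)$ for an arbitrary odd real polynomial $p(z)=\sum_{j=0}^k p_j z^{2j+1}$. Since $XX^* = U\Sigma\Sigma^T U^*$, we have $(XX^*)^jX = U(\Sigma\Sigma^T)^j\Sigma V^*$. The matrix $(\Sigma\Sigma^T)^j\Sigma$ is the $m\times n$ diagonal matrix with entries $\sigma_i^{2j+1}$, $i=1,\dots,\nu$. Summing over $j$ gives
\[ p^\diamond(X) = U\,\diagm\bigl(p(\sigma_1),\dots,p(\sigma_\nu)\bigr)V^*. \]
This uses the same $U,V$ that define $G$, so no uniqueness of the SVD is needed. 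Hence it suffices to find an odd real polynomial $p$ with $p(\sigma_i)=f_i$ for all $i=1,\dots,\nu$.

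This is an interpolation problem, and we must check that it is consistent. By property (ii), the prescribed values depend only on the value of $\sigma_i$, so repeated nodes carry equal data. Let $s_1>\dots>s_r$ be the distinct singular values and $c_\ell$ the common value of $f_i$ on $\{i: \sigma_i = s_\ell\}$. Oddness forces $p(0)=0$, and property (iii) guarantees $c_\ell=0$ whenever $s_\ell=0$, so the node $0$ is compatible with oddness.

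For the construction, collect the distinct positive values $s_1>\dots>s_q>0$. Writing $p(z)=z\,h(z^2)$, we need $h(s_\ell^2)=c_\ell/s_\ell$ for $\ell=1,\dots,q$. The points $s_\ell^2$ are distinct, so Lagrange interpolation gives a real polynomial $h$ of degree at most $q-1$. Then $p(z)=zh(z^2)$ is odd and real, satisfies $p(s_\ell)=c_\ell$, and satisfies $p(0)=0=f_i$ for every zero singular value. If $q=0$, then $X=0$, so $f=0$ by (iii) and $p=0$ works.

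The only real obstacle is the compatibility of the interpolation data. This is exactly what the symmetrization in Lemma~\ref{lem:subgradient}, via properties (ii) and (iii), was designed to provide; the rest is routine.
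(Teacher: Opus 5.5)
Your proof is correct and follows the paper's argument. Both reduce the claim to interpolating $p(\sigma)=f$ by an odd real polynomial at the distinct positive singular values, using properties (ii)--(iii) for consistency; your substitution $p(z)=z\,h(z^2)$ with Lagrange interpolation at the distinct nodes $s_\ell^2$ is the same step as the paper's invertible Vandermonde-times-diagonal system $MD$, and you also spell out the identity $p^\diamond(X)=U\diagm(p(\sigma_1),\dots,p(\sigma_\nu))V^*$, which the paper leaves implicit.
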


\begin{proof}
    If $X=0$ then $G=0$ and we can just take $p=0$. Otherwise, the argument is similar to \citet[Theorem 3.3]{Noferini2017}: Let $\sigma_{i_j}$, $j=1,\dots,q$, be the \emph{distinct positive} singular values of $X=U \Sigma V^*$ and let $f_{i_j}$ be the corresponding diagonal entries of $F$ in the permuted SVD $G=U F V^*$. It suffices to solve the interpolation problem $p(\sigma_{i_j}) = f_{i_j}$ with the extra constraints that $p(z)$ must be odd and have degree at most $2 q-1$. This is tantamount to solving for $x$ the linear system $x ^T M D = \begin{bmatrix}
        f_{i_1} & \dots & f_{i_q}
    \end{bmatrix}$. Here, the coefficient matrix $M D$ is the product of the Vandermonde matrix $M \in \R^{q \times q}$ at the distinct points $\sigma^2_{i_j}$ and the real diagonal matrix $D = \diagm(\sigma_{i_1},\dots,\sigma_{i_{q}}) \succ 0$. Hence, $MD$ is invertible and thus we can uniquely find the sought polynomial $p(z)$.
\end{proof}

\begin{theorem}\label{thm:bigthmminsubspaces}
    Let $\mathcal{E}= B + \mathcal{V}$ be a real affine subspace. Then, $\mathcal{E}$ has the strong UUM property if and only if $\mathcal{V}^\perp$ satisfies \eqref{eq:xxstarx}.
\end{theorem}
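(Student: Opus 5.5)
We translate by $B$ so that we may assume $B=0$ and $\mathcal{E}=\mathcal{V}$. For every $A$, the unique Frobenius projection of $A$ onto $\mathcal{V}$ is $X_\star=\Pi_{\mathcal{V}}(A)$, and the corresponding perturbation is $\Delta_\star=A-X_\star=\Pi_{\mathcal{V}^\perp}(A)$. Since $\mathcal{E}$ is convex, Theorem~\ref{thm:dom} (item 3) says the strong UUM property holds if and only if, for every $A$ and every unitarily invariant norm, $X_\star$ minimizes $\norm{X-A}$ over $X\in\mathcal{V}$. Writing $X-A=-(\Delta_\star+W)$ with $W\in\mathcal{V}$ arbitrary, this is equivalent to the following statement:
\[
\norm{\Delta_\star}\le\norm{\Delta_\star+W}\quad\text{for all } \Delta_\star\in\mathcal{V}^\perp,\ W\in\mathcal{V}.
\]
Every $\Delta_\star\in\mathcal{V}^\perp$ arises in this way, for instance by taking $A=\Delta_\star$.

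\emph{Sufficiency.} Assume $\mathcal{V}^\perp$ satisfies \eqref{eq:xxstarx}. Fix $\Delta\in\mathcal{V}^\perp$ and a unitarily invariant norm. By Lemma~\ref{lem:subgradient} there is a subgradient $G$ of $\norm{\cdot}$ at $\Delta$ with properties (i)--(iii). By Proposition~\ref{prop:genodd} we can write
\[
G=p^\diamond(\Delta)=\sum_j p_j(\Delta\Delta^*)^j\Delta
\]
with real coefficients $p_j$. Lemma~\ref{lem:oddstaysin}(2) gives $(\Delta\Delta^*)^j\Delta\in\mathcal{V}^\perp$ for each $j$. Since $\mathcal{V}^\perp$ is a real subspace and the $p_j$ are real, it follows that $G\in\mathcal{V}^\perp$. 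The subgradient inequality then yields, for every $W\in\mathcal{V}$,
\[
\norm{\Delta+W}\ge\norm{\Delta}+\langle G,W\rangle=\norm{\Delta}.
\]
This is the required inequality, so $\mathcal{E}$ has the strong UUM property. The real work here is the existence of a subgradient that is a real odd generalized matrix function of $\Delta$; Lemma~\ref{lem:subgradient} and Proposition~\ref{prop:genodd} already provide it, so what remains is to check that $\mathcal{V}^\perp$ is only required to be closed under \emph{real} linear combinations, and that this is all we use.

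\emph{Necessity.} Assume the strong UUM property and let $X\in\mathcal{V}^\perp$ be nonzero. Take the smooth Schatten $4$-norm $F=\norm{\cdot}_{S,4}$. By the reformulation above, $W=0$ minimizes $W\mapsto F(X+W)$ over the subspace $\mathcal{V}$. Because $F$ is differentiable at $X\neq0$, the first-order condition (Theorem~\ref{thm:reg min in normal}, using that the normal cone of a subspace is its orthogonal complement) gives $\nabla F(X)\in\mathcal{V}^\perp$. By \eqref{gradient schatten}, if $X=U\Sigma V^*$ then
\[
\nabla F(X)=\norm{X}_{S,4}^{-3}\,U\Sigma^3V^*=\norm{X}_{S,4}^{-3}\,XX^*X.
\]
Hence $XX^*X\in\mathcal{V}^\perp$. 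The case $X=0$ is trivial, so \eqref{eq:xxstarx} holds for $\mathcal{V}^\perp$.
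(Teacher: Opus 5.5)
Your proposal is correct and follows the paper's proof essentially step for step. For sufficiency, the subgradient $p^\diamond(\Delta)$ supplied by Lemma~\ref{lem:subgradient} and Proposition~\ref{prop:genodd} lies in $\mathcal{V}^\perp$ by Lemma~\ref{lem:oddstaysin}; for necessity, you use the first-order optimality condition for the Schatten $4$-norm, with only a cosmetic difference from the paper: it differentiates $\frac14\|\cdot\|_{S,4}^4$, which is smooth everywhere with gradient exactly $XX^*X$, so it needs neither the factor $\|X\|_{S,4}^{-3}$ nor your separate treatment of $X=0$.
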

\begin{proof}
Suppose first that $\mathcal{E}$ has the strong UUM property. 
Let $\| \cdot \|_{S,4}$ denote the (unitarily invariant) Schatten $4$-norm,
\[  \| M \|_{S,4}^4 = \sum_{j=1}^{\nu} \sigma_j(M)^4 = \traccia((M^*M)^2).  \]
   Let now $\Delta \in \mathcal{V}^\perp$ and note that every point $X_\star \in \mathcal{E}$ minimizes over $X \in \mathcal{E}$ the function $F(X) = \frac{1}{4}\| A - X \|_{S,4}^4$, for all inputs $A$ of the form $A=X_\star+\Delta$. Indeed, $\Pi_\mathcal{E}(X_\star+\Delta)=X_\star$, and hence $X_\star$ is the minimizer of $\|A-X\|$ in the Frobenius norm and thus, by assumption, also in every other unitarily invariant norm. 
   By Theorem~\ref{thm:reg min in normal}, we must have $\Delta \Delta^* \Delta = -\nabla F(X_\star) \in N_{X_\star} \mathcal{E} = \mathcal{V}^\perp$.

    We now prove the converse implication. Let $A \in \C^{m \times n}$, let $\| \cdot \|$ be a unitarily invariant norm, and suppose that \eqref{eq:xxstarx} holds for $\mathcal{V}^\perp$. Let $X_\star \in \mathcal{E}$ be the unique minimizer of $\|A - X \|_F $ over $X \in \mathcal{E}$. 
    We may assume with no loss of generality that $\Delta:=A-X_\star$ is not zero, otherwise $X_\star = A \in \mathcal{E}$ and there is nothing else to prove. Hence, suppose that $\Delta$ has $q>0$ distinct positive singular values. Then, by Proposition~\ref{prop:genodd} and its proof, it follows that there exists a nonzero real odd polynomial $p(z) = \sum_{j=0}^{q-1} p_j z^{2j+1}$, with $p_0,\dots,p_{q-1} \in \R$ and such that $G:=p^\diamond(\Delta) \in \C^{m \times n}$ is a subgradient of $\| \cdot \|$ at $\Delta$. On the other hand, $\Delta$ is orthogonal to $\mathcal{V}$, and hence $\Delta \in \mathcal{V}^\perp$. It follows by Lemma~\ref{lem:oddstaysin} and by $\R$-linearity that $\displaystyle G = \sum_{j=0}^{q-1} p_j (\Delta \Delta^*)^j \Delta \in \mathcal{V}^\perp$. Then, by definition of subgradient, for every $X \in \mathcal{E}$ it holds
    \[ \|A-X\| - \| A - X_\star\| \geq  \langle G, X_\star-X \rangle= 0 \ \mathrm{because} \ G \in \mathcal{V}^\perp, X_\star-X \in \mathcal{V}. \]
    Therefore, $X_\star=A-\Delta$ minimizes $\| X-A \|$ over all $X \in \mathcal{E}$.
\end{proof}

\begin{remark}
   The proof of Theorem~\ref{thm:bigthmminsubspaces} reveals that, in practice, it suffices to only check two norms: A real affine subspace $B + \mathcal{V}$ has the strong UUM property if and only if, for every input $A$, the Frobenius projection $X_\star$ minimizes the distance from $A$ in the Schatten $4$-norm. 
\end{remark}

\begin{remark}\label{rem:nolabel}
    A natural question is whether a local analogue of Theorem~\ref{thm:bigthmminsubspaces} can be stated when $\mathcal{E}$ is a smooth manifold, by replacing the subspace $\mathcal{V}^\perp$ with the local normal space at the Frobenius minimizer(s) $X_\star$, but when the manifold is not flat the situation is more intricate.

The first part of the proof of Theorem~\ref{thm:bigthmminsubspaces}, showing that \eqref{eq:xxstarx} is necessary, extends locally to manifolds that are not affine subspaces, in the following somewhat restrictive sense. If a Frobenius projection $X_\star$ of $A \in \C^{m \times n}$ onto $\mathcal{E}$ is also a global minimizer for every unitarily invariant distance from $A$, then $\Delta:= A-X_\star$ satisfies
\[ \Delta \Delta^* \Delta \in N_{X_\star} \mathcal{E}.   \]
However, while this equation is satisfied by the matrix $\Delta$, it does not imply that the whole normal space satisfies \eqref{eq:xxstarx}. Moreover, the other direction is also problematic: Even if the whole normal space satisfies \eqref{eq:xxstarx}, the Frobenius projection
does not necessarily minimize every unitarily invariant distance from $A$. Consider for instance
    \[ \mathcal{E} = \left\{ \begin{bmatrix}
        8 z_1 & 3 z_1^2\\
        8 z_2 & 0
    \end{bmatrix} : z_1, z_2 \in \C \right\} \subseteq \C^{2 \times 2}.  \]
    Then, $0 \in \mathcal{E}$ and \[ (T_0 \mathcal{E})^\perp = \mathrm{span}_\C \left\{ \begin{bmatrix}
        0&1\\
        0&0
    \end{bmatrix}, \begin{bmatrix}
        0&0\\
        0&1
    \end{bmatrix} \right\} \]
    satisfies \eqref{eq:xxstarx}, because for every $z_3,z_4 \in \C$ it holds $\displaystyle \begin{bmatrix}
        0 & z_3\\
        0 & z_4
    \end{bmatrix} \left(\begin{bmatrix}
        0 & z_3\\
        0 & z_4
    \end{bmatrix}\right)^* \begin{bmatrix}
        0 & z_3\\
        0 & z_4
    \end{bmatrix} = (|z_3|^2 + |z_4|^2 ) \begin{bmatrix}
        0 & z_3\\
        0 & z_4
    \end{bmatrix}.$ By minimizing a quadratic function one can check that $X_\star=0$ is the unique Frobenius projection onto $\mathcal{E}$ of $A=\begin{bmatrix}
        0&8\\
        0&8
    \end{bmatrix}$. However, $X_1 = \begin{bmatrix}
        8 & 3\\
        0 & 0
    \end{bmatrix} \in \mathcal{E}$ and, in the spectral norm,
    \[ 10.88 \approx  \frac{5 + \sqrt{281}{}}{2} = \| A -X_1 \|_2  < \|A-X_\star\|_2 = \| A \|_2 = 8 \sqrt{2} \approx  11.31.                \]

    In the next section, namely in Theorem~\ref{thm:sunui}, we
will characterize the weak UUM property
for unitarily invariant sets.
\end{remark}

\section{Problems solvable by direct methods}\label{sec:directmethods}

Many matrix nearness problems require iterative optimization methods to be approached numerically, but some subclasses display symmetries that allow us to apply a direct method. By this, we do not necessarily mean that a closed formula exists, but that the problem can be decoupled based on a classic matrix decomposition such as the SVD\footnote{Of course, the SVD is itself typically computed by iterative methods \cite{GolubVLoan, TrefethenBau1997}, but given the general difficulty of matrix nearness problems we take the liberty of viewing the numerical solution of an eigenvalue problem as a ``basic" task. When estimating operation counts, we will assume that the
SVD of an $m \times n$ matrix can be numerically approximated, in a fixed working precision, in
$\mathcal{O}(mn \nu )$ flops; see \citet[Section~8.6]{GolubVLoan} for a more thorough discussion.
}. In this section, we now return to focus on the Frobenius distance and, following
\citet{Higham1989}, we begin our discussion of
methods to solve nearness problems with this numerically friendlier class. We split into four subclasses: problems whose target set is a real affine subspace, those whose target set is completely determined by properties of singular values, those where the row and column spaces of the target matrix are prescribed, and a specific problem where both the input and the target matrix are assumed to be normal.

\subsection{Affine subspaces} \label{subsec:affine}

When the target set is an affine subspace of $\F^{m\times n}$, finding the minimizer reduces to a standard least-squares problem after vectorization. 

\begin{theorem} \label{thm:affinedist}
Consider the affine subspace $\mathcal{E} = B + \mathcal{V}$, where $B\in \F^{m\times n}$ and the vector subspace $\mathcal{V}$ has basis $P_1,\dots, P_\ell$. Then, setting 
\begin{equation} \label{defP}
\mathbf{P} = \begin{bmatrix}
    \mvec(P_1) & \cdots & \mvec(P_\ell)
\end{bmatrix} \in \F^{mn\times \ell},
\end{equation}
the projection $X_\star = \Pi_{\mathcal{E}}(A)$ is obtained as
\begin{equation} \label{xstarlstsq}
\mvec(X_\star)  = \mvec(B) + \mathbf{P}\mathbf{P}^\dagger \mvec(A-B),
\end{equation}
or equivalently $\Delta_\star = A - X_\star = \Pi_{\mathcal{V}^\perp}(A-B)$, i.e.,
\begin{equation} \label{deltastarlstsq}
\mvec(\Delta_\star) = (I - \mathbf{P}\mathbf{P}^\dagger) \mvec(A-B).    
\end{equation}
Alternatively, we can specify $\mathcal{E}$ via linear equations: $\mathcal{E} = \{X \in \F^{m\times n} \colon  \mathbf{M} \mvec(X) = b\}$, for given $\mathbf{M}\in\F^{\ell\times mn}, b\in\range(\mathbf{M})$. Then, $X_\star = A - \Delta_\star$, with
\begin{equation} \label{deltastarlstnrm}
    \mvec(\Delta_\star) = \mathbf{M}^\dagger(\mathbf{M}\mvec(A)-b).    
\end{equation}
\end{theorem}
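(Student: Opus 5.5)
Both parts reduce to a standard least-squares problem after vectorization. Throughout, $\mvec$ is a Hilbert space isometry: for $\F=\R$ it maps $\F^{m\times n}$ onto $\R^{mn}$ with the Euclidean norm. For $\F=\C$ with a complex subspace, working with $\mvec$ and the complex inner product gives the same minimizer as the real-Hilbert viewpoint, since the norm is unchanged. Minimizing $\|X-A\|_F$ over $X\in\mathcal{E}$ is therefore equivalent to minimizing $\|\mvec(X)-\mvec(A)\|_2$ over the affine set $\mvec(B)+\range(\mathbf{P})$.

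For the parametric description, I write $X=B+\sum_i c_iP_i$, so that $\mvec(X)=\mvec(B)+\mathbf{P}c$. The problem becomes the linear least-squares problem $\min_c\|\mathbf{P}c-\mvec(A-B)\|_2$. A solution is $c=\mathbf{P}^\dagger\mvec(A-B)$. The optimal point $\mathbf{P}c$ is unique and equals $\mathbf{P}\mathbf{P}^\dagger\mvec(A-B)$, because $\mathbf{P}\mathbf{P}^\dagger$ is the orthogonal projector onto $\range(\mathbf{P})$; this is a standard property of the Moore--Penrose pseudoinverse, checked from the Penrose conditions $(\mathbf{P}\mathbf{P}^\dagger)^*=\mathbf{P}\mathbf{P}^\dagger$ and $\mathbf{P}\mathbf{P}^\dagger\mathbf{P}=\mathbf{P}$. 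This gives \eqref{xstarlstsq}. Subtracting from $\mvec(A)$ gives \eqref{deltastarlstsq}. Since $I-\mathbf{P}\mathbf{P}^\dagger$ is the orthogonal projector onto $\range(\mathbf{P})^\perp$, which is $\mvec(\mathcal{V}^\perp)$, we get $\Delta_\star=\Pi_{\mathcal{V}^\perp}(A-B)$. Equivalently, $\Delta_\star$ lies in the normal cone $\mathcal{V}^\perp$ of $\mathcal{E}$, which matches the convex-set characterization \eqref{normal cone for convex E}.

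For the implicit description, I note that $\mathcal{E}=\{x:\mathbf{M}x=b\}$ is nonempty because $b\in\range(\mathbf{M})$. Its direction space is $\ker\mathbf{M}$, whose orthogonal complement is $\range(\mathbf{M}^*)$, with orthogonal projector $\mathbf{M}^\dagger\mathbf{M}$. By the first part, $\mvec(\Delta_\star)=\mathbf{M}^\dagger\mathbf{M}(\mvec(A)-x_0)$ for any $x_0\in\mathcal{E}$. Since $\mathbf{M}x_0=b$, this equals $\mathbf{M}^\dagger(\mathbf{M}\mvec(A)-b)$, which is \eqref{deltastarlstnrm}. As a direct check, $\mvec(A)-\mathbf{M}^\dagger(\mathbf{M}\mvec(A)-b)$ satisfies the constraint because $\mathbf{M}\mathbf{M}^\dagger b=b$ for $b\in\range(\mathbf{M})$. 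Moreover, the correction lies in $\range(\mathbf{M}^\dagger)=\range(\mathbf{M}^*)\perp\ker\mathbf{M}$, and the Pythagorean theorem then gives optimality.

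The only delicate point is the complex case when $\mathcal{V}$ is merely a real subspace, where $\mvec$ with complex pseudoinverses would be wrong. There I would replace $\mvec$ by $\cvec$ from \eqref{eq:cvec} and run the identical argument in $\R^{2mn}$; I expect this bookkeeping, rather than any estimate, to be the main thing to get right.
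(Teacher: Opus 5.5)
Your proposal is correct and follows essentially the same route as the paper, whose proof just vectorizes, reduces both descriptions to classical least-squares and minimum-norm problems, and cites the standard pseudoinverse results (Golub--Van Loan, Bj\"orck). You spell out the projector properties of $\mathbf{P}\mathbf{P}^\dagger$ and $\mathbf{M}^\dagger\mathbf{M}$ that those references supply, and your remark about switching to $\cvec$ for merely real subspaces of $\C^{m\times n}$ matches the comment the paper makes right after the theorem.
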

\begin{proof}
    These formulas follow from classic results on least-squares problems; see e.g. \citet[Section 6.2.3]{GolubVLoan}, \citet[Theorem~1.2.10]{Bjoerck1996}.
\end{proof}

The formulae~\eqref{defP}--\eqref{deltastarlstnrm} can be turned into direct algorithms by means of classic matrix factorizations, such as the SVD or the QR decomposition; see, e.g., \citet[Sections~5 and~6]{GolubVLoan}. However, the cost of doing so might be prohibitive unless $\ell$ is small; for instance, computing the SVD of the full column rank matrix $\mathbf{P}$ with standard dense algorithms costs $\mathcal{O}(mn\ell^2)$ operations. If we assume for concreteness that $\ell\approx mn$, the cost of dense algorithms is in the worst case $\mathcal{O}(m^3n^3)$, or $\mathcal{O}(n^6)$ if $m=n$. It should however be noted that a special structure of the coefficient matrices $\mathbf{P}$ or $\mathbf{M}$, such as the Kronecker product structure in Theorem~\ref{thm:sun}, can often be exploited to reduce these costs. Alternatively, one can use algorithms for large-scale least-squares problems, e.g., Krylov methods~\cite{Saad2003} (for instance LSQR~\cite{PaigeSaunders}), or randomized numerical linear algebra~\cite{HMT2011} (for instance Blendenpik~\cite{blendenpik, avron2026blendenpik-matlab}). These two algorithms can solve both overdetermined linear systems (as we need in~\eqref{xstarlstsq} and \eqref{deltastarlstsq}) and underdetermined ones (as we need in~\eqref{deltastarlstnrm}).

For many common vector spaces, it is easy to produce natural bases, even orthonormal ones, simplifying the computation.
\begin{example}
    Let $\mathcal{V} \subseteq \F^{3\times 3}$ be the vector subspace of upper triangular Toeplitz matrices. The vectorizations of the three matrices
    \[
    M_1 = \begin{bmatrix}
        1 & 0 & 0\\
        0 & 1 & 0\\
        0 & 0 & 1
    \end{bmatrix}, \quad M_2 = \begin{bmatrix}
        0 & 1 & 0\\
        0 & 0 & 1\\
        0 & 0 & 0
    \end{bmatrix}, \quad
    M_3 = \begin{bmatrix}
        0 & 0 & 1\\
        0 & 0 & 0\\
        0 & 0 & 0
    \end{bmatrix}
    \]
    are orthogonal, since their nonzero entries are in different positions. Then, an orthonormal basis for $\mathcal{V}$ is obtained by normalizing them: $P_1 = \frac{1}{\sqrt{3}}M_1, P_2 = \frac{1}{\sqrt{2}}M_2, P_3 = M_3$. Hence $\mathbf{P}^*\mathbf{P}=I_3$ and $\mathbf{P}^\dagger = \mathbf{P}^*$, so~\eqref{xstarlstsq} reduces to
    \[
            X_\star = P_1 \traccia(P_1^*A)  + P_2 \traccia(P_2^*A) + P_3 \traccia(P_3^*A) = \begin{bmatrix}
        \frac{a_{11}+a_{22}+a_{33}}{3} & \frac{a_{12}+a_{23}}{2} & a_{13}\\
        0 & \frac{a_{11}+a_{22}+a_{33}}{3} & \frac{a_{12}+a_{23}}{2}\\
        0 & 0 & \frac{a_{11}+a_{22}+a_{33}}{3}
    \end{bmatrix}.
    \]
\end{example}

\begin{example}
    Let $\mathcal{E} = \mathcal{V}$ be the vector space of sparse matrices with nonzero pattern $\mathcal{N} \subseteq \{1,\dots,m\} \times\{1,\dots,n\}$; i.e., $\mathcal{V} = \{X \in \mathbb{C}^{m\times n} \colon \text{$X_{ij}=0$ whenever $(i,j)\not\in \mathcal{N}$}\}$. Then, an orthonormal basis for $\mathcal{V}$ is the family of matrices with a single $1$ entry $(e_i e_j^T \colon (i,j)\in\mathcal{N})$. The projection $X_\star = \Pi_{\mathcal{E}}(A)$ is obtained by keeping only the entries of $A$ that belong to the pattern $\mathcal{N}$.
\end{example}

In the case where $\mathcal{E}$ is only a \emph{real} subspace of $\C^{m\times n}$ without a complex structure, analogues of~\eqref{defP}--\eqref{deltastarlstnrm} hold after replacing all the symbols $\mvec(\cdot)$ with $\cvec(\cdot)$; in that case the dimensions become $\mathbf{P} \in \R^{2mn \times \ell}$, $\mathbf{M}\in \R^{\ell\times 2mn}$.

Perhaps the most remarkable example of projections onto a real subspace of $\mathbb{C}^{mn}$ is the following, where formulas can be obtained directly.
\begin{example}
    Given $A \in \mathbb{C}^{n\times n}$, let
    \[
    H = \frac{A+A^*}{2}, \quad K = \frac{A-A^*}{2}
    \]
    be its \emph{Hermitian} and \emph{skew-Hermitian part}, respectively. Then, $H$ is the projection of $A$ onto the (real) subspace of Hermitian matrices, and $K$ is its projection onto the (real) subspace of skew-Hermitian matrices. 
    
    One can prove this fact directly by showing that $\scalar{H,K}=0$ and hence $A = H+K$ is a decomposition into two orthogonal components; we also proved it in Theorem~\ref{thm:FH0} and, in more generality, in Theorem~\ref{thm:pinching applications}.
\end{example}

\subsection{Unitarily invariant sets}\label{sec:uisets}

Another especially convenient class of target sets $\mathcal{E}$ consists of those that can be described entirely by the allowed singular values of their elements. Let us start from a formal definition.

 \begin{definition}\label{def:unitarilyinvariantset}
      A closed set $\mathcal{E} \subseteq \C^{m \times n}$ is called \emph{unitarily invariant} if $\mathcal{E} \neq \varnothing$ and $X \in \mathcal{E} \Rightarrow UXV^* \in \mathcal{E}$ for every pair of unitary matrices $U \in \C^{m \times m},V \in \C^{n \times n}$.
    \end{definition}

Definition~\ref{def:unitarilyinvariantset} immediately implies that singular values alone determine membership of a unitarily invariant set. Therefore, denoting by $\sigma(M)$ the vector of (ordered) singular values of the matrix $M$, a unitarily invariant set always takes the form $\mathcal{E} = \{ X \in \C^{m \times n} : \sigma(X) \in \mathcal{E}_\sigma \}$ for some nonempty closed set $\mathcal{E}_\sigma \subseteq \R^{\nu}$ (where we recall that $\nu=\min\{m,n\}$); we henceforth implicitly assume that $\mathcal{E}_\sigma$ is only allowed to contain nonnegative vectors ordered in a non-increasing manner, and (except in formal theorem statements) we explicitly specify only further constraints. We record some notable instances in Example~\ref{ex:456}.
\begin{example}\label{ex:456}
Among matrix nearness problems whose target set is unitarily invariant, we may mention:
\begin{itemize}
    \item \textbf{Low rank approximation.} The set of matrices having rank at most $r$, that is, $\mathcal{E}=\{ X \in \C^{m \times n} : \rank(X) \leq r\}$  is unitarily invariant. It is associated with the singular values set $\mathcal{E}_\sigma=\{ \sigma \in \R^{\nu } :\sigma_i=0 \ \forall \ i > r \} $.
    \item \textbf{Orthogonal Procrustes problem.}  Assuming $m \geq n$, the set of matrices with orthonormal columns $\mathcal{E} = \{  X \in \C^{m \times n} : X^* X = I \}$  is unitarily invariant; finding the nearest matrix in $\mathcal{E}$ is a special case of the so-called \emph{orthogonal Procrustes problem} \cite{Higham1986}. The corresponding singular values set consists of a single element, the vector of all ones: $\mathcal{E}_\sigma=\{ \sigma \in \R^{\nu } :\sigma_i=1 \ \forall \ i\} $.
    \item \textbf{Nearest matrix with a given norm.}   Given a unitarily invariant norm $\| \cdot \|$ and a constant $c \geq 0$, the problem of finding the nearest (in the Frobenius distance) matrix whose such norm is $c$ is equivalent to projecting onto the unitarily invariant set  $\mathcal{E} = \{ X \in \C^{m \times n} : \| X \| = c \}$. This set is associated with the singular value set $\mathcal{E}_\sigma=\{ \sigma \in \R^{\nu } :\Psi(\sigma)=c \}$, where $\Psi$ is the symmetric gauge function corresponding to the norm $\| \cdot \|$.
\end{itemize}
\end{example}

Similarly to affine sets, distance problems from unitarily invariant sets always admit a solution based on the SVD. To state Theorem~\ref{thm:minimizerforunitinv}, we introduce the notion of an orthogonally invariant subset of $\R^{m \times n}$, i.e., $X \in \mathcal{E} \Rightarrow Q X Z^T \in \mathcal{E}$ for every pair of real orthogonal matrices $Q,Z$ of compatible sizes. Moreover, every SVD in the statement and proof of Theorem~\ref{thm:minimizerforunitinv} is taken over $\F$.

    \begin{theorem}\label{thm:minimizerforunitinv}
        Let $\mathcal{E}=\{ X \in \F^{m \times n} : \sigma(X) \in \mathcal{E}_\sigma \}$ be either a unitarily invariant set (for $\F=\C$) or an orthogonally invariant set (for $\F=\R$), where $\mathcal{E}_\sigma \subseteq \R^{\nu}$ is a nonempty closed set only containing nonnegative and non-increasingly ordered vectors. Let $A \in \F^{m \times n}$ and let $\Sigma \in \R^{m \times n}$ be the diagonal matrix whose diagonal elements  are the singular values of $A$, ordered non-increasingly. Then:
        \begin{enumerate}
            \item If $\sigma_\star \in \mathcal{E}_\sigma$ is a minimizer of the vector distance problem
            \begin{equation}\label{eq:vectorproblem}
                \sigma_\star \in \mathrm{argmin}_{x \in \mathcal{E}_\sigma} \|x-\diag(\Sigma)\|_2^2,
            \end{equation}
        and $A=U \Sigma V^*$ is an arbitrary SVD of $A$ over $\F$, then $X_\star = U \diagm(\sigma_\star) V^*$ is a minimizer of the matrix distance problem
\begin{equation}\label{eq:matrixproblem}
    X_\star \in \mathrm{argmin}_{X \in \mathcal{E}} \|X-A\|_F^2.
\end{equation}
        \item Conversely, if $X_\star$ is a minimizer of the matrix distance problem \eqref{eq:matrixproblem}, then its vector of ordered singular values $\sigma_\star$ minimizes the vector distance problem \eqref{eq:vectorproblem} and there exist either unitary (if $\F=\C$) or real orthogonal (if $\F=\R$) matrices $U,V$ such that $X_\star=U\diagm(\sigma_\star)V^*$ and $A=U \Sigma V^*$.
        \end{enumerate}
    \end{theorem}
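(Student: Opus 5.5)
The whole argument rests on one lower bound: for every $X \in \mathcal{E}$, the Frobenius distance $\|X-A\|_F^2$ is at least $\|\sigma(X)-\diag(\Sigma)\|_2^2$. I would obtain this from Von Neumann's trace inequality, which the paper already uses in the proof of Lemma~\ref{lem:subgradient}. Expanding the square gives
\[ \|X-A\|_F^2 = \|X\|_F^2 + \|A\|_F^2 - 2\Re\traccia(X^*A). \]
The first two terms equal $\|\sigma(X)\|_2^2$ and $\|\diag(\Sigma)\|_2^2$, because the Frobenius norm is unitarily (respectively orthogonally) invariant. For the cross term, $\Re\traccia(X^*A) \le |\traccia(X^*A)| \le \sigma(X)^T\diag(\Sigma)$, where both singular value vectors are ordered non-increasingly. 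Combining these gives
\[ \|X-A\|_F^2 \ge \|\sigma(X)-\diag(\Sigma)\|_2^2. \]
Since $\sigma(X) \in \mathcal{E}_\sigma$, the right-hand side is bounded below by the optimal value of \eqref{eq:vectorproblem}. So the optimal value of the matrix problem \eqref{eq:matrixproblem} is at least that of the vector problem.

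For item 1, I would check that $X_\star = U\diagm(\sigma_\star)V^*$ attains this lower bound. It lies in $\mathcal{E}$ because $\sigma_\star$ is nonnegative and ordered, hence is exactly the singular value vector of $X_\star$, and it belongs to $\mathcal{E}_\sigma$. Its distance to $A$ is $\|U(\diagm(\sigma_\star)-\Sigma)V^*\|_F = \|\sigma_\star-\diag(\Sigma)\|_2$, so equality holds. Over $\F=\R$ the same argument works with orthogonal matrices, so $X_\star$ is real, and the real version of Von Neumann's inequality applies in the same way.

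For item 2, let $X_\star$ minimize \eqref{eq:matrixproblem}. By item 1 the two optimal values coincide, so the chain of inequalities above must be tight at $X_\star$. Tightness of the last step means $\sigma(X_\star)$ minimizes \eqref{eq:vectorproblem}. Tightness of the middle step means equality holds in Von Neumann's inequality, $\Re\traccia(X_\star^*A)=\sigma(X_\star)^T\sigma(A)$.

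The main obstacle is the equality case of Von Neumann's trace inequality: when $\Re\traccia(X^*A)=\sigma(X)^T\sigma(A)$, there must exist a simultaneous SVD with the same $U,V$ and the singular values matched in order. This is a classical fact (e.g.\ Theobald, or Lewis' work on unitarily invariant functions), and I would cite it. A self-contained alternative is to note that equality forces $\|X_\star-A\|_F=\|\sigma(X_\star)-\sigma(A)\|_2$, i.e.\ equality in Mirsky's inequality. One can then argue blockwise via the Hermitian dilation: equality in the corresponding eigenvalue trace inequality forces a common eigenbasis of the two commuting dilations, ordered compatibly, and this yields $U,V$ such that $X_\star=U\diagm(\sigma_\star)V^*$ and $A=U\Sigma V^*$. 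In the real case the same reasoning with real symmetric dilations produces orthogonal $U,V$. Zero and repeated singular values need some care here, but they are handled by the freedom to choose bases within each eigenspace.
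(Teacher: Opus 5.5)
Your proposal is correct and follows essentially the same route as the paper: the lower bound $\|X-A\|_F^2 \ge \|\sigma(X)-\diag(\Sigma)\|_2^2$ via Von Neumann's trace inequality, attainment of that bound by $U\diagm(\sigma_\star)V^*$, and, for item 2, forcing equality in the trace inequality and invoking its classical equality case (the paper cites \citet[Theorem~2.1]{Lewis1995}, which is the result you name). Your dilation-based alternative is optional and its handling of zero and repeated singular values is only sketched, but the citation already closes the argument, as it does in the paper.
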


    \begin{proof}
    \begin{enumerate}
        \item  Suppose first that $\sigma_\star$ minimizes the vector distance problem \eqref{eq:vectorproblem}. Then,
       \[ \min_{X \in \mathcal{E}} \| A - X \|_F^2 = \min_{Y=U^* X V \in \mathcal{E}} \| \Sigma - Y \|_F^2 = \| \Sigma\|_F^2 + \min_{Y \in \mathcal{E}} \left( \| Y \|_F^2  -2 \Re \traccia(\Sigma^T Y)\right) \]
        Denoting by $y \in \mathcal{E}_\sigma$ the (non-increasingly ordered) vector of singular values of $Y$, by Von Neumann's trace inequality \cite[Problem 10, Section 3.3]{HJtopics} we have $\Re \traccia(\Sigma^T Y) \leq \diag(\Sigma)^T y$. Therefore,
       \[\min_{X \in \mathcal{E}} \| A - X \|_F^2 \geq \min_{y \in \mathcal{E}_\sigma}  \| \diag(\Sigma)-y \|_2^2 = \| \diag(\Sigma)-\sigma_\star\|_2^2,   \] 
       and this lower bound for the minimum is attained by taking $X=U \diagm(\sigma_\star) V^*$, thus proving that the latter matrix minimizes \eqref{eq:matrixproblem}. 
       \item Conversely, suppose that the matrix $X_\star$, having ordered singular value vector $\sigma_\star$, minimizes the matrix distance problem \eqref{eq:matrixproblem}. Take an arbitrary SVD $A=U_A \Sigma V_A^*$. Then, the inequality $\Re \traccia(\Sigma^T U_A^* X_\star V_A)  \leq \diag(\Sigma)^T \sigma_\star$ must be attained as an equality; by \citet[Theorem 2.1]{Lewis1995}, this implies the existence of the unitary matrices $U,V$ having the desired property. Suppose now that $\sigma_\star$ does not minimize \eqref{eq:vectorproblem}, and let $\hat{\sigma}$ be an ordered vector such that $\| \hat{\sigma}-\diag(\Sigma)\|_2 < \| \sigma_\star - \diag(\Sigma)\|_2$. Then, by following the same arguments in the proof of item $1$, we reach the contradiction $\| U \diagm(\hat{\sigma})V^* -A \|_F < \| X_\star - A \|_F$.     
    \end{enumerate}
    \end{proof}

    Theorem~\ref{thm:minimizerforunitinv} is a powerful result that reduces a matrix nearness problem to a potentially much lower-dimensional one on vectors. Furthermore, its applicability goes even beyond unitarily invariant sets, as other types of inverse problems can sometimes be reduced to unitarily invariant sets. A striking example is the nearest Kronecker product problem: Given $A\in\mathbb{C}^{mn\times pq}$, find the nearest matrix of the form $B\otimes C$, with $B\in\mathbb{C}^{m\times p}, C\in\mathbb{C}^{n\times q}$. \citet{VLP1993} show that for this problem, even if the target set is not unitarily invariant itself, a clever isometry can be used to reshape the input and target matrices. The ultimate result of this trick is that the problem is shown to be equivalent (exclusively in the Frobenius distance) to the nearest matrix of rank at most $1$, whose target set is unitarily invariant.

    Given its importance, we now give some further remarks on Theorem~\ref{thm:minimizerforunitinv}, of both theoretical and algorithmic nature.

\begin{remark}
     Unlike affine sets, unitarily invariant sets need not be convex, and hence uniqueness is not guaranteed for the matrix minimizer $X_\star$ in Theorem~\ref{thm:minimizerforunitinv}. This does not contradict Theorem~\ref{thm:minimizerforunitinv}, for two reasons. First, $\mathcal{E}_\sigma$ may in turn not be convex and hence the minimizer $\sigma_\star$ of the vector distance problem need not be unique itself. Second, even if $\sigma_\star$ is unique, the non-uniqueness of the SVD can potentially create multiple matrix minimizers $X_\star$ sharing the same singular values. The latter phenomenon can be illustrated by the Procrustes problem when the input matrix $A$ is rank deficient; the minimizer of the vector problem is always unique and equal to the vector of all ones but still there can be many matrix minimizers. For the most extreme example, when $m=n$ and $A=0$, observe that every unitary matrix $U$ is at exactly the same Frobenius distance $\| U-A \|_F=\sqrt{n}$.
\end{remark}
   
\begin{remark}
Theorem~\ref{thm:minimizerforunitinv} provides a very elegant reduction from either $2mn$ ($\F=\C$) or $mn$ ($\F=\R$) to $\nu$ real dimensions. Nonetheless, it is worth emphasizing that solving the reduced vector distance problem is not necessarily easy. Thus, matrix distance problems to unitarily invariant sets are solvable by direct methods \emph{provided} that we can solve the corresponding vector distance problem by direct methods. This can be very far from being true. For example, one can take $\mathcal{E}_\sigma$ to be the intersection of a submodule of $\mathbb{Z}^{\nu}$ with the set of ordered nonnegative real vectors and argue that even computing an approximate, but possibly suboptimal, solution is NP-hard \cite{DKRS}. On the other hand, for many problems of practical interest, the vector projection turns out to be much easier; for example, here is how to solve the three problems listed in Example~\ref{ex:456}:
\begin{enumerate}
    \item To compute the nearest matrix of rank at most $r$ one simply sets to zero the singular values beyond the $r$-th;
    \item In the Procrustes problem, $\mathcal{E}_\sigma$ contains only one element and hence the projection is trivial. Note, however, that a nearest matrix in $\mathcal{E}$ is not necessarily unique, e.g., every unitary matrix is at the same distance from the zero matrix;
    \item The matrix nearest to $A \neq 0$ and of Frobenius norm $c$ is $X_\star=(c / \| A\|_F)  A$ (if $A=0$, the problem is trivial in the sense that every matrix in the feasible set is a minimizer). For a more general unitarily invariant norm one needs to project the singular values of $A$ onto the ``sphere" $\Psi(x)=c$. For example, if $\mathcal{E}$ is given by spectral norm constraint $\| X \|_2=c$, the associated symmetric gauge function is the vector infinity norm $\Psi(x)=\| x \|_\infty$, and hence the projection is $\sigma_1 \mapsto c$ and $\sigma_i \mapsto \min(c,\sigma_i)$ for $i>1$. 
\end{enumerate}
\end{remark}

Another remarkable feature of unitarily invariant sets is that it is relatively simple to characterize when they possess the weak UUM property defined in Section~\ref{sec:uinorms}, thus explaining why certain classic results \cite{Schmidt1907, EY36, FH55, Mirsky60, Higham1986} hold simultaneously for every unitarily invariant norm. We  record this in Theorem~\ref{thm:sunui}; to state it, recall, e.g. from \citet[Chapter~II]{bhatia_book}, that the nonnegative non-increasingly ordered vector $x \in \R^n$ weakly majorizes the nonnegative non-increasingly ordered vector $y \in \R^n$ if $\sum_{i=1}^k x_i \geq \sum_{i=1}^k y_i$ for all $k=1,\dots,n$.

\begin{theorem}\label{thm:sunui}
    Let $\mathcal{E}=\{ X \in \C^{m \times n} : \sigma(X) \in \mathcal{E}_\sigma \}$ be a unitarily invariant set, where $\mathcal{E}_\sigma \subseteq \R^{\nu}$ is a nonempty closed set only containing nonnegative and non-increasingly ordered vectors. Then, $\mathcal{E}$ has the weak UUM property if and only if, for every vector of (non-increasingly ordered) singular values $\sigma \in \R^{\nu}$, there exists $\sigma_\star \in \mathcal{E}_\sigma$ (possibly depending on $\sigma$) such that, for all $y \in \mathcal{E}_\sigma$ and after reordering the two following vectors nonincreasingly, $|\sigma-y|$ weakly majorizes $|\sigma-\sigma_\star|$.
\end{theorem}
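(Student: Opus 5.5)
The plan is to reduce both directions to Ky Fan norms via Theorem~\ref{thm:dom} (item 1) and Ky Fan's Domination Theorem, and then pass between matrices and singular value vectors. The key matrix-to-vector tool is Mirsky's inequality: for any $X,A$ and any unitarily invariant norm, $\|X-A\| \geq \|\diagm(\sigma(X)) - \diagm(\sigma(A))\|$. Equivalently, $\sigma(X-A)$ weakly majorizes the reordered $|\sigma(X)-\sigma(A)|$; see \citet[Section~IV.3]{bhatia_book}. Throughout, fix $A$ with SVD $A=U\Sigma V^*$ and $\sigma=\diag(\Sigma)$.

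\emph{Sufficiency.} Assume the vector condition and let $\sigma_\star\in\mathcal{E}_\sigma$ be the vector it provides for $\sigma$. Set $X_\star=U\diagm(\sigma_\star)V^*\in\mathcal{E}$, so that $X_\star-A = U\diagm(\sigma_\star-\sigma)V^*$ has singular values given by the reordering of $|\sigma-\sigma_\star|$. For any $X\in\mathcal{E}$ with $y=\sigma(X)\in\mathcal{E}_\sigma$, Mirsky gives $\|X-A\|_{(k)}\geq$ the sum of the $k$ largest entries of $|\sigma-y|$. By hypothesis this is at least the sum of the $k$ largest entries of $|\sigma-\sigma_\star|$, which equals $\|X_\star-A\|_{(k)}$. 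Hence $X_\star$ is a minimizer for every Ky Fan norm, and therefore for every unitarily invariant norm. This already establishes the weak UUM property by definition; it is not even necessary to check separately that $X_\star$ is a Frobenius projection.

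\emph{Necessity.} Assume the weak UUM property and fix $\sigma$; realize it as $A=\diagm(\sigma)$. Let $X_\star$ be a universal minimizer. By Theorem~\ref{thm:dom} (item 2), we may take $X_\star$ to be any Frobenius projection, since all of these are universal minimizers with the same singular values of $X_\star-A$. By Theorem~\ref{thm:minimizerforunitinv}, choose the Frobenius projection $X_\star=U\diagm(\sigma_\star)V^*$ with $A=U\diagm(\sigma)V^*$ and $\sigma_\star\in\mathcal{E}_\sigma$. Then the singular values of $X_\star-A$ are the reordered $|\sigma-\sigma_\star|$. Now take any $y\in\mathcal{E}_\sigma$ and the competitor $X=\diagm(y)\in\mathcal{E}$. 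Since $X-A=\diagm(y-\sigma)$, its singular values are exactly the reordered $|\sigma-y|$. Universality gives $\|X_\star-A\|_{(k)}\leq\|X-A\|_{(k)}$ for all $k$, which is precisely the statement that the reordered $|\sigma-y|$ weakly majorizes the reordered $|\sigma-\sigma_\star|$.

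The main obstacle is sufficiency. It hinges on the Mirsky-type majorization for singular values of a difference, which relates the matrix distance to the vector distance; this must be invoked from \citet{bhatia_book} (Theorem IV.3.4 there, via Lidskii/Wielandt applied to the Jordan–Wielandt Hermitian dilation). The necessity direction is direct once diagonal competitors are used. A minor point to check is that Theorem~\ref{thm:minimizerforunitinv} still applies when the chosen SVD of $A$ is not the one handed to us; item 2 of that theorem supplies compatible $U,V$.
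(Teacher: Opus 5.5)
Your proof is correct. The sufficiency direction is the same as the paper's: take an SVD $A=U\diagm(\sigma)V^*$, set $X_\star=U\diagm(\sigma_\star)V^*$, bound every competitor from below with Mirsky's inequality, and pass to all unitarily invariant norms with Ky Fan's domination theorem.

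For necessity you take a different route. The paper keeps an arbitrary universal minimizer $X_\star$ and sets $\sigma_\star=\sigma(X_\star)$. It then applies Mirsky's inequality a second time, obtaining $\|\diagm(\sigma-\sigma_\star)\|\le\|A-X_\star\|\le\|A-U\diagm(y)V^*\|=\|\diagm(\sigma-y)\|$. So one tool serves both directions, and no alignment of singular vectors between $X_\star$ and $A$ is needed.

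You instead use Theorem~\ref{thm:dom}(2) to replace $X_\star$ by the SVD-aligned Frobenius projection supplied by Theorem~\ref{thm:minimizerforunitinv}(1). Then $\|X_\star-A\|_{(k)}$ equals the Ky Fan norm of $\diagm(\sigma-\sigma_\star)$ exactly, and Mirsky's inequality is not needed in this direction. The cost is reliance on two earlier results, and hence on Lemma~\ref{lem:majorizedonsphere} and von Neumann's trace inequality. The benefit is that your argument shows directly that \emph{every} Euclidean nearest point of $\sigma$ in $\mathcal{E}_\sigma$ works as the witness $\sigma_\star$, not just the singular value vector of one given universal minimizer.
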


\begin{proof}
 Let $A=U\diagm(\sigma)V^*$ be an SVD, and for every $y\in\mathcal{E}_\sigma$ consider the matrix valued function
    \[ F: \mathcal{E}_\sigma \rightarrow \C^{m \times n}, \qquad y \mapsto F(y)= U \diagm(y) V^*.\]
For every $X\in\mathcal{E}$ with singular values equal to the elements of the vector $y$
and for every unitarily invariant norm,
by \citet[Theorem~5]{Mirsky60} we have
\begin{equation}\label{eq:mirsky}
 \|A-F(y)\|
 = \|\diagm(\sigma-y)\|
 \leq \|A-X\|.
\end{equation}
We first prove sufficiency. If the weak majorization assumption holds, then Ky Fan's
domination theorem \cite{KyFan51} (see also the proof of Theorem~\ref{thm:dom}) and \eqref{eq:mirsky} yield $\|A-F({\sigma_\star})\|\leq\|A-F(y)\|$ for every $y\in\mathcal{E}_\sigma$ and every unitarily invariant norm. Hence, $F({\sigma_\star})$ minimizes the distance from $A$ induced by an arbitrary unitarily invariant norm.

Conversely, suppose that $X_\star$ is a common minimizer for every unitarily invariant distance, and let $\sigma_\star$ be the vector whose elements are the singular values of $X_\star$, ordered non-increasingly. Then, for every $y \in \mathcal{E}_\sigma$ and for every unitarily invariant norm
\[ \|\diagm(\sigma-\sigma_\star)\| \leq \|A-X_\star\| \leq \|A-F(y)\| = \| \diagm(\sigma-y)\|.\]  
In particular, we can specialize the inequality to the Ky Fan norms $\|\cdot\|_{(i)}$, for $i=1,\dots,\nu$. This proves necessity.
\end{proof}

We illustrate Theorem~\ref{thm:sunui} with four examples of unitarily invariant sets, of which the first three have the weak UUM property, but the last one does not. The first two examples are once again a reprise of Example~\ref{ex:456}.

\begin{example}\label{ex:seym}
   Let $\mathcal{E}=\{ X \in \C^{m \times n} : \rank(X) \leq r \}$ be the set of matrices of rank at most $r$. Recall from Example~\ref{ex:456} that $\mathcal{E}$ is unitarily invariant, with $\mathcal{E}_\sigma$ being defined by the constraint that the bottom $\nu-r$ elements must be zero. Given an input matrix $A$ with singular values $\sigma_1(A) \geq \sigma_2(A) \geq \dots \geq \sigma_{\nu}(A)$, the vector distance minimizer is obviously $\sigma_\star=\begin{bmatrix}
    \sigma_1(A) & \dots & \sigma_r(A) & 0 & \dots & 0
\end{bmatrix}^T$. Moreover, minimality of $|\sigma(A)-\sigma_\star|$ in the sense of weak majorization is manifest, because this vector is minimal elementwise:
\begin{equation}\label{eq:seym}
     |\sigma(A)-\sigma_\star| = \begin{bmatrix}
    0\\
    \vdots\\
    0\\
    \sigma_{r+1}(A)\\
    \vdots\\
    \sigma_{\nu}(A)
\end{bmatrix} \leq  \begin{bmatrix}
    |y_1-\sigma_1(A)|\\
    \vdots\\
    |y_r-\sigma_r(A)|\\
    \sigma_{r+1}(A)\\
    \vdots\\
    \sigma_{\nu}(A)
\end{bmatrix}=|\sigma(A)-y| \ \forall \ y \in \mathcal{E}_\sigma  .
\end{equation}
Via Theorem~\ref{thm:sunui}, \eqref{eq:seym} yields a proof of the celebrated Schmidt--Eckart--Young--Mirsky Theorem \cite{Schmidt1907, EY36, Mirsky60}.
 \end{example} 

\begin{example}
    In the special case of the orthogonal Procrustes problem discussed in \ref{ex:456}, $\# \mathcal{E}_\sigma =1$, and hence Theorem~\ref{thm:sunui} trivially applies, showing that every Frobenius projection universally minimizes the distance for every unitarily invariant norm.
 \end{example}

\begin{example}
    Another slightly more complicated structure is the following one from~\citet{DPRV}.

    For a fixed $r < n=m$, let $\mathcal{E} = \{Q + UV^* \colon Q^*Q = I_n,\, U,V\in\F^{n\times r} \}$ be the set of unitary-plus-rank-$\leq r$ matrices. \citet[Theorem~3]{DPRV} show that $A\in \mathcal{E}$ if and only if $A$ has at most $r$ singular values strictly larger than $1$ and at most $r$ strictly smaller; hence the associated singular value set is
    $\mathcal{E}_\sigma = \{\sigma \colon \text{$\sigma_{r+1} \leq 1$ and $\sigma_{n-r} \geq 1$} \}$.
    For a given $A = U \diagm(\sigma) V^*$, the vector distance minimizer $\sigma_\star$ has elements
    \[
    (\sigma_\star)_j = \begin{cases}
        \max(\sigma_j, 1) & j \leq \min(r, n-r),\\
        \sigma_j & n-r < j \leq r,\\
        1 & r < j \leq n-r,\\
        \min(\sigma_j, 1) & j > \max(r, n-r).
    \end{cases}
    \]
    (Note that the condition in the second line takes effect only when $n<2r$, and the one in the third only when $n>2r$.) As in Example~\ref{ex:seym}, $\sigma_\star$ is an elementwise optimum: $\abs{(\sigma_\star)_j - \sigma_j} \leq \abs{y_j-\sigma_j}$ for each $j$ and $y \in \mathcal{E}_\sigma$; hence, $X_\star = U \diagm(\sigma_\star)V^*$ is a UUM.

    This result has an analogue for the set $\mathcal{E} = \{H + UV^* \colon H=H^*,\, U,V\in\F^{n\times r}\}$ of Hermitian-plus-rank-$\leq r$ matrices. Let $S_A=S_A^*$ be the matrix such that $iS_A$ is the anti-Hermitian part of $A$, i.e., $S_A = \frac{A-A^*}{2i}$, and take an eigenvalue decomposition $S_A = W\diagm(\lambda_1,\dots,\lambda_n)W^*$, where the $\lambda_j$ are sorted non-increasingly. \citet[Theorem~12]{DPRV} show that $A\in\mathcal{E}$ if and only if $\lambda_{r+1}\leq 0 \leq \lambda_{n-r}$, and by a similar argument (see also Section~\ref{subsec:spectral normal} below) $S_\star = W\diagm(\lambda_\star)W^*$ with
    \[
    (\lambda_\star)_j = \begin{cases}
        \max(\lambda_j,0) & j \leq \min(r,n-r),\\
        \lambda_j & n-r < j \leq r,\\
        0 & r < j \leq n-r,\\
        \min(\lambda_j,0) & j > \max(r,n-r)
    \end{cases}
    \]
    is a nearest matrix to $S_A$ satisfying $\lambda_{r+1}\leq 0 \leq \lambda_{n-r}$. If we set $\Delta_\star = i(S_A-S_\star)$, then $X_\star = A - \Delta_\star \in \mathcal{E}$, since $S_{X_\star} = S_\star$, and moreover for any other $X \in \mathcal{E}$ and every unitarily-invariant norm we have
    \[
    \norm{A-X} \geq \norm{S_{A-X}} = \norm{S_A - S_X} \geq \norm{S_A - S_{X_\star}} = \norm{\Delta_\star}.
    \]
    Hermitian-plus-rank-$r$ and unitary-plus-rank-$r$ matrices appear in the study of short recurrences for Krylov spaces (see~\citet{Liesen2008} and the references therein), and in structured algorithms for eigenvalue problems \cite{eidelman_book,aurentz_book}.
\end{example}

\begin{example}
    Consider the unitarily invariant set \[ \C^{2 \times 2} \supseteq \mathcal{E} := \{ 3 Q : Q \in \C^{2 \times 2}, Q^*Q=I \} \cup \{ 4 uv^* : u,v \in \C^2, \| u \|_2 = \|v\|_2=1  \},\] which is associated with the singular values set $\mathcal{E}_\sigma = \left\{ \begin{bmatrix}
        4\\
        0
    \end{bmatrix}, \begin{bmatrix}
        3\\
        3
    \end{bmatrix}  \right\} \subseteq \R^2$. To see that $\mathcal{E}$ does not have the weak UUM property, take the input $A=0$. Then, for every $Q,u,v$ as in the definition of $\mathcal{E}$, we have in the spectral norm \(  \| 3 Q\|_2=3 < 4 = \| 4 u v^*\|_2   \) but in the Frobenius norm \( \| 4 u v^*\|_{F}= 4 < 3 \sqrt{2} =\|3 Q \|_{F}   \).
 \end{example}

\subsection{Prescribed row and column spaces}
Given two matrices $B\in\mathbb{C}^{m\times p}$, $C\in\mathbb{C}^{q\times n}$, consider the subspace 
\[
\mathcal{E}_{B,C} = \{BYC \colon Y \in \mathbb{C}^{p\times q} \}\subseteq \mathbb{C}^{m\times n}.
\]
Despite the similar-looking definition, this structure is not a special case of~\eqref{sunstructure}; rather, it is the orthogonal complement of a set in the form~\eqref{sunstructure}. One can see that $X \in \mathcal{E}_{B,C}$ if and only if $\range(X)  \subseteq \range(B)$ and $\range(X^*) \subseteq \range(C^*) $: The nontrivial inclusion follows from noting that $BB^\dagger$  and $C^\dagger C$ are the orthogonal projections onto $\range(B) $ and $\range(C^*) $, and if $X$ satisfies the two image inclusions then $BB^\dagger XC^\dagger C = X$.

The results on affine spaces in Section~\ref{subsec:affine} apply to $\mathcal{E}_{B,C}$, but one can obtain also a more compact formula for the nearest matrix $\Pi_{\mathcal{E}_{B,C}}(A)$ that does not require vectorization.
\begin{theorem}[\citet{Penrose56}]
    The (Frobenius-norm) projection of $A\in\mathbb{C}^{m\times n}$ onto $\mathcal{E}_{B,C}$ (i.e., the solution in the least-squares sense of $A=BYC$) is $BY_\star C$ with $Y_\star = B^\dagger AC^\dagger $, where $\cdot^\dagger $ denotes the Moore--Penrose pseudoinverse.
\end{theorem}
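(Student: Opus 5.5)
The plan is to verify the characterization of orthogonal projection directly. Set $X_\star := B B^\dagger A C^\dagger C = BY_\star C$ with $Y_\star = B^\dagger A C^\dagger$. It is enough to show two things: $X_\star \in \mathcal{E}_{B,C}$, which is obvious from its form, and $A - X_\star \perp \mathcal{E}_{B,C}$ in the real inner product $\langle M, N\rangle = \Re\traccia(M^*N)$. Since $\mathcal{E}_{B,C}$ is a (complex, hence real) linear subspace, these two facts characterize the Frobenius projection, exactly as in Theorem~\ref{thm:affinedist}. Uniqueness of the nearest point then comes for free from convexity.

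For the orthogonality, take an arbitrary element $BYC$ and write
\[
\traccia\bigl((BYC)^*(A - BB^\dagger A C^\dagger C)\bigr) = \traccia\bigl(Y^* B^*(A - BB^\dagger AC^\dagger C)C^*\bigr).
\]
The key identities are the Penrose equations $B^*BB^\dagger = B^*$ (because $BB^\dagger$ is the Hermitian orthogonal projector onto $\range(B)$) and $C^\dagger C C^* = C^*$ (because $C^\dagger C$ is the Hermitian projector onto $\range(C^*)$). With these, $B^*BB^\dagger A C^\dagger C C^* = B^* A C^*$, so the middle factor vanishes and the inner product is zero for every $Y$.

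Alternatively, one can reuse the result quoted just before the statement: $\mathcal{E}_{B,C}$ is the orthogonal complement of a space of the form \eqref{sunstructure}. More directly, $\mathcal{E}_{B,C} = \{X : X = BB^\dagger X C^\dagger C\}$, and the projector onto it is the map $X \mapsto \Pi_{\range(B)} X \Pi_{\range(C^*)}$, as in the proof of Theorem~\ref{thm:sun}. This gives the same formula.

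The only mildly delicate point is the interpretation "solution in the least-squares sense of $A = BYC$". When $B$ or $C$ is rank deficient, $Y$ is not unique. Here $Y_\star = B^\dagger A C^\dagger$ is one minimizer (in fact the minimum-norm one, since it lies in $\range(B^*)\times\range(C)$ in the appropriate sense), while the product $BY_\star C$ is unique. I would remark on this, but not belabor it, since the statement only concerns the projection $BY_\star C$.
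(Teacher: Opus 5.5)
Your proof is correct, but it takes a different route from the paper. The paper vectorizes. It observes that $\mvec(\mathcal{E}_{B,C}) = \range(C^\top\otimes B)$ and invokes the standard fact that the pseudoinverse solves the least-squares problem. It then uses the Kronecker identity $(C^\top\otimes B)^\dagger = (C^\dagger)^\top\otimes B^\dagger$ to fold the answer back into $Y_\star = B^\dagger A C^\dagger$.

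You instead stay at the matrix level and verify the orthogonality characterization of the projection directly. Your only ingredients are the Penrose identities $B^*BB^\dagger = B^*$ and $C^\dagger CC^* = C^*$. This is essentially the unvectorized core of what one must check anyway to prove the Kronecker pseudoinverse identity, so your argument is more self-contained. It also exhibits the projector $X\mapsto \Pi_{\range(B)}X\Pi_{\range(C^*)}$ explicitly, which matches the picture in the proof of Theorem~\ref{thm:sun}.

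What the paper's route gives for free is the statement about $Y_\star$ itself. Since the pseudoinverse yields the minimum-norm least-squares solution, $Y_\star$ is automatically the smallest-norm $Y$ with $BYC$ nearest to $A$; you only assert this in passing. If you want that claim, note that $Y_\star = B^*(BB^*)^\dagger A\,(C^*C)^\dagger C^*$ lies in the range of the adjoint map $Z\mapsto B^*ZC^*$. That is, it lies in the orthogonal complement of $\{Y : BYC = 0\}$.
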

\begin{proof}
 This follows from facts that are now standard on vectorization, Kronecker products, and pseudoinverses: $\mvec(\mathcal{E}_{B,C}) = \range(C^\top \otimes B)$, so the solution of $(C^\top \otimes B)\mvec(Y)=\mvec(A)$  is given by
\[
\mvec(Y_\star ) = (C^\top \otimes B)^\dagger  \mvec(A) = ((C^\dagger )^\top \otimes B^\dagger )\mvec(A) = \mvec(B^\dagger AC^\dagger ).
\]
\end{proof}
\citet{friedlandtorokhti07, lim22, lim26} study extensions in which one adds additional constraints on $Y$: Given a nonempty closed set $\mathcal{Y}\subseteq \mathbb{C}^{p\times q}$, define
\begin{equation} \label{limstructure}
    \mathcal{E}_{B,C,\mathcal{Y}} = \{BYC \colon Y \in \mathcal{Y} \}\subseteq \mathbb{C}^{m\times n}.    
\end{equation}
It is interesting to note that $\mathcal{E}_{B,C,\mathcal{Y}}$ is not automatically closed, even if $\mathcal{E}_{B,C}$ and $\mathcal{Y}$ are, as shown in the following example, which is a variant of one in~\cite[p.~1721]{lim22}.

\begin{example}
Let $B=I_2$, $C=\begin{bsmallmatrix}
    0 & 0\\
    0 & 1
\end{bsmallmatrix}$, $\mathcal{Y}$ the closed cone of symmetric positive semidefinite (SPSD) matrices. Then,
\[
BYC = \begin{bmatrix}
    0 & y_{12}\\
    0 & y_{22}
\end{bmatrix}.
\]
For every $\varepsilon>0$ the matrix $A_{\varepsilon} = \begin{bsmallmatrix}
    0 & 1\\
    0 & \varepsilon
\end{bsmallmatrix}$ belongs to $\mathcal{E}_{B,C,\mathcal{Y}}$, since there is an SPSD matrix $Y$ with second column $\begin{bsmallmatrix}
    1\\
    \varepsilon
\end{bsmallmatrix}$, for instance $\begin{bsmallmatrix}
    \varepsilon^{-1} & 1\\
    1 & \varepsilon
\end{bsmallmatrix}$, but $A_{0}\not\in \mathcal{E}_{B,C,\mathcal{Y}}$, since no matrix with second column $\begin{bsmallmatrix}
    1\\0
\end{bsmallmatrix}$ is SPSD. This shows that $\mathcal{E}_{B,C,\mathcal{Y}}$ is not closed. In particular, for instance, in the distance problem $d\left(\begin{bsmallmatrix}
    0 & 1\\
    0 & 0
\end{bsmallmatrix}, \mathcal{E}_{B,C,\mathcal{Y}}\right)$ the minimum is not attained by a point in $\mathcal{E}_{B,C,\mathcal{Y}}$.
\end{example}

The following distance formula simplifies nearness problems on $\mathcal{E}_{B,C,\mathcal{Y}}$.
\begin{lemma}{\cite{lim22}}
    Let
    \[
    B = U_B \begin{bmatrix}
        S_B & 0\\
        0 & 0
    \end{bmatrix}V_B^*, \quad
    C = U_C \begin{bmatrix}
        S_C & 0\\
        0 & 0
    \end{bmatrix}V_C^*
    \]
    be SVDs, where we have chosen the block sizes so that $S_B, S_C$ are square invertible. Partition consistently
    \[
    U_B^*AV_C =
    \begin{bmatrix}
        A_{11} & A_{12}\\
        A_{21} & A_{22}
    \end{bmatrix}, \quad V_B^*YU_C = \begin{bmatrix}
        Y_{11} & Y_{12}\\
        Y_{21} & Y_{22}
    \end{bmatrix}.
    \]
    Then,
    \begin{equation} \label{limblockformula}
        \norm{A-BYC}_F^2 = \norm{A_{11}-S_B Y_{11}S_C}_F^2 + \norm{A_{12}}_F^2 + \norm{A_{21}}_F^2 + \norm{A_{22}}_F^2.    
    \end{equation}
\end{lemma}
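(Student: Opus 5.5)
The plan is to use the unitary invariance of the Frobenius norm to rotate everything into the singular bases of $B$ and $C$, and then read off the block structure. Since $U_B$ and $V_C$ are unitary, we have
\[
\norm{A-BYC}_F = \norm{U_B^*(A-BYC)V_C}_F = \norm{U_B^*AV_C - U_B^*BYCV_C}_F .
\]
The first term $U_B^*AV_C$ is exactly the block matrix $\begin{bsmallmatrix} A_{11} & A_{12}\\ A_{21} & A_{22}\end{bsmallmatrix}$ from the statement.

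For the second term, substitute the SVDs, giving $U_B^*B = \begin{bsmallmatrix} S_B & 0\\ 0 & 0\end{bsmallmatrix}V_B^*$ and $CV_C = U_C\begin{bsmallmatrix} S_C & 0\\ 0 & 0\end{bsmallmatrix}$. Hence
\[
U_B^*BYCV_C = \begin{bmatrix} S_B & 0\\ 0 & 0\end{bmatrix}(V_B^*YU_C)\begin{bmatrix} S_C & 0\\ 0 & 0\end{bmatrix}
= \begin{bmatrix} S_B & 0\\ 0 & 0\end{bmatrix}\begin{bmatrix} Y_{11} & Y_{12}\\ Y_{21} & Y_{22}\end{bmatrix}\begin{bmatrix} S_C & 0\\ 0 & 0\end{bmatrix}
= \begin{bmatrix} S_BY_{11}S_C & 0\\ 0 & 0\end{bmatrix}.
\]
The only point requiring care is that the block partitions are consistent. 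The partition of $U_B^*AV_C$ must have row blocks sized by the rank of $B$, which is the size of $S_B$. Its column blocks must be sized by the rank of $C$. Likewise, $V_B^*YU_C$ must be partitioned with rows matching the columns of $\begin{bsmallmatrix}S_B&0\\0&0\end{bsmallmatrix}$ and columns matching the rows of $\begin{bsmallmatrix}S_C&0\\0&0\end{bsmallmatrix}$. This is what "partition consistently" means, and I would state it explicitly. The zero blocks are in general rectangular, since $B$ and $C$ need not be square.

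Subtracting the two block matrices gives
\[
\begin{bmatrix} A_{11}-S_BY_{11}S_C & A_{12}\\ A_{21} & A_{22}\end{bmatrix}.
\]
The squared Frobenius norm is additive over blocks, so \eqref{limblockformula} follows immediately. No step is a genuine obstacle; the only thing to get right is the bookkeeping of block dimensions.
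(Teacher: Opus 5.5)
Your proof is correct and takes the same route as the paper's: you use unitary invariance of the Frobenius norm to pass to $U_B^*(A-BYC)V_C$ and then expand the block products. Your explicit bookkeeping of the (possibly rectangular) block sizes spells out what the paper's one-line argument leaves implicit.
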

\begin{proof}
    Since the Frobenius norm is orthogonally invariant, $\norm{A-BYC}_F^2 = \norm{U_B^*(A-BYC)V_C}_F^2$; now expand and use the block forms.
\end{proof}
In particular, this computation shows that the solution of the distance problem involves a condition on the sole block $Y_{11}$, and this condition depends only on the $A_{11}$ block of the input.

A particular distance problem in this category is studied by~\citet{friedlandtorokhti07}.
\begin{example}[\citet{friedlandtorokhti07,lim22}] \label{ex:friedlandtorokhti}
    Let $A,B,C$ be fixed, and $\mathcal{Y}$ be the space of matrices with rank at most $r$; we look for $d(A, \mathcal{E}_{B,C,\mathcal{Y}})$, i.e., the nearest matrix to $A$ of the form $BYC$, with $Y$ of rank at most $r$.
    
    Note that if $Y \in \mathcal{Y}$, then $S_B Y_{11} S_C$ has rank at most $r$, too. Hence, the term $\norm{A_{11} - S_B Y_{11} S_C}_F^2$ in~\eqref{limblockformula} is at least as large as $\norm{A_{11} - A_{11}^{(r)}}_F^2$, where $A_{11}^{(r)}$ is the best rank-$r$ (or lower) approximation of $A_{11}$ obtained by truncated SVD. The matrix $Y_{11}$ that achieves this minimum can be extended to a suitable $Y$ without increasing its rank if we take the blocks $Y_{12},Y_{21},Y_{22}$ to be zero, i.e.,
    \[
    Y_\star  = V_B \begin{bmatrix}
        S_B^{-1} A_{11}^{(r)} S_C^{-1} & 0\\
        0 & 0
    \end{bmatrix} U_C^*.
    \]
    This minimizer is unique if $\sigma_{r}(A_{11}) > \sigma_{r+1}(A_{11})$ (so that $A_{11}^{(r)}$ is unique) and $B,C^*$ have full column rank (so that the arbitrary blocks $Y_{12},Y_{21},Y_{22}$ are empty).
\end{example}
\citet{lim22,lim26} study the minima for several other sets $\mathcal{Y}$; in some cases the authors obtain closed formulas as in Example~\ref{ex:friedlandtorokhti}, in other cases algorithms based on alternating projections as in Section~\ref{subsec:vonneumann}.

\subsection{Spectral optimization on normal matrices} \label{subsec:spectral normal}

We conclude our overview of directly solvable problems with one that will smoothly lead us to the next subsection. Suppose that we restrict to a normal input $A$, having an eigendecomposition
\[  A=U\diagm(\lambda_1,\ldots,\lambda_n)U^*,
    \qquad U\in U(n),\]
and let $\Omega\subseteq\C$ be a nonempty closed set. Let now $\mathcal{E}=\{X \in \C^{n \times n} : X^*X=XX^*, \Lambda(X) \subseteq \Omega\}$. In other words, we seek the matrix nearest to $A$ (in the Frobenius distance) among the normal matrices whose eigenvalues all lie in $\Omega$. It turns out that this problem is completely equivalent to just projecting every eigenvalue of $A$ onto $\Omega$. Indeed,  by the Hoffman-Wielandt Theorem \cite[Theorem 6.3.5]{Horn2013},  the eigenvalues $\nu_1,\ldots,\nu_n$ of every feasible normal matrix $X$
can be ordered in such a way that
\[  \|A-X\|_F^2
    \geq \sum_{i=1}^n|\lambda_i-\nu_i|^2
    \geq \sum_{i=1}^n d(\lambda_i,\Omega)^2.\]To attain the lower bound, it suffices to choose
\[  \mu_i\in\arg\min_{z\in\Omega}|\lambda_i-z|,
    \qquad \forall \ i=1,\ldots,n \]
which is tantamount to picking as a minimizer the matrix
\[    X_\star=U\diagm(\mu_1,\ldots,\mu_n)U^*,
    \qquad
    \|A-X_\star\|_F^2
    =\sum_{i=1}^n d(\lambda_i,\Omega)^2.\]
Note that this argument only works because both the input and the target set are normal matrices. In contrast, in the next section we will analyze spectral nearness problems without these further constraints, and we will see that the situation becomes way more complicated. See also Section~\ref{sec:nearestnormal} for the problem of finding the normal matrix nearest to a general, i.e., not necessarily normal, input.

\section{Spectral nearness}\label{sec:spectralnearness}

In this section we consider \emph{spectral nearness problems}, in which the target set is defined by conditions on the eigenvalues of a square matrix. For a broader treatment, see also the survey by \citet{lewisoverton_acta} and the book by \citet{GuglielmiLubichBook}. Given a nonempty closed set $\Omega \subseteq \C$, two natural families of target sets arise:

\begin{equation} \label{eq:existsforall}
    \mathcal{E}_{\Omega,\exists} = \{X \in \C^{n \times n} : \Lambda(X) \cap \Omega \neq \varnothing\},    \qquad
    \mathcal{E}_{\Omega,\forall} = \{X \in \C^{n \times n} : \Lambda(X) \subseteq \Omega\}.
\end{equation}
\begin{proposition}\label{prop:bothclosed}
    Let $\Omega \subseteq \C$ be a nonempty closed set. Then, the sets $\mathcal{E}_{\Omega,\exists} \subseteq \C^{n \times n}$ and $
    \mathcal{E}_{\Omega,\forall} \subseteq \C^{n \times n}$, defined as in \eqref{eq:existsforall}, are both closed.
\end{proposition}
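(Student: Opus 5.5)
The plan is to use sequential closedness together with continuity of the eigenvalues, organized through the characteristic polynomial. Take a sequence $X_k \to X$ with every $X_k$ in the relevant set. For each $k$ choose an eigenvalue $\lambda_k$ of $X_k$; these are uniformly bounded, since $|\lambda_k| \le \|X_k\|_2 \le \sup_j \|X_j\|_2 < \infty$. The key fact is that if $\lambda_{k_j} \to \lambda$ along a subsequence, then $\lambda$ is an eigenvalue of $X$. Indeed, $\det(\lambda_{k_j} I - X_{k_j}) = 0$, and $(z,Y) \mapsto \det(zI-Y)$ is a polynomial, hence jointly continuous, so passing to the limit gives $\det(\lambda I - X)=0$.

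For $\mathcal{E}_{\Omega,\exists}$, pick $\lambda_k \in \Lambda(X_k)\cap\Omega$. By boundedness, Bolzano--Weierstrass gives a convergent subsequence $\lambda_{k_j} \to \lambda$. Since $\Omega$ is closed, $\lambda \in \Omega$, and by the key fact $\lambda \in \Lambda(X)$. Therefore $X \in \mathcal{E}_{\Omega,\exists}$.

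For $\mathcal{E}_{\Omega,\forall}$, the argument runs in the opposite direction: fix an arbitrary $\mu \in \Lambda(X)$ and show that it is a limit of eigenvalues of the $X_k$. Write $p_k(z) = \det(zI - X_k) = \prod_{i=1}^n (z-\lambda_i^{(k)})$, where the $\lambda_i^{(k)}$ are the eigenvalues of $X_k$, all lying in $\Omega$. Then
\[ \min_i |\mu - \lambda_i^{(k)}|^n \le |p_k(\mu)| \longrightarrow |\det(\mu I - X)| = 0, \]
so there are eigenvalues $\lambda^{(k)}_{i_k} \in \Omega$ converging to $\mu$. Since $\Omega$ is closed, $\mu \in \Omega$, and hence $\Lambda(X) \subseteq \Omega$.

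The only delicate step is this second case, where we must show that every eigenvalue of the limit is approximated and not merely some of them. Using the product formula for $p_k(\mu)$ avoids needing any matching of eigenvalues. Alternatively, one could invoke continuity of the unordered spectrum with respect to the optimal-matching distance (via Rouché's theorem), but the determinant bound above is more elementary.
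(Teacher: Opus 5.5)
Your proof is correct and has the same structure as the paper's. For $\mathcal{E}_{\Omega,\exists}$ you use a bounded-subsequence argument, and for $\mathcal{E}_{\Omega,\forall}$ you show that every eigenvalue of the limit is approximated by eigenvalues of the $X_k$. The one difference is that the paper cites continuity of the spectrum as a set, whereas you prove the two needed facts directly from the characteristic polynomial (joint continuity of $\det(zI-Y)$ and the bound $\min_i|\mu-\lambda_i^{(k)}|^n\le|p_k(\mu)|$), which makes the argument self-contained.
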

\begin{proof}
\begin{enumerate}
    \item Let $(X_k)_k \subset \mathcal{E}_{\Omega,\exists}$ be a converging subsequence such that $X_k \to X$. Then, for all $k$, there exists $\lambda_k \in \Lambda(X_k) \cap \Omega$. If more than one exists for some (or all) values of $k$, pick one arbitrarily and thus construct a sequence $(\lambda_k)_k \subset \Omega$. For sufficiently large $k$, we have $\| X_k - X \|_2 \leq 1$, and thus
    \[ |\lambda_k| \leq \| X_k \|_2 \leq \| X \|_2+1.  \]
    This shows that $(\lambda_k)_k$ is bounded. Therefore, using the closure of $\Omega$, there exists a convergent subsequence $(\lambda_{k_j})_j \to \lambda \in \Omega$. Since eigenvalues are continuous (as a set), we also have that $\lambda \in \Lambda(X)$, yielding the statement.
    \item  Take now $(X_k)_k \subset \mathcal E_{\Omega,\forall}$ with $X_k \to X$, and fix an arbitrary eigenvalue $\lambda\in\Lambda(X)$. Again by continuity of the spectrum as a set, there exist
$\lambda_k\in\Lambda(X_k)$ such that $\lambda_k\to\lambda$.
On the other hand, $\lambda_k\in\Omega$ for every $k$ and $\Omega$ is closed, and therefore 
$\lambda\in\Omega$. By repeating this argument for every eigenvalue of $X$,
we conclude that $\Lambda(X)\subseteq\Omega$.
\end{enumerate}

\end{proof}

The notation in \eqref{eq:existsforall} is suggestive of two contrasting interpretations as an ``$\exists$ problem'' and a ``$\forall$ problem''. In the former case the region $\Omega$ must be reached by at least one eigenvalue of the perturbed matrix, while in the latter case the whole spectrum must be confined within $\Omega$. Up to taking closures, the two classes of sets are related by taking set complements in the sense that we make precise in Proposition~\ref{prop:complement}. 
\begin{proposition}\label{prop:complement}
    Let $\Omega \subseteq \C$ be a nonempty closed set and define $\mathcal{E}_{\Omega,\exists}$ and $\mathcal{E}_{\Omega,\forall}$ as in \eqref{eq:existsforall}. Then,  
\[ \overline{\C^{n \times n} \setminus \mathcal{E}_{\Omega,\forall}}
   = \mathcal{E}_{\overline{\C \setminus \Omega},\exists}.\] 
\end{proposition}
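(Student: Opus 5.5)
We prove the two inclusions separately, writing $\Omega^c:=\C\setminus\Omega$, which is open. The complement of $\mathcal{E}_{\Omega,\forall}$ is exactly the set of matrices with at least one eigenvalue in $\Omega^c$, i.e.\ $\C^{n\times n}\setminus\mathcal{E}_{\Omega,\forall}=\{X:\Lambda(X)\cap\Omega^c\neq\varnothing\}$. Note that if $\Omega=\C$ then both sides are empty (the right-hand side is defined with the empty region, and we simply interpret it as the empty set), so we may assume $\Omega^c\neq\varnothing$, hence $\overline{\Omega^c}$ is nonempty and closed and Proposition~\ref{prop:bothclosed} applies to $\mathcal{E}_{\overline{\Omega^c},\exists}$.

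\emph{Inclusion $\subseteq$.} Since $\{X:\Lambda(X)\cap\Omega^c\neq\varnothing\}\subseteq\{X:\Lambda(X)\cap\overline{\Omega^c}\neq\varnothing\}=\mathcal{E}_{\overline{\Omega^c},\exists}$, and the latter set is closed by Proposition~\ref{prop:bothclosed}, taking closures gives the inclusion immediately.

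\emph{Inclusion $\supseteq$.} Let $X\in\mathcal{E}_{\overline{\Omega^c},\exists}$, so $X$ has an eigenvalue $\lambda\in\overline{\Omega^c}$. Choose $z_k\in\Omega^c$ with $z_k\to\lambda$. Then the matrices $X_k:=X+(z_k-\lambda)I$ have eigenvalue $z_k\in\Omega^c$, hence $X_k\notin\mathcal{E}_{\Omega,\forall}$, and $X_k\to X$. Therefore $X$ lies in the closure of the complement. This shift-by-a-multiple-of-the-identity trick is the only real idea; alternatively one could perturb only along the eigenvector (e.g.\ $X+(z_k-\lambda)vw^*$ with a suitable normalisation), but the scalar shift is simplest since it moves every eigenvalue rigidly and in particular moves $\lambda$ to $z_k$.

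I do not expect a genuine obstacle: the only points needing care are the degenerate case $\Omega=\C$ (empty complement) and remembering that closedness of $\mathcal{E}_{\overline{\Omega^c},\exists}$ comes from Proposition~\ref{prop:bothclosed}, which requires the region to be nonempty and closed.
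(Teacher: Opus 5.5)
Your proof is correct and follows the paper's two-inclusion structure. The differences are only streamlinings. For $\subseteq$ you invoke Proposition~\ref{prop:bothclosed} on the closed set $\overline{\C\setminus\Omega}$, where the paper argues directly that a spectrum contained in the open set $\Omega^\circ$ stays there under small perturbations. For $\supseteq$ your scalar shift $X+(z_k-\lambda)I$ replaces the paper's substitution of $\lambda$ by $\lambda_k$ inside a Jordan form $X=SJS^{-1}$, which avoids the Jordan decomposition entirely.
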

\begin{proof}
    Let first $X \not \in \mathcal{E}_{\overline{\C \setminus \Omega},\exists}$. Then, $\Lambda(X) \subseteq \Omega^\circ$. Take now an arbitrary sequence $(X_k)_k \subset \C^{n \times n}$ converging to $X$.  Since $\Omega^\circ$ is open, for sufficiently large $k$ it must happen $\Lambda(X_k) \subseteq \Omega^\circ$, implying $X_k \not\in (\C^{n \times n} \setminus \mathcal{E}_{\Omega,\forall})$.

    Conversely, let $X \in \mathcal{E}_{\overline{\C \setminus \Omega},\exists}$, implying the existence of $\lambda \in \Lambda(X) \cap \overline{\C \setminus \Omega}$. Let $X=S J S^{-1}$ be a Jordan decomposition. Construct a sequence $(\lambda_k)_k \subset \C \setminus \Omega$ such that $\lambda_k \to \lambda$, and construct $J_k$ from $J$ by replacing $\lambda$ with $\lambda_k$. Then, for all $k$, $S J_k S^{-1} \not\in \mathcal{E}_{\Omega,\forall}$, but $(S J_k S^{-1}) \to X$.
\end{proof}

Below and throughout the paper, we call a square matrix \emph{Hurwitz stable} if all its eigenvalues have negative real part, and \emph{weakly Hurwitz stable} if all its eigenvalues have nonpositive real part (without further constraints on the Jordan blocks of imaginary eigenvalues). Note in particular that the set of weakly Hurwitz stable matrices is the closure of the set of Hurwitz stable matrices. Similarly, we call a matrix \emph{weakly Schur stable} if all its eigenvalues lie in the closed unit disk.

\begin{example}
    Let $\Omega = \{ z \in \C : \Re(z) \leq 0 \}$ be the closed left-half plane, so that $\mathcal{E}_{\Omega,\forall}$ is the set of weakly Hurwitz stable matrices. Then, $\C \setminus \Omega = \{ z \in \C : \Re(z) > 0 \}$ is the open right half plane, and $\mathcal{E}_{\overline{\C \setminus \Omega},\exists}$ is the set of matrices with at least one eigenvalue in the closed right half plane. Proposition~\ref{prop:complement} says that every matrix in the latter set is the limit of a sequence of matrices having at least one eigenvalue in the open right half plane. The subtlety here is that the two closed sets $\mathcal{E}_{\Omega,\forall}$ and
$\mathcal{E}_{\overline{\C\setminus\Omega},\exists}$ have nonempty intersection, e.g., $0$ belongs to both. Note that, for example, $\frac1k I_n$ is a sequence of non-(weakly) Hurwitz stable matrices that converges to $0$. 
\end{example}

   In spite of Proposition~\ref{prop:complement}, from the computational viewpoint the classes associated with the ``$\exists$ problem'' and the ``$\forall$ problem'' behave very differently. Indeed, as we shall see, distances to $\mathcal{E}_{\Omega,\exists}$ can be computed by reducing the problem to an optimization over the boundary $\partial\Omega$, which becomes a univariate problem whenever $\partial\Omega$ admits a smooth parametrization (as it does for the sets $\Omega$ most common in applications, such as half planes or discs). Instead, distances to $\mathcal{E}_{\Omega,\forall}$ do not admit any such reduction. Moreover, the minimizers often exhibit nongeneric features, such as defective eigenvalues $\lambda \in \partial \Omega$ of very high algebraic multiplicity \cite{Noferini2021a, NN25}.

Both sets in \eqref{eq:existsforall} are special cases in the class of sets that are invariant under similarity, i.e., that satisfy $X \in \mathcal{E} \Rightarrow SXS^{-1} \in \mathcal{E}$ for every $S \in GL(n,\C)$; for example, $\mathcal{E}=\{X \in \C^{3 \times 3} : X^2 = 0 \}$ belongs to the latter class but is not of either form in \eqref{eq:existsforall}. An even larger class is that of sets that are invariant by unitary similarity, that is, $X \in \mathcal{E} \Rightarrow UXU^* \in \mathcal{E}$ whenever $U \in \C^{n \times n}$ is unitary; for example, the set of normal matrices is invariant under unitary similarity but not under similarity.

We have seen in Theorem~\ref{thm:minimizerforunitinv} that, for target sets $\mathcal{E}$ that can be entirely characterized by singular values, or in other words that are invariant under unitary equivalence, the associated matrix nearness problem can be reduced to an optimization task on the vector of singular values.  Our readers might already have wondered whether an ``eigenvalue analogue" of Theorem~\ref{thm:minimizerforunitinv} holds when $m=n$ and the nonempty closed set $\mathcal{E}$ belongs to one of the classes mentioned above, and that all imply invariance under unitary similarity. More concretely, the question can be rephrased as follows: We know that for unitarily invariant sets it suffices to take an SVD of the input $A$ and to project singular values appropriately; does it similarly suffice, when the set is invariant under unitary similarity, to take a Schur form of the input $A$ and project eigenvalues appropriately? Unfortunately, eigenvalues are much less tame than singular values (also) in this sense, and as we shall see below the answer is generally negative even for the smaller subclass of sets described entirely by eigenvalues. A generalization of Theorem~\ref{thm:minimizerforunitinv} is only possible if rather restrictive further assumptions are made, say, $\mathcal{E}$ only contains normal matrices and $A$ itself is normal.

\subsection{Nearest $\Omega$-unstable matrix}

A typical example of a spectral nearness problem is the following: Given a closed ``target'' set $\Omega \subseteq \mathbb{C}$, we wish to find the nearest matrix to $A\in\mathbb{C}^{n\times n}$ which has an eigenvalue inside $\Omega$, i.e., solve the matrix nearness problem~\eqref{eq:matrixdistfrob} with
\begin{equation} \label{eq:E with an eigenvalue in Omega}
    \mathcal{E}_{\Omega} = \{X \in \mathbb{C}^{n\times n} : \text{$X$ has an eigenvalue in $\Omega$}\}.
\end{equation}

\begin{example}
    When $\Omega = \{0\}$, we are looking for the \emph{nearest singular matrix}, which is readily found using the Schmidt-Eckart-Young-Mirsky Theorem (Example~\ref{ex:seym}): In every unitarily invariant norm (normalized such that $\| e_1 e_1^T \|=1$),
\begin{equation} \label{eq:distance from singularity}
    \min \{\norm{\Delta} \colon \text{$A-\Delta$ is singular}\} = \sigma_{n}(A),
\end{equation}
the smallest singular value of $A$, and a minimizer $X=A-\Delta$ is given by taking the rank-1 matrix $\Delta_\star = u_{\min} \sigma_{\min} v_{\min}^*$ constructed using its associated singular vectors.

The
Gastinel-Kahan Theorem \cite[pp.~775--776]{Kahan66} provides a related characterization for another family of norms: If $A$ is nonsingular and for every matrix norm induced by a vector norm,
\[    \min\{\|\Delta\| : A-\Delta \text{ is singular}\}
    = \frac{1}{\|A^{-1}\|}.\]
Also in this setting, there always exists a rank-$1$ minimizing perturbation $\Delta_\star$.
 \end{example}
A more interesting problem comes from the theory of dynamical linear systems. We recall that a square matrix $A\in\mathbb{C}^{n\times n}$ is called \emph{Hurwitz stable} if its spectrum lies in the (open) left half-plane, i.e., $\Re \lambda < 0$ for each $\lambda \in \Lambda(A)$. 
\begin{example} \label{ex:distance Hurwitz instability unstructured}
    Given a Hurwitz stable matrix $A$, we look for its \emph{distance to (Hurwitz) instability}, i.e., take $\Omega$ equal to the closed right-half plane $ RHP = \{z\in \mathbb{C} \colon \Re z \geq 0\}$ in~\eqref{eq:E with an eigenvalue in Omega}. This distance is commonly called the \emph{stability radius} of $A$ \cite{Hinrichsen1986, Hinrichsen2011}; it has applicative meaning in systems theory, because it tells how much we can perturb $A$ before losing stability.
 \end{example}

\subsection{Relation to pseudospectra} \label{subsec:pseudospectra}

Some properties of these nearness problems can be better understood in terms of pseudospectra~\cite{TrefethenEmbreeBook}. The (closed) \emph{$\varepsilon$-pseudospectrum} of a square matrix $A\in\mathbb{C}^{n\times n}$ (in a certain matrix norm $\norm{\cdot}$) is the set
\[
\Lambda_\varepsilon(A) = \{\lambda \in \mathbb{C} \colon \text{there exists $\Delta\in\mathbb{C}^{n\times n}$ with $\norm{\Delta} \leq \varepsilon$ such that $\lambda \in \Lambda(A-\Delta)$}\},
\]
i.e., the set of eigenvalues of all possible $\varepsilon$-perturbations of $A$. Pseudospectra are a widely studied topic; we refer the reader to the monograph~\citet{TrefethenEmbreeBook} for more information and many illustrations. 

We recall some of their key properties here. A useful way to rewrite the definition is $\Lambda_{\varepsilon}(A) = \{\lambda\in\mathbb{C} \colon d(A-\lambda I, \mathcal{E}_0) \leq \varepsilon\}$; hence, it follows from~\eqref{eq:distance from singularity} that pseudospectra are the same for every normalized unitarily invariant norm. Following the literature, when we speak of pseudospectra in the following we always assume that we are working with such a norm. 

Pseudospectra can be visualized in $\mathbb{C}\cong \mathbb{R}^2$ as the sub-level sets  of the function $f(z) = \sigma_{\min}(A-zI_n)$. In particular, they are a nested family: $\Lambda_{\varepsilon_1}(A) \subsetneq \Lambda_{\varepsilon_2}(A)$ if $\varepsilon_1 < \varepsilon_2$. Pseudospectra are fairly regular: Since singular values can be defined as zeros of a determinant (a polynomial function in $\Re(z),\Im(z)$), the function $f(z)$ is semi-algebraic. Consequently, the boundary $\partial \Lambda_{\varepsilon}$ lies on a real algebraic curve, and is smooth apart from finitely many points. For a normal $A$, the $\varepsilon$-pseudospectrum is simply the union of balls of radius $\varepsilon$ centered in each eigenvalue; for a non-normal matrix, it can be more complicated.
\begin{figure}
    \centering

    
    \caption{Boundaries of the $\varepsilon$-pseudospectra of two Hurwitz stable matrices, one normal (left) and one non-normal (right). The distance to Hurwitz instability of $A$ ($\Omega = \text{right half-plane}$) is the value of $\varepsilon$ for which $\Lambda_\varepsilon(A)$ first touches the imaginary axis.}
    \label{fig:pseudospectra}
\end{figure}

In a spectral nearness problem of the form~\eqref{eq:E with an eigenvalue in Omega}, the minimum distance is equal to the value of $\varepsilon$ for which the $\varepsilon$-pseudospectrum of $A$ first touches $\Omega$; this connection provides a way to visualize the solutions to spectral nearness problems. An illustrative example is displayed in Figure~\ref{fig:pseudospectra}.

\subsection{Reduction to optimizing singular values}

If there are no additional constraints, spectral nearness problems with a sufficiently regular region $\Omega$ can be solved by reducing them to univariate minimization on $\partial\Omega$.
\begin{theorem}\label{thm:1dmin} Let $\Omega \subseteq \mathbb{C}$ be closed and nonempty, and  $A \not \in \mathcal{E}_{\Omega}$. For every unitarily invariant norm normalized such that $\| e_1 e_1^T \|=1$, we have
    \begin{equation} \label{eq:optimum distance from stability}
        d(A, \mathcal{E}_\Omega) = \min_{\lambda \in \partial \Omega} \sigma_{\min}(A-\lambda I_n).
    \end{equation}
    Moreover, a minimizer can be constructed from an SVD of $A-\lambda I_n$ as $X=A-\Delta$, where $\Delta = u_{\min}\sigma_{\min}v_{\min}^*$ has rank 1.
\end{theorem}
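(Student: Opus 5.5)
Our plan is to first rewrite the distance as a minimum over all of $\Omega$, and then push the minimizing point to the boundary. For every $\lambda\in\Omega$, the matrix $A-\Delta$ has $\lambda$ as an eigenvalue exactly when $(A-\lambda I_n)-\Delta$ is singular. Hence
\[
d(A,\mathcal{E}_\Omega)=\inf_{\lambda\in\Omega}\ \min\{\norm{\Delta}\colon (A-\lambda I_n)-\Delta \text{ singular}\}=\inf_{\lambda\in\Omega}\sigma_{\min}(A-\lambda I_n),
\]
where the second equality is \eqref{eq:distance from singularity}, valid for every normalized unitarily invariant norm. For a fixed $\lambda$, the inner minimum is attained by the rank-one matrix $\Delta=u_{\min}\sigma_{\min}v_{\min}^*$ coming from an SVD of $A-\lambda I_n$. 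Then $(A-\Delta)v_{\min}=\lambda v_{\min}$, so $A-\Delta\in\mathcal{E}_\Omega$. This gives the construction of the minimizer once the outer infimum is known to be attained.

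Next we show that the outer infimum is attained. The function $f(\lambda)=\sigma_{\min}(A-\lambda I_n)$ is continuous; in fact it is $1$-Lipschitz, by Weyl's perturbation inequality for singular values. It also satisfies $f(\lambda)\geq \abs{\lambda}-\norm{A}_2$, so $f\to\infty$ as $\abs{\lambda}\to\infty$. Since $\Omega$ is closed and nonempty, the infimum over $\Omega$ is therefore a minimum. This is consistent with Proposition~\ref{prop:lauriwaswrong} and Proposition~\ref{prop:bothclosed}.

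The main step is replacing $\Omega$ by $\partial\Omega$. Because $A\notin\mathcal{E}_\Omega$, we have $f>0$ on $\Omega$. Let $\lambda_0\in\Omega$ be a minimizer with value $\varepsilon_0$. If $\lambda_0\in\partial\Omega$ we are done, so suppose $\lambda_0$ is an interior point. The idea is to use that no eigenvalue of $A$ lies in $\Omega$, so $\Omega$ is a proper subset of $\C$ and $\partial\Omega\neq\varnothing$. Fix an eigenvalue $\mu\notin\Omega$ of $A$, so that $f(\mu)=0$. Move along the segment from $\lambda_0$ to $\mu$ and let $\lambda_1$ be the first point where the segment leaves $\Omega$. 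This point exists because $\Omega$ is closed, and it satisfies $\lambda_1\in\partial\Omega$.

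It remains to show that $f(\lambda_1)\leq f(\lambda_0)$. This is the delicate point: $\sigma_{\min}$ is not monotone along segments in general. We will use instead that $1/f(\lambda)=\norm{(A-\lambda I_n)^{-1}}_2$ is subharmonic on $\C\setminus\Lambda(A)$, because it is the norm of a holomorphic resolvent. Consider the connected component $\Omega_0$ of the interior of $\Omega$ that contains $\lambda_0$. It contains no eigenvalue of $A$, so $1/f$ is subharmonic on $\Omega_0$. By the maximum principle, $1/f$ attains its supremum over the closure of $\Omega_0$ on $\partial\Omega_0\subseteq\partial\Omega$ when $\Omega_0$ is bounded. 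Hence the minimum of $f$ over $\overline{\Omega_0}$ is attained on $\partial\Omega$. If $\Omega_0$ is unbounded, the same conclusion holds: $1/f\to0$ at infinity and $\partial\Omega_0$ is nonempty, since $\Omega_0\neq\C$, so we apply the maximum principle on $\Omega_0\cap\{\abs{z}<R\}$ for large $R$.

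Consequently, the minimum over $\Omega$ equals the minimum over $\partial\Omega$, which proves \eqref{eq:optimum distance from stability}. Taking $\lambda$ to be a minimizing point on $\partial\Omega$ and applying the rank-one construction above gives the minimizer $X=A-\Delta$. We expect this subharmonicity and maximum-principle step to be the only nontrivial obstacle.
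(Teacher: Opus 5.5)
Your proof is correct, but the crucial step, moving the minimizing $\lambda$ from $\Omega$ to $\partial\Omega$, follows a different route from the paper's.

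The paper argues in matrix space. Its one-line justification amounts to this: if a minimizer $X_\star=A-\Delta_\star$ had no eigenvalue on $\partial\Omega$, then along $t\mapsto A-t\Delta_\star$, $t\in[0,1]$, continuity of the eigenvalues forces one of them to reach $\partial\Omega$ at some $t<1$. That gives a strictly closer element of $\mathcal{E}_\Omega$. The rest is the same chain of equalities via \eqref{eq:distance from singularity} that you use.

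You argue instead in the $\lambda$-plane. You apply the maximum principle to the subharmonic function $1/\sigma_{\min}(A-\lambda I_n)=\norm{(A-\lambda I_n)^{-1}}_2$ on the component $\Omega_0$ of $\Omega^\circ$ that contains an interior minimizer. This uses $\partial\Omega_0\subseteq\partial\Omega$, plus a truncation at $\abs{z}=R$ when $\Omega_0$ is unbounded.

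The paper's route is shorter and uses nothing about the function $\lambda\mapsto\sigma_{\min}(A-\lambda I_n)$. It only needs the admissible perturbations to be closed under scaling by $t\in[0,1]$. So the boundary reduction holds in any norm, and also for structured perturbations $\Delta\in\mathcal{V}$, where the inner function of $\lambda$ is no longer a reciprocal resolvent norm.

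Your route is tied to the unstructured setting, but it has three advantages:
\begin{itemize}
\item it avoids relying on a continuous selection of eigenvalues along a path, which the paper invokes only briefly;
\item it makes the attainment of the outer minimum explicit via coercivity;
\item it exposes the pseudospectral reason behind the theorem: every connected component of $\Lambda_\varepsilon(A)$ contains an eigenvalue of $A$, so $\Lambda_\varepsilon(A)$ first meets $\Omega$ on $\partial\Omega$. This is the same maximum-modulus fact the paper invokes later in Section~\ref{sec:nearestdef}.
\end{itemize}

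Incidentally, your segment construction of $\lambda_1$ is never used afterwards. Its only consequence, $\partial\Omega\neq\varnothing$, already follows from the connectedness of $\C$, so that paragraph can be dropped.
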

 \begin{proof}
Since eigenvalues are (in the quotient topology, i.e., up to permutations) continuous functions of the matrix entries, the minimizer must be a matrix with an eigenvalue on the boundary $\partial \Omega$. For any unitarily invariant norm, we have
\begin{subequations} \label{eq:distance to instability criteria}
\begin{align}
    d(A, \mathcal{E}_{\Omega}) &=\min \{\norm{\Delta} \colon \text{$A-\Delta$ has an eigenvalue in $\partial \Omega$}\}\\
    &= \min_{\lambda \in \partial \Omega} \min \{\norm{\Delta} \colon \text{$A-\Delta$ has eigenvalue $\lambda$}\} \\
    &= \min_{\lambda \in \partial \Omega} \min \{\norm{\Delta} \colon \text{$A-\Delta-\lambda I_n$ is singular}\} \\
    &= \min_{\lambda \in \partial \Omega} \sigma_{\min}(A-\lambda I_n), \label{eq: minimum singular value char}
\end{align}
\end{subequations}
where we have used~\eqref{eq:distance from singularity}. The expression of the minimizer follows from the Schmidt--Eckart--Young--Mirsky Theorem.
\end{proof}
\begin{remark} \label{rem:double role of u and v}
    Given a Frobenius distance  minimizer $X$ and the corresponding eigenvalue $\lambda \in \partial\Omega$, note the interesting property that $u_{\min}$ and $v_{\min}$ are not only (up to multiples) the low-rank factors of $\Delta=A-X$, but also the left and right eigenvectors of $X$ associated with $\lambda$. Observe also that the present remark can be seen as a shifted version of Example~\ref{ex: rank deficient via normal}.
\end{remark}
An example of this reduction to univariate minimization is the distance to Hurwitz instability, which we see in more detail in the next section.

\subsection{Distance to Hurwitz instability}
In this section, we focus on the problem of computing the distance to Hurwitz instability (Example~\ref{ex:distance Hurwitz instability unstructured}). This problem is natural in systems theory, and has attracted much interest in the literature. 

In this problem, $\partial \Omega$ is the imaginary axis $i\mathbb{R}$; hence, the distance to Hurwitz instability can be computed by minimizing the univariate function 
\begin{equation} \label{eq:function f from distance to instability on the imaginary axis}
 f(\omega) = \sigma_{\min}(A- i\omega I_n), \quad f : \mathbb{R} \to \mathbb{R}.   
\end{equation}
This function is continuous, but may fail to be differentiable in points where $\sigma_{\min}$ is not simple. Luckily, as Lemma~\ref{lem:localminimagood} shows, local minima are always differentiable points. See  also \cite[Theorem 3.8]{DNN24} for an analogous result on manifolds. Note that Lemma~\ref{lem:localminimagood} holds in particular for \emph{global} minima $\omega_\star \in \mathrm{argmin} f(\omega)$.

\begin{lemma}\label{lem:localminimagood}
    Let $A$ be a Hurwitz stable matrix. Then, $f(\omega)$ in~\eqref{eq:function f from distance to instability on the imaginary axis} is differentiable in a neighborhood of any local minimum $\omega_\star$, and $\frac{d}{d\omega}f(\omega_\star) = 0$.
\end{lemma}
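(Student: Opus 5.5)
The plan is to pass from $f$ to $f^2$ and use analytic perturbation theory for Hermitian matrices. Since $A$ is Hurwitz stable, $A-i\omega I_n$ is nonsingular for every real $\omega$, so $f(\omega)>0$ everywhere. Hence $f=\sqrt{f^2}$, and $f$ is differentiable exactly where $f^2$ is, with $\frac{d}{d\omega}f = \frac{1}{2f}\frac{d}{d\omega}f^2$. It therefore suffices to prove the claim for
\[
g(\omega):=f(\omega)^2=\lambda_{\min}\bigl(H(\omega)\bigr),\qquad H(\omega):=(A-i\omega I_n)^*(A-i\omega I_n)=A^*A+i\omega(A-A^*)+\omega^2 I_n,
\]
which is a Hermitian matrix depending polynomially, hence analytically, on the real parameter $\omega$.

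By Rellich's theorem, the eigenvalues of $H(\omega)$ can be labelled as real-analytic functions $\mu_1(\omega),\dots,\mu_n(\omega)$ on all of $\R$, and $g(\omega)=\min_j \mu_j(\omega)$. I discard repeated branches, keeping only one representative of any two branches that are identically equal. Two distinct analytic branches then agree only on a discrete set. Fix a local minimum $\omega_\star$ and a small interval $I\ni\omega_\star$ on which $g(\omega)\ge g(\omega_\star)$. Shrinking $I$, I arrange that no two distinct branches coincide anywhere in $I\setminus\{\omega_\star\}$. I also arrange that branches that are inactive at $\omega_\star$, meaning $\mu_j(\omega_\star)>g(\omega_\star)$, stay strictly above $g$ on $I$, which is possible by continuity. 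Then $I\setminus\{\omega_\star\}$ splits into two open intervals. On each of them a single active branch realizes the minimum throughout, because switching branches would require a crossing. So $g$ is analytic on each punctured side.

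The remaining step is differentiability at $\omega_\star$ itself, which I expect to be the only real obstacle: it is exactly where a kink could occur. Let $\mu_j$ be any active branch, so $\mu_j(\omega_\star)=g(\omega_\star)$. On $I$ we have $\mu_j\ge g\ge g(\omega_\star)=\mu_j(\omega_\star)$, so $\omega_\star$ is a local minimum of the smooth function $\mu_j$, and hence $\mu_j'(\omega_\star)=0$. Consequently the branch that realizes $g$ to the left and the branch that realizes $g$ to the right both have value $g(\omega_\star)$ and derivative $0$ at $\omega_\star$. Both one-sided derivatives of $g$ at $\omega_\star$ therefore exist and equal $0$. So $g$ is differentiable on all of $I$ with $g'(\omega_\star)=0$, and transferring back through $f=\sqrt g$ gives the statement.

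This argument is also where the fact that we are at a local minimum is used. At a crossing of branches with different slopes, the minimum of the branches forms a concave kink, which can never be a local minimum; so local minima can only sit at points where every active branch is flat.
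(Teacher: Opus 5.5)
Your proof is correct and follows essentially the same route as the paper's: analytic branches from Rellich's theorem, together with the observation that every branch active at $\omega_\star$ has a local minimum there and hence vanishing derivative. The differences are minor. You square $f$ and apply Rellich to the Hermitian family $H(\omega)$ instead of using the paper's unordered-unsigned analytic SVD of $A-i\omega I_n$, and you explicitly justify differentiability on a punctured neighborhood via isolated crossings of distinct branches, a point the paper's proof leaves implicit.
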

\begin{proof}
    By classic results on perturbation of Hermitian operators \cite{Rellich1937}, since $A- i\omega I_n$ is an analytic function of a single real parameter, there are $n$ real analytic functions $\psi_1(\omega), \dots, \psi_n(\omega)$ such that the singular values of $A-i\omega I_n$ are $\abs{\psi_1(\omega)}, \dots, \abs{\psi_n(\omega)}$ (unordered-unsigned analytic SVD; see~\citet{deMoor1989analytic, Bunse91}). Since $A$ is Hurwitz stable, the functions $\psi_k(\omega)$ never vanish; up to changing sign we can assume that they are all positive. Even if there is more than one value $k$ that achieves the minimum $\delta_\star = f(\omega_\star) = \min_k \psi_k(\omega_\star)$, then $\delta_\star$ must be a local minimum of each such $\psi_k(\omega)$; in particular, it holds that $\frac{d}{d\omega}\psi_k(\omega_\star)=0$. Hence $\psi_k(\omega) - \delta_\star = \mathcal{O}((\omega-\omega_\star)^2)$ for each such $\psi_k(\omega)$, and this is enough to conclude that $f(\omega) = \min_k \psi_k(\omega)$ is differentiable in $\omega_\star$.
\end{proof}
Various optimization methods have been suggested to find local minima of the (generally non-convex) $f$. Conveniently, a result by Byers allows one to check if a minimum is global.
\begin{theorem}[\protect{\citet[Theorem~1]{Byers1988a}}] \label{thm: Byers trick}
    The following are equivalent.
    \begin{enumerate}
        \item $\delta \in \mathbb{R}$, $\delta\geq 0$ is greater than or equal to the global minimum $\delta_\star = d(A, \mathcal{E}_{RHP})$ in~\eqref{eq:optimum distance from stability}
        \item There exists $\omega \in \mathbb{R}$ such that $\delta$ is a singular value of $A-i\omega I_n$
        \item There exists $\omega\in \mathbb{R}$ such that $i\omega$ is an eigenvalue of
        \[
        H_\delta = \begin{bmatrix}
            A & -\delta I_n\\
            \delta I_n & -A^*
        \end{bmatrix} \in \mathbb{C}^{2n\times 2n}.
        \]
    \end{enumerate}
\end{theorem}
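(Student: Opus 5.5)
The plan is to prove the cycle (1)$\Rightarrow$(2)$\Rightarrow$(1) and then (2)$\Leftrightarrow$(3), using Theorem~\ref{thm:1dmin} to write $\delta_\star=\min_{\omega\in\R} f(\omega)$ with $f(\omega)=\sigma_{\min}(A-i\omega I_n)$.

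For (2)$\Rightarrow$(1): if $\delta$ is some singular value of $A-i\omega I_n$, then $\delta\ge\sigma_{\min}(A-i\omega I_n)=f(\omega)\ge\delta_\star$.

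For (1)$\Rightarrow$(2): I will use continuity of $\omega\mapsto\sigma_1(A-i\omega I_n)$. By Theorem~\ref{thm:1dmin} the minimum is attained at some $\omega_\star$. At that point $\sigma_{\min}(A-i\omega_\star I_n)=\delta_\star\le\delta$. As $|\omega|\to\infty$, we have $\sigma_{\max}(A-i\omega I_n)\ge |\omega|-\|A\|_2\to\infty$. Hence the continuous function $\sigma_{\max}(A-i\omega I_n)$ either already satisfies $\sigma_{\max}\ge\delta$ at $\omega_\star$, or exceeds $\delta$ for large $\omega$. Consider the set $\{\omega : \sigma_{\min}(A-i\omega I_n)\le\delta\le\sigma_{\max}(A-i\omega I_n)\}$. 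I claim it is nonempty. To see this, move $\omega$ from $\omega_\star$ upward. Continuity of the ordered singular values $\sigma_1\ge\dots\ge\sigma_n$, combined with the intermediate value theorem applied to $\sigma_1(\omega)-\delta$, gives a point $\hat\omega$ where $\sigma_1(\hat\omega)=\delta$, unless $\sigma_1(\omega_\star)\ge\delta$ already. In the latter case $\sigma_n(\omega_\star)\le\delta\le\sigma_1(\omega_\star)$. This is not yet enough, because $\delta$ might fall in a gap between consecutive singular values. The fix is to apply the intermediate value theorem to each $\sigma_k(\omega)-\delta$ along $[\omega_\star,\infty)$. Let $k$ be the largest index with $\sigma_k(\omega_\star)\ge\delta$. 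Then $\sigma_{k+1}(\omega_\star)<\delta$, while $\sigma_{k+1}(\omega)\ge\sigma_n(\omega)\ge|\omega|-\|A\|_2\to\infty$, since $\sigma_n(A-i\omega I)\ge|\omega|-\|A\|_2$. So $\sigma_{k+1}$ hits $\delta$. This single argument covers all cases: take $k+1$ to be the smallest index with $\sigma_{k+1}(\omega_\star)\le\delta$, which exists because $\sigma_n(\omega_\star)=\delta_\star\le\delta$.

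For (2)$\Leftrightarrow$(3): $\delta$ is a singular value of $M:=A-i\omega I_n$ iff there are nonzero $u,v$ with $Mv=\delta u$ and $M^*u=\delta v$. I will rewrite this as
\[
\begin{bmatrix} A & -\delta I\\ \delta I & -A^*\end{bmatrix}\begin{bmatrix} v\\ u\end{bmatrix}=i\omega\begin{bmatrix} v\\ u\end{bmatrix}.
\]
The first block row is $Av-\delta u=i\omega v$, i.e.\ $Mv=\delta u$. The second is $\delta v-A^*u=i\omega u$, i.e.\ $(A^*+i\omega I)u=\delta v$, and $A^*+i\omega I=M^*$ because $\omega$ is real. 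Conversely, an eigenvector $[v;u]\ne0$ with eigenvalue $i\omega$ satisfies both relations. If $\delta>0$, both $u$ and $v$ are nonzero, so $\delta$ is a singular value. If $\delta=0$, the relations say that $Mv=0$ or $M^*u=0$ with the vector nonzero, so $M$ is singular and $0$ is a singular value.

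The main obstacle is the intermediate value step in (1)$\Rightarrow$(2), which needs continuity of ordered singular values and growth at infinity. Both are standard by Weyl's perturbation inequality.
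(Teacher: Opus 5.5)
Your proof is correct and follows the paper's route: (1)$\Leftrightarrow$(2) uses the continuity and growth at infinity of the singular values of $A-i\omega I_n$ together with the intermediate value theorem, and (2)$\Leftrightarrow$(3) expands $H_\delta\begin{bsmallmatrix} v\\ u\end{bsmallmatrix}=i\omega\begin{bsmallmatrix} v\\ u\end{bsmallmatrix}$ blockwise. Your handling of $\delta=0$, where one of $u,v$ may vanish, is in fact more careful than the paper's.

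The index juggling in (1)$\Rightarrow$(2) is unnecessary, and the ``gap'' worry never arises. The paper applies the intermediate value theorem directly to $f(\omega)=\sigma_{\min}(A-i\omega I_n)$. Its range is all of $[\delta_\star,\infty)$, because $f(\omega_\star)=\delta_\star$ and $f(\omega)\ge|\omega|-\|A\|_2\to\infty$.
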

\begin{proof}
    We begin by showing that 1 is equivalent to 2. If $\delta$ is the singular value of a matrix of the form $A-i\omega I_n$, then $\delta \geq \delta_\star $ in view of~\eqref{eq: minimum singular value char}. The converse follows from noting that the range of the continuous function $f$ in~\eqref{eq:function f from distance to instability on the imaginary axis} is precisely the interval $[\delta_\star,+\infty[$: Indeed, when $\omega \to \pm\infty$, $\sigma_{\min}(A-i\omega I_n)$ tends to $\infty$ because $\sigma_{\min}(A-i\omega I_n) \geq \sigma_{\min}(i\omega I_n) - \sigma_{\max}(A)$ by Weyl's inequalities.
    
    To show that 2 is equivalent to 3, we expand blocks to compute
    \[
    \begin{bmatrix}
        A & -\delta I_n\\
        \delta I_n & -A^*
    \end{bmatrix}\begin{bmatrix}
        v\\u
    \end{bmatrix} = i\omega \begin{bmatrix}
        v\\u
    \end{bmatrix} \iff \begin{cases}
        (A-i\omega I_n)v = \delta u,\\
        (A-i\omega I_n)^*u = \delta v,
    \end{cases} 
    \]
    The two rightmost equalities imply that $\delta$ is a singular value of $A-i\omega I_n$, with left and right singular vectors $u$ and $v$.
\end{proof}
The matrix $H_\delta$ is Hamiltonian, a structure that arises frequently in control theory. The reader may find it surprising that there is a whole interval of values $\delta$ for which $H_\delta$ has a pure imaginary eigenvalue; but these generic imaginary eigenvalues are a consequence of the structural properties of Hamiltonian matrices; see for instance~\citet[Section~3.5]{LancasterRodmanARE}.

Theorem~\ref{thm: Byers trick} forms the basis of a bisection algorithm in~\citet{Byers1988a}: We start from an interval $[a_0,b_0]$ enclosing $\delta_\star$; for instance $[0, \sigma_{\min}(A)]$. At each step, we solve a Hamiltonian eigenvalue problem with $\delta_k = \frac{a+b}{2}$ to determine whether the midpoint of $[a_k,b_k]$ is smaller or larger than $\delta_\star$, and so on.

An evolution of this algorithm is the quadratically convergent algorithm introduced independently by \citet{BoydBalakrishnan} and \citet{BS90} for computing the
$H_\infty$ norm of a transfer matrix. Computing the distance to instability is a special case of this problem. The \emph{criss-cross algorithm}
of \citet{Burke2003} for the pseudospectral abscissa is based on closely related ideas. Below, we describe the algorithm of \citet{BoydBalakrishnan, BS90} for the distance to instability by considering the graph of $f(\omega)$ in~\eqref{eq:function f from distance to instability on the imaginary axis}. Each iteration of the algorithm starts from a candidate minimizer $\delta_k \geq \delta_\star$. The equation $f(\omega)=\delta_k$ has at most $2n$ solutions: Indeed, if $\delta = f(\omega) = \sigma_{\min}(A-i\omega I_n)$, then $i\omega \in \Lambda(H_\delta)$ by Theorem~\ref{thm: Byers trick}. Hence the sub-level set $\{\omega\in\mathbb{R} : f(\omega)\leq\delta_k\}$ is the union of disjoint line segments $[a_1,b_1] \cup [a_2,b_2] \cup \dots \cup [a_\ell,b_\ell]$ (some of them possibly degenerating into points, $a_j=b_j$). 
\begin{figure}
    \centering
    \begin{tikzpicture}
\begin{axis}[
  width=13cm, height=6cm,
  xlabel={$\omega$}, ylabel={$\delta$},
  xmin=-4, xmax=4, ymin=0, ymax=4,
  samples=200, grid=both, axis lines=left,
  legend pos=north east, legend cell align=left,
]
  \addplot[gray!50, dashed, domain=-5:5, forget plot] {sqrt((x-2)^2+1)};
  \addplot[gray!50, dashed, domain=-5:5, forget plot] {sqrt((x+2)^2+4)};

  \addplot[red!60, thin, domain=-4:4] {2};

  \addplot[very thick, blue, domain=-5:-0.375] {sqrt((x+2)^2+4)};
  \addlegendentry{$f(\omega)=\sigma_{\min}(A-i\omega I_n)$}
  \addplot[very thick, blue, domain=-0.375:5, forget plot] {sqrt((x-2)^2+1)};

  \addplot[only marks, mark=*, red, forget plot]   coordinates {(-2,2) (2,1)};
  \addplot[only marks, mark=*, black, forget plot] coordinates {(-0.375,2.5769)};
  \node[anchor=north, red]  at (axis cs:-2,2) {\small $-2$};
  \node[anchor=north, red]  at (axis cs: 2,1) {\small $2$};
\end{axis}
\end{tikzpicture}

    \caption{Plot of $f(\omega)$ in~\eqref{eq:function f from distance to instability on the imaginary axis} for $A=\begin{bsmallmatrix}
        -1.5 & -0.5-2i\\
        -0.5-2i & -1.5
    \end{bsmallmatrix}$. For $\delta_k=2$, $H_{\delta_k}$ has not only the double imaginary eigenvalue $-2i$, but also two simple eigenvalues $\lambda_1 \approx 0.2679i$ and $\lambda_2 \approx 3.7321i$, corresponding to the other intersections of the plot with $y=2$; this allows one to conclude that $2$ is not the global minimum. Starting from $\delta_k=2$, the midpoint iteration algorithm would pick $\omega_{k+1} = \frac{\lambda_1+\lambda_2}{2i}=2$ as the abscissa of the next candidate minimum, and $\delta_{k+1} = f(\omega_{k+1})=1$. In this simple problem, $\delta_{k+1}$ is the exact global minimum $\delta_{\star}$. Indeed, for $\delta_{k+1}=1$, $H_{\delta_{k+1}}$ has no other pure imaginary eigenvalues in addition to the double eigenvalue $2i$, and this confirms that we have reached a global minimum.}
    \label{fig:spectralplot}
\end{figure}
If $\delta$ is close to $\delta_\star= \min f$, then 
geometrical intuition suggests that these segments are few and short, and that their midpoints $\mu_j = \frac{a_j + b_j}{2} \in \mathbb{R}$ are good candidate locations for $\arg\min f$. Thus, we take as the new candidate minimizer for the next iteration of the algorithm
\[
\delta_{k+1} = \min_{j=1,2,\dots,\ell} f(\mu_j) \geq \delta_\star.
\]
The analysis in~\citet{BoydBalakrishnan} shows that in this setup indeed $\delta$ converges quadratically to $\delta_\star$. For $H_\infty$-norm and stability radius computations, \citet{GeninVV98} study a refinement of this approach that, generically, yields a quartic convergence rate at a moderate extra cost.  We see an illustration of the midpoint iteration algorithm in Figure~\ref{fig:spectralplot}. One point in the algorithm needs some care: While the endpoints $a_j,b_j$ are all imaginary parts of pure imaginary eigenvalues of $H_\delta$, in general it is impossible to recover the segments from $\Lambda(H_\delta)$ alone. An example is in Figure~\ref{fig:spectralplot_counterexample}.
\begin{figure}
    \centering
    \begin{tikzpicture}
    \begin{axis}[
      width=0.47\textwidth, height=0.32\textwidth,
      xlabel={$\omega$}, ylabel={$\delta$},
      xmin=-4, xmax=4, ymin=0, ymax=4,
      samples=400, grid=both, axis lines=left,
    ]
      \addplot[gray!50, dashed, domain=-5:5] {sqrt((x-1.6)^2+2.56)};
      \addplot[gray!50, dashed, domain=-5:5] {sqrt((x+1.6)^2+2.56)};
      \addplot[very thick, blue, domain=-5:5]
        {min(sqrt((x-1.6)^2+2.56), sqrt((x+1.6)^2+2.56))};
      \addplot[red!60, thin, domain=-4:4] {2};
      \addplot[only marks, mark=o, red] coordinates {(-2.8,2) (-0.4,2) (0.4,2) (2.8,2)};
      \addplot[only marks, mark=*, red] coordinates {(-1.6,1.6) (1.6,1.6)};
      \addplot[only marks, mark=*, black] coordinates {(0,2.2627)};
    \end{axis}
    \end{tikzpicture}%
    \hfill
    \begin{tikzpicture}
    \begin{axis}[
      width=0.47\textwidth, height=0.32\textwidth,
      xlabel={$\omega$}, ylabel={$\delta$},
      xmin=-4, xmax=4, ymin=0, ymax=4,
      samples=400, grid=both, axis lines=left,
    ]
      \addplot[gray!50, dashed, domain=-5:5] {sqrt((x-1.2)^2+1.44)};
      \addplot[gray!50, dashed, domain=-5:5] {sqrt((x+1.2)^2+1.44)};
      \addplot[very thick, blue, domain=-5:5]
        {min(sqrt((x-1.2)^2+1.44), sqrt((x+1.2)^2+1.44))};
      \addplot[red!60, thin, domain=-4:4] {2};
      \addplot[only marks, mark=o, red] coordinates {(-2.8,2) (-0.4,2) (0.4,2) (2.8,2)};
      \addplot[only marks, mark=*, red] coordinates {(-1.2,1.2) (1.2,1.2)};
      \addplot[only marks, mark=*, black] coordinates {(0,1.6971)};
    \end{axis}
    \end{tikzpicture}
    \caption{Plots of $f(\omega)=\sigma_{\min}(A-i\omega I_2)$, for
    $A_1=\begin{bsmallmatrix} -1.6 & 1.6\\ -1.6 & -1.6 \end{bsmallmatrix}$ (left) and
    $A_2=\begin{bsmallmatrix} -1.2 & 1.2\\ -1.2 & -1.2 \end{bsmallmatrix}$ (right).
    In both cases, for $\delta=2$ the matrix $H_\delta$ has the same pure imaginary eigenvalues $\pm 0.4i$ and $\pm 2.8i$ (circles), but in one case, the sublevel set $\{\omega \colon f(\omega)\leq 2\}$ is composed of two segments $[-2.8,-0.4]$ and $[0.4,2.8]$; in the other, it is only one segment $[-2.8,2.8]$.
    }
    \label{fig:spectralplot_counterexample}
\end{figure}
One can reconstruct the segments by evaluating $f(\omega)$ in suitable points; alternatively, a method to recover these segments exactly is by considering instead of $H_\delta$ the Hermitian matrix pencil
\begin{equation*}
A_\delta(\omega) = \begin{bmatrix}0&I_n\\-I_n&0\end{bmatrix}(H_\delta-i\omega I_{2n})
=
\begin{bmatrix}
\delta I_n & -A^*-i\omega I_n\\
-A+i\omega I_n & \delta I_n
\end{bmatrix}.
\end{equation*}
Together with the generalized eigenvalues of this pencil, one can also compute the  \emph{sign characteristics} of those eigenvalues that lie on the real line. The sign characteristics give us more detailed information about the crossings; see \citet{MNTX16} for definitions and more details.

Extensions of the criss-cross algorithm to general spectral value sets \cite{karow2003}, together with improvements in efficiency and robustness, are developed by \citet{Benner2019}. A criss-cross algorithm for the real pseudospectral abscissa in the spectral norm is developed by \citet{Lu17}.

A more sophisticated method to compute new candidates $\delta$ and $\omega$ is the \emph{implicit determinant method}~\cite{Freitag11newtoninst}. It is based on solving the bordered linear system
\begin{equation} \label{impdet}
    M(\omega,\delta) \begin{bmatrix}
        x\\s
    \end{bmatrix} = \begin{bmatrix}
        0\\1
    \end{bmatrix}, \quad M(\omega, \delta) = \begin{bmatrix}
        H_\delta - i\omega I & c_1\\
        c_2^* & 0
    \end{bmatrix} \in \mathbb{C}^{(2n+1)\times (2n+1)},
\end{equation}
for two suitable vectors $c_1,c_2\in\mathbb{C}^{2n}$. The following results hold.

\begin{theorem}[\protect{\citet{Freitag11newtoninst}}] \label{thm:implicitdeterminant}
    Assume $M(\omega,\delta)$ is invertible, and choose $c_1,c_2$ so that $c_1 = Jc_2$, $J=\begin{bsmallmatrix}
        0 & I_n\\
        -I_n & 0
    \end{bsmallmatrix}$. Let $s = s(\omega,\delta)$ be the last entry of the solution of~\eqref{impdet}, seen as a function of $\omega$ and $\delta$, and similarly let $x = x(\omega,\delta)\in\mathbb{C}^{2n}$. Then, 
    \begin{enumerate}
        \item $s(\omega,\delta) \in\mathbb{R}$, and
        \begin{equation} \label{impdet_d}
            s(\omega,\delta) = \frac{\det(H_\delta - i\omega I_{2n})}{\det M(\omega,\delta)}.    
        \end{equation}
        \item The partial derivatives $s_{\omega}, s_{\delta}, s_{\omega\omega}, s_{\omega\delta}$ of $s(\omega,\delta)$ satisfy the equations
        \begin{align*}
            M(\omega,\delta)\begin{bmatrix}
                x_\delta\\
                s_\delta
            \end{bmatrix} &= \begin{bmatrix}
                Jx\\0
            \end{bmatrix}, &
            M(\omega,\delta)\begin{bmatrix}
                x_\omega\\
                s_\omega
            \end{bmatrix} &= \begin{bmatrix}
                ix\\0
            \end{bmatrix},\\
            M(\omega,\delta)\begin{bmatrix}
                x_{\omega\omega}\\
                s_{\omega\omega}
            \end{bmatrix} &= \begin{bmatrix}
                2ix_\omega\\0
            \end{bmatrix}, &
            M(\omega,\delta)\begin{bmatrix}
                x_{\omega\delta}\\
                s_{\omega\delta}
            \end{bmatrix} &= \begin{bmatrix}
                Jx_\omega + ix_\delta\\0
            \end{bmatrix}.
        \end{align*}
    \end{enumerate}
\end{theorem}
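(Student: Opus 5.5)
The whole argument will run through the cofactor (Cramer) representation of the solution of the bordered system~\eqref{impdet}, together with the Hamiltonian structure of $H_\delta$.

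\emph{Item 1, formula~\eqref{impdet_d}.} By Cramer's rule, $s$ equals $\det$ of $M(\omega,\delta)$ with its last column replaced by $e_{2n+1}$, divided by $\det M(\omega,\delta)$. Expanding that determinant along the last column leaves only the leading $2n\times 2n$ block, namely $H_\delta-i\omega I_{2n}$. This gives~\eqref{impdet_d} directly, with no extra assumption.

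\emph{Item 1, reality of $s$.} I will use that $JH_\delta$ is Hermitian. With $J=\begin{bsmallmatrix}0&I\\-I&0\end{bsmallmatrix}$ one computes
\[
JH_\delta=\begin{bmatrix}\delta I & -A^*\\ -A & \delta I\end{bmatrix},
\]
which is Hermitian. Since $J$ is real, $J(i\omega I)=i\omega J$, and $i\omega J$ is Hermitian because $J^*=-J$. Hence $K:=J(H_\delta-i\omega I)$ is Hermitian; this is the pencil $A_\delta(\omega)$ already displayed in the paper. Multiplying $M$ on the left by $\operatorname{diag}(J,1)$ gives
\[
\widetilde M=\begin{bmatrix}K & Jc_1\\ c_2^* & 0\end{bmatrix},
\qquad Jc_1=J^2c_2=-c_2 .
\]
The bottom row is $c_2^*$ and the last column is $-c_2$, so $\widetilde M$ is Hermitian up to flipping the sign of one column. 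Flipping the sign of the last column and the last unknown makes it exactly Hermitian.

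A bordered Hermitian matrix has real determinant, and the last entry of $\widetilde M^{-1}e_{2n+1}$ is a diagonal entry of the inverse of a Hermitian matrix, hence real. Tracking the signs shows $s$ is real up to a sign, which is harmless. As a cross-check, one can also argue from~\eqref{impdet_d}: $\det(H_\delta-i\omega I)=\det K$ since $\det J=1$, and $\det K$ is real.

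\emph{Item 2.} This is implicit differentiation. Differentiate $M(\omega,\delta)\begin{bsmallmatrix}x\\s\end{bsmallmatrix}=e_{2n+1}$, whose right-hand side is constant:
\[
\partial_\delta M=\begin{bmatrix}\partial_\delta H_\delta&0\\0&0\end{bmatrix},\qquad
\partial_\delta H_\delta=\begin{bmatrix}0&-I\\ I&0\end{bmatrix}=-J,
\]
\[
\partial_\omega M=\begin{bmatrix}-iI&0\\0&0\end{bmatrix}.
\]
This gives $M\begin{bsmallmatrix}x_\delta\\s_\delta\end{bsmallmatrix}=\begin{bsmallmatrix}Jx\\0\end{bsmallmatrix}$ and $M\begin{bsmallmatrix}x_\omega\\s_\omega\end{bsmallmatrix}=\begin{bsmallmatrix}ix\\0\end{bsmallmatrix}$. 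Differentiating these two identities once more in $\omega$ by the product rule gives the second-order equations. For $s_{\omega\omega}$ the term $ix_\omega$ appears twice, giving $2ix_\omega$. For $s_{\omega\delta}$ one gets $Jx_\omega+ix_\delta$.

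\emph{Main obstacle.} The only delicate point is the sign bookkeeping in the reality argument, since the relation $c_1=Jc_2$ produces $-c_2$ rather than $c_2$. I will handle it by explicitly conjugating with $\operatorname{diag}(J,1)$ and then $\operatorname{diag}(I,-1)$, and checking that the resulting matrix is genuinely Hermitian. Invertibility of $M$ ensures that the implicit function theorem applies and that all derivatives exist, because $M$ depends polynomially on $(\omega,\delta)$.
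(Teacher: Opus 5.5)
Your proposal is correct. The determinant formula, which you get by Cramer's rule (the paper leaves this implicit), and the four derivative equations, which you get by implicit differentiation of~\eqref{impdet}, coincide with the paper's argument. The one step where you take a different route is the reality of $s$.

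The paper multiplies the first block row of~\eqref{impdet} on the left by $x^*J$. It then uses $x^*Jc_1=-x^*c_2=-\overline{c_2^*x}=-1$ to obtain the explicit identity $s=x^*J(H_\delta-i\omega I_{2n})x$. This is a Hermitian quadratic form, so $s\in\mathbb{R}$ follows in one line with no sign bookkeeping.

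You instead transform $M$ into the Hermitian bordered matrix $\widehat M=(J\oplus 1)\,M\,(I_{2n}\oplus(-1))$, which equals $\begin{bsmallmatrix} K & c_2\\ c_2^* & 0\end{bsmallmatrix}$ and is invertible because $M$ is. You then read $-s$ off as the last diagonal entry of $\widehat M^{-1}$. This is equally rigorous. It also has a small bonus: it shows $\det M=-\det\widehat M\in\mathbb{R}$, so~\eqref{impdet_d} is a quotient of two real numbers.

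Your phrase ``real up to a sign'' undersells what you proved. Since $-s\in\mathbb{R}$, you have $s\in\mathbb{R}$ exactly, which is what the statement requires.

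The paper's version is shorter and expresses $s$ explicitly in terms of $x$. Yours makes the link to the Hermitian pencil $A_\delta(\omega)$ visible at the level of the whole bordered system.
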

\begin{proof}
    The fact that $s(\omega,\delta)$ is real (and hence also its derivatives are) follows from multiplying the first row of~\eqref{impdet} by $x^*J$ and noting that $J(H_\delta-i\omega I_{2n})$ is Hermitian. The equations for the partial derivatives follow by differentiating~\eqref{impdet} a suitable number of times. 
\end{proof}
Thanks to~\eqref{impdet_d}, the task of finding pure imaginary eigenvalues of $H_\delta$ can be replaced by finding zeros of $s(\omega, \delta)$.

Assume now that the minimum of $f$ is attained in a point $\omega_\star$ where $\sigma_{\min}(A-i\omega_\star I_n)$ is simple. Then, by computing $f'(\omega)$ using standard formulas for the derivatives of simple singular values, one sees that in a minimum $u_{\min}^*v_{\min} \in \R$, and this in turn is equivalent to the fact that the eigenvalue 
$i \omega_\star$
of $H_{\delta_\star}$ is defective, since its left and right eigenvectors are respectively $\begin{bsmallmatrix}
    u_{\min}^* & -v_{\min}^*
\end{bsmallmatrix}$ and $\begin{bsmallmatrix}
    v_{\min}\\
    u_{\min}
\end{bsmallmatrix}$. The fact that this eigenvalue is not simple is not surprising, since in view of the previous algorithm we know that in the limit $\delta\to\delta_\star$ at least one segment $[a_j,b_j]$ degenerates to a point $\omega_\star = \lim_{\delta\to\delta_\star} a_j = \lim_{\delta\to\delta_\star} b_j$.

Hence, to find the minimum $\delta_\star$ and its associated $\omega_\star$, we solve for $(\omega,\delta)\in\mathbb{R}^2$ the real bivariate system of equations $s(\omega,\delta) = s_{\omega}(\omega,\delta) = 0$ using the Newton method in $\mathbb{R}^2$; the associated Jacobian is
\begin{equation} \label{impdet_jac}
    \begin{bmatrix}
    s_\omega & s_\delta\\
    s_{\omega\omega} & s_{\omega\delta}
    \end{bmatrix}.    
\end{equation}
\citet{Freitag11newtoninst} argue that generically $H_{\delta_\star}$ has a Jordan block in $i\omega_\star$ of size $2$; in turn this implies that $M(\omega_\star,\delta_\star)$ is nonsingular for almost all choices of $c$, and that the Jacobian~\ref{impdet_jac} is nonsingular in $(\delta_\star,\omega_\star)$, so the Newton method is quadratically convergent. 

Note that all the derivatives in~\eqref{impdet_jac} are computed by solving linear systems with the same matrix $M(\omega,\delta)$; so the method requires only one matrix factorization, and can be used effectively also in cases where $A$ is large and sparse. We refer the reader to~\citet{Freitag11newtoninst} for more details on this method.

All the results in this subsection have simple analogues for \emph{Schur (or discrete-time) stability}, i.e., when the right half-plane is replaced by the outside of the unit disc; for instance, an analogue of Theorem~\ref{thm: Byers trick} for that case is in the same paper~\citet{Byers1988a}. Algorithms for computing the pseudospectral radius in this  context are developed by \citet{Mengi2005}. See also~\citet{Kostic2015} for an extension to more general domains.

Further related work includes an algorithm for the distance to instability designed primarily for sparse matrices \cite{He1999}, iterative methods for the pseudospectral abscissa and radius, developed by \citet{Guglielmi2011}, and
subspace acceleration for the pseudospectral abscissa and the stability radius, studied by \citet{Kressner2014}.

\subsection{Nearness problems solvable via optimization on $U(n)$}\label{sec:Un}

In this subsection, we analyze an approach to matrix nearness problems based on the Schur canonical form and variable projection. Let $\mathcal{V}_T \subseteq \C^{n \times n}$ denote the subspace of upper triangular matrices, and suppose that the target set $\mathcal{E} \subseteq \C^{n \times n}$ is invariant by unitary similarity, i.e., $X \in \mathcal{E} \Rightarrow Q X Q^* \in \mathcal{E}$ for all $Q \in U(n)$. We choose a closed subset $\mathcal{E}_T \subseteq \mathcal{E}\cap\mathcal{V}_T$ such that
\begin{equation}\label{eq:siminvset}
    \mathcal{E}
    = \{QTQ^* : Q\in U(n),\ T\in\mathcal{E}_T\}.
\end{equation}
An obvious choice for $\mathcal{E}_T$ in \eqref{eq:siminvset} always exists by the Schur decomposition, namely, we can take $\mathcal{E}_T=\mathcal{E}\cap\mathcal{V}_T$. For instance, if $\mathcal{E}$ is the set of matrices whose eigenvalues lie in $\Omega\subseteq \mathbb{C}$, then $\mathcal{E} \cap \mathcal{V}_T$ is the set of triangular matrices whose diagonal elements are in $\Omega$, whereas the strictly upper triangular part is unconstrained; if $\mathcal{E}$ is the set of normal matrices then $\mathcal{E} \cap \mathcal{V}_T$ reduces to the set of diagonal matrices, and so on. However, Example~\ref{ex:smaller} shows that letting $\mathcal{E}_T$ be a strictly smaller generating set than $\mathcal{E} \cap \mathcal{V}_T$ may suffice, and indeed even be more convenient.  
\begin{example}\label{ex:smaller}
    Taking $\mathcal{E}_T=\left\{\begin{bsmallmatrix}
    1&0\\
    0&0
\end{bsmallmatrix}\right\}$ generates the set $\mathcal{E}$ of all orthogonal projections onto a one-dimensional subspace of $\C^2$, but $\displaystyle \mathcal{E}_T \not\ni \begin{bsmallmatrix}
    0&0\\
    0&1
\end{bsmallmatrix} \in \mathcal{E} \cap \mathcal{V}_T$. The same example also illustrates that a less obvious choice than just the intersection can be advantageous from the viewpoint of optimization; indeed, while $\mathcal{E}_T=\left\{\begin{bsmallmatrix}
    1&0\\
    0&0
\end{bsmallmatrix}\right\}$ is a convex set, $\mathcal{E} \cap \mathcal{V}_T = \left\{\begin{bsmallmatrix}
    1&0\\
    0&0
\end{bsmallmatrix},\begin{bsmallmatrix}
    0&0\\
    0&1
\end{bsmallmatrix}\right\}$ is not.
\end{example}

Before going ahead, let us quickly verify that \eqref{eq:siminvset} is coherent with our treatment in generating a closed  target set $\mathcal{E}$.

\begin{proposition}
    Let $\mathcal{E}_T \subseteq \mathcal{V}_T$ be a closed subset of upper triangular matrices. Then, $\mathcal{E}$ as in \eqref{eq:siminvset} is closed.
\end{proposition}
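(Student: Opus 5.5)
The plan is a sequential compactness argument, exploiting the compactness of $U(n)$ and the invariance of the Frobenius norm under unitary similarity. Let $(X_k)_k \subset \mathcal{E}$ with $X_k \to X \in \C^{n\times n}$. By \eqref{eq:siminvset}, for each $k$ we can write $X_k = Q_k T_k Q_k^*$ with $Q_k \in U(n)$ and $T_k \in \mathcal{E}_T$. We must show that $X \in \mathcal{E}$.

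First, we bound the triangular factors. Since the Frobenius norm is invariant under unitary similarity, $\|T_k\|_F = \|Q_k^* X_k Q_k\|_F = \|X_k\|_F$. The latter is bounded because $(X_k)_k$ converges, so $(T_k)_k$ is a bounded sequence in the finite dimensional space $\mathcal{V}_T$. Moreover, $U(n)$ is compact, being closed and bounded in $\C^{n\times n}$: it is the zero set of the continuous map $Q \mapsto Q^*Q - I$, and $\|Q\|_F = \sqrt{n}$ for every $Q \in U(n)$.

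Next, we extract convergent subsequences. By the Bolzano--Weierstrass theorem applied to the pair $(Q_k, T_k)$ in $\C^{n\times n}\times\C^{n\times n}$, there is a subsequence with $Q_{k_j} \to Q \in U(n)$ and $T_{k_j} \to T$. Because $\mathcal{E}_T$ is closed, $T \in \mathcal{E}_T$; in particular $T$ is upper triangular, since $\mathcal{V}_T$ is closed.

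Finally, we pass to the limit. Matrix multiplication and conjugate transposition are continuous, so
\[ X = \lim_{j\to\infty} X_{k_j} = \lim_{j\to\infty} Q_{k_j} T_{k_j} Q_{k_j}^* = Q T Q^*. \]
This shows $X \in \mathcal{E}$.

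No step here is a real obstacle. The only point needing care is the first one: the factors $T_k$ are not a priori bounded, and boundedness comes from the unitary invariance of the Frobenius norm. This is exactly where the unitary similarity structure is used; for general similarity transformations the argument would fail.
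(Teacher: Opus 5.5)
Your proof is correct and uses the same compactness argument as the paper. The paper extracts a convergent subsequence of $Q_k$ alone and then gets $T_{k_j} = Q_{k_j}^* X_{k_j} Q_{k_j} \to Q^* X Q$ directly by continuity, so your separate bound on $T_k$ (the step you flag as the delicate one) is valid but not actually needed.
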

\begin{proof}
    Let $(X_k)_k \subset \mathcal{E}$ converge to $X \in \C^{n \times n}$ and choose Schur decompositions $X_k=Q_kT_kQ_k^*$ with
$Q_k\in U(n)$ and $T_k\in\mathcal E_T$,
as allowed by \eqref{eq:siminvset}. Because $U(n)$ is compact, there is a convergent subsequence $Q_{k_j} \rightarrow Q \in U(n)$, and then $T_{k_j} \to Q^* X Q \in \mathcal{E}_T$ because $\mathcal{E}_T$ is closed. This proves $X \in \mathcal{E}$.
\end{proof}

Let us now discuss why \eqref{eq:siminvset} can be exploited to design numerical methods for nearness problems whose target set is invariant by unitarily similarity. Every $X \in \mathcal{E}$ has a Schur decomposition $X=QTQ^*$ with $Q \in U(n)$ and $T \in \mathcal{E}_T$, and conversely every pair $(Q,T) \in U(n) \times \mathcal{E}_T$ generates a matrix $X=QTQ^* \in \mathcal{E}$. Hence,

\begin{equation}\label{eq:schurtrick}
   d(A,\mathcal{E})^2 = \min_{Q \in U(n)} \min_{T \in \mathcal{E}_T} \| Q^* A Q - T \|_F^2.  
\end{equation}

In fact, \eqref{eq:schurtrick} remains valid even after replacing the Frobenius norm with any unitarily invariant norm. What makes \eqref{eq:schurtrick} particularly attractive is that, for many problems of practical interest, the inner minimization often reduces to elementary projections of elements and blocks, leading to an exact solution. This is an instance of a \emph{variable projection method} \cite{Golub1973,O’Leary2013}, and it allows us to express the optimal distance as a function of the unitary matrix $f(Q) = \min_{T\in\mathcal{E}_T} \norm{Q^*AQ-T}_F^2$. It follows that the original matrix nearness problem is equivalent to
\begin{equation} \label{optimization on unitaries}
   d(A,\mathcal{E})^2 = \min_{Q \in U(n)} f(Q). 
\end{equation}
Moreover, since unitary matrices are a well studied real Riemannian manifold, the tools of modern Riemannian optimization \cite{Absil2008, Boumal2014b, Boumal2023} are available to tackle this new problem~\eqref{optimization on unitaries} numerically; often, this leads to very efficient algorithms. 

Before this specialization, let us give in Theorem~\ref{thm:equivalentgeneral} a general framework for this class of problems. This result has previously appeared for specific instances of problems in this class, such as \cite[Theorem 3.2]{Noferini2021a} (whose statement for the case of local minima is incorrect as given; item 2 in Theorem~\ref{thm:equivalentgeneral} provides a correction for the case where $\Omega$ is convex) or \cite[Theorem 2.2]{NZ26} for normal matrices.

\begin{theorem}\label{thm:equivalentgeneral}
    Let $\mathcal{V}_T \subseteq \C^{n \times n}$ be the subspace of upper triangular matrices, and let $\mathcal{E}_T \subseteq \mathcal{V}_T$ be a nonempty closed convex subset of $\mathcal{V}_T$. Let $\mathcal{E}$ be defined as in
\eqref{eq:siminvset}. For every $Q\in U(n)$, define
\begin{equation*}
B(Q)=Q^*A Q, \quad T(Q)=\Pi_{\mathcal{E}_T}(B(Q)), \quad X(Q)=QT(Q)Q^*, \quad f(Q)=\|B(Q)-T(Q)\|_F^2,
\end{equation*}
where $\Pi_{\mathcal{E}_T}$ denotes the orthogonal (in the Frobenius inner product) projection onto $\mathcal{E}_T$. Then:
\begin{enumerate}
    \item $Q_0$ is a global minimizer of $f$ if and only if $X(Q_0)$ is a
global minimizer of $\|A-X\|_F^2$ over $X\in\mathcal{E}$. Moreover, $f(Q_0)=\|A-X(Q_0)\|_F^2$;
\item Given $X_0\in\mathcal{E}$, define
\[
 \mathcal{R}_{\mathcal{E}_T}(X_0)
 =\{Q\in U(n):Q^*X_0Q\in\mathcal{E}_T\}.
\]
Then $X_0$ is a local minimizer of $\|A-X\|_F^2$ over $\mathcal{E}$ if and
only if, for every $Q\in\mathcal{R}_{\mathcal{E}_T}(X_0)$, one has
$X(Q)=X_0$ and $Q$ is a local minimizer of $f$.
\end{enumerate}
\end{theorem}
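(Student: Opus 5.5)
Both items rest on one identity. By unitary invariance of the Frobenius norm, $\|A-QTQ^*\|_F=\|Q^*AQ-T\|_F$ for every $Q\in U(n)$ and every $T$. Hence $f(Q)=\min_{T\in\mathcal{E}_T}\|A-QTQ^*\|_F^2=\|A-X(Q)\|_F^2$. Since $\mathcal{E}_T$ is closed and convex, $T(Q)$ is well defined and is the unique minimizer of the inner problem. This gives the equality $f(Q_0)=\|A-X(Q_0)\|_F^2$ claimed in item 1, and in particular $X(Q)\in\mathcal{E}$.

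For item 1, note that $X(Q)\in\mathcal{E}$ gives $f(Q)\ge d(A,\mathcal{E})^2$ for every $Q$. Conversely, every $X\in\mathcal{E}$ can be written as $X=QTQ^*$ with $T\in\mathcal{E}_T$ by \eqref{eq:siminvset}, and then $\|A-X\|_F^2\ge f(Q)$. Thus $\min_Q f(Q)=d(A,\mathcal{E})^2$, which is \eqref{eq:schurtrick}. Both directions of item 1 follow at once.
\begin{itemize}
\item If $Q_0$ minimizes $f$, then $\|A-X(Q_0)\|_F^2=f(Q_0)=d(A,\mathcal{E})^2$.
\item If $X(Q_0)$ is a global minimizer, then $f(Q_0)=d(A,\mathcal{E})^2=\min f$.
\end{itemize}

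For the forward direction of item 2, let $X_0$ be a local minimizer and take $Q\in\mathcal{R}_{\mathcal{E}_T}(X_0)$, so that $T_0:=Q^*X_0Q\in\mathcal{E}_T$.
\begin{itemize}
\item To get $X(Q)=X_0$: suppose $T(Q)\ne T_0$. For $t\in[0,1]$ consider the segment $T_t=(1-t)T_0+tT(Q)$, which stays in $\mathcal{E}_T$ by convexity. The function $t\mapsto\|B(Q)-T_t\|_F^2$ is strictly convex, and its minimum over $[0,1]$ is at $t=1$, since $T(Q)$ is the unique projection. So it is strictly decreasing on $[0,1]$. The matrices $QT_tQ^*$ lie in $\mathcal{E}$ and converge to $X_0$ as $t\to0$, with $\|A-QT_tQ^*\|_F<\|A-X_0\|_F$ for every $t>0$. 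This contradicts local minimality. Convexity is used exactly here, which is also why the earlier statement in \cite{Noferini2021a} needed correcting.
\item To get that $Q$ is a local minimizer of $f$: by continuity of the Frobenius projection onto a closed convex set, $Q'\mapsto X(Q')$ is continuous. So for $Q'$ near $Q$ the matrix $X(Q')\in\mathcal{E}$ is near $X(Q)=X_0$, and therefore $f(Q')=\|A-X(Q')\|_F^2\ge\|A-X_0\|_F^2=f(Q)$.
\end{itemize}

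For the converse of item 2, suppose $X_0$ is not a local minimizer. Then there are $X_k\in\mathcal{E}$ with $X_k\to X_0$ and $\|A-X_k\|_F<\|A-X_0\|_F$. Write $X_k=Q_kT_kQ_k^*$ with $T_k\in\mathcal{E}_T$. By compactness of $U(n)$, pass to a subsequence with $Q_k\to Q$. Then $T_k\to Q^*X_0Q$, and this limit lies in $\mathcal{E}_T$ because $\mathcal{E}_T$ is closed, so $Q\in\mathcal{R}_{\mathcal{E}_T}(X_0)$. By hypothesis $X(Q)=X_0$, hence $f(Q)=\|A-X_0\|_F^2$. Along the sequence, $f(Q_k)\le\|A-X_k\|_F^2<f(Q)$ with $Q_k\to Q$, so $Q$ is not a local minimizer of $f$. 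This contradicts the hypothesis, and the converse is proved.

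The main obstacle is this converse direction. It works only because the hypothesis is imposed for every $Q$ in $\mathcal{R}_{\mathcal{E}_T}(X_0)$: the limit point $Q$ produced by compactness cannot be chosen in advance. A secondary point to check is the continuity of $Q\mapsto X(Q)$ used in the forward direction; this follows because the projection onto a closed convex set is $1$-Lipschitz.
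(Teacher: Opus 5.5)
Your proof is correct and follows the paper's argument essentially step by step. You use the identity $f(Q)=\min_{T\in\mathcal{E}_T}\|A-QTQ^*\|_F^2=\|A-X(Q)\|_F^2$ for item 1; a convex-combination path from $T_0$ toward $T(Q)$, together with continuity of the projection, for the forward direction of item 2; and compactness of $U(n)$ with closedness of $\mathcal{E}_T$ for the converse. The only cosmetic difference is that you get the strict decrease along the segment from strict convexity of $t\mapsto\|B(Q)-T_t\|_F^2$, whereas the paper takes $T_k=\frac{k-1}{k}T_0+\frac1k T(Q)$ and applies the triangle inequality.
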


\begin{proof}
    \begin{enumerate}
        \item Using the unitary invariance of the Frobenius norm and that (by convexity) $T(Q)=\Pi_{\mathcal{E}_T}(B(Q))$ minimizes the distance from $B(Q)$ over  $\mathcal{E}_T$, we have that for every $Q \in U(n)$ and every $T \in \mathcal{E}_T$
\begin{equation}\label{eq:fQleq}
    f(Q) = \min_{Y \in \mathcal{E}_T} \|B(Q)-Y\|_F^2 \leq \|B(Q)-T\|_F^2  = \|A-Q T Q^*\|_F^2,
\end{equation}
with equality if and only if $T=T(Q)$. Taking minima yields the sought equivalences.
        \item Suppose first that $X_0 \in \mathcal{E}$ is a local minimizer of the squared distance from $A$, define $T_0=Q^*X_0Q$, and fix $Q \in \mathcal{R}_{\mathcal{E}_T}(X_0)$. If $T_0 \neq T(Q)$, then $f(Q) = \|B(Q)-T(Q)\|_F^2 < \| B(Q) - T_0\|_F^2 = \| A - X_0\|_F^2$, because the projection of $B(Q)$ onto $\mathcal{E}_T$ is unique by convexity. Now construct the sequence $(T_k)_{k \geq 1}$ defined as
        \[  T_k = \frac{k-1}{k} T_0 + \frac1k T(Q), \]
        and note that indeed $T_k \in \mathcal{E}_T$ as it is a convex combination of $T_0 \in \mathcal{E}_T$ (because $Q \in \mathcal{R}_{\mathcal{E}_T}(X_0)$) and $T(Q) \in \mathcal{E}_T$ (because it is the projection of $B(Q)$ onto $\mathcal{E}_T$). Therefore, if we define $X_k=Q T_k Q^*$, we have that $(X_k)_{k \geq 1} \in \mathcal{E}$ and $X_k \to X_0$. Moreover, by the triangle inequality, for all $k$
        \[ \| B(Q)-T_k\| \leq \frac{k-1}{k} \|B(Q)-T_0\|_F +\frac1k \|B(Q)-T(Q)\|_F < \|B(Q)-T_0\|_F  \]
        and therefore
        \[  \|A - X_k\|_F < \|A - X_0\|_F.    \]
This is a contradiction, and therefore we conclude $T_0=T(Q)$ implying in turn $X(Q)=X_0$. Take now a sequence $Q_k \to Q$. The function $\Pi_{\mathcal{E}_T}$ is continuous, because it is a projection onto a nonempty closed convex set. Therefore, $X(Q_k) \to X_0$ and
\[  f(Q_k) =  \| A - X(Q_k) \|_F^2 \geq \|A-X_0\|_F^2  = f(Q).  \]

Now suppose that $X_0 \in \mathcal{E}$ is not a local minimizer over $\mathcal{E}$ of the squared distance from $A$. Then, we can pick a sequence $X_k \to X_0$ satisfying $X_k \in \mathcal{E}$ and $\|A-X_k\|^2_F < \|A-X_0\|^2_F$ for all $k$. Take Schur decompositions $X_k = Q_k T_k Q_k^*$ such that $T_k \in \mathcal{E}_T$ (the existence of such a Schur decomposition is guaranteed by \eqref{eq:siminvset}). By compactness, we can extract a convergent subsequence $(Q_{k_j})_j$, with $Q_{k_j} \to Q_0$.
    Then, $T_{k_j} \to Q_0^* X_0 Q_0 \in \mathcal{E}_T$ implying in particular $Q_0 \in \mathcal{R}_{\mathcal{E}_T}(X_0)$. If $X(Q_0) \neq X_0$, there is nothing else to prove. Otherwise, by \eqref{eq:fQleq},
    \[ f(Q_k) \leq \|A - X_k \|_F^2 < \|A-X_0\|_F^2  = f(Q_0),         \]
    and therefore $Q_0$ is not a local minimizer for $f$.
    \end{enumerate}
\end{proof}

Example~\ref{ex:mustbeforall} below illustrates that, in item 2 of Theorem~\ref{thm:equivalentgeneral}, it is crucial for the ``if and only if" that \emph{every} $Q \in \mathcal{R}_{\mathcal{E}_T}(X_0)$ is a local minimizer.

\begin{example}\label{ex:mustbeforall}
    Let $\displaystyle A=\begin{bmatrix}
        1&0\\
        0&-1
    \end{bmatrix}, \mathcal{E}_T=\left\{ T = \begin{bmatrix}
        0&0\\
        0&x
    \end{bmatrix} \ \textrm{for} \ \textrm{some} \ x \geq 0
    \right\}.$ Then, $\mathcal{E} \subsetneq \C^{2 \times 2}$ is the set of Hermitian positive semidefinite matrices of rank at most $1$. Observe that, calling $B=B(Q)=Q^*AQ$,
    \[  \left\| B - \begin{bmatrix}
        0&0\\
        0&x
    \end{bmatrix} \right\|_F^2 = \|A\|_F^2 -  (B_{22})^2  + (B_{22}-x)^2 = 2 - 2x B_{22} + x^2  \]
    which (for a given $Q$) is minimized at $x(Q)=\max\{q^*Aq,0\}$ where $q=Qe_2$. Moreover, $f(Q)=2-x(Q)^2$. At $Q=I_2$, we have $q^*Aq=-1$ and hence (by continuity) $x(Q)=0$ and $f(Q)=2$ in a neighborhood of the identity, which is thus a local minimizer of $f$. However, the corresponding $X_0=0$ is not a local minimizer of the distance, as can be seen by the sequence $(X_k)_{k \geq 1}$,
\[ X_k = \begin{bmatrix}
    \frac1k&0\\
    0&0
\end{bmatrix} = \begin{bmatrix}
    0&1\\
    1&0
\end{bmatrix} \begin{bmatrix}
    0&0\\
    0&\frac1k
\end{bmatrix} \begin{bmatrix}
    0&1\\
    1&0
\end{bmatrix} \to 0=X_0\]
but
\[  \|A-X_k\|_F^2 = \frac{2k^2-2k+1}{k^2} < 2 = \|A-X_0\|_F^2.   \]
\end{example}

To perform Riemannian optimization, it is important to calculate the Riemannian gradient of $f(Q)$, i.e., the projection of $\nabla f(Q)$ onto the tangent space $T_Q U(n)$. Let us now extend the approach of \citet[Lemma 2.6]{NZ26}, who study objective functions defined
through a linear orthogonal projection, to the squared distance from an arbitrary nonempty closed convex set; see also \citet[Section~6.2]{Noferini2021a}
for similar gradient calculations in the context of the nearest $\Omega$-stable matrix problem. First, we can compute the Euclidean gradient of $f$ by some tedious, but straightforward, calculations: $f(Q)$ is the composition of the quadratic map $Q \mapsto Q^*AQ$ and the squared-distance map $g(B) = d(B, \mathcal{E}_T)^2$, which has gradient $\nabla g(B) = 2(B-T)$, with $T=\Pi_{\mathcal{E}_T}(B)$, by Theorem~\ref{thm:ae}. 
Abbreviating $B=B(Q)$ and $R=R(Q)=B(Q)-T(Q)$, we obtain for $Q \in U(n)$,
\[  \nabla f(Q)= 2 Q(B^*R+B R^*). \]
The tangent space of $U(n)$ at $Q$ is $T_Q(U(n)) = \{QK : K^*=-K\}$, and the orthogonal projection onto it is thus $\Pi_{T_Q(U(n))}(Z)=Q \mathrm{skew}(Q^*Z)$, where $\mathrm{skew}(Y)=(Y-Y^*)/2$. Bringing all together, we obtain Proposition~\ref{prop:RGfQ}.
\begin{proposition}\label{prop:RGfQ}
    Under the assumptions of Theorem~\ref{thm:equivalentgeneral}, the 
    Riemannian gradient on $U(n)$ of $f(Q)$ is
    \[  \mathrm{grad} f(Q) = 2 Q \mathrm{skew} (B^*R-RB^*) = 2Q \mathrm{skew}(T^* R - R T^*). \]
\end{proposition}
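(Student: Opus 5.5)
The plan is to project the Euclidean gradient $\nabla f(Q)=2Q(B^*R+BR^*)$, which the text just before the statement derives, onto $T_Q U(n)$ using $\Pi_{T_Q(U(n))}(Z)=Q\,\mathrm{skew}(Q^*Z)$. This immediately gives $\grad f(Q)=2Q\,\mathrm{skew}(B^*R+BR^*)$. I will then prove the two claimed expressions by showing that the arguments of $\mathrm{skew}$ differ only by Hermitian matrices, which $\mathrm{skew}$ annihilates.

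First I would double-check the Euclidean gradient, since everything rests on it. Write $g(B)=d(B,\mathcal{E}_T)^2$. By Theorem~\ref{thm:ae}, $g$ is differentiable everywhere with $\nabla g(B)=2R$, because $\mathcal{E}_T$ is convex, so the projection is unique and $M(\mathcal{E}_T)=\varnothing$. In the real inner product $\langle X,Y\rangle=\Re\traccia(X^*Y)$, the differential of $Q\mapsto Q^*AQ$ in direction $E$ is $E^*AQ+Q^*AE$. Hence
\[ df[E]=2\Re\traccia\bigl(R^*(E^*AQ+Q^*AE)\bigr)=\bigl\langle E,\,2(AQR^*+A^*QR)\bigr\rangle . \]
Using $Q^*Q=I$, we have $AQ=QB$ and $A^*Q=QB^*$, so $\nabla f(Q)=2Q(BR^*+B^*R)$, consistent with the formula in the text. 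A subtlety is that this is the gradient of the extension $Q\mapsto g(Q^*AQ)$ to all of $\C^{n\times n}$. On $U(n)$ one also has $Q^*=Q^{-1}$, but the Riemannian gradient only depends on the restriction of $f$ to the manifold, so any smooth extension is valid.

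Second, I would simplify inside $\mathrm{skew}$. Since $BR^*=(RB^*)^*$, the matrix $BR^*+RB^*$ is Hermitian and has zero skew part. Therefore $\mathrm{skew}(B^*R+BR^*)=\mathrm{skew}(B^*R-RB^*)$, which is the first expression. For the second, substitute $B=T+R$:
\[ B^*R-RB^*=T^*R-RT^*+(R^*R-RR^*). \]
The term $R^*R-RR^*$ is Hermitian, so it disappears under $\mathrm{skew}$. This yields $2Q\,\mathrm{skew}(T^*R-RT^*)$.

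The only step I regard as delicate is the differentiability of $f$ everywhere on $U(n)$. This needs the convexity of $\mathcal{E}_T$ together with Theorem~\ref{thm:ae}, which guarantees a unique projection and hence a gradient $2R$ at every $B$. The chain rule then applies globally. The remaining steps are routine algebra with Hermitian parts.
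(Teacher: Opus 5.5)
Your proposal is correct and follows the paper's route: you project the Euclidean gradient $2Q(B^*R+BR^*)$ via $Z\mapsto Q\,\mathrm{skew}(Q^*Z)$, then discard Hermitian terms inside $\mathrm{skew}$; the paper's proof spells out only the last equality, using that $RR^*-R^*R$ is Hermitian. Your extra checks are consistent with the paper and fill in details it leaves to the surrounding text: the Euclidean gradient, the first equality via the Hermitian matrix $BR^*+RB^*$, and global differentiability from the convexity of $\mathcal{E}_T$ together with Theorem~\ref{thm:ae}.
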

\begin{proof}
    It only remains to prove the last equality, which follows by noting that $RR^*-R^*R$ is Hermitian.
\end{proof}

We now move from the general overview of methods based on optimization over unitary matrices to a closer look at some important examples for which this approach has been especially successful.
\subsubsection{Nearest normal matrix}\label{sec:nearestnormal}

If $\mathcal{E} = \{ N \in \C^{n \times n} : NN^*=N^*N \}$ is the set of normal matrices, then we can take  $\mathcal{E}_T$ to be the subspace of diagonal matrices, because an upper  triangular matrix is normal if and only if it is diagonal. Hence, in this case, the objective function $f(Q)$ to be minimized over the manifold of unitary matrices is precisely the squared Frobenius norm of the off-diagonal part of $B(Q)=Q^*AQ$:
\[ f_{normal}(Q) = \sum_{i \neq j} |(Q^*AQ)_{ij}|^2.  \]
Equivalently, finding a nearest normal matrix is tantamount to finding a unitary matrix $Q$ that maximizes the Frobenius norm of the diagonal of $Q^*AQ$. Note that, since $\mathcal{E}_T$ is a linear subspace, $f_{normal}$ is smooth. Formulae for its gradient and Hessian can be found in \citet[Section 2]{NZ26}; they can also be derived by specialization from the formulae in \citet{Bierly} or (for the gradient) in Proposition~\ref{prop:RGfQ}.

The observation that finding a nearest normal matrix is tantamount to minimizing $f_{normal}(Q)$ (or, equivalently, maximizing $\norm{\diag(Q^*AQ)}_F$) dates back to~\citet{gabriel}, who also compute the gradient of this function. Algorithmically, this approach was used to formulate the classic Jacobi-type algorithms reviewed by
\citet{Higham1989}; see in particular \citet{ruhe87}. More recently, the same objective function has been exploited in the context of more modern trust-regions algorithms \cite{Bierly,NZ26}. A reader generally interested in the Riemannian trust-region framework might consult \citet{Absil2007}. For a thorough study of the theoretical properties of the objective function $f_{normal}$, see \citet[Section 3]{NZ26}; here, we limit ourselves to observing some symmetries of $f_{normal}$. If $D$ is diagonal and unitary,
then
$ f_{normal}(QD)=f_{normal}(Q)$. Thus, the optimization can also be carried out on the flag manifold
$U(n)/U(1)^n$, by identifying unitary matrices
whose corresponding columns differ only by phases \cite{NZ26}. The gradient and Hessians on the flag manifold are computed in \citet[Section 3]{NZ26}. Similarly, $f_{normal}(Q)=f_{normal}(QP)$ for every permutation matrix $P$, because permuting the columns of $Q$ cannot change the value of the objective function.

If the input $A$ is real, there does not necessarily exist a real matrix among the normal matrices nearest to $A$ (the question was posed by \citet{Higham1989} and answered by \citet[Theorem 3.4]{NZ26b}, negatively for $n\geq 3$ and positively for $n \leq 2$). If one wishes to compute a nearest real normal matrix, a real variant of the algorithm can be devised exploiting the fact that a real normal matrix
can be orthogonally block diagonalized, with scalar blocks and
$2\times2$ blocks of the form
$\begin{bsmallmatrix}a&b\\-b&a\end{bsmallmatrix}$.
These blocks must be taken into account in a real variant of the Riemannian algorithms discussed above \cite{Bierly,NZ26}.

\subsubsection{Nearest circulant matrix}

A simpler variant of the nearest normal matrix problem arises when the unitary matrix $Q \in U(n)$ is fixed. This is the case, for example, if we let $\mathcal{E}$ be the set of circulant matrices \cite{Davis1979}. If we denote by $\mathcal{E}_D$  the set of diagonal matrices, then no optimization on $U(n)$ is needed at all, because every circulant matrix shares the same set of eigenvectors. Namely, let
\[  F =  \frac{1}{\sqrt{n}}\begin{bmatrix}
\omega^{n-1} & \omega^{2n-2} & \dots & \omega^{(n-1)^2} & 1\\
\vdots  & \vdots & \ddots & \vdots  & \vdots \\
\omega^2 & \omega^4 & \dots & \omega^{2n-2} & 1\\
    \omega & \omega^2 & \dots & \omega^{n-1} & 1\\
    1 & 1 & \dots & 1 & 1\\
\end{bmatrix} \in U(n)      \]
be the Fourier matrix, where $\omega$ is an arbitrary, but fixed, primitive $n$-th root of unity. A matrix $X$ is circulant if and only if  $X=FDF^*$ for some diagonal $D\in\mathcal{E}_D$ \cite{Davis1979}. Hence, invoking once again the unitary invariance of the Frobenius norm and having chosen $\mathcal{E}$ and $\mathcal{E}_D$ as explained above,
\[ \min_{X\in \mathcal{E}} \|A-X\|_F^2
    = \min_{D\in \mathcal{E}_D} \|F^*AF-D\|_F^2
    = \sum_{i\neq j}|(F^*AF)_{ij}|^2.\]
Hence, we conclude that the circulant matrix nearest to $A$ must be
\[ B=F \mathcal{D}(F^* A F)F^*,    \]
where the linear operator $\mathcal{D}:\C^{n \times n} \to \mathcal{E}_D$ maps a matrix $X$ to a diagonal matrix whose diagonal elements coincide with those of $X$.

\subsubsection{Nearest $\Omega$-stable matrix}

Let $\Omega\subseteq\C$ be nonempty and closed, and consider the target set
$\mathcal{E}_{\Omega,\forall}$ in \eqref{eq:existsforall}, which is closed by Proposition~\ref{prop:bothclosed} and obviously nonempty because, if $z \in \Omega$, then $z I \in \mathcal{E}_{\Omega,\forall}$. The problem of finding a nearest matrix in $\mathcal{E}_{\Omega, \forall}$ is classic, especially when $\Omega$ is the closed left-half plane. Relevant references include \citet{Higham1989, Gillis2017, Guglielmi2017a, Guglielmi2018, Orbandexivry2013}.

Following the treatment
of \citet{Noferini2021a}, in the context of \eqref{eq:schurtrick} the set up above corresponds to
\begin{equation}\label{eq:ETOmega}
     \mathcal{E}_{T}
 =\{T\in\C^{n\times n}:T\text{ is upper triangular},\ T_{ii}\in\Omega\}.
\end{equation}
\begin{lemma}
    If $\Omega$ is nonempty (resp. closed, convex) then $\mathcal{E}_{T}$ as defined in \eqref{eq:ETOmega} is.
\end{lemma}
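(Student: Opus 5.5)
The plan is to exploit the product structure of $\mathcal{E}_T$. Identifying $\mathcal{V}_T$ with $\C^{n(n+1)/2}\cong\R^{n(n+1)}$ by listing the diagonal entries first and the strictly upper triangular entries after, the map $T\mapsto (T_{11},\dots,T_{nn},(T_{ij})_{i<j})$ is a linear isometry (for the Frobenius norm) between $\mathcal{V}_T$ and $\C^n\times\C^{n(n-1)/2}$. Under this identification,
\[ \mathcal{E}_T = \Omega^n \times \C^{n(n-1)/2}, \]
so each of the three properties reduces to the corresponding property of a Cartesian product.

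For nonemptiness, if $z\in\Omega$ then $zI_n\in\mathcal{E}_T$; alternatively, any upper triangular matrix with diagonal $(z,\dots,z)$ works.

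For closedness, I will take a sequence $(T_k)_k\subset\mathcal{E}_T$ with $T_k\to T$. Then $T$ is upper triangular, because $\mathcal{V}_T$ is a linear subspace of a finite dimensional space and hence closed. Each diagonal entry satisfies $(T_k)_{ii}\to T_{ii}$, since entrywise convergence follows from Frobenius convergence. As $(T_k)_{ii}\in\Omega$ and $\Omega$ is closed, $T_{ii}\in\Omega$. Equivalently, $\mathcal{E}_T$ is the intersection of $\mathcal{V}_T$ with the preimages of $\Omega$ under the continuous maps $T\mapsto T_{ii}$, and each of these sets is closed.

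For convexity, I will take $T,S\in\mathcal{E}_T$ and $t\in[0,1]$. The matrix $tT+(1-t)S$ is upper triangular, and its $(i,i)$ entry is $tT_{ii}+(1-t)S_{ii}\in\Omega$ by convexity of $\Omega$.

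There is no real obstacle here. The only point to be careful about is that the three claims are stated separately: each property of $\Omega$ implies the same property of $\mathcal{E}_T$, and no property is assumed jointly with another.
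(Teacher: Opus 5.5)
Your proposal is correct and follows essentially the same route as the paper: nonemptiness via $zI_n$, closedness by passing to limits (upper triangularity is preserved and the diagonal entries stay in the closed set $\Omega$), and convexity by taking convex combinations. The only difference is that the paper obtains closedness by pointing back to the proof of Proposition~\ref{prop:bothclosed}, which uses continuity of the spectrum, whereas you use continuity of the maps $T\mapsto T_{ii}$ directly, which is slightly more elementary.
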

\begin{proof}
    The same example above proves nonemptiness because $zI$ is upper triangular. To prove closedness we may follow the same proof of Proposition~\ref{prop:bothclosed}, taking into account that taking limits preserves the property of being upper triangular. Finally, if $T_1,T_2 \in \mathcal{E}_T$ then for every $0 \leq t \leq 1$ the convex combination $tT_1+(1-t)T_2$ also does, because upper triangular matrices are a subspace and, by the convexity of $\Omega$, the diagonal elements must remain in $\Omega$.
\end{proof}

For every $z\in\C$, let its projection $p_{\Omega}(z)$ be any point of $\Omega$ nearest to $z$ (chosen in an arbitrary, but fixed, way if there is more than one)\footnote{We use $p_\Omega$ rather than our usual notation $\Pi_{\Omega}$ for a Euclidean projection, to highlight visually the fact that $p_\Omega(z)$ is a scalar.}.
Writing $B=B(Q)=Q^*AQ$, the inner problem in \eqref{eq:schurtrick} reduces to appropriate elementwise projections, and one minimizer is given by
\[
 \mathcal{T}_{\Omega}(B)_{ij}
 =\begin{cases}
 B_{ij},&i<j,\\
 p_{\Omega}(B_{ii}),&i=j,\\
 0,&i>j.
 \end{cases}
\]
Hence, $ d(A,\mathcal{E}_{\Omega,\forall})^2
 =\min_{Q\in U(n)} f_{\Omega}(Q),$ having defined the objective function
\[
 f_{\Omega}(Q)
 =\sum_{i>j}|B_{ij}|^2+\sum_{i=1}^n |B_{ii}-p_\Omega(B_{ii})|^2.
\]
In practical applications, the two most relevant cases correspond to when $\Omega$ is either the closed left-half plane (weak Hurwitz stability) or the closed unit disk (weak Schur stability); note that both these sets are convex and therefore Theorem~\ref{thm:equivalentgeneral} applies. In particular, a weakly (Hurwitz or Schur) stable
matrix $X_0$ is a local minimizer if and only if each of its Schur
decompositions $X_0=QTQ^*$ satisfies
$T=\mathcal{T}_{\Omega}(Q^*AQ)$ and $Q$ is a local minimizer of
$f_{\Omega}$. Note that, instead, the equivalence of \emph{global} minimizers in Theorem~\ref{thm:equivalentgeneral} does not require the convexity of $\Omega$. 

The projection formulae also take an especially simple form for Hurwitz and Schur stability. In the former case,
\[ \Omega = \{ z \in \C : \Re(z) \leq 0 \}, \quad p_\Omega(z) = \min\{0,\Re(z)\} + i \Im(z),  \]
whereas in the latter case
\[ \Omega = \{ z \in \C : |z| \leq 1 \}, \quad p_\Omega(z) = \frac{z}{\max\{1,|z|\}}.    \]
Up to translating or rotating the input $z$, it is clear that minor modifications of the same formulae yield the projections onto an arbitrary closed disk or half-plane.

When $\Omega$ is convex, $f_\Omega$ is continuously differentiable over $U(n)$, with gradient given by Proposition~\ref{prop:RGfQ} after substituting $R=B-\mathcal{T}_{\Omega}(B)$.
When $\Omega$ is nonconvex, the same gradient formula holds whenever every
$B_{ii}$ has a unique orthogonal projection onto $\Omega$, but not everywhere. 

Other parametrizations have been used to develop optimization
methods for the nearest Schur stable matrix
\cite{Gillis2019} and for spectral regions defined by
intersections of conic sectors, vertical strips, and disks
\cite{Choudhary}. However, an attractive feature of the Riemannian optimization approach is that the constraint is imposed on the diagonal of
the computed triangular factor. When the matrix in $\mathcal{E}_{\Omega,\forall}$ has repeated or defective eigenvalues of the
approximant, the Riemannian
algorithm returns its Schur factors directly, completely avoiding any tricky computation of Jordan chains. At least in the case of Hurwitz stability, Example~\ref{ex:jordanblocks} below (which is a reprisal and generalization of \citet[Example 5.6]{Noferini2021a}) shows that it happens with positive probability that the nearest weakly stable matrix has nontrivial Jordan blocks in its Jordan canonical form; indeed, there is ample numerical evidence \cite{Noferini2021a} that this phenomenon happens with very high probability at least for the case of nearest weakly Hurwitz or weakly Schur stable matrices.

\begin{example}\label{ex:jordanblocks}
    As shown by \citet[Example~5.6]{Noferini2021a}, the unique real weakly Hurwitz stable matrix nearest to
\[
A_0=\begin{bmatrix}1&2\\1&1\end{bmatrix}
\]
in the Frobenius distance is
\[
X_\star=\begin{bmatrix}0&2\\0&0\end{bmatrix},
\qquad \|A_0-X_\star\|_F=\sqrt{3}.
\]
By \citet[Proposition 2.2 and Proposition~4.3]{NZ26b},  $X_\star$ is also a (possibly complex) nearest weakly Hurwitz matrix, and it is also the unique complex minimizer.
It is clear that the Jordan form of $X_\star$ is $X_\star/2$, i.e., a nontrivial Jordan block.

It is already shown in \citet{Noferini2021a} that there exists an open neighbourhood of real matrices centered around $A_0$ such that the Frobenius projection remains defective. We show here that the same conclusion holds for a complex neighbourhood of $A_0$. Take a sequence $(A_k)_k \subset \C^{2 \times 2}$ with $A_k \to A_0$, and construct a second sequence $(X_k)_k \subset \mathcal{E}$, where $\mathcal{E}$ is the set of weakly stable matrices, such that $X_k$ is nearest to $A_k$ for all $k$. By the proof of Theorem~\ref{thm:ae}, we have $X_k \to X_\star$. Observing that $\ker X_\star = \spann(\{e_1\})$, we can take Schur decompositions $X_k=Q_kT_kQ_k^*$ with $Q_k\to I_2$. Thus, $B_k:=Q_k^*A_kQ_k\to A_0$. Hence, for large enough $k$, $\Re \diag(B_k) > 0$. Minimizing $\norm{B_k-T_k}_F^2$ over weakly Hurwitz stable triangular matrices we therefore see that $T_k$ must have the form
\[ T_k = \begin{bmatrix}
    i\Im (B_k)_{11} & (B_k)_{12}\\
    0 & i\Im (B_k)_{22}
\end{bmatrix}  \Rightarrow B_k-T_k = \begin{bmatrix}
    \Re (B_k)_{11} & 0\\
    (B_k)_{21} & \Re (B_k)_{22}
\end{bmatrix}. \]
Moreover, applying Theorem~\ref{thm:equivalentgeneral} and Proposition~\ref{prop:RGfQ}, and using that $B_k^* B_k-B_k B_k^*$ is Hermitian, it must be that $T_k^*(B_k-T_k)-(B_k-T_k) T_k^*$ is real and diagonal for all $k$. Equating to $0$ the $(2,1)$ element of this matrix we get
\[
\overline{(B_k)_{12}}\,
\Re\left((B_k)_{11}-(B_k)_{22}\right)
+i(B_k)_{21}\,
\Im\left((B_k)_{11}-(B_k)_{22}\right)=0.
\]
Multiplying by $(B_k)_{12}$ and taking imaginary parts, this yields
\[
\Im\left((B_k)_{11}-(B_k)_{22}\right)
\cdot \Re\left((B_k)_{12}(B_k)_{21}\right)=0.
\]
For sufficiently large $k$, the second factor of the latter equation is close to $2$, and hence the first factor must be $0$, implying that $T_k$ has equal diagonal entries. Moreover, $(T_k)_{12} \to 2$, showing that $X_k$ is defective.
\end{example}

A related phenomenon occurs in spectral abscissa minimization
over affine matrix families: \citet{burke2001optimizing}
construct examples whose optimal Jordan structure consists
of a single block and persists under small perturbations
of the family. The presence of large Jordan blocks in the Jordan canonical form of the minimizers quite possibly explains why the Riemannian approach has been so successful on this problem: Unlike the algorithm in \citet{Noferini2021a}, the competing algorithms face the issue of having to compute Jordan chains, or more generally numerical difficulties related to defective eigenvalues; in some cases, the method actively avoids that situation.

It is worth noting that the problem of  the nearest Hurwitz stable matrix is motivated by its applications to differential-algebraic equations and dynamical systems \cite{KunkelMerhmann}. For this reason, Hurwitz stability is usually defined by imposing that every eigenvalue have \emph{strictly negative} real part; a weaker notion allows  for semisimple pure imaginary eigenvalues. However, this observation cannot be used to dismiss the distances and nearest matrices found by the algorithm in \cite{Noferini2021a}, because the corresponding two sets are not closed and their closure is precisely the set $\mathcal{E}_{\Omega,\forall}$ discussed above (with $\Omega$ being the closed left-half plane). Therefore, even assuming that the nearest weakly Hurwitz stable matrix $B$ does not belong to the smaller sets (for example because it has a huge Jordan block with pure imaginary eigenvalues), the distance remains the same, as it is clear by taking the sequence of strictly stable matrices $(B_k)_{k \geq 1}$ with $B_k = B - \frac1k I \to B$.

Similarly to the nearest normal matrix problem, it was conjectured in \citet{Noferini2021a} that a real matrix $A$ may always have a real weakly Hurwitz stable matrix among the (complex) weakly Hurwitz stable matrices nearest to $A$. Some time after the publication of the paper, the algorithm of \citet{Noferini2021a} found some potential counterexamples; a formal disproof of the conjecture, together with a proof that the conjectured behaviour happens with probability $1$ when $n \leq 2$ and with a probability strictly between $0$ and $1$ when $n \ge 3$, can be found in \citet{NZ26b} (limited to the case of Hurwitz stability). Moreover, again for every fixed size $n\geq3$, the ratio between the real and complex distances is unbounded, and  this is  true even when the Frobenius norm is replaced by any other unitarily invariant norm \citep[Theorem~4.1]{NZ26b}.

For a general $\Omega$, if one is interested in a nearest \emph{real} matrix in $\mathcal{E}_{\Omega,\forall}$, in \citet{Noferini2021a} a real variant of the main algorithm is described, based on optimization over orthogonal matrices and generalized real Schur forms, allowing $2\times2$ diagonal blocks even when all eigenvalues are real. This approach requires solving exactly the inner problem of projecting real $2\times2$ matrices onto the
corresponding stable set. We refer the reader to \citet{Noferini2021a} for a full solution of this problem when $\Omega$ is either the closed left half-plane or the closed unit disk; the former case is also treated by \citet{Kuo2020}.

\subsubsection{Exact spectral assignment}
Suppose now that the complete spectrum is prescribed, including algebraic
multiplicities. Given a list $L$, possibly including repetitions, of the eigenvalues $\lambda_1,\ldots,\lambda_n\in\C$, define
\[
 \mathcal{E}_{L}
 =\left\{X\in\C^{n\times n}:
 \det(zI_n-X)=\prod_{i=1}^n(z-\lambda_i)\right\}.
\]
The set is closed, because the definition is equivalent to fixing the coefficients of the
characteristic polynomial of $X$. Every matrix in $\mathcal{E}_L$ has a Schur decomposition
with the eigenvalues in the specified order. We can therefore use
\[
 \mathcal{E}_{T}
 =\{T:T\text{ is upper triangular},\ T_{ii}=\lambda_i\},
\]
which is an affine subspace. For $B=Q^*AQ$, the inner minimization in \eqref{eq:schurtrick} is solved exactly by keeping
$B_{ij}$ for $i<j$, replacing $B_{ii}$ by $\lambda_i$, and setting the strictly
lower triangular entries to zero. Thus, the problem reduces to minimizing the objective function
\[f_L(Q) = \sum_{i>j}|B_{ij}|^2+
       \sum_{i=1}^n|B_{ii}-\lambda_i|^2, \quad B = Q^*AQ. \]
The function $f_L$ is smooth, and Theorem~\ref{thm:equivalentgeneral}
applies. In particular, repeated prescribed eigenvalues cause no additional difficulty, and all the possible corresponding Jordan structures are allowed.

Alternatively, one may choose $\mathcal{E}_T$ by allowing every possible reordering of the eigenvalues. This gives the alternative objective function
\[
 \sum_{i>j}|B_{ij}|^2+
 \min_{\pi\in S_n}\sum_{i=1}^n|B_{ii}-\lambda_{\pi(i)}|^2,
\]
where $S_n$ is the permutation group. The inner minimization is
a linear assignment problem with costs $|B_{ii}-\lambda_j|^2$. The global
minimum remains the same, but the objective function may become nonsmooth at the points
where the solution to the optimal assignment problem changes.

Spectral assignment has a long history within inverse eigenvalue problems;
see \citet{chu98,chugolub}. In particular, \citet{chu90} developed projected
gradient methods for general matrix nearness problems with spectral constraints, and
\citet{chu91} treated the problem of computing the nearest real normal matrices with prescribed spectrum. Going back to the complex case, the normality constraint changes the triangular set; in particular, $\mathcal{E}_T$ reduces to a set with a single element, namely, \[ \mathcal{E}_T = \left\{\diagm(\lambda_1,\dots,\lambda_n) \right\}.\] The objective function becomes instead
\[ 
 f(Q) = 
 \left\|Q^*AQ-\diagm(\lambda_1,\dots,\lambda_n)\right\|_F^2.
\]

\subsubsection{The normal Procrustes problem}\label{sec:nProcr}

Given a pair of matrices $A,B\in\mathbb{C}^{n\times m}$, the normal Procrustes problem, analyzed by \citet{Bierly}, is a variant of the more commonly studied orthogonal Procrustes problem where the request that the sought matrix is unitary (or real orthogonal) is replaced by a normality constraint:
\[
  \inf_{N\in\mathbb{C}^{n\times n},\,NN^*=N^*N}
      \|NA-B\|_F^2.
\]
Note that, when $A=I_n$, this problem reduces to the nearest normal matrix problem discussed earlier.
Since $N$ is normal, we can once again write $N=QDQ^*$ with $Q$ unitary and $D$ diagonal, leading once again to an optimization problem over $U(n)$. If $N$ is required to be not only normal, but also real, once again one can use orthogonal matrices and real normal blocks to design a real variant of the approach.

\subsubsection{Other examples}

Several further examples can be obtained by changing only $\mathcal{E}_T$. We limit ourselves to one example: It is clear that one can combine the normal and stable problems by imposing that $\mathcal{E}_T$ contains diagonal matrices whose diagonal elements are restricted to lie in a certain set $\Omega$. The problem studied in \cite{chu91} overlaps with this hybrid problem when $ \#  \Omega = 1$. Some other choices recover
problems for which direct methods are already available. For example,
$\Omega=\R$ gives the problem of finding the Hermitian matrix nearest to $A$, which can be simply computed as $(A+A^*)/2$.

\subsection{A bi-objective optimization viewpoint} \label{subsec:biobjective}

We describe now a method to convert a matrix nearness problem into optimization of a matrix function on a ball. This method will be instrumental for the gradient system approach in Section~\ref{subsec:gradientsystem_unstructured}.

Let $F: \mathbb{F}^{m\times n} \to \mathbb{R}$ be a function; for now we assume only that it is continuous. Given $c < F(A)$, we wish to solve the matrix distance problem in the Frobenius norm for the sublevel set (assumed nonempty) $\mathcal{E} = F^{-1}((-\infty,c])$, i.e., find
\begin{equation} \label{dist_sublevel}
    d(A, \mathcal{E}) = \min_X \norm{X-A}_F \subjectto X \in\mathbb{F}^{m\times n},\, F(X) \leq c.    
\end{equation}
We call this distance $\psi(c)$, to highlight the dependence on the function value $c$. Together with $\psi$, we consider also the function obtained by swapping the roles of $F$ and $\norm{X-A}_F$, i.e.,
\begin{equation} \label{gsphi}
    \varphi(\varepsilon) :=\min_X F(X) \subjectto X \in\mathbb{F}^{m\times n},\,\norm{X-A}_F \leq \varepsilon
\end{equation}
These maps are two different ways to study the same problem via the trade-off between the function value $c$ and the distance $\varepsilon$. We see immediately an example to make this setup more concrete.
\begin{example}[distance from instability and pseudospectral abscissa] \label{ex:spectral abscissa}
    Consider again the problem of distance to Hurwitz instability. For a matrix $A\in\mathbb{C}^{n\times n}$, define the \emph{spectral abscissa}
    \[
        \alpha(A) = \max \Re(\lambda) \subjectto \lambda \in \Lambda(A),
    \]
    and set $F(X) = -\alpha(X)$. A matrix is Hurwitz stable if and only if $\alpha(A) < 0$; hence, computing the distance to Hurwitz instability reduces to computing $\psi(c)$ in~\eqref{dist_sublevel} with $c=0$. The function $\varphi$ in this case satisfies
    \begin{align*}
        -\varphi(\varepsilon) &= \max \alpha(X) \subjectto X \in\mathbb{C}^{n\times n},\,\norm{X-A}_F \leq \varepsilon\\
        &= \max \Re(\lambda) \subjectto \lambda \in \Lambda_\varepsilon(A).
    \end{align*}
    The function $-\varphi(\varepsilon)$ for this problem is known as \emph{pseudospectral abscissa}, and usually denoted by $\alpha_\varepsilon(A)$.

For Schur stability, instead, one can take $F(X) = -\rho(X)$ and $c=-1$. More generally, one can consider the problem $F(X)=\min f(\lambda) \subjectto \lambda\in\Lambda(X)$ for an arbitrary scalar function $f:\mathbb{C} \to \mathbb{R}$~\cite{GLS23}.
\end{example}

The following result describes the connection between $\psi$ and $\varphi$.
\begin{theorem} \label{thm:psiphi}
    Suppose $F$ is continuous, and has no local minima in the interior of the ball $B(A,r)=\{X \colon \norm{X-A}_F\leq r\}$. Then,
    \begin{enumerate}
        \item in~\eqref{dist_sublevel} and~\eqref{gsphi}, the minima are reached only in points $X$ for which equality holds in the constraint;
        \item $\psi$ and $\varphi$ are continuous;
        \item $\varphi$ is strictly decreasing from $F(A) = \varphi(0)$ to $c_0:=\varphi(r)$, and $\psi$ is strictly decreasing from $r = \psi(c_0)$ to $0 = \psi(F(A))$, and the two functions are the inverse of each other on these intervals;
    \end{enumerate}
\end{theorem}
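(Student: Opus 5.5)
Item 1 comes first, and the other two items follow from it together with compactness of balls in $\mathbb{F}^{m\times n}$. Throughout, the domains are $\varepsilon\in[0,r]$ for $\varphi$ and $c\in[c_0,F(A)]$ for $\psi$; at $c=F(A)$ we have $\psi=0$, attained at $X=A$. For \eqref{gsphi} with $0<\varepsilon\le r$, the minimum exists because $F$ is continuous on the compact ball $B(A,\varepsilon)$. If a minimizer $X$ had $\norm{X-A}_F<\varepsilon$, then $X$ would lie in the interior of $B(A,\varepsilon)\subseteq B(A,r)$, and minimizing over the whole ball would make it a local minimum of $F$ there, contradicting the hypothesis. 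Hence $\norm{X-A}_F=\varepsilon$.

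For \eqref{dist_sublevel} with $c_0\le c<F(A)$, the feasible set meets $B(A,r)$ (it contains the $\varphi(r)$-minimizer), so the minimum is attained by Proposition~\ref{prop:lauriwaswrong}, with $\psi(c)\le r$. Let $X$ be a minimizer, with $\varepsilon:=\norm{X-A}_F>0$ since $F(A)>c$. Suppose $F(X)<c$. By continuity, $F<c$ on a small neighborhood of $X$, and that neighborhood contains points of the segment $[A,X]$ that are strictly closer to $A$. This contradicts minimality, so $F(X)=c$.

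For item 3, I first show that $\varphi$ is strictly decreasing. Take $\varepsilon_1<\varepsilon_2$ and let $X_1$ be a minimizer for $\varepsilon_1$. It lies on the sphere $\norm{X-A}_F=\varepsilon_1$, hence in the interior of $B(A,r)$, so it is not a local minimum of $F$. Therefore there are points arbitrarily close to $X_1$, in particular inside $B(A,\varepsilon_2)$, with $F<F(X_1)$. This gives $\varphi(\varepsilon_2)<\varphi(\varepsilon_1)$, with $\varphi(0)=F(A)$.

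Next I show the two functions are mutually inverse. Set $c=\varphi(\varepsilon)$. Any $X$ with $F(X)\le c$ and $\norm{X-A}_F<\varepsilon$ would give $\varphi(\norm{X-A}_F)\le c=\varphi(\varepsilon)$, contradicting strict monotonicity. So $\psi(c)\ge\varepsilon$, and the $\varphi$-minimizer shows $\psi(c)\le\varepsilon$. Conversely, for $c\in[c_0,F(A)]$ put $\varepsilon=\psi(c)$. The $\psi$-minimizer gives $\varphi(\varepsilon)\le c$. If the inequality were strict, then by continuity of $\varphi$ (proved below) and the intermediate value theorem some $\varepsilon'<\varepsilon$ would satisfy $\varphi(\varepsilon')=c$, producing a feasible point closer than $\psi(c)$, which is impossible. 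Thus $\psi=\varphi^{-1}$, and $\psi$ is strictly decreasing on $[c_0,F(A)]$ with the stated endpoints.

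The main obstacle is item 2. Once $\varphi$ is known to be continuous and strictly decreasing on $[0,r]$, it is a homeomorphism onto $[c_0,F(A)]$, and $\psi=\varphi^{-1}$ is continuous automatically. So it suffices to prove continuity of $\varphi$, which is a marginal function over the balls $B(A,\varepsilon)$, a family varying continuously in the Hausdorff metric. Upper semicontinuity: given a minimizer $X$ for $\varepsilon$ and $\varepsilon_k\to\varepsilon$, the points $A+\min(1,\varepsilon_k/\varepsilon)(X-A)$ converge to $X$ and are feasible, so $\limsup\varphi(\varepsilon_k)\le\varphi(\varepsilon)$. Lower semicontinuity: take minimizers $X_k$ for $\varepsilon_k$, extract a convergent subsequence $X_k\to X$ with $\norm{X-A}_F\le\varepsilon$, and conclude $\liminf\varphi(\varepsilon_k)=F(X)\ge\varphi(\varepsilon)$. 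The case $\varepsilon=0$ is handled the same way, since then $X_k\to A$. This proves item 2 and, through the inverse relation, completes item 3.
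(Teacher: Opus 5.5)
Your proof is correct, and its skeleton is the paper's. Item~1 for $\varphi$ and the strict decrease of $\varphi$ both come from the observation that a minimizer strictly inside $B(A,r)$ would be an interior local minimum. The relation $\psi=\varphi^{-1}$ then follows because $F>\varphi(\varepsilon)$ on the open ball of radius $\varepsilon$, while the value $\varphi(\varepsilon)$ is attained on the boundary sphere.

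You depart from the paper in two places.

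First, for the sublevel problem~\eqref{dist_sublevel} you prove $F(X)=c$ directly, by sliding a minimizer along the segment towards $A$. This uses only the continuity of $F$ and $F(A)>c$. It therefore shows that the no-local-minima hypothesis is irrelevant for that half of item~1, and that the conclusion holds for every $c<F(A)$ with nonempty feasible set. The paper instead reads it off at the end from the inverse relation: a minimizer satisfies $F(X)\le c$ and $F(X)\ge\varphi(\psi(c))=c$.

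Second, for the continuity of $\varphi$ you use a Berge-type semicontinuity argument: radial shrinking gives upper semicontinuity and compactness gives lower semicontinuity. It works directly with balls and needs neither item~1 nor uniform continuity. The paper applies Heine--Cantor on $B(A,r)$ and matches the spheres $\partial B(A,\varepsilon)$ and $\partial B(A,\varepsilon+\delta)$ radially. That relies on item~1 to pass from balls to spheres, but it gives a quantitative statement: $\varphi$ inherits the modulus of continuity of $F$ on $B(A,r)$, so for instance $\varphi$ is Lipschitz whenever $F$ is.

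One small fix in your lower semicontinuity step: first pass to a subsequence along which $\varphi(\varepsilon_k)$ converges to the $\liminf$, and only then extract a convergent subsequence of minimizers. Otherwise the equality $\liminf\varphi(\varepsilon_k)=F(X)$ is not justified.
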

\begin{proof}
    We start from proving the properties of $\varphi$. Let $\varepsilon\in (0,r]$. If $X$ minimizes $F$ inside $B(A,\varepsilon)$ and it is not on the boundary, then $X$ must be a local minimizer of $F$; this contradicts the assumptions.

    Let now $\varepsilon_1,\varepsilon_2$ be given with $0 \leq \varepsilon_1<\varepsilon_2 \leq r$. Let $X$ be a minimizer of $F$ inside $B(A,\varepsilon_1)$. If $\varphi(\varepsilon_2) \geq \varphi(\varepsilon_1)$, then $X$ must minimize $F$ also inside the larger ball $B(A,\varepsilon_2)$, and this is impossible because it would be a non-boundary minimizer. Hence $\varphi$ is strictly decreasing.

    The function $F$ is uniformly continuous on the compact set $B(A,r)$, by the Heine--Cantor theorem: For any given $\sigma>0$ there is $\delta$ such that $\norm{X-X'}_F\leq \delta$ implies $\abs{F(X)-F(X')}\leq\sigma$. Let $X = A+\varepsilon E$, with $\norm{E}_F=1$, be an arbitrary point on the sphere $\partial B(A,\varepsilon)$. Then, for the corresponding point $X' = A+(\varepsilon+\delta)E$ on the sphere $\partial B(A,\varepsilon+\delta)$ it holds that $\abs{F(X)-F(X')} \leq \sigma$; in particular, the minima of $F$ on these two spheres cannot differ by more than $\sigma$, i.e., $\abs{\varphi(\varepsilon+\delta) - \varphi(\varepsilon)} \leq \sigma$. This proves continuity of $\varphi$.
    
    Since $\varphi$ is continuous and strictly decreasing, it has a continuous strictly decreasing inverse; we show that this inverse is $\psi$. Let $\varepsilon,c$ be given so that $\varphi(\varepsilon)=c$. Then, since the minimum is attained (only) on the boundary, it holds that:
    \begin{itemize}
        \item For all $X$ with $\norm{X-A}_F < \varepsilon$, $F(X)>c$;
        \item For all $X$ with $\norm{X-A}_F = \varepsilon$, $F(X)\geq c$, and there is a point where equality is attained.
    \end{itemize}
    These two properties together imply that the minimum in~\eqref{dist_sublevel} is $\varepsilon$, hence $\psi(c)=\varepsilon$; and moreover the equality $F(X)=c$ holds for all minimizers of~\eqref{dist_sublevel}.
\end{proof}
Under certain regularity conditions, we can easily obtain the derivative $\varphi'$ once we have computed a minimizer.
\begin{theorem}[cfr. \protect{\citet[Theorems IV.1.2, VI.1.2, VII.2.4]{GuglielmiLubichBook}}] \label{thm:gspsiderivative}
    Under the assumptions of Theorem~\ref{thm:psiphi}, assume $\varepsilon \in (0,r)$. Assume further that the minimum in~\eqref{gsphi} is achieved by a unique $X_\star$ on the sphere $\norm{X-A}_F=\varepsilon$, and that $F$ is continuously differentiable in a neighborhood of $X_\star $. Then, $\varphi'(\varepsilon) = -\norm{\nabla F(X_\star )}_F$.
\end{theorem}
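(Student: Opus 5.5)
The idea is to treat $\varphi$ as a value function, $\varphi(\varepsilon)=\min_{\|E\|_F=1}F(A+\varepsilon E)$, and to get matching one-sided bounds on difference quotients. Theorem~\ref{thm:psiphi} lets us restrict to the sphere. Write $X_\star=A+\varepsilon E_\star$ with $\|E_\star\|_F=1$, and set $G=\nabla F(X_\star)$. The target formula $-\|G\|_F$ suggests two facts. The best direction for decreasing $F$ from $X_\star$ is $-G/\|G\|_F$. First-order optimality on the sphere should force $E_\star$ to be parallel to that direction, namely $E_\star=-G/\|G\|_F$.

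\textbf{Step 1: alignment.} Since $X_\star$ minimizes $F$ on the ball $B(A,\varepsilon)$, Theorem~\ref{thm:reg min in normal} gives $-G\in N_{X_\star}B(A,\varepsilon)=\{tE_\star: t\geq0\}$. We need $G\neq0$. If $G=0$, then for small $s>0$ the point $A+(\varepsilon-s)E_\star$ lies inside the ball, so $F(A+(\varepsilon-s)E_\star)>\varphi(\varepsilon)$ by item 1 of Theorem~\ref{thm:psiphi}. This alone does not contradict $G=0$, so I would instead argue through the upper bound below. Alternatively, assume $G\neq0$ as a regularity condition; I expect this is implicit in the intended statement. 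With $G\neq0$ we get $E_\star=-G/\|G\|_F$.

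\textbf{Step 2: upper bound on the right derivative.} For $h>0$ small, the point $X_\star+hE_\star$ lies in $B(A,\varepsilon+h)$. Hence
\[
\varphi(\varepsilon+h)\le F(X_\star+hE_\star)=\varphi(\varepsilon)+h\langle G,E_\star\rangle+o(h)=\varphi(\varepsilon)-h\|G\|_F+o(h).
\]
Similarly, for the left side, $\varphi(\varepsilon-h)\le F(X_\star-hE_\star)$, which gives $\varphi(\varepsilon)-\varphi(\varepsilon-h)\ge -h\|G\|_F+o(h)$. So we have the one-sided inequalities $\limsup_{h\downarrow0}(\varphi(\varepsilon+h)-\varphi(\varepsilon))/h\le-\|G\|_F$ and $\liminf_{h\downarrow0}(\varphi(\varepsilon)-\varphi(\varepsilon-h))/h\ge-\|G\|_F$.

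\textbf{Step 3: reverse bounds via uniqueness.} This is the main obstacle. Let $X_h=A+(\varepsilon+h)E_h$ be a minimizer for radius $\varepsilon+h$, where $h$ may be positive or negative. By compactness and uniqueness of $X_\star$, the argument used in the proof of Theorem~\ref{thm:ae} shows that $X_h\to X_\star$, because every limit point is a minimizer at radius $\varepsilon$ by continuity of $\varphi$. Now compare with the point $A+\varepsilon E_h\in B(A,\varepsilon)$:
\[
\varphi(\varepsilon)\le F(X_h-hE_h)=\varphi(\varepsilon+h)-h\langle \nabla F(\xi_h),E_h\rangle
\]
for some $\xi_h$ on the segment, by the mean value theorem. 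Continuity of $\nabla F$ near $X_\star$, together with $E_h\to E_\star$, gives $\langle\nabla F(\xi_h),E_h\rangle\to\langle G,E_\star\rangle=-\|G\|_F$. For $h>0$ this yields $\varphi(\varepsilon+h)-\varphi(\varepsilon)\ge -h\|G\|_F+o(h)$. For $h<0$ it yields the complementary inequality on the left.

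\textbf{Conclusion.} Combining Steps 2 and 3 shows that both one-sided derivatives equal $-\|G\|_F$. Note that Step 3 alone already identifies the derivative as $\langle G,E_\star\rangle$, which combined with Step 2 makes the alignment in Step 1 consistent. The delicate points are the convergence $X_h\to X_\star$, which is where uniqueness of the minimizer is needed, and keeping $\xi_h$ inside the neighbourhood where $F$ is $C^1$.
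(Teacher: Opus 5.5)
Your argument is correct, but it reaches the key identity $\varphi'(\varepsilon)=\langle\nabla F(X_\star),E_\star\rangle$ by a different route from the paper. Both proofs start from the same normal-cone alignment $-\nabla F(X_\star)=\mu(X_\star-A)$ with $\mu\ge0$ (your Step 1).

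The paper then reparametrizes $\varphi(\varepsilon)=\min_{\norm{E}_F=1}F(A+\varepsilon E)$ and invokes Danskin's theorem to differentiate through the minimum. This gives $\varphi'(\varepsilon)=\langle\nabla F(X_\star),E_\star\rangle=-\mu\varepsilon$, and the paper concludes by taking norms.

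Your Steps 2--3 instead sandwich the one-sided difference quotients:
\begin{itemize}
\item Feasibility of $X_\star\pm hE_\star$ gives one inequality for free.
\item The reverse inequality comes from comparing the radius-$(\varepsilon+h)$ minimizer $X_h$ with its radial retraction $A+\varepsilon E_h$. This uses $X_h\to X_\star$, which is where uniqueness enters, together with the mean value theorem and continuity of $\nabla F$ near $X_\star$.
\end{itemize}
In effect you reprove a localized Danskin theorem. This is longer but self-contained. It uses exactly the stated hypotheses: $C^1$ only near $X_\star$, and no convexity in $\varepsilon$. Standard statements of Danskin's theorem are usually formulated under stronger assumptions, such as convexity in the parameter or differentiability of $F(A+\varepsilon E)$ for every $E$ on the sphere. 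The paper's version is shorter and makes the role of the multiplier $\mu$ transparent.

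One correction: drop the hedge about assuming $G\neq0$. It is neither needed nor implied by the hypotheses. From $-G=tE_\star$ with $t\ge0$ you get $\langle G,E_\star\rangle=-t=-\norm{G}_F$ in all cases, and this is all Steps 2--3 use.

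The case $G=0$ can genuinely occur. Take $m=n=1$, $\F=\R$, $A=0$, $r>\varepsilon$, and $F(x)=-(x-\varepsilon)^3$. This $F$ is strictly decreasing, so it has no local minima. Then $\varphi(s)=F(s)$ with unique minimizer $x=s$, and $\varphi'(\varepsilon)=0=-\abs{F'(\varepsilon)}$.

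The paper avoids this issue automatically by keeping $\mu\ge0$ and taking norms only at the end.
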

\begin{proof}
    By Theorem~\ref{thm:reg min in normal}, $-\nabla F(X_\star)$ must belong to the normal cone of the ball $\norm{X-A}_F \leq \varepsilon$, i.e.,    
    \begin{equation} \label{phi first order optimality}
        -\nabla F(X_\star ) = \mu (X_\star -A), \quad \mu \geq 0
    \end{equation}
    We shall show that $\mu = -\frac{\varphi'(\varepsilon)}{\varepsilon}$; and then the result follows by taking norms.
    We reparametrize the problem: Write $X = A + \varepsilon E$, so
    \begin{equation} \label{minF_reparametrized}
    \varphi(\varepsilon) = \min_E F(A+\varepsilon E) \subjectto E \in \mathbb{F}^{m\times n}, \, \norm{E}_F=1.    
    \end{equation}
    The assumptions of Danskin's theorem~\cite[Proposition~B.25]{Bertsekas1999}  hold for~\eqref{minF_reparametrized}, so we can compute $\varphi'(\varepsilon)$ ignoring the contribution $\frac{d}{d\varepsilon} E_\star$, i.e.,
    \[
    \varphi'(\varepsilon) = \frac{d}{d\varepsilon} F(A+\varepsilon E_\star ) = \scalar{\nabla F(X_\star ), E_\star } = \scalar{-\mu(X_\star -A),E_\star } = \scalar{-\mu\varepsilon E_\star ,E_\star  } = -\mu\varepsilon.
    \]
\end{proof}
These results suggest that (when the no-local-minima condition holds) we can compute $\psi(c)$ (and hence solve the distance problem~\eqref{dist_sublevel}) via $\varphi(\varepsilon)$, using a form of univariate rootfinding to solve $\varphi(\varepsilon) - c = 0$: for instance, bisection or Newton's method, given sufficient regularity. Newton's method is particularly convenient since Theorem~\ref{thm:gspsiderivative} shows that $\varphi'$ is available explicitly once we have computed a minimizer $X_\star $.

Using $\varphi$ as a means to solve nearness problems is a popular approach in the literature. An informal motivation to explain why $\varphi$ is generally easier to compute than $\psi$ is that it is easier to handle complexity in the function to optimize (via derivatives) than complexity in the optimization domain (via projections or reparametrizations).

\subsection{The gradient system approach} \label{subsec:gradientsystem_unstructured}
In this section, we describe a general method to compute $\varphi(\varepsilon)$ for a large class of problems; the approach is described in more detail in~\citet{GuglielmiLubichBook}.

Let $\mathcal{M}$ be a Riemannian manifold immersed in $\mathbb{R}^N$ (with the induced metric). A \emph{gradient system} of ordinary differential equations on $\mathcal{M}$ is the system of ODEs in the function $X(t)$
\begin{equation} \label{gradientsystem}
    \dot{X} = -\Pi_{T_X(\mathcal{M})}(\nabla V(X)),
\end{equation}
where $V \in \mathcal{C}^1(\mathcal{M})$ is a real-valued function.

The projection of $\nabla V(X)$ onto the tangent space in $X$ is known as the \emph{Riemannian gradient} of $V$, and denoted by $\grad(V)$. Since the right-hand side lies in the tangent space of $\mathcal{M}$, one can show that $X(t)\in\mathcal{M}$ for all $t$ such that $X(t)$ is defined. Moreover, $\frac{d}{dt} V(X(t)) = -\norm{\grad(V)}_2^2 \leq 0$, hence the function $V$ decreases monotonically along the solutions. If the solutions converge to a point $X_\star\in \mathcal{M}$, then $X_\star$ is a critical point of $V$ on $\mathcal{M}$.

From an optimizer's point of view, this gradient system is a continuous-time version of a gradient descent method to compute a (local) minimum of $V(X)$ disguised as a differential equation. Indeed, to solve a gradient system in the simplest possible way, the forward Euler method, we need to:
\begin{enumerate}
    \item compute the right-hand side $F_k$ of~\eqref{gradientsystem} for the current iterate $X_k$;
    \item compute $X_k + h F_k$, with a suitable step-size $h$; note that this sum generally is not an element of the manifold $\mathcal{M}$;
    \item project $X_k + h F_k$ back to a point on the manifold, obtaining $X_{k+1} \in \mathcal{M}$.
\end{enumerate}
This is essentially gradient descent on a Riemannian manifold (see, e.g., \citet[Section~4.3]{Boumal2023}); note in particular Step 3, the \emph{retraction}, which is a characteristic feature of optimization on manifolds, not needed in $\mathbb{R}^N$. One must take care to include a retraction when solving~\eqref{gradientsystem} numerically: While the exact solution is guaranteed to be in $\mathcal{M}$, numerically one must enforce this invariant. In practice, the distance between the ODE and optimization worlds is small, since also in the ODE one typically uses optimization ideas such as the Armijo step-size; see, e.g., \citet[Algorithm~1]{GuglielmiLubichBook}.

To compute~\eqref{gsphi} for given $\varepsilon$, we reuse the parametrization~\eqref{minF_reparametrized}, and integrate numerically the gradient system on the sphere $\mathcal{S} = \{E \colon \norm{E}_F=1\}$ given by
\begin{equation} \label{gradient system on the sphere}
    \dot{E} = -\Pi_{T_E\mathcal{S}}(G) = -G + \scalar{G,E}E, \quad G = \nabla_X F(A+\varepsilon E),
\end{equation}
following a single trajectory from an arbitrary starting point, until we reach a stationary point: In practice this stationary point is often a local minimum, especially with a well-chosen starting point.

Caution is needed because in many applications $F$ is not differentiable everywhere, in particular when it depends on the eigenvalues or singular values of $X$: For instance, in Example~\ref{ex:spectral abscissa}, $F(X) = -\alpha(X)$ is differentiable only when $X$ has a unique, simple rightmost eigenvalue $\lambda$, and in this case $\nabla F(X) = -\frac{1}{x^*y}xy^*$, where $x$ and $y$ are its left and right associated eigenvectors, normalized with $\norm{y}_2=\norm{x}_2=1$ and $y^*x>0$. 

In many practical cases one still gets a usable result out of these methods even by ignoring differentiability issues~\cite[Remark II.2.8]{GuglielmiLubichBook}: Computing an eigenvalue decomposition of $X$ and sorting the eigenvalues automatically picks a subgradient in cases where there is an exact tie, which is in any case unlikely (non-generic). Moreover, the method ensures monotonic descent of $F$, if implemented properly, for instance with Armijo step-size. However, the usual concerns of non-smooth optimization apply, especially if the minimizer is a non-differentiable point~\cite{asl20}.

For more robust methods, there is a long line of research on BFGS-based methods to optimize functions based on eigenvalues, which can be non-smooth and non-convex: See for instance~\citet{burke2005_robust,Lewis2009,Curtis2017}; in particular, the GRANSO method for constrained optimization. It is a BFGS-based method for constrained optimization, with carefully chosen tuning parameters and termination criteria, designed especially for functions that are differentiable almost everywhere, but may have non-smooth points. This is exactly the setup we have when studying functions based on eigenvalues, like the spectral abscissa.

\citet{grundel2014} studied the normal cones of weakly Hurwitz stable matrices and the subgradients of the spectral abscissa, focusing on a special case for which there is a defective and derogatory eigenvalue on the imaginary axis, or in other words, an eigenvalue associated with both more than one Jordan block and at least one Jordan block of size greater than one; see also \citet{burke2001_variational} for results on the more general case of a function $F(X)$ depending only on the spectrum of $X$.

\subsection{Rank-constrained gradient systems}

A significant practical improvement in this setup comes from exploiting low-rank structure in $E_\star$. As we have seen in Theorem~\ref{thm:1dmin}, in distance to $\Omega$-instability problems $\Delta_\star $ (and hence also $E_\star  = -\Delta_\star  / \norm{\Delta_\star }_F$) has rank $1$ for complex perturbations. More generally, if $\nabla F(X_\star)$ exists and $\varphi'(\varepsilon_\star )\neq 0$, we can see that $E_\star $ has the same rank as $\nabla F(X_\star)$  also from~\eqref{phi first order optimality}, or from imposing stationarity in~\eqref{gradient system on the sphere}. In a problem where the optimal $E_\star $ is guaranteed to have rank $1$, we can restrict the minimization to
\begin{align}
    \varphi(\varepsilon) &= \min_{E} F(A+\varepsilon E) \subjectto E \in \mathcal{S} \cap \mathcal{R}_1 , \label{varphi_rango} \\
    \mathcal{S} \cap \mathcal{R}_1 &:= \{E\in \mathbb{F}^{m\times n} \colon \norm{E}_F=1, \,\rank(E)=1\}. \nonumber
\end{align}
(We denote with $\mathcal{R}_r$ the manifold of matrices with rank exactly $r$.)

The following lemma simplifies the projection on the tangent space of this manifold.
\begin{lemma} \label{lem:sphere and cone}
    Let $\mathcal{M}$ be a manifold that is closed by rescaling, i.e., $E\in\mathcal{M}$ implies $tE \in \mathcal{M}$ for each $t>0$. Then, $\mathcal{S} \cap \mathcal{M} $ is a clean intersection, and for each $E\in\mathcal{S} \cap \mathcal{M}$ we have
    \[
    \Pi_{T_E(\mathcal{S}\cap \mathcal{M})} (G) = \Pi_{T_E\mathcal{S}} \Pi_{T_E\mathcal{M}} G = 
    P - \scalar{P,E}E, \quad P = \Pi_{T_E\mathcal{M}}(G).
    \]
\end{lemma}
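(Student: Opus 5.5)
The key fact to exploit is that a cone-like manifold $\mathcal{M}$ contains the radial direction: if $E\in\mathcal{M}$ then the curve $t\mapsto tE$ lies in $\mathcal{M}$. Differentiating at $t=1$ gives $E\in T_E\mathcal{M}$. The normal space of the sphere at $E$ is $N_E\mathcal{S}=\spann\{E\}$ and its tangent space is $T_E\mathcal{S}=E^\perp$. Hence $N_E\mathcal{S}\subseteq T_E\mathcal{M}$.

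For the clean intersection, I would first show transversality, which implies cleanness as recalled in Section~\ref{subsec:normal cones}. Since $E\in T_E\mathcal{M}$, we have
\[ T_E\mathcal{S}+T_E\mathcal{M}\supseteq E^\perp+\spann\{E\}, \]
and the right-hand side is the whole ambient space. Thus the intersection is transversal, so $\mathcal{S}\cap\mathcal{M}$ is an embedded manifold near $E$ and
\[ T_E(\mathcal{S}\cap\mathcal{M})=T_E\mathcal{S}\cap T_E\mathcal{M}=T_E\mathcal{M}\cap E^\perp. \]
Alternatively, one can see this directly: the map $\mathcal{M}\to\R$, $X\mapsto\norm{X}_F^2$, has nonzero differential $2\scalar{E,\cdot}$ on $T_E\mathcal{M}$, because $\scalar{E,E}=1$. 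Therefore $1$ is a regular value, and the level set has the stated tangent space.

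Next comes the projection formula. Let $P=\Pi_{T_E\mathcal{M}}(G)$, and write $\mathcal{W}:=T_E\mathcal{M}$, a linear subspace containing the unit vector $E$. The orthogonal complement of $E$ inside $\mathcal{W}$ is $\mathcal{W}\cap E^\perp$. Orthogonal projection onto a subspace of $\mathcal{W}$ factors through $\Pi_{\mathcal{W}}$, so
\[ \Pi_{\mathcal{W}\cap E^\perp}=\Pi_{\mathcal{W}\cap E^\perp}\Pi_{\mathcal{W}}. \]
Inside $\mathcal{W}$, projecting onto $E^\perp$ is $P\mapsto P-\scalar{P,E}E$, and this stays in $\mathcal{W}$ because $E\in\mathcal{W}$. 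This gives the formula $P-\scalar{P,E}E$, which is exactly $\Pi_{T_E\mathcal{S}}(P)$ by the formula already used in~\eqref{gradient system on the sphere}. To confirm it is the orthogonal projection, I would check two things: the result lies in $\mathcal{W}\cap E^\perp$, and $G$ minus the result, namely $(G-P)+\scalar{P,E}E$, is orthogonal to $\mathcal{W}\cap E^\perp$. The latter holds because $G-P\perp\mathcal{W}$ and $E\perp E^\perp$.

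There is no serious obstacle here. The only point needing care is justifying $E\in T_E\mathcal{M}$, since that is what makes the two projections commute. It requires the scaling curve $tE$ to be a smooth curve in the (embedded) manifold $\mathcal{M}$, which holds because the curve is smooth in the ambient space and lies in $\mathcal{M}$.
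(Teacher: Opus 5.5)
Your proof is correct and follows the same route as the paper. The scaling curve gives $E\in T_E\mathcal{M}$, which yields transversality (hence cleanness) with $T_E(\mathcal{S}\cap\mathcal{M})=T_E\mathcal{M}\cap E^\perp$, and it also guarantees that $P-\langle P,E\rangle E$ stays in $T_E\mathcal{M}$. Your direct check of the projection (membership in $T_E\mathcal{M}\cap E^\perp$ plus orthogonality of the residual) is one of the ``elementary ways'' the paper alludes to, used instead of the general fact that $\Pi_{\mathcal{W}}\Pi_{\mathcal{V}}x\in\mathcal{V}$ for all $x$ implies $\Pi_{\mathcal{W}}\Pi_{\mathcal{V}}=\Pi_{\mathcal{W}\cap\mathcal{V}}$.
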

\begin{proof}
    Note that $E \in T_E\mathcal{M}$, since it is the tangent vector of the rescaling curve $f(t) = tE$. Since $T_E\mathcal{S} = (\operatorname{span}(E))^\perp$, the intersection is transversal, and hence $T_E(\mathcal{S} \cap \mathcal{M}) = T_E\mathcal{S} \cap T_E\mathcal{M}$.
    
    Note also that $P - \scalar{P,E}E \in T_E\mathcal{M}$, since it is the difference of two elements of $T_E\mathcal{M}$. Given two vector subspaces $\mathcal{W},\mathcal{V}$, there are various elementary ways to show that $\Pi_{\mathcal{W}}\Pi_{\mathcal{V}} x \in \mathcal{V}$ for all $x$ implies $\Pi_{\mathcal{W}}\Pi_{\mathcal{V}} = \Pi_{\mathcal{W}\cap\mathcal{V}}$; 
    or we can also swat a fly with a cannon and invoke Von Neumann's alternating projection theorem (Theorem~\ref{th:vonneumann} below). In this case, the algorithm reaches convergence after the first step.
\end{proof}
We can thus compute the minimum~\eqref{varphi_rango} by integrating the gradient system restricted to $\mathcal{S} \cap \mathcal{R}_1$
\begin{gather}
    \dot{E} = -\Pi_{T_E(\mathcal{S} \cap \mathcal{R}_1)}(G) = -P + \scalar{P,E}E, \label{gradientsys_R1}
    \\ G = \nabla_X F(A+\varepsilon E), \quad P = \Pi_{T_E \mathcal{R}_1}(G) \nonumber
\end{gather}
We introduce the parametrization
\[
\mathcal{S} \cap \mathcal{R}_1 = \{uv^* \colon u\in\mathbb{F}^m, v\in\mathbb{F}^n, \norm{u}_2=\norm{v}_2=1\},
\]
which is unique only up to a phase factor in $u$ and $v$. After some work to compute the tangent space of this manifold and the corresponding projection \cite[Lemma~II.3.4]{GuglielmiLubichBook}, \eqref{gradientsys_R1}~reduces to the two coupled differential equations 
\begin{equation} \label{parametrized tangent system}
\begin{aligned}
    \dot{u} &= -(I_m-uu^*)Gv - \tfrac{i}{2}\Im (u^*Gv)u,\\
    \dot{v} &= -(I_n-vv^*)G^*u - \tfrac{i}{2}\Im (v^*G^*u)v.
\end{aligned}    
\end{equation}
Since~\eqref{parametrized tangent system} is still a gradient system, $F(X(t)) = F(A+\varepsilon u(t)v(t)^*)$ is monotonic along its trajectories, and if these trajectories converge they reach a stationary point of $F$ on $\mathcal{S} \cap \mathcal{R}_1$. \citet[Theorem~3.6 and Remark~3.7]{GuglielmiLubichBook} show that this is also a stationary point on $\mathcal{S}$, under the assumptions that $\rank(G)=1$ and $P\neq0$. In practice, it is observed numerically that this is a local minimizer of~\eqref{gsphi}, and of course the hope is that it is the global minimizer.

For other target functions $F$, the rank of the optimal perturbation $E_\star$ can be larger than $1$, and hence one needs to consider generalizations of the manifold $\mathcal{S} \cap \mathcal{R}_1$ to higher rank (with more complicated projections). Several individual problems are studied in~\citet{GuglielmiLubichBook}; in general, the heuristic (under various regularity assumptions) is that the rank of $E_\star $ is the rank of $\nabla F(X_\star)$, which in turn depends on how many eigenvalues, eigenvectors or singular values of $X_\star $ enter the expression of the function $F$ (or of the target set $\mathcal{E}$) as ``active constraints''. For instance, we shall see below in Section~\ref{sec:nearestdef} that, for the distance to defectivity, a rank-$2$ gradient flow is relevant. This is still compatible with the possible existence of a rank-$1$ optimal perturbation (see Theorem~\ref{thm:defectiveuin}). For the distance to stability, an arbitrary number $r$ of eigenvalues of $X$ may lie on the boundary $\partial\Omega$ \cite[Section~VI.1]{GuglielmiLubichBook}.

This gradient system approach to compute the distance of $A$ to $\mathcal{E} = F^{-1}((-\infty, c])$ can be summarized in an algorithm as follows. Preliminarily, identify a suitable submanifold $\mathcal{M} \subseteq \mathcal{S}$ of the Frobenius-norm unit sphere, which contains $E_\star$. Start from an estimate $\varepsilon_0$ of the distance, and then for $k=0,1,2,\dots$:
\begin{enumerate}
    \item Starting from an arbitrary initial point $E_{k,0}$, integrate the gradient system $\dot{E} = -\Pi_{T_E\mathcal{M}}(\nabla F(A+\varepsilon_k E))$ using an ODE solver (the ``inner iteration''), until $\dot{E}=0$ is reached; this gives a stationary point $X=A+\varepsilon_k E_{k,\star}$ of $F$ on $\mathcal{M}$.
    \item If $F(X)$ is sufficiently close to $c$, terminate. Otherwise, compute a new guess $\varepsilon_{k+1}$, for instance with the Newton method $\varepsilon_{k+1} = \varepsilon_k - \varphi'(\varepsilon_k)^{-1} (\varphi(\varepsilon_k)-c) = \varepsilon_k + \norm{\nabla_{X}(F)}_F^{-1}(F(X)-c)$ (where we use Theorem~\ref{thm:gspsiderivative} and assume that the computed $X$ is a global minimizer), and continue.
\end{enumerate}
A good initial value $E_{k,0}$ is beneficial not only to reduce the number of inner iterations in the ODE solver, but also to drive the method toward the global minimum. At each $k>0$, we can use as $E_{k+1,0}$ the stationary point $E_{k,\star}$ obtained at the previous step (\emph{warm-starting}). At the first step $k=0$, we resort to problem-specific heuristics: For instance, for the distance to $\Omega$-instability, we can take as $u,v$ the left and right eigenvectors of $A$ associated with its eigenvalue closest to $\Omega$, normalized so that $\norm{u}_2=\norm{v}_2=1$, mimicking the property of the solution $X$ given in Remark~\ref{rem:double role of u and v}; or we can choose a target eigenvalue $\lambda\in\Omega$ close to $\Lambda(A)$ and compute $u_{\min},v_{\min}$ from the SVD of $A-\lambda I_n$.

An alternative to Newton's method and bisection is the \emph{hybrid expansion-contraction} algorithm \cite{mitchell23}. Start from a sufficiently large $\varepsilon_0>0$ and a matrix $E_0$ with $\norm{E_0}_F=1$ such that $F(A+\varepsilon_0 E_0)< c$. Then, at each step $k=0,1,2,\dots$, alternate between the following two steps:
\begin{description}
    \item[Contraction phase] Keep $E_k$ fixed, and compute $\varepsilon_{k+1} \in (0,\varepsilon_k)$ such that $F(A+\varepsilon_{k+1}E_k) = c$ (the ``contraction phase''). This can be done cheaply via a bisection-type algorithm, since $F(A + 0 \cdot E_k) > c$, and the algorithm ensures at each step the invariant $F(A+\varepsilon_k E_k) < c$.
    \item[Expansion phase] Keep $\varepsilon_{k+1}$ fixed, and use the gradient system described above (or any other eigenvalue optimization method) to compute $\varphi(\varepsilon_{k+1})$, as well as the minimizer matrix $E_{k+1,\star}$ (with $\norm{E_{k+1}}_F=1$) such that 
    \[
    \varphi(\varepsilon_{k+1}) = F(A + \varepsilon_{k+1} E_{k+1,\star}) = \min_{\norm{E}_F=1} F(A+\varepsilon_{k+1} E).
    \]
    Clearly, $\varphi(\varepsilon_{k+1}) \leq c$, since $E_k$ is a possible value for $E$. If equality holds, we terminate; otherwise, we continue onto the next step setting $E_{k+1} = E_{k+1,\star}$.
\end{description}
\citet{mitchell23} describe this approach for more general problems, and prove its quadratic convergence.

\section{Distance from an intersection} \label{sec:projections}

In several contexts, especially involving structured matrices, one wishes to solve a (Frobenius-norm) distance problem with an intersection $\mathcal{E} = \mathcal{E}_1 \cap \mathcal{E}_2$, where the two individual projections $\Pi_{\mathcal{E}_1}$ and $\Pi_{\mathcal{E}_2}$ are simple to compute.

A famous example of such a problem is the nearest correlation matrix \cite{Higham2002}: A \emph{correlation matrix}, in statistics and economics, is a real positive semidefinite matrix with diagonal entries equal to 1, i.e., an element of $\mathcal{E}_1 \cap \mathcal{E}_2$, where $\mathcal{E}_1,\mathcal{E}_2 \subseteq \mathbb{R}^{n\times n}$ are
\begin{equation} \label{nearestcorr}
\begin{aligned}
\mathcal{E}_1 &= \mathcal{E}_{\succeq 0} = \{X \colon \text{$X$ is positive semidefinite}\},\\
\mathcal{E}_2 &= \{X \colon \diag(X) = \begin{bmatrix}
    1,1,\dots,1
\end{bmatrix}\}.
\end{aligned}
\end{equation}
Estimation methods designed to reconstruct a correlation matrix from noisy data often return a matrix $A$ which is not in $\mathcal{E}_1\cap\mathcal{E}_2$, so one wishes to project it to the nearest matrix that satisfies the two constraints. The norm of interest often is a weighted Frobenius norm, to reflect the fact that different components of the data may have different statistical errors.

A classic technique for computing the (Frobenius) distance from an intersection is that of \emph{alternating projections} (see, e.g., the book \cite{raydan_book}). We now present two famous alternating projection algorithms in the general vector case $a,x\in\mathbb{R}^N$, using lowercase letters for the variables, even though in our setting we are typically interested in the case where $\mathbb{R}^N \cong \mathbb{F}^{m\times n}$ is a matrix space.

\subsection{Von Neumann's alternating projections} \label{subsec:vonneumann}
The earliest alternating projection algorithm is due to~\citet{vonneumann_alternating}. We present it in the case when $\mathcal{E}_1 =\mathcal{V}_1,\mathcal{E}_2 = \mathcal{V}_2$ are vector subspaces, for simplicity, but with a translation the setup transfers to affine subspaces. In the algorithm, we start from $x_0=a$, and apply repeatedly the two projection operators: For $k=0,1,2,\dots$
\begin{equation} \label{voonneumannproj}
y_{k+1} = \Pi_{\mathcal{V}_1}(x_k), \quad x_{k+1} = \Pi_{\mathcal{V}_2}(y_{k+1}).    
\end{equation}
The iterates $y_k$ and $x_k$ zig-zag between the two subspaces, as shown in Figure~\ref{fig:vonneumann}. 
\begin{figure}
    \centering
    \begin{tikzpicture}[
        >={Stealth[length=2.4mm]},
        line cap=round,
        dot/.style={circle,fill,inner sep=1.4pt},
        projV1/.style={->,thick,dashed,blue},
        projV2/.style={->,thick,dashed,red},
      ]
     
      \def\ang{40}      
     
      \draw[very thick] (180:0.7) -- (0:7.7)            node[right]      {$\mathcal{V}_2$};
      \draw[very thick] (0,0) -- (\ang:7.1)  node[below right]{$\mathcal{V}_1$};
     
      \coordinate (O) at (0,0);
     
      \draw (0:0.75) arc (0:\ang:0.75);
      \node at ({\ang/2}:1.02) {$\theta$};
     
      \coordinate (x0) at (0:7.0000);
      \coordinate (y1) at (\ang:5.3623);
      \coordinate (x1) at (0:4.1078);
      \coordinate (y2) at (\ang:3.1468);
      \coordinate (x2) at (0:2.4107);
      \coordinate (y3) at (\ang:1.8468);
      \coordinate (x3) at (0:1.4148);
     
      \draw[projV1] (x0) -- (y1);
      \draw[projV2] (y1) -- (x1);
      \draw[projV1] (x1) -- (y2);
      \draw[projV2] (y2) -- (x2);
      \draw[projV1] (x2) -- (y3);
      \draw[projV2] (y3) -- (x3);
     
      \node[dot] at (x0){}; \node[below=4pt]        at (x0){$x_0$};
      \node[dot] at (y1){}; \node[above left]       at (y1){$y_1$};
      \node[dot] at (x1){}; \node[below=4pt]        at (x1){$x_1$};
      \node[dot] at (y2){}; \node[above left]       at (y2){$y_2$};
      \node[dot] at (x2){}; \node[below=4pt]        at (x2){$x_2$};
      \node[dot] at (y3){}; \node[above left]       at (y3){$y_3$};
      \node[dot] at (x3){}; \node[below=4pt]        at (x3){$x_3$};
      \node[dot] at (O){};  \node[below=3pt]        at (O){$\mathcal{V}_1\cap\mathcal{V}_2$};
     
      \begin{scope}[shift={(0.25,3.55)}]
        \draw[projV1] (0,0.42) -- (0.7,0.42)
            node[right,black]{\footnotesize $\Pi_{\mathcal V_1}$};
        \draw[projV2] (0,0) -- (0.7,0)
            node[right,black]{\footnotesize $\Pi_{\mathcal V_2}$};
      \end{scope} 
    \end{tikzpicture}
    
    \caption{Von Neumann's alternating projection algorithm}
    \label{fig:vonneumann}
\end{figure}

The analysis of this algorithm is not complicated with modern linear algebra tools, since the projections $\Pi_{\mathcal{V}_i}$ are linear operators.
\begin{theorem}[\citet{vonneumann_alternating}] \label{th:vonneumann}
    In the iteration~\eqref{voonneumannproj}, $\lim_{k\to\infty} x_k = \lim_{k\to\infty} y_k = \Pi_{\mathcal{V}_1\cap\mathcal{V}_2}(a)$.
\end{theorem}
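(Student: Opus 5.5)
We prove the statement by a spectral argument on the composed linear operator. We set $P_1=\Pi_{\mathcal{V}_1}$, $P_2=\Pi_{\mathcal{V}_2}$ and $W=\mathcal{V}_1\cap\mathcal{V}_2$. The iteration~\eqref{voonneumannproj} gives $x_k=T^k a$ with $T:=P_2P_1$, and $y_{k+1}=P_1x_k$. It therefore suffices to show that $T^k\to\Pi_W$. Once that is done, $x_k\to\Pi_W(a)$, and by continuity $y_{k+1}=P_1x_k\to P_1\Pi_W(a)=\Pi_W(a)$, because $W\subseteq\mathcal{V}_1$.

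\emph{Step 1: split off $W$.} Every $w\in W$ satisfies $P_1w=P_2w=w$, so $T$ acts as the identity on $W$. The orthogonal complement $W^\perp$ is invariant under both $P_1$ and $P_2$. To see this for $P_1$, take $z\in W^\perp$ and $w\in W$; since $P_1$ is self-adjoint, $\langle P_1z,w\rangle=\langle z,P_1w\rangle=\langle z,w\rangle=0$. The same argument works for $P_2$, so $W^\perp$ is also invariant under $T$. Writing $a=\Pi_W(a)+z$ with $z\in W^\perp$, we get $T^ka=\Pi_W(a)+T^kz$. Hence it is enough to prove that $T^kz\to0$ for every $z\in W^\perp$.

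\emph{Step 2: $T$ is a strict contraction on $W^\perp$.} This is the main step. Let $T'$ denote the restriction of $T$ to $W^\perp$. Projections have norm at most $1$, so $\|T'z\|_2\le\|z\|_2$ for all $z$. Suppose equality holds for some $z\in W^\perp$. From
\[\|z\|_2\ge\|P_1z\|_2\ge\|P_2P_1z\|_2=\|z\|_2,\]
both inequalities must be equalities. Since orthogonal projections satisfy $\|Px\|_2^2+\|x-Px\|_2^2=\|x\|_2^2$, the first equality forces $P_1z=z$, and then the second forces $P_2z=z$. Thus $z\in W\cap W^\perp=\{0\}$. Now the unit sphere of the finite-dimensional space $W^\perp$ is compact, and $z\mapsto\|T'z\|_2$ is continuous on it, so its maximum is attained. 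By what we just showed, that maximum $c$ satisfies $c<1$, i.e., $\|T'\|_2=c<1$.

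\emph{Step 3: conclude.} We have $\|T^kz\|_2\le c^k\|z\|_2\to0$ for every $z\in W^\perp$. Combined with Step 1 this gives $T^k\to\Pi_W$, which proves the theorem.

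The proof also yields linear convergence with rate $c$, which equals $\cos$ of the Friedrichs angle between $\mathcal{V}_1$ and $\mathcal{V}_2$, matching the angle $\theta$ in Figure~\ref{fig:vonneumann}. The step that needs the most care is Step 2: the equality case in the norm bounds, together with the compactness argument, is what turns "norm at most $1$" into "norm strictly less than $1$". Finite dimensionality is essential here; in infinite-dimensional spaces the limit still exists, but the convergence need not be geometric.
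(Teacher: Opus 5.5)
Your proof is correct, and it takes a different route from the paper's. The paper argues spectrally. It writes $\Pi_{\mathcal{V}_i}=Q_iQ_i^*$ and identifies the nonzero eigenvalues of $\Pi_{\mathcal{V}_2}\Pi_{\mathcal{V}_1}$ with $\cos^2\theta_j$, the squared singular values of $Q_1^*Q_2$. It then diagonalizes, lets the powers converge eigenvalue by eigenvalue, and omits the ``bookkeeping'' showing that the limiting spectral projector is the \emph{orthogonal} projector onto $\mathcal{V}_1\cap\mathcal{V}_2$. You instead use a norm argument: the orthogonal splitting $W\oplus W^\perp$, invariant under both projectors, together with the Pythagorean equality case and compactness, gives $\|T|_{W^\perp}\|<1$.

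Your route is self-contained and elementary. It needs neither the diagonalizability of $\Pi_{\mathcal{V}_2}\Pi_{\mathcal{V}_1}$ (true, but used without comment in the paper) nor principal angles. The invariance of $W^\perp$ is exactly the bookkeeping the paper omits, and you also treat $y_k$ explicitly.

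What the paper's route buys is the sharp rate. Each sweep contracts by $\cos^2\theta_F$, whereas your bound only gives $c^k$ with $c=\|T|_{W^\perp}\|=\cos\theta_F$; that identification is asserted rather than proved, but the theorem does not need it. So your closing remark that the rate ``matches'' the figure is off by a square: in Figure~\ref{fig:vonneumann} each step $x_k\mapsto x_{k+1}$ shrinks distances by $\cos^2\theta$. You can recover the sharp rate inside your own framework. Write $T^k=\Pi_{\mathcal{V}_2}(\Pi_{\mathcal{V}_1}\Pi_{\mathcal{V}_2}\Pi_{\mathcal{V}_1})^{k-1}\Pi_{\mathcal{V}_1}$ and note that $\Pi_{\mathcal{V}_1}\Pi_{\mathcal{V}_2}\Pi_{\mathcal{V}_1}=T^*T$ has norm $c^2$ on the subspace $W^\perp$, which is invariant under both $T$ and $T^*$. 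This gives $\|T^kz\|_2\leq c^{2k-2}\|z\|_2$.
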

\begin{proof}
 Let $Q_1 \in \mathbb{R}^{N\times r_1}, Q_2 \in \mathbb{R}^{N\times r_2}$ be rectangular matrices whose columns are orthonormal bases for $\mathcal{V}_1,\mathcal{V}_2$. Then, $\Pi_{\mathcal{V}_i} = Q_i Q_i^*$ for $i=1,2$. The eigenvalues of $\Pi_{\mathcal{V}_2}\Pi_{\mathcal{V}_1}$ coincide (up to additional zeros) with those of $Q_2^* Q_1 Q_1^*Q_2$, which are the squares of the singular values of $Q_1^*Q_2$. These singular values are the cosines of the so-called \emph{principal angles} between the subspaces $\mathcal{V}_1,\mathcal{V}_2$. In particular, calling $W$ the eigenvector matrix, we have
\[
\lim_{k\to\infty} (\Pi_{\mathcal{V}_2}\Pi_{\mathcal{V}_1})^k = \lim_{k\to\infty} W \begin{bmatrix}
    \cos(\theta_1)^{2k} \\ & \ddots  \\ && \cos(\theta_{\min(r_1,r_2)})^{2k} \\ & && 0 \\ && & & \ddots \\ &&&&&0
\end{bmatrix} W^{-1}
\]
The $j$th eigenvalue converges to $0$ if $\cos(\theta_j)<1$, and is constantly equal to $1$ if $\cos(\theta_j)=1$, i.e., $\theta_j=0$. Hence, some bookkeeping (which we omit) shows that the limit displayed above is precisely the orthogonal projection onto $\mathcal{V}_1\cap\mathcal{V}_2$.
\end{proof}
This proof also reveals that the convergence speed is geometric with rate $\cos(\theta_F)^{2}$, where $\theta_F$ is the smallest non-zero principal angle (\emph{Friedrichs angle}). In particular, the method may be very slow when $\theta$ is small, as one can infer also from Figure~\ref{fig:vonneumann}. More details on the subspace case are in~\citet{botspit}. For two smooth manifolds that intersect transversally, local linear convergence results for the alternating projections method are proved in \cite[Theorem 4.3]{LM08}.

\subsection{Dykstra's alternating projection algorithm}
For closed convex sets $\mathcal{E}_1,\mathcal{E}_2$ with nonempty intersection, Von Neumann's alternating projection algorithm converges to a point in the intersection \cite{BB96}. However, when the sets are not affine subspaces, this point is usually not the projection of $a$: See, e.g., \citet[Example~5.1]{raydan_book}. A modification introduced by \citet{dykstra1983algorithm}
restores convergence to the nearest point. The classic formulation uses two additional variables $\delta_1,\delta_2$, one associated to each set, called \emph{Dykstra corrections}: These are updated at each step and added to $x_k,y_k$ before every projection. Starting from $x_0=a, \delta_{1,0} = \delta_{2,0} = 0$, we compute for $k=0,1,\dots$
\begin{equation} \label{dykstra}
\begin{aligned}
    y_{k+1} &= \Pi_{\mathcal{E}_1}(x_k + \delta_{1,k}),\\
    \delta_{1,k+1} &=\delta_{1,k} + (x_k - y_{k+1}),\\
    x_{k+1} &= \Pi_{\mathcal{E}_2}(y_{k+1} + \delta_{2,k}),\\
    \delta_{2,k+1} &=\delta_{2,k} + (y_{k+1} -x_{k+1}).
\end{aligned}
\end{equation}
The two sequences $x_k,y_k$ converge to $\Pi_{\mathcal{E}_1\cap \mathcal{E}_2}(a)$ whenever the target set $\mathcal{E}_1\cap \mathcal{E}_2$ is nonempty~\cite{boyle1986}. The meaning of the two corrections is quite obscure at first, so we give some more intuition behind them.

Recall from Section~\ref{subsec:normal cones} that, for a convex set $\mathcal{E} = \mathcal{E}_1 \cap \mathcal{E}_2$, $x\in\mathcal{E}$ is the projection of $a$ if and only if $\delta = a-x$ belongs to the normal cone $N_x \mathcal{E}$~\eqref{normal cone for convex E}; and that if the constraint qualification $\operatorname{relint}(\mathcal{E}_1) \cap \operatorname{relint}(\mathcal{E}_2) \neq \emptyset$ holds, then $N_x\mathcal{E} = N_x\mathcal{E}_1 + N_x\mathcal{E}_2$, i.e., there are $\delta_1 \in N_x\mathcal{E}_1$ and $\delta_2\in N_x\mathcal{E}_2$ such that $\delta = \delta_1 + \delta_2$. The two sequences of Dykstra corrections approximate these two summands. Indeed, the way the corrections are computed in the algorithm ensures that at each step $k$ we have
\begin{gather*}
    a = x_k + \delta_{1,k} + \delta_{2,k} = y_{k+1} + \delta_{1,k+1} + \delta_{2,k},\\
    y_{k} \in \mathcal{E}_1, \quad \delta_{1,k} \in N_{y_{k}}\mathcal{E}_1,\\
    x_k \in \mathcal{E}_2, \quad \delta_{2,k} \in N_{x_k}\mathcal{E}_2.
\end{gather*}
Putting $x_k$ and $y_k$ in the sideline, we can interpret Dykstra's algorithm as ``improving'' twice in each step the current iterate $a - \delta_{1,k} - \delta_{2,k}$: first by replacing $\delta_{1,k}$ with $\delta_{1,k+1}$ (obtained by projecting $a-\delta_{2,k}$ on $\mathcal{E}_1$), and then by replacing $\delta_{2,k}$ with $\delta_{2,k+1}$, (obtained by projecting $a-\delta_{1,k+1}$ on $\mathcal{E}_2$). If the correction sequences converge, their limits satisfy $a = x_\star + \delta_{1,\star} + \delta_{2,\star}$, with $x_\star  \in \mathcal{E}_1\cap\mathcal{E}_2$, $\delta_{1,\star} \in N_{x_\star }\mathcal{E}_1$ and $\delta_{2,\star} \in N_{x_\star }\mathcal{E}_2$; this relation proves that $x_\star $ is the sought projection.

To make this description more formal, we need to remove the quotes around ``improving''. A way to do this is through Lagrangian duality. Formulate the projection problem as
\begin{equation} \label{dykstraprimal}
    \tfrac12 d(a, \mathcal{E}_1\cap \mathcal{E}_2)^2 = \min \tfrac12\norm{x-a}^2 \subjectto x=z_1,\, x=z_2,\, z_1 \in \mathcal{E}_1,\, z_2\in\mathcal{E}_2,
\end{equation}
with two additional copies of $x$ and equality constraints between them. Write the Lagrangian dual of~\eqref{dykstraprimal}
\begin{equation} \label{dykstradual}
g(\delta_1,\delta_2) = \inf_{x,z_1,z_2} \tfrac12\norm{x-a}^2 + \scalar{\delta_1, x-z_1} + \scalar{\delta_2, x-z_2} \subjectto z_1\in \mathcal{E}_1, z_2\in\mathcal{E}_2;
\end{equation}
then $\tfrac12 d(a, \mathcal{E}_1\cap \mathcal{E}_2)^2 = \sup_{\delta_1,\delta_2} g(\delta_1,\delta_2)$, by duality. The following result holds.
\begin{proposition}[see e.g.~\citet{tibshirani2017dykstra}] \label{prop:dykstraascent}
    Dykstra's projection algorithm~\eqref{dykstra} is coordinate ascent on $g$: 
    \begin{subequations}
        \begin{align}
        \delta_{1,k+1} &\in \arg\max_{\delta} g(\delta,\delta_{2,k}), \label{dykstraascent1}\\       
        \delta_{2,k+1} &\in \arg\max_{\delta} g(\delta_{1,k+1},\delta). \label{dykstraascent2}
        \end{align}
    \end{subequations}
\end{proposition}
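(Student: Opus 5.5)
The plan is to compute the dual function $g$ in closed form and then check that each Dykstra update is exactly the maximizer of $g$ in one block of variables. The infimum in \eqref{dykstradual} separates into three independent problems. The first is $\inf_x \tfrac12\norm{x-a}^2+\scalar{\delta_1+\delta_2,x}$, attained at $x=a-\delta_1-\delta_2$. The other two are $\inf_{z_i\in\mathcal{E}_i}(-\scalar{\delta_i,z_i})=-\sigma_{\mathcal{E}_i}(\delta_i)$, where $\sigma_{\mathcal{E}_i}$ denotes the support function. This gives
\[
g(\delta_1,\delta_2)=\scalar{\delta_1+\delta_2,a}-\tfrac12\norm{\delta_1+\delta_2}^2-\sigma_{\mathcal{E}_1}(\delta_1)-\sigma_{\mathcal{E}_2}(\delta_2).
\]
This function is concave. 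It may take the value $-\infty$, which is harmless.

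Next I fix $\delta_2=\delta_{2,k}$ and maximize over $\delta$. Completing the square with $b:=a-\delta_{2,k}$, the objective becomes, up to a constant, $-\tfrac12\norm{b-\delta}^2-\sigma_{\mathcal{E}_1}(\delta)$. The standard route is Moreau's decomposition: since $\sigma_{\mathcal{E}_1}$ is the convex conjugate of the indicator of $\mathcal{E}_1$, the unique maximizer is $\delta=b-\Pi_{\mathcal{E}_1}(b)$. An alternative that stays closer to the paper is to use the optimality condition $b-\delta\in\partial\sigma_{\mathcal{E}_1}(\delta)$. This holds exactly when $y:=b-\delta\in\mathcal{E}_1$ and $\scalar{\delta,z-y}\le0$ for all $z\in\mathcal{E}_1$, that is, $\delta\in N_y\mathcal{E}_1$ as in \eqref{normal cone for convex E}. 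By the characterization of convex projections, this says $y=\Pi_{\mathcal{E}_1}(b)$. The symmetric computation with $\delta_1=\delta_{1,k+1}$ fixed shows that the maximizer in $\delta_2$ is $c-\Pi_{\mathcal{E}_2}(c)$ with $c=a-\delta_{1,k+1}$.

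It remains to match these formulas with \eqref{dykstra}. For this I use the invariants $a=x_k+\delta_{1,k}+\delta_{2,k}$ and $a=y_{k+1}+\delta_{1,k+1}+\delta_{2,k}$, proved by induction from the update rules and the initialization $x_0=a$, $\delta_{i,0}=0$. With them, $b=a-\delta_{2,k}=x_k+\delta_{1,k}$. Hence $\Pi_{\mathcal{E}_1}(b)=y_{k+1}$, and $b-y_{k+1}=\delta_{1,k}+x_k-y_{k+1}=\delta_{1,k+1}$, which is \eqref{dykstraascent1}. In the same way, $c=a-\delta_{1,k+1}=y_{k+1}+\delta_{2,k}$. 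Hence $\Pi_{\mathcal{E}_2}(c)=x_{k+1}$, and $c-x_{k+1}=\delta_{2,k+1}$, which is \eqref{dykstraascent2}.

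The main obstacle is the partial maximization, namely identifying the maximizer of $-\tfrac12\norm{b-\delta}^2-\sigma_{\mathcal{E}_1}(\delta)$. I would handle it with the normal-cone argument above. That argument needs $\mathcal{E}_i$ closed and convex so that the projection is unique, and it needs the subdifferential of the support function at $\delta$ to be the set of maximizers of $\scalar{\delta,\cdot}$ over $\mathcal{E}_1$. Everything else is bookkeeping with the invariants.
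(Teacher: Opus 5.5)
Your proof is correct, but it takes a different route from the paper's.

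The paper never computes $g$. With $\delta_{2,k}$ fixed, it first bounds $g(\delta_1,\delta_{2,k})$ from above, independently of $\delta_1$, by inserting the particular choice $x=z_1=y_{k+1}$ into the infimum that defines $g$. It then checks that this bound is attained at $\delta_1=\delta_{1,k+1}$, for two reasons: the unconstrained minimizer in $x$ is exactly $a-\delta_{1,k+1}-\delta_{2,k}=y_{k+1}$, and the $z_1$-term $\scalar{\delta_{1,k+1},y_{k+1}-z_1}$ has minimum $0$ because $\delta_{1,k+1}\in N_{y_{k+1}}\mathcal{E}_1$.

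You instead write $g$ in closed form through the support functions $\sigma_{\mathcal{E}_i}$. This turns each block step into the proximal problem $\min_\delta \tfrac12\norm{\delta-b}^2+\sigma_{\mathcal{E}_1}(\delta)$, which you solve by Moreau's decomposition or by the subdifferential of $\sigma_{\mathcal{E}_1}$.

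The paper's argument is more elementary and self-contained: it uses only the normal-cone characterization \eqref{normal cone for convex E} of a convex projection, and no conjugate functions. Yours imports a bit more convex analysis, but it buys three things:
\begin{enumerate}
\item it makes the concavity of $g$ explicit;
\item it places Dykstra's method in the standard proximal/duality framework;
\item it shows, by strong concavity, that each block maximizer is unique, so Dykstra's iterates are \emph{the} coordinate-ascent iterates rather than one admissible selection from the argmax.
\end{enumerate}

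For the uniqueness claim, note that the constant you drop is finite. We have $\delta_{2,0}=0$, and for $k\geq1$ one has $\delta_{2,k}\in N_{x_k}\mathcal{E}_2$, so $\sigma_{\mathcal{E}_2}(\delta_{2,k})=\scalar{\delta_{2,k},x_k}<\infty$. Symmetrically, $\sigma_{\mathcal{E}_1}(\delta_{1,k+1})=\scalar{\delta_{1,k+1},y_{k+1}}$. If this constant were $+\infty$, then $g(\cdot,\delta_{2,k})$ would be identically $-\infty$, and the inclusion would hold only trivially.
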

\begin{proof}
    It is enough to show~\eqref{dykstraascent1}; then~\eqref{dykstraascent2} is analogous by symmetry in the two variables. Recall that $y_{k+1} = \Pi_{\mathcal{E}_1}(a-\delta_{2,k})$, and $y_{k+1} = a - \delta_{1,k+1} - \delta_{2,k}$.

    We get an upper bound that does not depend on $\delta_1$ if we plug in $x=z_1=y_{k+1}$ instead of minimizing in~\eqref{dykstradual}: For each $\delta_1$,
    \begin{equation} \label{dykstraupper}
        g(\delta_{1},\delta_{2,k}) \leq \min_{z_2\in\mathcal{E}_2} \tfrac12 \norm{y_{k+1}-a}^2 + \scalar{\delta_{2,k},y_{k+1}-z_2}.
    \end{equation}
    We shall show that equality in~\eqref{dykstraupper} holds when $\delta_1 = \delta_{1,k+1}$. Using gradients, we see that the unconstrained minimum of~\eqref{dykstradual} in $x$ is achieved when $x-a +\delta_{1,k+1}+\delta_{2,k} = 0$, i.e., $x=y_{k+1}$; hence
    \[
    g(\delta_{1,k+1}, \delta_{2,k}) = \min_{z_1\in\mathcal{E}_1,z_2\in\mathcal{E}_2} \tfrac12\norm{y_{k+1}-a}^2 + \scalar{\delta_{1,k+1},y_{k+1}-z_1} + \scalar{\delta_{2,k},y_{k+1}-z_2}.
    \]
    The only summand which depends on $z_1$ is the middle one; it achieves its minimum $0$ when $z_1=y_{k+1}$, since $\delta_{1,k+1} \in N_{y_{k+1}}(\mathcal{E}_1)$.
    Hence
    \[
    g(\delta_{1,k+1}, \delta_{2,k}) = \min_{z_2\in\mathcal{E}_2} \tfrac12\norm{y_{k+1}-a}^2 +  \scalar{\delta_{2,k},y_{k+1}-z_2},
    \]
    which coincides with the upper bound~\eqref{dykstraupper}.
\end{proof}
Proposition~\ref{prop:dykstraascent} is useful as interpretation because it gives a practical description of Dykstra's algorithm as a dual ascent method, and gives an interpretation of the corrections $\delta_1,\delta_2$ as dual variables. It does not prove immediately convergence of Dykstra's algorithm~\eqref{dykstra} by itself, since it does not exclude that the dual coordinate ascent could stall at a smaller value than the primal optimum; also note that convergence in Dykstra's algorithm holds even without constraint qualifications~\cite{boyle1986}, but a full proof is more involved.

When one of the two sets (say, $\mathcal{E}_2$) is an affine subspace $B+\mathcal{V}$, then $N_{x}\mathcal{E}_2 = \mathcal{V}^\perp$ independently of $x\in\mathcal{E}_2$, and hence thanks to orthogonality we can omit the correction $\delta_{2,k}\in N_{x_k}\mathcal{E}_2$ when projecting:
\[
x_{k+1} = \Pi_{\mathcal{E}_2}(y_{k+1} + \delta_{2,k}) = \Pi_{\mathcal{E}_2}(y_{k+1})
\]
When both $\mathcal{E}_1,\mathcal{E}_2$ are affine, we can omit both corrections, and we recover Von Neumann's algorithm.
\begin{example}[Nearest correlation matrix, \citet{Higham2002}]
    We solve the problem~\eqref{nearestcorr} with Dykstra's algorithm, omitting the correction term corresponding to the affine subspace $\mathcal{E}_2$. Starting from $X_0=A, \Delta_0=0$, we compute for $k=0,1,\dots$
    \begin{align*}
        Y_{k+1} &= \Pi_{\mathcal{E}_1}(X_k + \Delta_k),\\
        \Delta_{k+1} &= \Delta_k + (X_k - Y_{k+1}),\\
        X_{k+1} &= \Pi_{\mathcal{E}_2}(Y_{k+1}).
    \end{align*}
    \citet{Higham2002} actually solves this problem in the more general case of the norm $\norm{A}_W = \norm{W^{1/2}AW^{1/2}}_F$, which is natural in the applications. The analysis of Dykstra's algorithm is unchanged, since~\eqref{dykstra} is unaffected by a linear change of variables, but computing the two projections $\Pi_{\mathcal{E}_1}, \Pi_{\mathcal{E}_2}$ in this norm requires a little more work.
\end{example}

We have seen that the alternating projection algorithm can converge at a linear rate arbitrarily close to 1, already in its version for subspaces. A quadratically convergent algorithm to compute the nearest correlation matrix is proposed by~\citet{Qi2006}. We give an account of the main ideas, in the case of the Frobenius norm. 

We have seen that the projection $X$ is the unique matrix in $\mathcal{E}_1\cap \mathcal{E}_2$ that satisfies the normal cone condition $X = A - \Delta_1 - \Delta_2$. Since $N_X\mathcal{E}_2$ is the space of diagonal matrices, we can write $\Delta_2 = \diagm(\mu)$ for some $\mu\in\mathbb{R}^n$; then, $X = \Pi_{\mathcal{E}_1}(A - \diagm(\mu))$, by the normal cone condition on $\mathcal{E}_1$. In particular, $\mu$ is the unique solution of $\varphi(\mu)=0$, with
\begin{equation} \label{ncmqi}
    \varphi(\mu) = \diag(\Pi_{\mathcal{E}_1}(A - \diagm(\mu))) - e,
\end{equation}
and $e$ is the vector of all ones. We would like to solve this nonlinear system of $n$ equations in $n$ unknowns with Newton's method; however, applying this method is not obvious, since the projection onto positive semidefinite matrices $\Pi_{\mathcal{E}_1}$ is not differentiable everywhere. \citet{Qi2006} show that $\Pi_{\mathcal{E}_1}$ (and hence also $\varphi$ in~\eqref{ncmqi}) is \emph{strongly semi-smooth}: A function $\varphi: \mathbb{R}^N \to \mathbb{R}^N$ is strongly semi-smooth if it is  directionally differentiable, and for each $V$ in the Clarke subdifferential $\partial \varphi(x+h)$ it holds that
\begin{equation} \label{semismooth}
    \varphi(x+h) = \varphi(x) + Vh + \mathcal{O}(\norm{h}^2).
\end{equation}
Note that~\eqref{semismooth} is different from a standard Taylor expansion, since the subdifferential $V$ is computed in $x+h$, not in $x$. Thanks to semi-smoothness (and assuming that every $V\in \partial\varphi(\mu_\star )$ is invertible), the Newton method
\[
\mu_{k+1} = \mu_k - V_k^{-1}\varphi(\mu_k), \quad V_k \in \partial \varphi(\mu_k)
\]
is locally quadratically convergent~\cite[Theorem~2.1 and Corollary~3.7]{Qi2006}. \citet{Borsdorf2010} give practical details to implement this method as an efficient $\mathcal{O}(n^3)$ algorithm, using Krylov methods and preconditioning.

An algorithm which is formally similar to Dykstra's projection algorithm is the \emph{alternating-direction method of multipliers} (see, e.g., \citet{admm}). Given two convex functions $f,g: \mathbb{R}^n \to\mathbb{R}\cup \{+\infty\}$, the method computes $\min f(x)+g(x)$ via the iteration (started from any $x_0, \delta_0$)
\begin{subequations} \label{admm}
    \begin{align}
        y_{k+1} &= \arg\min_y f(y) + \frac12 \norm{y - (x_k + \delta_k)}^2, \label{admmprox1} \\
        x_{k+1} &= \arg\min_x g(x) + \frac12 \norm{x - (y_{k+1}-\delta_k)}^2, \label{admmprox2} \\
        \delta_{k+1} &= \delta_k - y_{k+1} + x_{k+1}. \label{admmdual}
    \end{align}
\end{subequations}
Under very mild assumptions, the two sequences $x_k,y_k$ converge to $x_\star  \in \arg\min f(x)+g(x)$, and $\delta_k$ converges to a dual optimal solution of the Lagrangian formulation $f(y)+g(x) + \scalar{\delta, x-y}$. One can specialize ADMM to the case in which $f$ (or $g$, or both) is the \emph{convex indicator function} of a convex set $\mathcal {E}$, i.e.,
\[
\iota_{\mathcal{E}}(x) = \begin{cases}
    0 & x\in \mathcal{E},\\
    +\infty & x\not\in\mathcal{E}.
\end{cases}
\]
With this function, $\arg\min \iota_{\mathcal{E}}(x) + \frac12 \norm{x-a}^2 = \Pi_{\mathcal{E}}(a)$, so~\eqref{admmprox1} and~\eqref{admmprox2} (which are, in general, so-called \emph{proximal problems}) become projections, $y_{k+1} = \Pi_{\mathcal{E}_1}(x_k+\delta_k)$ and $x_{k+1} = \Pi_{\mathcal{E}_2}(y_{k+1}-\delta_k)$, resembling closely Dykstra's algorithm~\eqref{dykstra}. However, the similarities stop here: ADMM uses only one correction variable $\delta$, with alternating signs, rather than two; and even in the version with $f=\iota_{\mathcal{E}_1}$ and $g=\iota_{\mathcal{E}_2}$ it converges to some point inside $\mathcal{E}_1\cap\mathcal{E}_2$, not necessarily the projection $\Pi_{\mathcal{E}_1\cap\mathcal{E}_2}(a)$. See~\citet{tibshirani2017dykstra} for a more detailed comparison of ADMM and Dykstra's algorithm.

\citet{lim22} use this version of ADMM in the matrix distance problem $d(A, \mathcal{E}_{B,C,\mathcal{Y}})$, with target set as in~\eqref{limstructure}. Notice that this target set is not an intersection, since $A$ and the elements of $\mathcal{Y}$ have different sizes. However, we can solve the problem with the ADMM algorithm~\eqref{admm}, setting $f(Y) = \iota_{\mathcal{Y}}(Y)$ and $g(Y) = \norm{A-BYC}_F^2$:
    \begin{align}
        Y_{k+1} &= \Pi_{\mathcal{Y}}(X_k + \Delta_k) \nonumber\\
        X_{k+1} &= \arg\min_X \norm{A-BXC}_F^2 + \frac12 \norm{X - (Y_{k+1}-\Delta_k)}_F^2, \label{admmspecialprox2}  \\
        \Delta_{k+1} &= \Delta_k + X_{k+1} - Y_{k+1}. \nonumber 
    \end{align}
This is essentially~\citet[Equations (4.2)--(4.4)]{lim22}, with the minor difference that the proximal problem~\eqref{admmspecialprox2} is replaced by a constrained problem; but Section~2.3 of the same article explains how the two are equivalent. \citet[Algorithm~2]{lim26} expand the technique to deal with more general distance functions.

\section{Nearest structured singular matrix}\label{sec:structuredsingular}

\subsection{General properties of structured nearness problems}
A challenging class of problems comes from combining a singularity or spectral constraint with the request that $X = A-\Delta$ lies in an affine subspace $B+\mathcal{V}$, where $B\in\F^{m\times n}$, and $\mathcal{V} \subseteq \F^{m\times n}$ is a linear subspace. (Or, equivalently, we can request that $\Delta$ lies in the affine subspace $A-B+\mathcal{V}$.) For instance, looking for the nearest sparse weakly Hurwitz stable matrix with a prescribed sparsity pattern, or the nearest singular Toeplitz matrix. 

Structured nearness problems may have additional pathologies. For instance, the feasible region may be empty (e.g., finding the nearest singular matrix to any given $A$ in the affine space $B+\mathcal{V} = \{\begin{bsmallmatrix}a & 1\\1 & 0\end{bsmallmatrix},\, a\in\F\}$), or the sought eigenvalues may be multiple (e.g., finding the nearest weakly Hurwitz stable matrix in $B+\mathcal{V} = \{\begin{bsmallmatrix}a & 0\\0 & a\end{bsmallmatrix},\, a\in\F\}$), and therefore formulae for the derivatives that assume simple eigenvalues are not directly applicable. Moreover, \citet[Theorem~2]{ZZ23} show that determining whether the feasible set is nonempty is NP-hard even for a fairly simple case: The intersection of an affine subspace $B+\mathcal{V} \subseteq \R^{n\times n}$ with the set $\mathcal{E}_0$ of singular matrices.

We open this section by giving some general properties that are true for general regions $\mathcal{E}$, and then we study in particular the case of finding the nearest structured singular matrix. This problem is both useful in itself and because it is a building block for more general problems of the form~\eqref{eq:structured nearness problem}. Throughout this section, distances are measured
in the Frobenius norm.

First of all, one can easily reduce to the case where we have a purely linear condition $\hat{\Delta}\in \mathcal{V}$:
\begin{lemma}\label{lem:A=B}
    Let $A,B\in\F^{m\times n}$; let $\mathcal{V} \subseteq \F^{m\times n}$ be a vector subspace, and $\mathcal{E} \subseteq \F^{m\times n}$ be a nonempty closed subset. Let $\hat{A} = \Pi_{B+\mathcal{V}}(A)$ be the orthogonal projection of $A$ on the affine subspace $B+\mathcal{V}$. 
    
    The matrix $\Delta_0$ is a minimizer of
    \begin{equation} \label{eq:affine minimum Delta}
        \min \norm{\Delta}_F^2 \subjectto A-\Delta \in B+\mathcal{V}, \,A-\Delta \in \mathcal{E}
    \end{equation}
    if and only if $\hat{\Delta}_0 = \Delta_0 + \hat{A}-A $ is a minimizer of
    \begin{equation} \label{eq:linear minimum Deltahat}
        \min \norm{\hat{\Delta}}_F^2 \subjectto \hat{\Delta} \in \mathcal{V}, \hat{A}-\hat{\Delta} \in \mathcal{E}.
    \end{equation}
\end{lemma}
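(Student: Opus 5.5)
The plan is to show that the change of variables $\Delta \mapsto \hat{\Delta} = \Delta + \hat{A} - A$ is a bijection between the feasible sets of \eqref{eq:affine minimum Delta} and \eqref{eq:linear minimum Deltahat}. Under this bijection the two objective functions will turn out to differ only by a constant. Minimizers then correspond, which is exactly the claim.

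First, the feasible sets. Because $\hat{A} = \Pi_{B+\mathcal{V}}(A)$, we have $\hat{A} \in B + \mathcal{V}$, and $A - \hat{A} \in \mathcal{V}^\perp$ by Theorem~\ref{thm:affinedist}, or simply by the normal cone description of affine subspaces in Section~\ref{subsec:normal cones}. The substitution $\hat{\Delta} = \Delta + \hat{A} - A$ gives the identity
\[
A - \Delta = \hat{A} - \hat{\Delta}.
\]
Hence the condition $A-\Delta \in \mathcal{E}$ is the same as $\hat{A}-\hat{\Delta} \in \mathcal{E}$. Likewise, $A - \Delta \in B + \mathcal{V}$ is equivalent to $\hat{A} - \hat{\Delta} \in B + \mathcal{V}$. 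Since $\hat{A} \in B+\mathcal{V}$ and $B+\mathcal{V}$ is a translate of $\mathcal{V}$, this last condition is in turn equivalent to $\hat{\Delta} \in \mathcal{V}$. The map is an affine bijection of $\F^{m\times n}$ with inverse $\hat{\Delta} \mapsto \hat{\Delta} - (\hat{A} - A)$, so it maps one feasible set onto the other.

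Second, the objectives. Write $\Delta = \hat{\Delta} + (A - \hat{A})$. For feasible $\hat{\Delta}$ we have $\hat{\Delta} \in \mathcal{V}$, while $A - \hat{A} \in \mathcal{V}^\perp$. The Pythagorean theorem, for the real inner product $\Re\traccia(\cdot^*\cdot)$ when $\F = \C$, then gives
\[
\norm{\Delta}_F^2 = \norm{\hat{\Delta}}_F^2 + \norm{A-\hat{A}}_F^2 .
\]
The second term does not depend on the variable, so on corresponding feasible points the two objectives differ by a constant. Therefore $\Delta_0$ minimizes \eqref{eq:affine minimum Delta} if and only if $\hat{\Delta}_0$ minimizes \eqref{eq:linear minimum Deltahat}.

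There is no real obstacle here. The only point needing care is that the orthogonality $A-\hat{A}\perp\mathcal{V}$ is used exactly where $\hat{\Delta}$ is restricted to lie in $\mathcal{V}$, so the Pythagorean identity should be applied only on the feasible set.
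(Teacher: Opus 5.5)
Your proposal is correct and follows the paper's proof essentially step for step. Both use the change of variables $\hat{\Delta} = \Delta + \hat{A} - A$, the equivalence $A-\Delta\in B+\mathcal{V} \iff \hat{\Delta}\in\mathcal{V}$ (from $\hat{A}\in B+\mathcal{V}$), and the Pythagorean identity $\norm{\Delta}_F^2 = \norm{\hat{\Delta}}_F^2 + \norm{\hat{A}-A}_F^2$ (from $\hat{A}-A\in\mathcal{V}^\perp$), applied on the feasible set where $\hat{\Delta}\in\mathcal{V}$.
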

\begin{proof}
    For a given $\Delta$, we define $\hat{\Delta} = \Delta + \hat{A} - A$, and make a change of variable to express the minimum problem~\eqref{eq:affine minimum Delta} in terms of $\hat{\Delta}$. Since $\hat{A} \in B+\mathcal{V}$, we have that $A-\Delta = \hat{A} - \hat{\Delta} \in B+\mathcal{V}$ if and only if $\hat{\Delta} \in \mathcal{V}$. Moreover, $\hat{A}-A\in \mathcal{V}^\perp$ by definition of orthogonal projection, so the decomposition $\Delta = \hat{\Delta} + (A-\hat{A})$ is orthogonal and
    \[
    \norm{\Delta}_F^2 = \norm{\hat{\Delta}}_F^2 + \norm{\hat{A}-A}_F^2.
    \]
    Hence the minimum problem~\eqref{eq:affine minimum Delta} becomes
    \[
        \min \norm{\hat{\Delta}}_F^2 + \norm{\hat{A}-A}_F^2 \subjectto \hat{\Delta} \in \mathcal{V}, \, \hat{A} - \hat{\Delta}\in\mathcal{E},
    \]
    which is equivalent to~\eqref{eq:linear minimum Deltahat} since the two objective functions differ only by the additive constant $\norm{\hat{A}-A}_F^2$.
\end{proof}
Therefore, in the following we shall restrict to the problem
\begin{equation} \label{eq:structured nearness problem}
    d^2(A, \mathcal{E}\cap (A+\mathcal{V})) = \min \norm{\Delta}_F^2 \subjectto \Delta \in \mathcal{V}, \, A-\Delta \in \mathcal{E}.
\end{equation}
The properties of tangent cones allow us to express the optimal $\Delta$ as a projection.

\begin{theorem}\label{thm:V82}
   Let $\mathcal{E} \subseteq \F^{m \times n}$ be a nonempty closed set, and let $\mathcal{V} \subseteq \F^{m \times n}$ be a real vector subspace. Let $X_\star = A- \Delta_\star$ be a Frobenius-norm projection of $A \in \F^{m \times n}$ (i.e., a minimizer of $\norm{A-X}_F^2$) onto $\mathcal{E}_S = \mathcal{E} \cap (A+\mathcal{V})$. If the sum rule for normal cones~\eqref{sum rule with inclusion} holds, i.e., $N_{X_\star}^\ell(\mathcal{E}_S) \subseteq N_{X_\star}^\ell(\mathcal{E}) + \mathcal{V}^\perp$, then there exists $\Delta_{1} \in N^\ell_{X_\star} \mathcal{E}$ such that $\Delta_\star = \Pi_{\mathcal{V}}(\Delta_{1}).$
\end{theorem}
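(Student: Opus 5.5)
We argue exactly as in Example~\ref{ex: rank deficient via normal} and the proof of Theorem~\ref{thm:rank exactly n-1}, now applied to the intersection $\mathcal{E}_S$. First, $\mathcal{E}_S=\mathcal{E}\cap(A+\mathcal{V})$ is closed, being the intersection of two closed sets. It is also nonempty, since $X_\star$ is assumed to exist. Apply Theorem~\ref{thm:reg min in normal} to the smooth function $F(X)=\frac12\norm{X-A}_F^2$ at the global minimizer $X_\star$ of $F$ over $\mathcal{E}_S$. This gives
\[
\Delta_\star = A - X_\star = -\nabla F(X_\star) \in N^r_{X_\star}\mathcal{E}_S \subseteq N^\ell_{X_\star}\mathcal{E}_S .
\]

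Next we invoke the hypothesis, namely the sum rule. The normal cone of the affine subspace $A+\mathcal{V}$ is $\mathcal{V}^\perp$ at every point (see the list of Clarke regular sets after Theorem~\ref{thm:sum rule}). Hence $N^\ell_{X_\star}\mathcal{E}_S\subseteq N^\ell_{X_\star}\mathcal{E}+\mathcal{V}^\perp$, and we can write $\Delta_\star=\Delta_1+W$ with $\Delta_1\in N^\ell_{X_\star}\mathcal{E}$ and $W\in\mathcal{V}^\perp$.

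Finally, observe that $\Delta_\star\in\mathcal{V}$: indeed $X_\star\in A+\mathcal{V}$, so $\Delta_\star=A-X_\star\in\mathcal{V}$. Apply the orthogonal projection $\Pi_{\mathcal{V}}$, which is real-linear, to the decomposition. It fixes $\Delta_\star$ and annihilates $W$, so
\[
\Delta_\star=\Pi_{\mathcal{V}}(\Delta_\star)=\Pi_{\mathcal{V}}(\Delta_1)+\Pi_{\mathcal{V}}(W)=\Pi_{\mathcal{V}}(\Delta_1).
\]
This is the claim.

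There is no serious obstacle. The only point needing care is bookkeeping about the setting. All cones and projections are taken with $\F^{m\times n}$ viewed as a real Hilbert space, with inner product $\Re\traccia(\cdot^*\cdot)$. $\mathcal{V}$ is only a real subspace, so $\mathcal{V}^\perp$ and $\Pi_{\mathcal{V}}$ must be understood in this real sense. We also use that Theorem~\ref{thm:reg min in normal} applies to a global minimizer, since a global minimizer is in particular a local one.
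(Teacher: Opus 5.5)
Your proposal is correct and follows the paper's proof essentially verbatim: Theorem~\ref{thm:reg min in normal} places $\Delta_\star$ in the normal cone of $\mathcal{E}_S$, the assumed sum rule splits it as $\Delta_1 + W$ with $\Delta_1\in N^\ell_{X_\star}\mathcal{E}$ and $W\in\mathcal{V}^\perp$, and applying $\Pi_{\mathcal{V}}$ together with $\Delta_\star\in\mathcal{V}$ gives the claim. Your explicit remark that $N^r_{X_\star}\mathcal{E}_S\subseteq N^\ell_{X_\star}\mathcal{E}_S$ is a small step the paper leaves implicit.
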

\begin{proof}
    Theorem~\ref{thm:reg min in normal} ensures that $\Delta_\star = A-X_\star \in N_{X_\star}^\ell(\mathcal{E}_S)$. By the sum rule, we can write
    \[
    \Delta_\star = \Delta_1 + V_\perp, \quad \Delta_{1} \in N^\ell_{X_\star} \mathcal{E}, \, V_\perp \in \mathcal{V}^\perp,
    \]
    and since $\Delta_\star \in \mathcal{V}$ we get the thesis by applying the linear operator $\Pi_{\mathcal{V}}$ to both sides.
\end{proof}
In particular, applying Theorem~\ref{thm:V82} to the structured distance from singularity problem (and recalling our notation $\nu=\min\{m,n\}$),
\begin{equation} \label{structured distance from singularity}
    d^2(A,\mathcal{E}_0 \cap (A+\mathcal{V})) = \min \norm{\Delta}_F^2 \subjectto \Delta \in\mathcal{V},\, \rank(A-\Delta) < \nu,
\end{equation}
we obtain a direct generalization of the rank-1 structure that follows from the Schmidt-Eckart-Young-Mirsky Theorem.
\begin{corollary} \label{cor:V82 for singular}
    Let $X_\star$ be a minimizer of~\eqref{structured distance from singularity}, and $\Delta_\star = A-X_\star$. If $\Pi_{\mathcal{V}}(uv^*) \neq 0$ for every pair of nonzero vectors $u\in \ker(X_\star^*)$, $v\in\ker(X_\star)$, then there exist vectors $u_\star \in \ker(X_\star^*), \, v_\star \in \ker(X_\star)$ such that $\Delta_\star = \Pi_{\mathcal{V}}(u_\star v_\star^*)$.
\end{corollary}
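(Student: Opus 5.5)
The corollary follows from Theorem~\ref{thm:V82} with $\mathcal{E}=\mathcal{E}_0$. The work lies in two places: checking the sum rule hypothesis at $X_\star$, and identifying the limit normal cone of $\mathcal{E}_0$ well enough to extract a rank-one element. For the cone we use Theorem~\ref{thm: normals of rank varieties} with $r=\nu-1$. If $\rank X_\star=k\le\nu-1$, item 2 (or item 1 when $k=\nu-1$) gives $N^\ell_{X_\star}\mathcal{E}_0=\{UYV^*\colon \rank(Y)\le \nu-(\nu-1)=1\}$, where $U,V$ span $\ker X_\star^*$ and $\ker X_\star$. Every element of the cone is therefore of the form $uv^*$ with $u\in\ker(X_\star^*)$ and $v\in\ker(X_\star)$, possibly zero. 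Once Theorem~\ref{thm:V82} applies, it gives $\Delta_\star=\Pi_{\mathcal{V}}(\Delta_1)$ with $\Delta_1=u_\star v_\star^*$ of this form, which is exactly the claim.

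The sum rule comes from Theorem~\ref{thm:sum rule} with $\mathcal{E}_1=\mathcal{E}_0$ and $\mathcal{E}_2=A+\mathcal{V}$. The latter is an affine subspace, so its limit normal cone is $\mathcal{V}^\perp$ at every point. The constraint qualification to check is
\[
N^\ell_{X_\star}\mathcal{E}_0\cap(-\mathcal{V}^\perp)=N^\ell_{X_\star}\mathcal{E}_0\cap\mathcal{V}^\perp=\{0\}.
\]
This is where the hypothesis enters. Suppose a nonzero $uv^*$ with $u\in\ker X_\star^*$ and $v\in\ker X_\star$ lay in $\mathcal{V}^\perp$. Then $\Pi_{\mathcal{V}}(uv^*)=0$, contradicting the assumption. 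Hence the qualification holds, Theorem~\ref{thm:sum rule} gives $N^\ell_{X_\star}(\mathcal{E}_S)\subseteq N^\ell_{X_\star}\mathcal{E}_0+\mathcal{V}^\perp$, and Theorem~\ref{thm:V82} applies.

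The main obstacle is the first step: making sure the limit cone of $\mathcal{E}_0=\mathcal{R}_{\le\nu-1}$ is exactly the set of rank-$\le1$ matrices $UYV^*$ in every rank stratum, including $k<\nu-1$, where the regular cone is trivial. I would quote Theorem~\ref{thm: normals of rank varieties} and check the rank bound $\nu-r=1$ carefully. Two small points also need checking: $\Delta_\star$ lies in the limit normal cone of $\mathcal{E}_S$, which follows from Theorem~\ref{thm:reg min in normal} since the regular cone is contained in the limit cone; and over $\F=\C$ the real-subspace setting agrees with the complex rank-one description, where $uv^*$ absorbs real scalings. The degenerate case $\Delta_\star=0$ is covered by taking $u_\star=0$.
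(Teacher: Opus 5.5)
Your proposal is correct and follows the paper's own proof: you identify $N^\ell_{X_\star}\mathcal{E}_0$ as the set of matrices $uv^*$ with $u\in\ker(X_\star^*)$ and $v\in\ker(X_\star)$ via Theorem~\ref{thm: normals of rank varieties} with $r=\nu-1$, you use the hypothesis $\Pi_{\mathcal{V}}(uv^*)\neq 0$ to verify the constraint qualification of Theorem~\ref{thm:sum rule} against $N^\ell_{X_\star}(A+\mathcal{V})=\mathcal{V}^\perp$, and you conclude with Theorem~\ref{thm:V82}. Your extra bookkeeping (item~1 versus item~2 of the rank-variety theorem, and the inclusion of the regular normal cone in the limit cone) is accurate and spells out what the paper's three-line proof leaves implicit.
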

\begin{proof}
    Note that $\mathcal{E}_0 = \mathcal{R}_{\leq \nu-1}$, in the language of Theorem~\ref{thm: normals of rank varieties}. Hence, by the same theorem,
    \[
    N^\ell_{X_\star} \mathcal{E}_0 = \{uv^* \colon u \in \ker(X_\star^*), \, v \in \ker(X_\star)\}.
    \]
    The assumption $\Pi_{\mathcal{V}}(uv^*) \neq 0$ ensures that the constraint qualification condition in Theorem~\ref{thm:sum rule} holds, and we conclude by applying Theorem~\ref{thm:V82}.
\end{proof}
\citet[Theorem~V.8.3]{GuglielmiLubichBook} use gradients to prove the same result in the special case where $m=n$ and $\rank(X_\star)=n-1$, hence the vectors in $\ker(X_\star^*)$ and $\ker(X_\star)$ are unique up to multiples. The power of our approach with normal cones is that it requires no assumptions on the rank of $X_\star$ nor the singular values of $A$.

\subsection{Structured distance from singularity} \label{subsec:structuredsing}

We now focus specifically on the set of rank-deficient matrices $\mathcal{E} = \mathcal{E}_0$ that we have encountered in the previous example. We assume (up to transposition) that $m\geq n$, so that $X=A-\Delta$ is singular if and only if there is $v\neq 0$ such that $Xv=0$; and we can take $v$ to be normalized with $\norm{v}_2=1$. Moreover, let $P_1,P_2,\dots,P_\ell$ be a basis for $\mathcal{V}$; we shall assume that this basis is orthonormal, i.e., $\mathbf{P}$ as in~\eqref{defP} has orthonormal columns.

Now the main idea is reparametrizing the problem, by using as optimization variables $v$ and the vector of basis coefficients $p \in \F^\ell$ such that
\begin{equation*} 
    \Delta = P_1 p_1 + P_2 p_2 + \dots + P_\ell p_\ell.
\end{equation*}
Then, we have
\begin{subequations} \label{eq:Lagrangian formulation}
\begin{align}
    d^2(A, \mathcal{E}_0 \cap (A+\mathcal{V})) &= \min_{\Delta, v} \norm{\Delta}_F^2 \subjectto (A-\Delta)v = 0, \, \norm{v}_2^2 = 1, \, \Delta \in \mathcal{V} \label{eq:Lagrangian formulation with A+Delta}\\
    &= \min_{p,v} \norm{p}_2^2 \subjectto M_v p = r_v, \, \norm{v}_2^2 = 1, \label{eq:Lagrangian formulation with Mp}
\end{align}    
\end{subequations}
where we used the fact that $\mathbf{P}$ has orthonormal columns to rewrite the objective function as
\[
    \norm{\Delta}_F^2 = \norm{\mvec(\Delta)}_2^2 = \norm{\mathbf{P}p}_2^2 = \norm{p}_2^2,
\]
and introduced
\begin{align} 
    M_v &= (v^T \otimes I_m) \mathbf{P} = \begin{bmatrix}
        P_1 v & P_2 v & \dots P_\ell v
    \end{bmatrix} \in\F^{m\times \ell},  \label{defM} \\ 
    r_v &= Av = (v^T \otimes I_m)\mvec(A) \nonumber
\end{align}
to rewrite the first constraint as
\[
    0 = (A-\Delta)v = (v^T \otimes I_m)\mvec(A-\Delta) = r_v - M_v p.
\]
We can now rewrite the matrix distance problem~\eqref{eq:Lagrangian formulation with Mp} as a nested minimization problem on the sphere $\mathcal{S} = \{v \in \mathbb{F}^n \colon \norm{v}_2 = 1\}$
\begin{gather}
    d^2(A, \mathcal{E}_0 \cap (A+\mathcal{V})) = \min_{v \in \mathcal{S}} f(v),  \nonumber \\ 
    \begin{aligned}
    f(v) &= \min_{\Delta} \norm{\Delta}_F^2 \subjectto \Delta \in \mathcal{V}, \, (A-\Delta)v = 0\\
    &= \min_p \norm{p}_2^2 \subjectto M_v p = r_v.\label{fv}
    \end{aligned}
\end{gather}
The inner function $f$ is the optimal value of a linearly-constrained least-squares problem, and we can express it in closed form.
\begin{lemma} \label{lem:weakrk1}
    Let $A \in \mathbb{F}^{m\times n}$, with $m\geq n$, and a linear subspace $\mathcal{V}\subseteq\mathbb{F}^{m\times n}$ over $\mathbb{F}$ be given. For each given $v\in\mathbb{F}^n, v\neq 0$, such that $r_v \in \range(M_v)$, 
    the problem~\eqref{fv} has the unique solution $\Delta(v) = \Pi_{\mathcal{V}}(uv^*)$, where $u = (M_v M_v^*)^\dagger  r_v$.
    (Here $X^\dagger $ denotes the Moore--Penrose pseudoinverse of the matrix $X$.)

    In particular, let $X_\star$, $\Delta_\star = A - X_\star$ be a solution of the structured distance from singularity problem~\eqref{structured distance from singularity} and $v_\star \in \ker X_\star$, $v_\star\neq0$. Then, $\Delta_\star = \Delta(v_\star) = \Pi_{\mathcal{V}}(u_\star v_\star^*)$, where $u_\star = (M_{v_\star} M_{v_\star}^*)^\dagger  r_{v_\star} \in\F^m$.
\end{lemma}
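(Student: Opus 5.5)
The plan is to treat \eqref{fv} as a minimum-norm least-squares problem in the coefficient vector $p$, and then translate the solution back to matrix form.

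Since $r_v \in \range(M_v)$, the set $\{p : M_v p = r_v\}$ is a nonempty affine subspace. Its unique element of minimal Euclidean norm is $p(v) = M_v^\dagger r_v$, by the classical result already used in Theorem~\ref{thm:affinedist}. Uniqueness of $p$ gives uniqueness of $\Delta = \sum_i P_i p_i$, because the $P_i$ are linearly independent (indeed orthonormal). So the real work is to rewrite $M_v^\dagger r_v$ in the claimed form. For this I would use the identity $M^\dagger = M^*(MM^*)^\dagger$, which holds for every matrix and follows from the SVD. It gives $p(v) = M_v^* u$ with $u = (M_vM_v^*)^\dagger r_v$.

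Next I would compute $\mathbf{P} M_v^* u$. From \eqref{defM}, $M_v^* = \mathbf{P}^*(v^T\otimes I_m)^*$. Since $(v^T \otimes I_m)^* = \bar v \otimes I_m$ and $(\bar v \otimes I_m) u = \mvec(u v^*)$, we get
\[
\mvec(\Delta) = \mathbf{P}p(v) = \mathbf{P}\mathbf{P}^* \mvec(uv^*).
\]
Because $\mathbf{P}$ has orthonormal columns, $\mathbf{P}\mathbf{P}^*$ is the orthogonal projector onto $\mvec(\mathcal{V})$. Hence $\Delta(v) = \Pi_{\mathcal{V}}(uv^*)$, which is the first claim.

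The main point needing care is the field. For $\F=\C$ with $\mathcal{V}$ a complex subspace, the adjoint and pseudoinverse computations are standard. If $\mathcal{V}$ were only a real subspace, one would instead pass to $\cvec$ and real coefficients; since the statement assumes $\mathcal{V}$ is a subspace over $\F$, the complex computation suffices. I would also check that the map $\mathbf{P}\mathbf{P}^*$ is the Frobenius-orthogonal projector, which holds because $\langle \mvec X, \mvec Y\rangle$ is the Frobenius inner product.

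For the second claim, let $X_\star$ be a minimizer and $v_\star \in \ker X_\star$ with $v_\star \neq 0$. Then $\Delta_\star \in \mathcal{V}$ and $(A-\Delta_\star)v_\star = 0$, so $\Delta_\star$ is feasible for \eqref{fv} at $v_\star$. Consequently $r_{v_\star} \in \range(M_{v_\star})$ and $f(v_\star) \le \norm{\Delta_\star}_F^2$. Conversely, $\Delta(v_\star)$ is feasible for \eqref{structured distance from singularity}, since $A - \Delta(v_\star)$ annihilates $v_\star$ and is therefore rank deficient. It follows that $\norm{\Delta(v_\star)}_F^2 \ge \norm{\Delta_\star}_F^2$. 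Thus $\Delta_\star$ attains the minimum in \eqref{fv}, and by uniqueness $\Delta_\star = \Delta(v_\star)$. This holds for any nonzero $v_\star$, not only normalized ones, because \eqref{fv} is invariant under scaling of the constraint.
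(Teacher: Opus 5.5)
Your proposal is correct and follows essentially the same route as the paper. Both compute the minimum-norm solution $p = M_v^\dagger r_v = M_v^*(M_vM_v^*)^\dagger r_v$ and then use the Kronecker identity $\mathbf{P}M_v^*u = \mathbf{P}\mathbf{P}^*\mvec(uv^*)$. For the second claim, you explicitly verify feasibility in both directions to show that $\Delta_\star$ attains the minimum in \eqref{fv} at $v_\star$ before invoking uniqueness; the paper leaves this step implicit, so your version is slightly more complete.
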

\begin{proof}
    For a fixed $v$, the minimization problem~\eqref{fv} is a linearly-constrained minimum-norm problem, and its unique solution can be computed using the pseudoinverse:
    \[
    p = M_v^\dagger r_v = M_v^* (M_v M_v^*)^\dagger r_v = M_v^*u.
    \]
    (See, e.g., \citet[Theorem~1.2.10]{Bjoerck1996}.) To conclude the proof, we show that for every vector $u$ the relation $p = M_v^*u$ is equivalent to $\Delta(v) = \Pi_{\mathcal{V}}(uv^*)$, by plugging in the formulas that define $M_v$ and $p$:
    \begin{equation} \label{projzv}
    \mvec(\Delta(v)) = \mathbf{P}p = \mathbf{P} ((v^T\otimes I_m)\mathbf{P})^* u = \mathbf{P}\mathbf{P}^* (\overline{v}\otimes I_m)u = \mathbf{P}\mathbf{P}^* \mvec(uv^*).
    \end{equation}
    Since we took $\mathbf{P}$ with orthonormal columns, $\mathbf{P}\mathbf{P}^*$ is the (vectorized) orthogonal projection on $\mathcal{V}$.

    The second part of the theorem follows by using this result with $v=v_\star$; since there is $\Delta_\star$ such that $(A-\Delta_\star)v_\star = 0$, the condition $r_{v_\star}\in \range(M_{v_\star})$ is satisfied.
\end{proof}
It is interesting to compare Lemma~\ref{lem:weakrk1} with the results obtained in Theorem~\ref{thm:V82} and Corollary~\ref{cor:V82 for singular}. To apply Theorem~\ref{thm:V82}, we need a constraint qualification condition to ensure that the sum rule for normal cones holds. On the other hand, Lemma~\ref{lem:weakrk1} holds without additional technical assumptions, but does not guarantee that $u_\star \in \ker(X_\star^*)$. Example~\ref{ex:glarewrong} in the next section shows that there are in fact cases where one holds but not the other.

Using the machinery introduced in this section, we can also give an alternative direct proof of Corollary~\ref{cor:V82 for singular} that does not rely on tangent cones. This second proof is instructive, because it also reveals that $u_\star$ can be interpreted as a Lagrange multiplier. We present it only for the case $\F=\R$, to avoid some technical complications.
\begin{proof}(\emph{Alternative proof of Corollary~\ref{cor:V82 for singular}}.)
    Note that in~\eqref{eq:Lagrangian formulation} we can replace the constraint $\norm{v}_2^2=1$ with $v\neq 0$, since the objective function is homogeneous in $v$. Minimizers then come in rays: if $(p_\star, v_\star)$ is a minimizer, then so is $(p_\star, \alpha v_\star)$ for each $\alpha \neq 0$.

    Introduce a Lagrange multiplier $y\in\R^m$ to write the associated Lagrangian function
    \begin{equation} \label{lagr}
        \begin{aligned}
            \mathcal{L}(p,v;y) &= \frac{1}{2}\norm{p}_2^2 + \left\langle y, r_v - M_v p \right\rangle\\
            &= \frac{1}{2}\norm{\Delta}_F^2 + \scalar{y, (A-\Delta)v}.
        \end{aligned}    
    \end{equation}
    (Here and in the following, consider $\Delta$ not as a separate variable, but merely as a shorthand for $\sum_{j=1}^\ell P_j p_j$.)
    
    The Jacobian of the constraints is
    \begin{equation} \label{Jacobian of constraints}
        J_{p,v} \begin{bmatrix}
        (A-\Delta)v
    \end{bmatrix} = \begin{bmatrix}
        -M_v & A-\Delta
    \end{bmatrix}.    
    \end{equation}
    Let now $X_\star,\Delta_\star$ define a minimizer of~\eqref{structured distance from singularity}. Let $v_\star \in \ker X_\star$, $v_\star\neq 0$, and $p_\star$ be the coordinates of $\Delta_\star$.
    So, $(p_\star, v_\star)$ is a minimizer of~\eqref{eq:Lagrangian formulation with Mp}.
    
    We would like to prove that this Jacobian has full row rank in $(p_\star,v_\star)$, so that LICQ (linear independence constraint qualification, \cite[Definition~12.4]{Nocedal2006}) holds. Let us take $u\in \F^m$ such that
    \[
    u^*
    \begin{bmatrix}
        -M_{v_\star} & A-\Delta_\star
    \end{bmatrix} = 0,
    \]
    i.e., $u \in \ker(X_\star^*)$ and $u^* M_{v_\star} = 0$. By the definition of $M_{v_\star}$, this is equivalent to $0 = u^* P_j v_\star = \scalar{P_j, uv_\star^*}$ for each $j=1,\dots,\ell$. Since the $P_j$ are a basis of $\mathcal{V}$, this property is equivalent to $uv_\star^* \in \mathcal{V}^\perp$, or equivalently to $\Pi_{\mathcal{V}}(uv_\star^*)=0$. Hence we must have $u=0$ not to contradict the assumptions of Corollary~\ref{cor:V82 for singular}. This completes the proof that the Jacobian of the constraints has full row rank in the minimizer $(p_\star,v_\star)$, and hence LICQ holds.
    
    Since constraint qualifications hold, in an optimum $(p_\star,v_\star)$ of~\eqref{eq:Lagrangian formulation with Mp} there must be $y_\star\in\R^m$ which satisfy the KKT conditions
    \begin{subequations}
    \begin{align}
        0 &= \nabla_p \mathcal{L} = p_\star - M_{v_\star}^*{y_\star}, \label{firstlag}\\
        0 &= \nabla_v \mathcal{L} = (A-\Delta_\star)^*{y_\star}. \label{secondlag}
    \end{align}
    \end{subequations}
    The second equation~\eqref{secondlag} tells us that $y_\star \in \ker X_\star^*$, and the first~\eqref{firstlag} that $p_\star = M_{v_\star}^* y_\star$, which is equivalent to $\Delta_\star = \Pi_{\mathcal{V}} (y_\star v_\star^*)$, by following the argument in~\eqref{projzv}. Hence $\Delta_\star = \Pi_{\mathcal{V}}(u_\star v_\star^*)$ holds with $u_\star$ equal to the Lagrange multiplier $y_\star$.
\end{proof}

\begin{remark}
    Let $m=n$ and consider the case $\rank(X_\star)=n-1$, in which the vectors $u\in \ker X_\star^*$ and $v\in \ker X_\star$ are unique up to multiples. The three conditions 
    \begin{enumerate}
        \item $\Pi_{\mathcal{V}}(uv^*) \neq 0$ \cite[Theorem V.8.3]{GuglielmiLubichBook}
        \item $\rank\begin{bmatrix}
            M_v & A-\Delta_\star
        \end{bmatrix} = n$ (LICQ for~\eqref{Jacobian of constraints})
        \item $\mathcal{V} + (T_{X_\star} \mathcal{E}_0) = \F^{m \times n}$ (transversal intersection of tangent spaces)
    \end{enumerate}
    are then all equivalent to the constraint qualification in Theorem~\ref{thm:sum rule}.
\end{remark}

\subsection{Some examples}
\label{subsec:struct sing excamples}
In this section, we give a few examples of interesting cases involving the structured distance to singularity, and in particular the various conditions that let us prove the property that $\Delta$ is the projection on $\mathcal{V}$ of a rank-1 matrix.

Our first example is a case where the sum rule for normal cones does not hold, and hence Lemma~\ref{lem:weakrk1} applies but not Corollary~\ref{cor:V82 for singular}, and in particular $u_\star\not\in \ker(X_\star^*)$.
\begin{example}\label{ex:glarewrong}
    Let $A=\begin{bsmallmatrix}
        1 & 1\\
        0 & 1
    \end{bsmallmatrix} \in \mathcal{V}=\left\{ \begin{bsmallmatrix}
        p_1 & p_2\\
        0 & p_1
    \end{bsmallmatrix} \colon \ p_1,p_2 \in \F \right\} \subseteq \F^{2 \times 2}$. In the notation of Theorem~\ref{thm:V82}, we have $\displaystyle \mathcal{E}_S = \left\{\begin{bsmallmatrix}
        0 & p_2\\
        0 & 0
    \end{bsmallmatrix} \colon \ p_2 \in \F \right\}$; in particular, $\mathcal{E}_S$ is a vector subspace of $\mathcal{V}$. Evidently, the (unique) matrix in $\mathcal{E}_S$ nearest to $A$ is $X_\star=\begin{bsmallmatrix}
        0 & 1\\
        0 &0
    \end{bsmallmatrix}$, with $\ker(X_\star^*)= \spann(\begin{bsmallmatrix}
        0\\
        1
    \end{bsmallmatrix})$ and $\ker (X_\star) = \spann(\begin{bsmallmatrix}
        1\\0
    \end{bsmallmatrix})$. So we have
    \[
    N_{X_\star} \mathcal{E}_0 = \spann\left(\begin{bmatrix}
        0 & 0\\
        1 & 0
    \end{bmatrix}\right) \subseteq N_{X_\star}(A+\mathcal{V}) = \mathcal{V}^\perp = \left\{
    \begin{bmatrix}
        p_1 & 0\\
        p_2 & -p_1
    \end{bmatrix}\colon p_1,p_2\in\F\right\};
    \]
    in particular, the constraint qualification in Theorem~\ref{thm:sum rule} $N_{X_\star}^\ell \mathcal{E}_0 \cap (-\mathcal{V}^\perp) = \{0\}$ does not hold, and neither does the sum rule itself, since
    \[
    N_{X_\star}\mathcal{E}_S = \mathcal{E}_S^\perp = \left\{
    \begin{bmatrix}
        p_1 & 0\\
        p_2 & p_3
    \end{bmatrix}\colon p_1,p_2,p_3\in\F\right\} \not \subseteq N_{X_\star}(\mathcal{E}_0) + \mathcal{V}^\perp = \mathcal{V}^\perp.
    \]
    So this example does not satisfy the assumptions of Theorem~\ref{thm:V82}; and neither does it satisfy its thesis, since for every $\Delta_1 \in N_{X_\star}\mathcal{E}_0$ we have $\Pi_{\mathcal{V}}(\Delta_1) = 0 \neq \Delta_\star$.
    
    On the other hand, we can apply Lemma~\ref{lem:weakrk1}, and indeed taking $v_\star = \begin{bsmallmatrix}
            1\\0
        \end{bsmallmatrix}$ we have
    $u_\star= \begin{bsmallmatrix}
        2\\0
    \end{bsmallmatrix} \not \in \ker(X_\star^*)$, and 
    \[
    \Pi_\mathcal{V}(u_\star v_\star^*)=\Pi_\mathcal{V} \left( \begin{bmatrix}
            2 & 0 \\
            0 & 0
        \end{bmatrix} \right)  = \begin{bmatrix}
            1 & 0\\
            0 & 1
        \end{bmatrix} = \Delta_\star,
    \]
    i.e., $\Delta_\star$ is the projection onto $\mathcal{V}$ of a rank-1 matrix, but not one built from vectors in the left and right kernel of $X_\star$.
 \end{example}
 \begin{remark}
    The fact that $A$ and $X_\star$ have a defective eigenvalue in Example~\ref{ex:glarewrong} is a red herring, in the sense that  
    the failure of Theorem~\ref{thm:V82} is not due to the spectrum of $X_\star$. For instance, one could swap the rows in all the matrices appearing in our example, obtaining an analogous example with simple eigenvalues (which is essentially \citet[Example 8.5]{GuglielmiLubichBook}). More generally, the structured distance to singularity problem~\eqref{structured distance from singularity} is invariant under left and right multiplication (of both $A$ and $\mathcal{V}$) by two unitary matrices, so eigenvalues are not a meaningful concept here.
 \end{remark}

Note that the conclusion of Corollary~\ref{cor:V82 for singular} is verified whenever the sum rule $N_{X_\star}^{\ell}\mathcal{E}_S \subseteq N_{X_\star}^{\ell}\mathcal{E}_0 + N_{X_\star}^{\ell}(A+\mathcal{V})$ holds; however, for simplicity we formulated the corollary using as hypothesis the constraint qualification condition $\Pi_{\mathcal{V}}(uv^*)\neq 0$, which is a sufficient (but not necessary!) condition for the sum rule to hold, in view of Theorem~\ref{thm:sum rule}. The following example displays a case where the constraint qualification does not hold, but the sum rule does.
\begin{example}\label{ex:gl2}
    Let $A=\begin{bsmallmatrix}
        1 & 0\\
        0 & 0
    \end{bsmallmatrix}$ and $\mathcal{V}=\spann \{ A \}$. Since $A$ is already singular, it is clear that the optimal perturbation is $\Delta_\star=0$. Every (either left or right) singular vector of $X_\star=A$ associated with the smallest singular value $0$ must be a nonzero multiple of $\begin{bsmallmatrix}
        0\\
        1
    \end{bsmallmatrix}$, and hence 
    \[ N^\ell_{X_\star} \mathcal{E}_0 = \spann \left\{ \begin{bmatrix}
    0&0\\
    0& 1 \end{bmatrix}\right\} \subsetneq \mathcal{V}^\perp = \spann\left\{\begin{bmatrix}
        0 & 1\\
        0 & 0
    \end{bmatrix},
    \begin{bmatrix}
        0 & 0\\
        1 & 0
    \end{bmatrix},
    \begin{bmatrix}
        0 & 0\\
        0 & 1
    \end{bmatrix}\right\}.\]
    The constraint qualification does not hold, since $\begin{bsmallmatrix}
        0 & 0\\
        0 & 1
    \end{bsmallmatrix}\in N^\ell_{X_\star}\mathcal{E}_0 \cap (-\mathcal{V}^\perp)$; however, $\mathcal{E}_S = \mathcal{E}_0 \cap (A+\mathcal{V}) = \mathcal{V}$, and hence the sum rule holds:
    \[
    \mathcal{V}^\perp = N_{X_\star}^\ell \mathcal{E}_S = N^\ell_{X_\star}\mathcal{E}_0 + \mathcal{V}^\perp.
    \]
    We can also verify directly that the thesis of Theorem~\ref{thm:V82} (and Corollary~\ref{cor:V82 for singular}) holds in this case, since $0 = \Delta = \Pi_{\mathcal{V}} \left(\begin{bsmallmatrix}
    0\\
    z
\end{bsmallmatrix} \begin{bsmallmatrix}
    0 & 1
\end{bsmallmatrix}\right)$ for each $z\in\F$.
 \end{example}

The following example displays a case where $X_\star$ has a rank drop of more than 1.

\begin{example}\label{ex:gl3}
    Let
    \[
    A=\begin{bmatrix}
        1 & 0 & 0\\
        0 & 1 & 0\\
        0 & 0 & 1
    \end{bmatrix}, \quad \mathcal{V} = \left\{
    \begin{bmatrix}
        0 & 0 & 0\\
        0 & x_1 & x_2\\
        0 & -x_2 & x_1
    \end{bmatrix} \colon x_1, x_2 \in\F
    \right\}.
    \]
    The determinant of the general matrix in $A+\mathcal{V}$ is $(1+x_1)^2 + x_2^2$. So, when $\F=\R$, the only singular matrix in $A+\mathcal{V}$ is $X_\star = \diagm(1,0,0)$. Note that $\rank(X_\star)<n-1$, and hence $\mathcal{E}_0$ is not a manifold in a neighborhood of $X_\star$. The normal cones are
    \[
    N^\ell_{X_\star} \mathcal{E}_S = N_{X_\star} \{X_\star\} = \mathbb{R}^{3\times 3}, \quad N^\ell_{X_\star} \mathcal{E}_0 = \{ 0 \oplus st^T \colon s,t \in \R^2\},
    \]
    and
    \[
     N^\ell_{X_\star}(A+\mathcal{V}) = \mathcal{V}^\perp = \left\{\begin{bmatrix}
        x_1 & x_2 & x_3\\
        x_4 & x_5 & x_6\\
        x_7 & x_6 & -x_5
    \end{bmatrix} \colon x_1,\dots, x_7\in \R\right\}.
    \]
    One can verify directly Theorem~\ref{thm:sum rule} in this example, because both the constraint qualification condition $N^\ell_{X_\star} \mathcal{E}_0 \cap (-\mathcal{V}^\perp) = \{ 0 \}$ and the sum rule $N^\ell_{X_\star} \mathcal{E}_S = N^\ell_{X_\star} \mathcal{E}_0 + \mathcal{V}^\perp$ hold. If we pick, for instance, $v_\star = \begin{bsmallmatrix}
        0\\1\\0
    \end{bsmallmatrix} \in \ker(X_\star)$, $u_\star = \begin{bsmallmatrix}
        0\\2\\0
    \end{bsmallmatrix}\in \ker(X_{\star}^*)$, then we have 
    \[ \Delta_\star = \begin{bmatrix}
        0&0&0\\
        0&1&0\\
        0&0&1
    \end{bmatrix}  = \Pi_{\mathcal{V}}\left( \begin{bmatrix}
        0 & 0 & 0\\
        0 & 2 & 0\\
        0 & 0 & 0
    \end{bmatrix} \right)  =\Pi_{\mathcal{V}}\left(u_\star v_\star^* \right).    \]
    So the conclusion of Corollary~\ref{cor:V82 for singular} holds.
 
    \citet[Theorem V.8.3]{GuglielmiLubichBook} is simply inapplicable to this case, since one of its assumptions is $\rank(X_\star)=n-1$. 
 \end{example}

\subsection{Structured low-rank approximation}

The reformulation~\eqref{fv} can be turned into a practical algorithm, by eliminating the variable $p$: Indeed, we have seen that once $v$ is fixed, \eqref{fv} can be readily solved with a pseudoinverse. We present the resulting algorithm, starting from the simpler case when $M_v$ has full row rank.

\begin{theorem} \label{thm:nearest structured singular, regular case}
    If $M_v$ has full row rank, then 
    \begin{enumerate}
        \item $f(v) = \norm{M_v^\dagger r_v}^2 = r_v^* (M_v M_v^*)^{-1} r_v$;
        \item for any fixed $v$, the unique minimizer $\Delta(v)$ of the problem in~\eqref{fv} is given by $\mvec(\Delta(v)) = \mathbf{P}M_v^\dagger r_v = \mathbf{P} M_v^*u$, with $u=(M_v M_v^*)^{-1} r_v$;
        \item the (Euclidean) gradient of $f$ is $\nabla_v f = 2(A-\Delta(v))^*u$.
    \end{enumerate}
    
\end{theorem}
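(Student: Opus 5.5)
Items 1 and 2 follow from Lemma~\ref{lem:weakrk1} and the standard formula for a minimum-norm solution of an underdetermined consistent system; item 3 needs a derivative of $r_v^*(M_vM_v^*)^{-1}r_v$ with respect to $v$, which we organize around the vector $u$.

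\textbf{Items 1 and 2.} If $M_v\in\F^{m\times\ell}$ has full row rank, then $M_vM_v^*$ is invertible. Hence $M_v^\dagger = M_v^*(M_vM_v^*)^{-1}$, and $r_v\in\range(M_v)=\F^m$ holds automatically. Lemma~\ref{lem:weakrk1} then gives the unique minimizer $p=M_v^\dagger r_v = M_v^*u$ with $u=(M_vM_v^*)^{-1}r_v$, and $\mvec(\Delta(v))=\mathbf{P}p$; this is item 2. For item 1, note that $f(v)=\norm{p}_2^2$ because $\mathbf{P}$ has orthonormal columns. Substituting gives
\[
\norm{p}_2^2 = u^*M_vM_v^*u = r_v^*(M_vM_v^*)^{-1}r_v .
\]

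\textbf{Item 3.} Full row rank is an open condition, so $v\mapsto (M_vM_v^*)^{-1}$ is smooth near $v$, and $M_v$, $r_v$ are linear in $v$. I will treat $\F=\R$ directly; for $\F=\C$ the same computation goes through with real parts and the real inner product, with $\nabla_v$ understood in the realified sense. Take a direction $h$, so that $\dot r = Ah$ and $\dot M = M_h$, since $v\mapsto M_v$ is linear. Write $G=M_vM_v^*$ and $f = r_v^*G^{-1}r_v$. Using $\dot{(G^{-1})}=-G^{-1}\dot G G^{-1}$ and $u=G^{-1}r_v$,
\[
\dot f = 2\Re\, u^*\dot r - u^*\dot G u,\qquad \dot G = M_hM_v^* + M_vM_h^*.
\]
This yields
\[
\dot f = 2\Re\big(u^*Ah - u^*M_hM_v^*u\big) = 2\Re\big(u^*Ah - u^*M_h p\big).
\]

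\textbf{Main obstacle.} The key identity is $M_hp = \Delta(v)h$. Indeed, $M_hp=\sum_j P_jh\,p_j$, and $\sum_j P_jp_j=\Delta(v)$ because $\mathbf{P}p=\mvec(\Delta(v))$. So
\[
\dot f = 2\Re\, u^*(A-\Delta(v))h = \scalar{2(A-\Delta(v))^*u,\,h},
\]
which is the claimed gradient.

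The care needed is twofold. First, $M_h$ must be recognized as the linear dependence on $v$, with $\Delta(v)$ held fixed in the identity $M_hp=\Delta(v)h$; no derivative of $\Delta$ appears. This is consistent with an envelope-theorem reading: the multiplier is $u$, and the objective $\norm{p}^2$ has no explicit dependence on $v$. Second, the conjugation conventions in the complex case must be handled consistently. I would double-check the result via the Lagrangian \eqref{lagr}: at the optimal $p$, $\nabla_v\mathcal{L}=(A-\Delta)^*y$ with $y=u$, up to the factor $2$ coming from using $\norm{p}^2$ rather than $\frac12\norm{p}^2$.
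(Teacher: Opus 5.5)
Your proposal is correct and follows the paper's route: items 1 and 2 come from the minimum-norm solution $p = M_v^\dagger r_v = M_v^*(M_vM_v^*)^{-1}r_v$ of the consistent underdetermined system $M_v p = r_v$, exactly as in the paper. For item 3 the paper only cites ``standard differentiation techniques'' (with the envelope theorem mentioned in the remark that follows); your explicit computation, using the derivative of $(M_vM_v^*)^{-1}$ and the identity $M_h p = \Delta(v)h$, correctly fills in that step, including the complex case with the real inner product.
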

\begin{proof}
    For a fixed $v$, \eqref{fv} is an equality-constrained linear least-squares problem. The problem must be feasible because of the assumption that $M_v$ has full row rank; hence (see, e.g.,~\citet[Theorem~1.2.10]{Bjoerck1996}) its unique minimizer is $p_v = M_v^\dagger r_v = M_v^*(M_v M_v^*)^{-1} r_v$, and the minimum is $\norm{p_v}_2^2 = p_v^*p_v = r_v^* (M_v M_v^*)^{-1} r_v$. Standard differentiation techniques give the gradient; see, e.g., \citet[Proposition~2]{Usevich2014} or \citet[Theorem~3.1]{Oracle}.
\end{proof}
\begin{remark}
    Under suitable regularity conditions, the formula for the gradient can also be obtained using the so-called \emph{envelope theorem}~\cite[Theorem~3.4.1]{fiacco_book}, which states that $\nabla_v \frac{1}{2}f(v) = \nabla_v \mathcal{L}(p, v; \lambda)$
\end{remark}

Hence we obtain the optimal $d^2(A,\mathcal{E})$ (or, at least, a local minimizer) and the associated minimizers $v_\star, p_\star, \Delta_\star, X_\star$ by solving numerically the minimum problem
\[
\min_{v\in\mathcal{S}} r_v^*(M_v M_v^*)^{-1}r_v.
\]
This method, known as \emph{variable projection}, is studied extensively by Markovsky and Usevich; see~\citet{Markovsky2013, Markovsky2014,Usevich2014a,Usevich2014}. The authors obtain expressions for the approximate Hessian of $f(v)$, and use first- and second-order optimization techniques on the sphere to compute the minimum. Notable precursors of this technique are~\citet{Golub1973}, where the idea of removing quadratic variables from a nonlinear optimization problem appears, and~\citet{Rosen1996}, where the same structured matrix parametrization is used in a similar context for a total least-squares problem. See also \citet{Markovsky2008} for a broader review of structured low-rank approximation and its applications.

The attentive reader probably already noticed the non-trivial assumption in Theorem~\ref{thm:nearest structured singular, regular case} that $M_v$ must have full row rank. This condition must hold at least in  the optimum $v_\star$, for the method to be usable, and is a stronger assumption than our earlier constraint qualifications. The following two examples show that we cannot assume it blindly, or handwave it away as ``true generically''.
\begin{example} \label{ex:infeasibility}
    This example shows that the minimum problem~\eqref{fv} may have an empty feasible region. Consider the structured distance from singularity problem~\eqref{structured distance from singularity} with
    \[
    A = \begin{bmatrix}
        0 & 4\\
        1 & 0
    \end{bmatrix}, \quad \mathcal{V} = \left\{\begin{bmatrix}
        p_1 & 0\\
        0 & p_1
    \end{bmatrix} : p_1 \in \F\right\}.
    \]
    Then, with $P_1 = \frac{1}{\sqrt{2}}I_2$, 
    \[
    M_v = \frac{1}{\sqrt{2}}\begin{bmatrix}
        v_1\\
        v_2
    \end{bmatrix}, \quad r_v = \begin{bmatrix}
        4v_2\\v_1
    \end{bmatrix}.
    \]
    The matrix $M_v M_v^*$ is always singular, so we cannot rely on the formula $M_v^\dagger  = M_v^*(M_v M_v^*)^{-1}$, and the constraint $M_v p = r_v$ in~\eqref{eq:Lagrangian formulation with Mp} is infeasible for almost all $v$. If we agree that $\min \varnothing = +\infty$, we have
    \[
    f(v) = \begin{cases}
        8 & v_1 = \pm 2v_2,\\
        +\infty & \text{otherwise}.
    \end{cases}
    \]
\end{example}
Hence, we might have to restrict $v$ to a suitable submanifold of $\F^n$ to ensure that $f$ takes finite values. We note that a case where the feasibility of~\eqref{fv} is always guaranteed is when $A \in \mathcal{V}$: In that case $\Delta = A$ is always a feasible solution irrespective of $v$.

The following example shows another fundamental issue.
\begin{example}[\protect{\citet[Example~2.7]{Oracle}}] \label{ex:oracle discontinuous f}
    Consider the structured distance from singularity problem~\eqref{structured distance from singularity} with
    \[
    A = \begin{bmatrix}
        1 & 1\\
        0 & 2
    \end{bmatrix}, \quad \mathcal{V} = \left\{\begin{bmatrix}
        p_1 & 0\\
        0 & p_2
    \end{bmatrix} : \begin{bmatrix}
        p_1\\
        p_2
    \end{bmatrix} \in \F^2\right\}.
    \]
    Then,
    \[
    M_v = \begin{bmatrix}
        v_1 & 0\\
        0 & v_2
    \end{bmatrix}, \quad r_v = \begin{bmatrix}
        v_1 + v_2\\
        2v_2
    \end{bmatrix}.
    \]
    The matrix $M_v$ becomes singular if $v_1=0$ or $v_2=0$; for $v\in \mathcal{S}$ we have
    \[
    f(v) = \begin{cases}
        4 + \abs*{1 + \frac{v_2}{v_1}}^2 & v_1,v_2\neq 0,\\
        +\infty & v_1=0,\\
        1 & v_2=0.\\
    \end{cases}
    \]
    We plot this function over the real unit circle in Figure~\ref{fig:oracle discontinous f}.
\begin{figure}
    \centering
\begin{tikzpicture}
\begin{axis}[ymin=0, ymax=20, width=0.6\textwidth,
xtick={-pi,-pi/2,0,pi/2,pi},
xticklabels={$-\pi$, $-\pi/2$, $0$, $\pi/2$,$\pi$}, enlarge x limits=false, xlabel=$t$, 
ylabel=$f\left(\begin{bmatrix}\cos(t)\\\sin(t)\end{bmatrix}\right)$
]
\addplot[color=blue, domain=-2.708:2.708, samples=400]{min(50, (1*cos(deg(x))+1*sin(deg(x)))^2/(cos(deg(x))^2) + 2^2)};
\addplot[only marks, mark=*, color=blue]
table{
x  y
0 1
};
\addplot[only marks, mark=o, color=blue]
table{
x  y
0 5
};
\end{axis}
\end{tikzpicture}
\caption{A plot of the function $f(v)$ in Example~\ref{ex:oracle discontinuous f}, for $\F=\R$, over the circle $\norm{v}=1$. Note the discontinuity in $t=0$.} \label{fig:oracle discontinous f}
\end{figure}
The global minimum $1$ is achieved in a discontinuity point, where the matrix $M_v$ becomes singular. Gradient-based optimization on the function $f$ cannot locate this discontinuous minimum, but only the local minimum $f\left(\frac{1}{\sqrt{2}}\begin{bsmallmatrix}
    1\\-1
\end{bsmallmatrix} \right)=4$.
\end{example}
We can get some insight on the behavior of the problem~\eqref{fv} for singular $M_v$ in terms of an SVD $M_v = U_v \Sigma_v V_v^*$. Set $q = V_v^*p$ and $w_v = U_v^* r_v$; then the constraint $M_v p = r_v$ becomes $\Sigma_v q = w_v$. This diagonal system of equations is solvable if and only if $(w_v)_i=0$ whenever $(\sigma_v)_i = 0$ (or $i>\dim\mathcal{V}$), and in this case the minimum-norm solution is
\[
    f(v) = \norm{q}^2 = \sum_{i : (\sigma_v)_i \neq 0} \abs*{\frac{(w_v)_i}{(\sigma_v)_i}}^2.
\]
In particular, consider a family of vectors $v(t)$ for which we have (for a certain index~$j$)
\[
(w_{v(0)})_j = (\sigma_{v(0)})_j=0, \quad \lim_{t\to 0} \frac{(w_{v(t)})_j}{(\sigma_{v(t)})_j} = \alpha \neq 0
\]
Then,
\[
f(v(0)) < \lim_{t\to 0} f(v(t)),
\]
because the right-hand side contains a term $\abs{\alpha}^2$ that the left-hand side lacks. This argument shows that $f(v)$ is, in general, only lower semicontinuous in rank-drop points.

We would like to have an algorithm that can solve also problems such as those in Examples~\ref{ex:infeasibility} and~\ref{ex:oracle discontinuous f}. To this purpose, we use a classic strategy from optimization, the \emph{penalty method}, following~\citet{Oracle}. Define the following relaxed version of~\eqref{eq:Lagrangian formulation with Mp}
\begin{equation} \label{penaltyform}
    \min_{p \in \F^\ell,v\in \mathcal{S}} \norm{p}_2^2 + \frac{1}{\varepsilon} G(p,v), \quad G(p,v) := \frac{\norm{(A-\Delta)v}_2^2}{\norm{v}_2^2} = \frac{\norm{r_v - M_v p}_2^2}{\norm{v}_2^2}.
\end{equation}
The penalty method consists in choosing a sequence $\varepsilon_1 \geq \varepsilon_2 \geq \varepsilon_3 \geq \dots$ converging to zero, and for each of these values $\varepsilon_k$ computing a (possibly non-unique) global minimizer of~\eqref{penaltyform}, which we call $(p_k, v_k)$. Theorem~\ref{thm:fepsvconvergence} shows that the distance from $(p_k,v_k)$ to the set of global minimizers of \eqref{eq:Lagrangian formulation with Mp} tends to zero. If there is a unique minimizer, this implies in particular that $p_k$ and $\Delta_k$ converge to it.
\begin{theorem} \label{thm:fepsvconvergence}
Assume the feasible region of the structured distance to singularity problem $(A+\mathcal{V}) \cap \mathcal{E}_0$ is nonempty.
Then, the iterates $(p_k,v_k)$ of the penalty method described above satisfy the following properties.
\begin{enumerate}
    \item $\norm{p_k}_2 \leq d(A, (A+\mathcal{V}) \cap \mathcal{E}_0)$;
    \item every limit point of $(p_k,v_k)$ is an optimum of~\eqref{eq:Lagrangian formulation with Mp};
    \item $\lim_{k\to\infty} d((p_k,v_k), S) = 0$, where $S$ is the set of minimizers of~\eqref{eq:Lagrangian formulation with Mp}. In particular, if $A$ does not belong to the medial axis of $(A+\mathcal{V}) \cap \mathcal{E}_0$, then $p_k$ converges to the unique minimizer $p$.
\end{enumerate}
\end{theorem}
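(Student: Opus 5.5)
The plan is to follow the standard penalty-method analysis and compare each penalized minimizer against a fixed optimal point of the constrained problem. Let $d_\star = d(A,(A+\mathcal{V})\cap\mathcal{E}_0)$, which is finite by the nonemptiness assumption. Let $(p_\star,v_\star)$ be a global minimizer of~\eqref{eq:Lagrangian formulation with Mp}. It exists because the feasible set is closed and we may restrict to the compact set $\{\norm{p}_2\le d_\star+1\}\times\mathcal{S}$. Here $\norm{p_\star}_2^2=d_\star^2$ and $G(p_\star,v_\star)=0$.

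For item 1, global optimality of $(p_k,v_k)$ in~\eqref{penaltyform} gives
\[
\norm{p_k}_2^2+\tfrac{1}{\varepsilon_k}G(p_k,v_k)\le \norm{p_\star}_2^2+\tfrac{1}{\varepsilon_k}G(p_\star,v_\star)=d_\star^2 .
\]
Since $G\ge0$, this yields $\norm{p_k}_2\le d_\star$. It also gives $G(p_k,v_k)\le \varepsilon_k d_\star^2\to0$.

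For item 2, note that $(p_k,v_k)$ lies in the compact set $\{\norm{p}_2\le d_\star\}\times\mathcal{S}$, so limit points exist. Take a subsequence converging to $(\bar p,\bar v)$. The function $G$ is continuous on $\F^\ell\times\mathcal{S}$, because $\norm{v}_2=1$ there and the numerator is polynomial. Hence $G(\bar p,\bar v)=\lim G(p_k,v_k)=0$, so $(A-\bar\Delta)\bar v=0$ with $\bar v\ne0$. In other words, $M_{\bar v}\bar p=r_{\bar v}$, and $(\bar p,\bar v)$ is feasible. By item 1, $\norm{\bar p}_2\le d_\star$, and feasibility forces $\norm{\bar p}_2\ge d_\star$. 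Therefore $(\bar p,\bar v)$ is optimal.

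For item 3, argue by contradiction. Suppose $d((p_{k_j},v_{k_j}),S)\ge\eta>0$ along some subsequence. By compactness, a further subsequence converges to a point that must lie in $S$ by item 2. This contradicts the bound $d\ge\eta$, since distance to $S$ is continuous. For the "in particular" part, assume $A$ is not in the medial axis. Then $(A+\mathcal{V})\cap\mathcal{E}_0$ has a unique nearest point $X_\star$, so all optimal $p$ coincide, because $\mathbf{P}$ has orthonormal columns and $\Delta_\star=A-X_\star$ is unique. The set $S$ is then $\{p_\star\}\times(\text{a closed subset of }\mathcal{S})$, and item 3 gives $\norm{p_k-p_\star}_2\to0$.

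The only mildly delicate step is item 2. It requires continuity of $G$ on the sphere and the observation that $S$ is nonempty and compact, so that limit points are meaningful. The rest is routine once those facts are in place.
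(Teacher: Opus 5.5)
Your proof is correct and follows essentially the same route as the paper. You compare each penalized minimizer with a fixed constrained optimum $(p_\star,v_\star)$ to obtain item 1 and $G(p_k,v_k)\le \varepsilon_k d_\star^2$, then use compactness and continuity of $G$ on the sphere for item 2, and argue by contradiction via compactness for item 3 (your explicit handling of the ``in particular'' clause, using that $\mathbf{P}$ has orthonormal columns so a unique $X_\star$ forces a unique $p_\star$, fills in a step the paper leaves implicit).
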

\begin{proof}
    This is a standard result in the theory of penalty methods, but we give a brief proof anyway. 
    \begin{enumerate}
    \item Since the feasible region is nonempty, \eqref{eq:Lagrangian formulation with Mp}~has an optimum, i.e., there exist $p_\star$ of minimum norm and $v_\star\in\mathcal{S}$ such that $r_{v_\star}-M_{v_\star} p_\star = 0$. By minimality, 
    \begin{equation} \label{eq:penalty method convergence}
        \norm{p_k}_2^2 + \frac{1}{\varepsilon_k}G(p_k,v_k) \leq \norm{p_\star}_2^2 + \frac{1}{\varepsilon_k}G(p_\star,v_\star) = \norm{p_\star}_2^2.
    \end{equation}
     In particular, this implies that $\norm{p_k}_2^2 \leq \norm{p_\star}_2^2 = d^2(A, \mathcal{E}_0 \cap (A+\mathcal{V}))$.
     \item Let $(p,v)$ be a limit point of the sequence $(p_k, v_k)$; at least one such limit point must exist because we have shown in the previous point that $(p_k,v_k)$ are uniformly bounded. It follows from~\eqref{eq:penalty method convergence} that $\frac{1}{\varepsilon_{k}}G(p_k, v_k)$ is bounded, hence we must have $G(p,v) = 0$ in the limit. Moreover, in the limit $\norm{p}_2^2 \leq \norm{p_\star}_2^2$, hence $\Delta$ must be an optimum.
    \item Assume that the thesis is false, i.e., there are a subsequence $(p_{k_j},v_{k_j})$ and a radius $\delta$ such that $d((p_{k_j},v_{k_j}), S) \geq \delta$ for each $j$. By compactness, $(p_{k_j},v_{k_j})$ has a limit point, which must be outside $S$; this is a contradiction.
     \end{enumerate}
\end{proof}
To solve~\eqref{penaltyform}, we once again eliminate the variable $p$, defining
\begin{subequations} \label{fepsv}
\begin{align}
    f_\varepsilon(v) &=  \min_{\Delta} \norm{\Delta}_F^2 + \frac{1}{\varepsilon} \frac{\norm{(A-\Delta)v}_2^2}{\norm{v}_2^2} \subjectto \Delta \in \mathcal{V} \label{fepsv_withA}\\
    &=\min_{p} \norm{p}_2^2 + \frac{1}{\varepsilon} \frac{\norm{r_v - M_v p}_2^2}{\norm{v}_2^2}. \label{fepsv_withM}
\end{align}
\end{subequations} 
An analogue of Theorem~\ref{thm:nearest structured singular, regular case} holds.

\begin{theorem} \label{thm:nearest structured singular, singular case}
    For each $\varepsilon>0$, we have
    \begin{enumerate}
        \item $f_\varepsilon(v) = r_v^{*} (M_v M_v^* + \varepsilon \norm{v}_2^2 I_m)^{-1} r_v$;
        \item for any fixed $v$, the unique minimizer $\Delta_\varepsilon(v)$ of the problem in the right-hand side of~\eqref{fepsv_withA} is given by $\mvec(\Delta_\varepsilon(v)) = \mathbf{P}M_v^* u = \mvec(\Pi_{\mathcal{V}}(uv^*))$, with $u=\left(M_v M_v^* + \varepsilon \norm{v}_2^2 I_m\right)^{-1} r_v$;
        \item the (Euclidean) gradient of $f_\varepsilon$ is $\nabla_v f_\varepsilon = 2\left(A-\Delta_{\varepsilon}(v) - \varepsilon uv^*\right)^*u$.
    \end{enumerate}
\end{theorem}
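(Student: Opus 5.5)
For fixed $v\neq 0$, the inner problem~\eqref{fepsv_withM} is an unconstrained regularized least-squares (Tikhonov) problem in $p$. I would treat it exactly as Theorem~\ref{thm:nearest structured singular, regular case} was treated, with the hard constraint replaced by a penalty. Set $c:=\varepsilon\norm{v}_2^2>0$ and multiply the objective by $\varepsilon\norm{v}_2^2$. This gives the equivalent problem
\[
\min_p \; c\norm{p}_2^2 + \norm{r_v-M_vp}_2^2 ,
\]
whose objective is strictly convex because $c>0$. The normal equations are $(M_v^*M_v+cI_\ell)p=M_v^*r_v$. Hence the unique minimizer is $p_\varepsilon=(M_v^*M_v+cI)^{-1}M_v^*r_v$. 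The push-through identity $(M_v^*M_v+cI)^{-1}M_v^*=M_v^*(M_vM_v^*+cI_m)^{-1}$ then gives $p_\varepsilon=M_v^*u$ with $u=(M_vM_v^*+cI_m)^{-1}r_v$.

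Item 2 follows directly from this and the computation~\eqref{projzv} in the proof of Lemma~\ref{lem:weakrk1}: $\mvec(\Delta_\varepsilon(v))=\mathbf{P}p_\varepsilon=\mathbf{P}M_v^*u=\mathbf{P}\mathbf{P}^*\mvec(uv^*)$. Since $\mathbf{P}$ has orthonormal columns, this is $\mvec(\Pi_{\mathcal{V}}(uv^*))$. Uniqueness comes from strict convexity.

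For item 1, I substitute the minimizer back into the objective. The residual is $r_v-M_vM_v^*u=(M_vM_v^*+cI-M_vM_v^*)u=cu$. Therefore
\[
f_\varepsilon(v)=u^*M_vM_v^*u+\tfrac{1}{c}\,c^2u^*u=u^*(M_vM_v^*+cI)u=r_v^*(M_vM_v^*+cI)^{-1}r_v .
\]
The last equality uses that $M_vM_v^*+cI$ is Hermitian.

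For item 3, I would not differentiate the closed form. Instead I would use an envelope (Danskin) argument, as in the remark following Theorem~\ref{thm:nearest structured singular, regular case}. The map $(p,v)\mapsto\norm{p}^2+\frac1\varepsilon\norm{(A-\Delta)v}^2/\norm{v}^2$ is smooth for $v\neq0$, and the minimizer $p_\varepsilon(v)$ is unique and smooth in $v$ (by the explicit formula). So $\nabla_v f_\varepsilon$ equals the partial gradient in $v$ at fixed $p=p_\varepsilon$, i.e.\ at fixed $\Delta=\Delta_\varepsilon(v)$.

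Write $g(v)=\norm{Xv}^2/\norm{v}^2$ with $X=A-\Delta_\varepsilon(v)$ held fixed. Its gradient, in the real inner product convention of the paper, is $\nabla g=\frac{2X^*Xv}{\norm{v}^2}-\frac{2\norm{Xv}^2v}{\norm{v}^4}$. From the residual identity, $Xv=r_v-M_vp_\varepsilon=cu=\varepsilon\norm{v}^2u$. Hence
\[
\tfrac1\varepsilon\nabla g = 2X^*u-2\varepsilon\norm{u}^2 v = 2\bigl(X-\varepsilon uv^*\bigr)^*u ,
\]
because $(uv^*)^*u=v\norm{u}^2$. This matches the claimed formula.

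The main obstacle is justifying the envelope step rigorously, i.e.\ showing that the $v$-dependence of $p_\varepsilon$ contributes nothing. The reason is that $\nabla_p$ of the objective vanishes at $p_\varepsilon$, and smoothness of $p_\varepsilon(v)$ holds because $M_vM_v^*+cI$ is invertible for all $v\neq0$. A second point needing care is the complex case, where one has to confirm that the real-gradient conventions yield the factor $2$ exactly. If the envelope argument gets delicate, I would fall back on direct differentiation of $r_v^*(M_vM_v^*+c I)^{-1}r_v$ as in Usevich--Markovsky.
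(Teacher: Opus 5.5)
Your proof is correct. Items 1 and 2 follow the route the paper indicates; its proof only refers back to Theorem~\ref{thm:nearest structured singular, regular case} and to \citet{Oracle}, i.e., it solves the inner least-squares problem in closed form. In your version, the normal equations and the push-through identity $(M_v^*M_v+cI_\ell)^{-1}M_v^*=M_v^*(M_vM_v^*+cI_m)^{-1}$ play the role of the pseudoinverse formula, and the residual identity $r_v-M_vp_\varepsilon=cu$ gives the minimum value at once.

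For item 3 the paper relies on ``standard differentiation'' of the closed form, as in the cited references. It mentions the envelope theorem only in a remark, ``under suitable regularity conditions''. You make the envelope argument the proof, and the regularity it needs is already in your hands. Since $M_vM_v^*+\varepsilon\norm{v}_2^2I_m$ is invertible for every $v\neq0$, the minimizer $p_\varepsilon(v)$ is unique and smooth. The chain rule together with $\nabla_p=0$ at $p_\varepsilon(v)$ then settles the step you flagged as the main obstacle, and no fallback is needed. Your computation with the real inner product also confirms the factor $2$ in the complex case.

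What your route buys is transparency. With $X=A-\Delta_\varepsilon(v)$, it shows that $2X^*u$ comes from the numerator and the extra term $-2\varepsilon(uv^*)^*u$ comes entirely from the denominator $\norm{v}_2^2$. It also makes the orthogonality $v^*\nabla_v f_\varepsilon=2(Xv)^*u-2\varepsilon\norm{u}_2^2\norm{v}_2^2=0$ a one-line check. Direct differentiation of $r_v^*(M_vM_v^*+\varepsilon\norm{v}_2^2I_m)^{-1}r_v$ needs no envelope justification. However, it requires differentiating a matrix inverse through both $M_v$ and $\norm{v}_2^2$, which is messier.
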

\begin{proof}
    The proof is similar to that of Theorem~\ref{thm:nearest structured singular, regular case}; see~\citet[Theorems~2.9 and~3.1]{Oracle}, where a similar function is defined without the factor $\norm{v}_2^2$ in the denominator.
\end{proof}

Note that our definition of $f_\varepsilon$ differs slightly from the version in~\citet{Oracle}, which is missing the factor $\norm{v}_2^2$ in the denominator. While the two definitions coincide when $v\in\mathcal{S}$, the function $f_\varepsilon(v)$ in our~\eqref{fepsv} is homogeneous in $v$; hence $\nabla_v f_{\varepsilon}$ is always orthogonal to $v$, and we can expect $\nabla_v f_{\varepsilon} = 0$ in a minimum. In contrast, \citet{Oracle} must note the difference between the Euclidean gradient and the Riemannian gradient, and their presentation requires more machinery from optimization on manifolds.

This regularization procedure gives a practical numerical algorithm to solve~\eqref{structured distance from singularity} even in the case where $M_v$ is not guaranteed to have full rank: starting from arbitrary $\varepsilon_0$ and $v_0$, for each $k=1,2\dots,$
\begin{description}
    \item[Outer iteration] Fix a new value of $\varepsilon_{k}$ so that the sequence $(\varepsilon_k)_k$ tends to $0$: for instance $\varepsilon_{k} = 0.5\, \varepsilon_{k-1}$.
    \item[Inner iteration] For a fixed $\varepsilon_k$, use an iterative method, with $v_{k-1}$ as initial value, to solve numerically the optimization problem $v_{k} \in \arg\min_{v\in\mathcal{S}} f_{\varepsilon_k}(v)$, taking advantage of the explicit formulas for $f_\varepsilon$ and its gradient in Theorem~\ref{thm:nearest structured singular, singular case}.
\end{description}
\citet{Oracle} describe this nested iteration in detail. The convergence result that we gave in Theorem~\ref{thm:fepsvconvergence} assumes that each inner problem is solved by returning a global minimizer. Under this assumption, for each $k$ we have
\[
0 = \nabla_{v} f_{\varepsilon_k}(v_k) = 2\left(A-\Delta_{\varepsilon_k}(v_k) - \varepsilon_k u_kv_k^*\right)^*u_k,
\]
where $u_k$ is the vector $u$ defined in point 2 of Theorem~\ref{thm:nearest structured singular, singular case}. If the sequence $(u_k)_k$ converges, passing to the limit along a subsequence $(u_{k_j})_j$ such that the corresponding $v_{k_j}$ converges (the existence of this subsequence is guaranteed because the unit sphere is compact), we re-obtain the projection-of-rank-1 properties $\Delta_\star = \Pi_{\mathcal{V}}(u_\star v_\star^*)$ and $0 = (A-\Delta_\star)^*u_\star$ of Corollary~\ref{cor:V82 for singular}. Note also that, in practice, the inner iteration may return a local, but not global, minimizer. 

\subsection{Variants and alternatives to SLRA}

Eliminating $p$ with the formulas in Theorems~\ref{thm:nearest structured singular, regular case} and~\ref{thm:nearest structured singular, singular case} is not the only way to solve the optimization problems~\eqref{eq:Lagrangian formulation} and~\eqref{penaltyform}. \citet{Oracle} suggest another classical idea from optimization: combining~\eqref{lagr} and~\eqref{penaltyform} into the \emph{augmented Lagrangian}
    \begin{align*}
        \mathcal{L}_\varepsilon(p,v;y) &= \norm{p}_2^2 + \scalar{2y, r_v - M_v p} + \frac{1}{\varepsilon}\norm{r_v - M_v p}_2^2\\
        &= \norm{\Delta}_F^2 + \scalar{2y, (A-\Delta)v} + \frac{1}{\varepsilon}\norm{(A-\Delta)v}_2^2.
    \end{align*}
(We have imposed $\norm{v}_2=1$ to simplify the subsequent formula, and multiplied by 2 to avoid some denominators.) Therefore,
\[\mathcal{L}_\varepsilon(p,v;y)   + \varepsilon\norm{y}_2^2  = \norm{p}_2^2   + \frac{1}{\varepsilon} \norm{(A-\Delta)v+\varepsilon y}_2^2.\]
Minimizing over $p$ having fixed $v$ and $y$, we then get
\begin{equation}\label{auglag_explicit}
    \min_p \mathcal{L}_\varepsilon(p,v;y)
    =
    \hat{r}_v^*
    (M_vM_v^*+\varepsilon I_m)^{-1}
    \hat{r}_v
    -\varepsilon\norm{y}_2^2,
    \qquad
    \hat{r}_v=Av+\varepsilon y.
\end{equation}

Hence, for a fixed $y_k$, we can eliminate $p$ and minimize $\mathcal{L}_\varepsilon(p,v;y_k)$ at the same cost as minimizing $f_\varepsilon(v)$ in~\eqref{fepsv}. The \emph{augmented Lagrangian method} is once again a nested iteration: in every inner step we compute $v_{k+1}$ by minimizing $\mathcal{L}_{\varepsilon_k}(p,v;y_k)$ with an iterative algorithm applied to~\eqref{auglag_explicit}, but at each outer step, in addition to choosing a new $\varepsilon_k$, we update our estimate $y_{k}$ of the Lagrange multiplier with a gradient ascent step. The main appeal of this method is that convergence is driven mainly by the convergence of $y_k$ to the optimal dual variable $y_\star$, and typically one terminates the iteration before $\varepsilon_k$ gets too close to zero, hence avoiding ill-conditioning in $(M_v M_v^*+\varepsilon I_m)^{-1}$. For more details on this application of the method, see~\citet{Oracle}, and for a general treatment of augmented Lagrangian methods, see \citet{Birgin2014}.

\citet{nyman2026_singular_vector_spaces} suggests replacing the inner iteration with alternating minimization to compute the minimum in~\eqref{penaltyform}, since both half-steps of the alternating minimization can be computed in closed form:
\begin{align*}
    p_{j+1} &= \arg\min_p \norm{p}_2^2 + \frac{\norm{r_{v_j} - M_{v_j}p}_2^2}{\varepsilon\norm{v_j}_2^2} = M_{v_j}^*\left( M_{v_j}M_{v_j}^* +\varepsilon\norm{v_j}_2^2 I_m\right)^{-1}r_{v_j},\\
    v_{j+1} &= \arg\min_v  \norm{\Delta_{j+1}}_F^2 + \frac{\norm{(A-\Delta_{j+1})v}_2^2}{\varepsilon\norm{v}_2^2} = v_{\min}(A-\Delta_{j+1}),
\end{align*}
where $v_{\min}(M)$ denotes the (possibly non-unique) right singular vector associated to the smallest singular value of a matrix $M$. The resulting method is particularly efficient for larger-dimensional problems.

An alternative approach suggested by~\citet{gnazzo26_nonlinear} avoids the nested outer-inner iteration entirely: we find the two vectors $u_\star,v_\star$ whose existence is predicted by Corollary~\ref{cor:V82 for singular} (when its assumptions hold!) by solving the nonlinear system of $m+n$ equations in $m+n$ unknowns
\begin{equation} \label{uv_nonlinear}
    \begin{cases}
    (A - \Pi_{\mathcal{V}}(uv^*))^*u = 0;\\
    (A - \Pi_{\mathcal{V}}(uv^*))v = 0.
\end{cases}    
\end{equation}
One needs to impose a suitable normalization condition to eliminate the spurious degree of freedom $uv^* = (u\beta)(v/\overline{\beta})^*$, $\beta\neq0$, but after doing that the nonlinear system~\eqref{uv_nonlinear} can be solved using Newton's method. \citet{gnazzo26_nonlinear} show that the resulting algorithm is very competitive in terms of speed for the case of a sparsity structure.

\subsection{The structured singular value}

In this section, we describe a variant of the structured distance to singularity problem that has received attention in the area of systems and control. Subdivide the dimension $n$ into $k+\ell$ positive integers $n = r_1 + r_2 + \dots + r_k + s_1 + s_2 + \dots + s_\ell$, and define the structure
\begin{equation*}
    \mathcal{V} = \left\{\delta_1 I_{r_1} \oplus \dots 
    \oplus \delta_k I_{r_k} \oplus \Delta_1 \oplus\dots \oplus \Delta_\ell \colon \delta_1,\dots,\delta_k\in\mathbb{C},\, \Delta_j \in \mathbb{C}^{s_j \times s_j}\right\} \subseteq \mathbb{C}^{n\times n},
\end{equation*}
i.e., the space of block diagonal matrices $\Delta$ that have $k$ blocks (of prescribed dimensions) that are multiples of the identity, and $\ell$ blocks (of prescribed dimensions) that are arbitrary matrices. For a given matrix $M\in\mathbb{C}^{n\times n}$, we define the \emph{structured singular value} $\mu_{\mathcal{V}}(M)$ \cite{doyle82} as the inverse of the solution of the minimum problem
\begin{equation} \label{structured singular value definition}
\mu_{\mathcal{V}}(M)^{-1} = \min\, \norm{\Delta}_2 \subjectto \Delta \in \mathcal{V}, \, \text{$I_n-M\Delta $ is singular}.    
\end{equation}
This minimum problem, with its peculiar structure $\mathcal{V}$, arises in the study of robust stability of systems subject to noise or perturbations. If $M$ is invertible, we can rewrite~\eqref{structured singular value definition} as structured distance from singularity:
\begin{align*}
    \mu_{\mathcal{V}}(M)^{-1} &= \min\, \norm{\Delta}_2 \subjectto \Delta \in \mathcal{V}, \, \text{$M^{-1} -\Delta$ is singular} \\
    &= d(M^{-1}, \mathcal{E}_0 \cap (M^{-1} + \mathcal{V})).
\end{align*}
The norm is the spectral norm, so the computational techniques that we have seen in this section cannot be applied. Notably, computing $\mu_{\mathcal{V}}$ is NP-hard \cite{Braatz94nphard,toker1998}: Quadratic programming can be reduced polynomially to computing the structured singular value of a certain $M$. The matrix $M$ appearing in this reduction is singular, so this argument does not immediately imply that structured distance problems in the spectral norm are NP-hard.

Since the exact computation of $\mu_{\mathcal{V}}$ is prohibitive, research in this area has focused on computable bounds. We give a few basic ones here.
\begin{lemma}[\protect{\citet{doyle82}}] \label{lem: easy bound for the structured singular value}
    \[
    \rho(M) \leq \mu_{\mathcal{V}}(M) \leq \sigma_{1}(M).
    \]
\end{lemma}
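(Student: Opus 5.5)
The two inequalities are proved separately, both directly from the definition~\eqref{structured singular value definition}. Throughout, the convention is $\mu_{\mathcal{V}}(M)=0$ when no $\Delta\in\mathcal{V}$ makes $I_n-M\Delta$ singular, i.e.\ the minimum over the empty set is $+\infty$.

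\emph{Upper bound.} Take any $\Delta\in\mathcal{V}$ for which $I_n-M\Delta$ is singular. Then there is a unit vector $x$ with $x=M\Delta x$. Taking norms gives $1=\norm{x}_2\leq\norm{M}_2\norm{\Delta}_2=\sigma_1(M)\norm{\Delta}_2$, so $\norm{\Delta}_2\geq 1/\sigma_1(M)$. Minimizing over all feasible $\Delta$ yields $\mu_{\mathcal{V}}(M)^{-1}\geq\sigma_1(M)^{-1}$, which is the upper bound. This holds even when the feasible set is empty, and when $M=0$ both sides vanish.

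\emph{Lower bound.} Here I construct an explicit feasible $\Delta$ from an eigenvalue of $M$. If $\rho(M)=0$ there is nothing to prove, since $\mu_{\mathcal{V}}(M)\geq 0$. Otherwise pick $\lambda\in\Lambda(M)$ with $\abs{\lambda}=\rho(M)>0$ and set $\Delta=\lambda^{-1}I_n$. Since $\delta_j=\lambda^{-1}$ in every scalar block and $\Delta_j=\lambda^{-1}I_{s_j}$ in every full block, this $\Delta$ lies in $\mathcal{V}$. The matrix $I_n-M\Delta=I_n-\lambda^{-1}M$ is singular because $\lambda$ is an eigenvalue of $M$, and $\norm{\Delta}_2=1/\rho(M)$. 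Hence $\mu_{\mathcal{V}}(M)^{-1}\leq 1/\rho(M)$, which gives the lower bound.

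The only point needing care is the one used in the lower bound: that multiples of the identity always belong to $\mathcal{V}$. This holds because every block type in $\mathcal{V}$ contains scalar multiples of the identity, so no further work is needed there.
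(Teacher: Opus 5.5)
Your proof is correct and is essentially the paper's argument: the paper places $\mathcal{V}$ between $\{\delta I_n \colon \delta\in\mathbb{C}\}$ and $\mathbb{C}^{n\times n}$ and computes $\mu$ exactly for both extreme structures, whereas you keep only the half of each computation that is needed, namely the single feasible perturbation $\lambda^{-1}I_n$ for the lower bound and the estimate $1\leq\|M\|_2\|\Delta\|_2$ for the upper bound. Your explicit treatment of the empty-feasible-set convention (e.g.\ for nilpotent $M$) covers a case the paper leaves implicit.
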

\begin{proof}
    Let us consider the space of multiples of the identity matrix, 
    $\mathcal{V}_1 = \{\delta I_n \colon \delta\in\mathbb{C}\}$.
    Since $\mathcal{V}_1 \subseteq \mathcal{V}$, we have $\mu_{\mathcal{V}_1}(M) \leq \mu_{\mathcal{V}}(M)$, because the minimum in~\eqref{structured singular value definition} is taken on a smaller set. We can compute directly the structured singular value for this simple structure:
    \begin{align*}
        \mu_{\mathcal{V}_1}(M)^{-1} &= \min \, \norm{\delta I_n}_2 \subjectto \text{$I_n - \delta M$ is singular}    \\
        & = \min \, \abs{\delta} \subjectto \rho(\delta M) = 1\\
        &= \rho(M)^{-1}.
    \end{align*}
    For the upper bound, we consider similarly $\mathcal{V}_2 = \mathbb{C}^{n\times n}$: We have $\mu_{\mathcal{V}}(M) \leq \mu_{\mathcal{V}_2}(M)$ because $\mathcal{V} \subseteq \mathcal{V}_2$, and
    \begin{align*}
        \mu_{\mathcal{V}_2}(M)^{-1} &= \min \norm{\Delta}_2 \subjectto \Delta\in\mathbb{C}^{n\times n}, \, \text{$I_n-M\Delta$ is singular} = \sigma_1(M)^{-1}.
    \end{align*}
    Indeed, if $u_1,v_1$ are the singular vectors associated with $\sigma_1(M)$, then with $\Delta_\star  = v_1 \sigma_1(M)^{-1} u_1^*$ we have $M\Delta_\star  u_1 = u_1$, and for any $\Delta$ with smaller norm the matrix $I_n-M\Delta$ is invertible, since $\norm{M\Delta}_2 \leq \norm{M}_2\norm{\Delta}_2 < 1$.
\end{proof}
For a tighter upper bound, define the space
\[
\mathcal{W} = \left\{\Delta_1 \oplus \dots \oplus \Delta_{k} \oplus \delta_1 I_{s_1} \oplus\dots\oplus\delta_\ell I_{s_\ell} \colon \Delta_i \in \mathbb{C}^{r_i \times r_i}, \, \delta_1,\dots,\delta_\ell\in\mathbb{C}, \right\} \subseteq \mathbb{C}^{n\times n},
\]
i.e., the space of matrices whose block structure is ``reversed'' with respect to that of $\mathcal{V}$: full blocks in place of multiples of the identity, and vice versa. One can see that $\mathcal{W}$ is the space of matrices that commute with $\mathcal{V}$ (centralizer).
\begin{theorem}[\protect{\citet{doyle82}}] It holds that
    \begin{equation} \label{bound for the structured singular value}
        \mu_{\mathcal{V}}(M) \leq \inf\, \norm{DMD^{-1}}_2 \subjectto D \in \mathcal{W},\, \text{$D$ positive definite}.    
    \end{equation}
\end{theorem}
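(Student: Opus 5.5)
The plan is a similarity-invariance argument: scaling by a commuting matrix does not change the structured singular value, and the resulting bound is then improved with Lemma~\ref{lem: easy bound for the structured singular value}. Concretely, I will show that $\mu_{\mathcal{V}}(M) = \mu_{\mathcal{V}}(DMD^{-1})$ for every invertible $D \in \mathcal{W}$. Applying the upper bound $\mu_{\mathcal{V}}(\cdot) \leq \sigma_1(\cdot)$ to $DMD^{-1}$ then gives $\mu_{\mathcal{V}}(M) \leq \norm{DMD^{-1}}_2$ for every positive definite $D \in \mathcal{W}$. Taking the infimum over such $D$ yields~\eqref{bound for the structured singular value}.

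The first step is to verify that every $D \in \mathcal{W}$ commutes with every $\Delta \in \mathcal{V}$, as claimed just before the statement. Both matrices are block diagonal with the same partition $r_1,\dots,r_k,s_1,\dots,s_\ell$, so it suffices to compare them block by block. In the $r_i$ blocks, $D$ has a full block $D_i$ and $\Delta$ has $\delta_i I_{r_i}$. In the $s_j$ blocks, $D$ has $d_j I_{s_j}$ and $\Delta$ has an arbitrary $\Delta_j$. In each case one factor is a scalar multiple of the identity, so the blocks commute. Hence $D^{-1}\Delta D = \Delta$, and $D^{-1}$ is again in $\mathcal{W}$, since the inverse of a block diagonal matrix is block diagonal with the inverted blocks.

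The second step uses the determinant identity
\[
\det\bigl(I_n - DMD^{-1}\Delta\bigr) = \det\bigl(D(I_n - M D^{-1}\Delta D)D^{-1}\bigr) = \det(I_n - M\Delta).
\]
So $I_n - DMD^{-1}\Delta$ is singular exactly when $I_n - M\Delta$ is singular, for each $\Delta \in \mathcal{V}$. The feasible sets in~\eqref{structured singular value definition} for $M$ and for $DMD^{-1}$ are therefore identical, the objective $\norm{\Delta}_2$ is the same, and so the two structured singular values coincide. I expect no real obstacle; only two points need care. The first is the degenerate case in which the feasible set is empty, which gives $\mu_{\mathcal{V}} = 0$; the invariance argument covers this case trivially, since both feasible sets are then empty. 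The second is that the positive definiteness of $D$ is used only to guarantee that $D$ is invertible.
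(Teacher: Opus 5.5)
Your proposal is correct and matches the paper's argument: similarity by an invertible $D\in\mathcal{W}$ preserves singularity of $I_n-M\Delta$ for every $\Delta\in\mathcal{V}$, so $\mu_{\mathcal{V}}(M)=\mu_{\mathcal{V}}(DMD^{-1})\leq\sigma_1(DMD^{-1})$ by Lemma~\ref{lem: easy bound for the structured singular value}. Your blockwise check that $\mathcal{W}$ commutes with $\mathcal{V}$ and your remark on the empty-feasible-set case fill in details that the paper only asserts or leaves implicit.
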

\begin{proof}
    It is sufficient to prove that $\mu_{\mathcal{V}}(M) \leq \norm{DMD^{-1}}_2 = \sigma_1(DMD^{-1})$ for any feasible $D$. Note that $I_n-M\Delta$ is singular if and only if
    \[
    D(I-M\Delta)D^{-1} = I_n - DM\Delta D^{-1} = I_n - DMD^{-1}\Delta
    \]
    is singular; hence $\mu_\mathcal{V}(M) = \mu_{\mathcal{V}}(DMD^{-1}) \leq \sigma_1(DMD^{-1})$, where the last inequality follows from Lemma~\ref{lem: easy bound for the structured singular value}
\end{proof}
The infimum in~\eqref{bound for the structured singular value} can be computed using linear matrix inequality techniques; and the computable bound in this theorem is tight in many practical cases. We refer the reader to~\citet{PackardDoyle} for more details, including algorithms for lower bounds to $\mu_{\mathcal{V}}$, which we have not described here.

\subsection{Several generalizations and applications}

This section focused on $\F$-linear subspaces of $\F^{m \times n}$; however, the theory and algorithms are almost identical if $A\in\C^{m\times n}$ and $\mathcal{V}\subseteq\C^{m\times n}$ is only a \emph{real} subspace; for instance, the space of Hermitian matrices.

The complex constraint $(A-\Delta)v = 0, \Delta\in\mathcal{V}$ becomes the real linear structured constraint $M_v p = r_v$, with
\begin{equation} \label{cveccase}
M_v = \begin{bmatrix}
    \cvec(P_1v) & \cvec(P_2v) & \cdots & \cvec(P_\ell v)
\end{bmatrix}\in\mathbb{R}^{2m\times \ell}, \quad r_v = \cvec(Av) \in \mathbb{R}^{2m},    
\end{equation}
and $\cvec$ as defined in~\eqref{eq:cvec}. Then, the formulas in Theorem~\ref{thm:nearest structured singular, regular case} and~\ref{thm:nearest structured singular, singular case} still hold, up to minor modifications related to interpreting $\C^m \cong \R^{2m}$

The theory behind SLRA extends to the case where one minimizes an arbitrarily weighted 2-norm of $p$:
\[
\min \norm{Wp}^2 \subjectto \mvec(\Delta) = \mathbf{P}p, \rank(A-\Delta)<n,
\]
where $W\in\mathbb{C}^{\ell \times \ell}$ is invertible. This is, in fact, the problem that is treated in~\citet{Usevich2014}. Equivalently, one can set $\hat{p}=Wp$, $\hat{\mathbf{P}}=\mathbf{P}W^{-1}$. Since $\hat{\mathbf{P}}$ no longer has orthonormal columns, the equivalence between~\eqref{eq:Lagrangian formulation with A+Delta} and~\eqref{eq:Lagrangian formulation with Mp} is lost when one replaces $\mathbf{P},p$ with $\hat{\mathbf{P}},\hat{p}$; but one can proceed from~\eqref{eq:Lagrangian formulation with Mp} and formulate the algorithms in the same way. The noteworthy change in the theory is that $\hat{\mathbf{P}}\hat{\mathbf{P}}^*$ is no longer an orthogonal projection, and hence the projection-of-rank-1 property~\eqref{lem:weakrk1} no longer holds.

Another extension of this framework is imposing a rank drop (nullity) of at least $d$. I.e., for a fixed $d>1$, we compute
\[
d(A, \mathcal{R}_{\leq n-d} \cap (A+\mathcal{V}))^2 = \min \norm{\Delta}_F^2 \subjectto \Delta\in\mathcal{V},\, \rank(A-\Delta) \leq n-d,
\]
This problem can be addressed similarly: $A-\Delta$ has nullity at least $d$ if and only if there is $V\in\mathbb{C}^{n\times d}$ with orthonormal columns such that $(A-\Delta)V = 0$; hence we write
\begin{gather*}
    f(V) = \min \norm{\Delta}_F^2 \subjectto \Delta\in\mathcal{V}, \, (A-\Delta)V=0,\\
    d(A, \mathcal{R}_{\leq n-d} \cap (A+\mathcal{V}))^2 = \min_{V\in\mathcal{E}} f(V),
\end{gather*}
where the minimization is over the Stiefel manifold $\mathcal{E} = \mathbf{St}_d(\mathbb{C}^n)$, i.e., the manifold of all $V\in\mathbb{C}^{n\times d}$ with orthonormal columns (or, equivalently for this problem, the Grassmann manifold $\mathbf{Gr}_d(\mathbb{C}^n)$ of all $d$-dimensional subspaces $\range(V) \subseteq \mathbb{C}^n$). The inner minimization problem is once again a least-norm problem on $p$ with linear constraints, while the outer problem can be solved with the help of a package for optimization on manifolds such as Manopt \cite{Bergmann2022, Boumal2014b}. This generalization is the reason why the acronym SLRA includes the initials of ``low-rank'': The algorithm can optimize over fixed-rank constraints on $A-\Delta$. This variant appears in~\citet{Usevich2014}, in a variant where a specific chart of the Grassmann manifold (i.e., the location of an invertible $d\times d$ submatrix of $V$) has to be fixed in advance, and in~\citet{Usevich2014a} in a version in which the algorithm can switch between different charts at need. The regularized version is briefly sketched in~\citet[Section~9]{Oracle}. We also briefly mention Cadzow's algorithm \cite{Cadzow88}, a heuristic approach that alternates projections onto the matrix structure and the low-rank constraint.

\section{Structured spectral nearness}\label{sec:otherstruct}
\subsection{Structured distance from instability}
\label{sec:structured distance from instability}

Another interesting problem comes from combining a spectral constraint with a linear structure: Namely, given a closed region $\Omega \subseteq \mathbb{C}$, we define
\[
\mathcal{E}_{\Omega} = \{X \colon \text{$X$ has an eigenvalue in $\Omega$}\},
\]
and we look for
\[
d^2(A, \mathcal{E}_{\Omega} \cap (A+\mathcal{V})) = \min \norm{\Delta}_F^2 \subjectto \Delta \in \mathcal{V}, \text{$A-\Delta$ has an eigenvalue in $\Omega$}.
\]
As in the unstructured case, we can reduce the problem to the one of distance from singularity:
\begin{gather*}
d^2(A, \mathcal{E}_{\Omega} \cap (A+\mathcal{V})) = \min_{\lambda \in \Omega} f(\lambda),\\
f(\lambda) = \min \norm{\Delta}_F^2 \subjectto \text{$(A-\lambda I_n) - \Delta$ is singular,}\, \Delta\in\mathcal{V}.    
\end{gather*}
Combining this formulation with the results in the previous section, it follows immediately that the minimizer $\Delta$ is the projection on $\mathcal{V}$ of a rank-1 matrix. This time, however, the inner function $f(\lambda)$ that we have to minimize over $\Omega$ (or, better, over its boundary $\partial \Omega$) is more complicated than $\sigma_{\min}(A-\lambda I_n)$. Hence, the idea of combining the methods in the previous section with an outer minimization step on $\lambda$ is less appealing: We would have to compute $f(\lambda)$ in many different points $\lambda$, for a high computational cost.

Instead, we can nest the optimization variables differently:
    \begin{gather}
        d^2(A, \mathcal{E}_{\Omega} \cap (A+\mathcal{V})) = \min_{v \in \mathcal{S}} f_{\Omega}(v), \nonumber\\
        f_{\Omega}(v) = \min_{\lambda \in \Omega} f(v,\lambda) \nonumber\\
        f(v, \lambda) = \min \norm{\Delta}_F^2 \subjectto \Delta \in \mathcal{V}, \, (A-\lambda I_n-\Delta)v = 0. \label{3problems:inner}
    \end{gather}    
Now Theorem~\ref{thm:nearest structured singular, regular case} gives immediately the closed-form solution to~\eqref{3problems:inner}:
\begin{equation} \label{fvlambdaexpr}
    f(v,\lambda) = r_{v,\lambda}^* (M_v M_v^*)^{-1} r_{v,\lambda}, \quad r_{v,\lambda} = (A-\lambda I_n)v,
\end{equation}
with $M_v$ again as in~\eqref{defM}. (We assume for now that $\rank(M_v) = n$: The singular case is discussed below.)

In this form, also the intermediate minimization step on $\lambda$ has a closed form solution.
\begin{theorem} \label{thm:optimal lambda}
    Suppose $\rank(M_v) = n$. Then,
    \begin{equation} \label{fOmegaexpr}
        f_{\Omega}(v) = f\bigl(v, p_{\Omega}(\lambda_0)\bigr), \quad \lambda_0 = \frac{v^*(M_v M_v^*)^{-1} Av}{v^* (M_v M_v^*)^{-1}v}.        
    \end{equation}
    Here, $p_{\Omega}(\lambda_0)$ denotes the complex number closest to $\lambda_0$ inside $\Omega$.
\end{theorem}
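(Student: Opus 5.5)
With $v$ fixed, $f(v,\lambda)$ in \eqref{fvlambdaexpr} is a real quadratic function of $\lambda \in \C \cong \R^2$. The key observation is that $M_v$ does not depend on $\lambda$, since the shift $-\lambda I_n$ only enters the right-hand side $r_{v,\lambda} = Av - \lambda v$. So the plan is to write $W := (M_v M_v^*)^{-1}$, which is Hermitian positive definite by the assumption $\rank(M_v)=n$, and expand
\[
f(v,\lambda) = (Av-\lambda v)^* W (Av-\lambda v) = c\,\abs{\lambda}^2 - 2\Re\bigl(\overline{\lambda}\, v^*WAv\bigr) + (Av)^*W(Av),
\]
where $c := v^* W v > 0$ because $v\in\mathcal{S}$ is nonzero and $W \succ 0$.

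Completing the square then gives
\[
f(v,\lambda) = c\,\abs{\lambda - \lambda_0}^2 + \Bigl( (Av)^*W(Av) - c\abs{\lambda_0}^2 \Bigr),
\qquad \lambda_0 = \frac{v^*WAv}{v^*Wv}.
\]
This $\lambda_0$ is exactly the one in \eqref{fOmegaexpr}. To verify the identity, I will check that the cross term $-2c\Re(\overline{\lambda}\lambda_0)$ coincides with $-2\Re(\overline{\lambda}\, v^*WAv)$, using $v^*W = (Wv)^*$ and the Hermitian symmetry of $W$.

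The bracketed constant does not depend on $\lambda$, and $c>0$. Minimizing $f(v,\lambda)$ over $\lambda\in\Omega$ is therefore equivalent to minimizing $\abs{\lambda-\lambda_0}$ over $\Omega$. Since $\Omega$ is closed and nonempty, this minimum is attained at $p_\Omega(\lambda_0)$ (any nearest point if it is not unique), and so $f_\Omega(v) = f(v,p_\Omega(\lambda_0))$.

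The step needing the most care is justifying that $M_vM_v^*$ is invertible. The notation is slightly loose here: $M_v$ has $m$ rows (here $m=n$), so the hypothesis $\rank(M_v)=n$ means full row rank. That is precisely the hypothesis of Theorem~\ref{thm:nearest structured singular, regular case}, and it is what makes \eqref{fvlambdaexpr} valid for every $\lambda$, so the closed form may be used inside the minimization over $\Omega$. In the real-subspace variant \eqref{cveccase}, I would repeat the same computation with the real inner product; the quadratic in $(\Re\lambda,\Im\lambda)$ then need not be isotropic. I would only remark on this case rather than treat it.
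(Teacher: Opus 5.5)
Your proof is correct and follows essentially the same route as the paper. In both, with $M_v$ independent of $\lambda$, you expand $f(v,\lambda)$ as a quadratic in $\lambda$ with positive leading coefficient $v^*(M_vM_v^*)^{-1}v$, complete the square around $\lambda_0$, and conclude that minimizing over $\Omega$ reduces to finding a point of $\Omega$ nearest to $\lambda_0$; your reading of $\rank(M_v)=n$ as full row rank, so that $M_vM_v^*$ is invertible and \eqref{fvlambdaexpr} holds for every $\lambda$, is also the intended one.
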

\begin{proof}
    The proof (following \citet[Theorem~8.1]{Oracle}) follows from noting that $f(v,\lambda)$ depends on $\lambda$ only through $r_{v,\lambda}$, and it is a simple quadratic function: Indeed, defining the three scalar constants
    \[
        a = v^* (M_v M_v^*)^{-1} v > 0, \quad b = v^* (M_v M_v^*)^{-1}Av,\quad c = v^*A^* (M_v M_v^*)^{-1} Av,
    \]
    we have
    \begin{align*}
        f(v,\lambda) &= v^*(A-\lambda I_n)^*(M_v M_v^*)^{-1}(A-\lambda I_n)v \\
        &= \overline{\lambda}a\lambda - \overline{\lambda}b - \overline{b}\lambda + c\\
        &= a\abs*{\lambda-\frac{b}{a}}^2 + \frac{ac-\abs{b}^2}{a}.
    \end{align*}
    Hence, for a fixed $v$, the value of $f(v,\lambda)$ is an increasing function of the distance between $\lambda$ and $\lambda_0 = \frac{b}{a}$.
\end{proof}
This reduction suggests a simpler algorithm: We can compute numerically with an optimization algorithm $\min_{v\in \mathcal{S}} f_{\Omega}(v)$ using the explicit formula given by~\eqref{fOmegaexpr} and~\eqref{fvlambdaexpr}. No nested minimization is required.

If the projection $p_{\Omega}(\lambda_0)$ is unique (which is always true when $\Omega$ is convex), then
 $\nabla_v f_{\Omega}(v) = \nabla_v f(v,\lambda)$ by Danskin's theorem~\cite[Proposition~B.25]{Bertsekas1999}, i.e., $\lambda$ can be considered as a constant when computing gradients, without the need to differentiate the expression for it. In particular, we obtain an expression for the gradient $\nabla f_\Omega(v)$  by replacing $A$ with $A-\lambda I_n$ in Theorem~\ref{thm:nearest structured singular, regular case}. This simplification is particularly convenient: To implement first-order algorithms, one needs only an expression for the projection $p_{\Omega}$, and not its more complicated derivative.

In the case when $\rank(M_v) < n$, an analogue of Theorem~\ref{thm:optimal lambda} holds for the penalty function~\eqref{fepsv}: It is sufficient to replace $M_v M_v^*$ in~\eqref{fOmegaexpr} with $M_v M_v^* + \varepsilon \norm{v}_2^2 I_n$. Formulas for the gradient are obtained by replacing $A$ with $A-\lambda I$ in Theorem~\ref{thm:nearest structured singular, singular case}. A noteworthy complication appears in the case in which $\mathcal{V}$ is a real subspace instead of a complex one: It is no longer true that $r_{v,\lambda}$ depends linearly on $\lambda$, since one has to split real and imaginary parts in~\eqref{cveccase}. So Theorem~\ref{thm:optimal lambda} no longer applies, and to obtain the optimal $\lambda_\star (v)$ we have to minimize a quadratic function of $[\Re(\lambda),\Im(\lambda)]$. This minimization is not problematic since it is a $2\times 2$ problem, but it is useful to note that in this real-constrained case $\lambda_\star $ is no longer the projection $p_{\Omega}(\lambda_0)$.

Related nearness problems arise in the context of port-Hamiltonian systems. For example, for the system of dissipative differential equations $\dot{x}=(J-R)Qx$, where $J^*=-J$, $R=R^*\succeq0$, and $Q=Q^*\succ0$, \citet{MMS} study the distance to loss of asymptotic stability under structured perturbations. See also \cite{benner2023computation} for further computational development in this direction. A similar question concerns the distance to loss of passivity for systems with inputs and outputs; \citet{MehrmannV20} study this distance, and construct port-Hamiltonian realizations
that maximize it for a given passive transfer function.

\subsection{Structured distances with the ODE approach} \label{subsec:gradientsystem_structured}
The gradient system approach of Section~\ref{subsec:gradientsystem_unstructured} is flexible enough to be adapted to problems with a structure. Let $F(X) : \mathbb{F}^{m\times n} \to \mathbb{R}$ be a function defined on the whole matrix space, for instance the spectral abscissa or radius, and $X \in A+\mathcal{V}$ be an additional structural constraint. Following the setup of Section~\ref{subsec:biobjective}, we replace the structured distance problem
\begin{equation} \label{psistruct}
    \psi(c) = \min \norm{X-A}_F \subjectto F(X) \leq c, \,  X \in A+\mathcal{V}    
\end{equation}
with
\begin{subequations}  \label{phistruct_tutte}
\begin{align}
    \varphi(\varepsilon) &= \min F(X) \subjectto \norm{X-A}_F = \varepsilon,\, X \in A+ \mathcal{V} \label{phistructX}   \\
    &= \min F(A+\varepsilon E) \subjectto \norm{E}_F=1, \, E\in\mathcal{V}. \label{phistruct}
\end{align}
\end{subequations}
An analogue of Theorem~\ref{thm:psiphi} holds (with the same proof) under the assumption that $F$ has no local minima in $\{\norm{X-A}_F\leq r,\, X\in A+\mathcal{V}\}$, so we can recover $\psi$ from $\varphi$. A noteworthy benefit of this replacement is that the feasible region $\mathcal{S}\cap \mathcal{V}$ of~\eqref{phistruct} is always a clean intersection by Lemma~\ref{lem:sphere and cone}, unlike the one $\mathcal{E}\cap (A+\mathcal{V})$ of~\eqref{psistruct}.

To compute $\varphi(\varepsilon)$, we integrate the  system
\begin{equation} \label{gradientsystem_struct}
    \dot{E} = -\Pi_{T_E(\mathcal{S} \cap \mathcal{V})}(G) = -P + \scalar{P,E}E, \quad G = \nabla_X F(A+\varepsilon E), \quad P = \Pi_{\mathcal{V}}(G)
\end{equation}
following a trajectory until we reach a stationary point, hopefully a global minimum. The structure of this algorithm is the same as the one in Section~\ref{subsec:gradientsystem_unstructured}, and we only need to replace the gradient $G$ with $\Pi_{\mathcal{V}}(G)$ in~\eqref{gradient system on the sphere}. 

We have seen in Sections~\ref{subsec:structuredsing} and~\ref{sec:structured distance from instability} that the optimal $\Delta$ (and hence $E$) in that problem has a projection-of-rank-1 property; we generalize this result to a general $F$, assuming regularity.
\begin{lemma} \label{lem:proj rank structure in ODE}
    Assume the minimum in~\eqref{phistructX} is achieved in a regular point $X_\star$ of $F$, which is not a stationary point of $F|_{A+\mathcal{V}}$ (i.e., $\Pi_{\mathcal V}(\nabla F(X_\star)) \neq 0$). Let $r =\rank(\nabla F(X_\star))$. Then, $E_\star$ is the projection on $\mathcal{V}$ of a rank-$r$ matrix.
\end{lemma}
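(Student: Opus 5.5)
Our plan is to derive a first-order optimality condition for \eqref{phistruct} and read off the structure of $E_\star$. Write $X_\star = A+\varepsilon E_\star$ with $E_\star \in \mathcal{S}\cap\mathcal{V}$. Since $X_\star$ is a regular point of $F$, the objective $E\mapsto F(A+\varepsilon E)$ is differentiable near $E_\star$, with Euclidean gradient $\varepsilon G_\star$, where $G_\star=\nabla F(X_\star)$. The feasible set $\mathcal{S}\cap\mathcal{V}$ is a smooth manifold: it is the unit sphere of the subspace $\mathcal{V}$, and the intersection is clean by Lemma~\ref{lem:sphere and cone} applied with $\mathcal{M}=\mathcal{V}$. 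Its normal space at $E_\star$ is therefore $\mathcal{V}^\perp+\spann(E_\star)$.

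By Theorem~\ref{thm:reg min in normal}, we then have $-\varepsilon G_\star\in \mathcal{V}^\perp+\spann(E_\star)$. Equivalently, the right-hand side of \eqref{gradientsystem_struct} vanishes:
\[ \Pi_{\mathcal{V}}(G_\star)=\scalar{\Pi_{\mathcal{V}}(G_\star),E_\star}E_\star. \]
This follows by projecting onto $\mathcal{V}$ and using that $E_\star\in\mathcal{V}$.

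Next we use the non-stationarity assumption $\Pi_{\mathcal{V}}(G_\star)\neq 0$. It forces the scalar $\mu:=\scalar{\Pi_{\mathcal{V}}(G_\star),E_\star}=\scalar{G_\star,E_\star}$ to be nonzero, since otherwise the displayed identity would give $\Pi_{\mathcal{V}}(G_\star)=0$. Hence
\[ E_\star=\mu^{-1}\Pi_{\mathcal{V}}(G_\star)=\Pi_{\mathcal{V}}(\mu^{-1}G_\star). \]
By $\R$-linearity of $\Pi_{\mathcal{V}}$, this exhibits $E_\star$ as the projection onto $\mathcal{V}$ of $\mu^{-1}\nabla F(X_\star)$. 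That matrix has rank exactly $r=\rank(\nabla F(X_\star))$, because $\mu$ is a nonzero real scalar.

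The only delicate point is justifying the first-order condition. We need that the minimizer of the restricted problem lies where the constraint set is a manifold, which holds everywhere on $\mathcal{S}\cap\mathcal{V}$ since it is a sphere in a linear space. We also need differentiability of $F$ at $X_\star$, which is exactly the ``regular point'' hypothesis. If $\mathcal{V}$ is only a real subspace of $\C^{m\times n}$, we work throughout in the real inner product $\Re\traccia(\cdot^*\cdot)$, so $\mu$ is real and the argument is unchanged. We expect no further obstacle; the sign of $\mu$ (by Theorem~\ref{thm:gspsiderivative} it is negative when $\varphi'<0$) is irrelevant for the rank statement.
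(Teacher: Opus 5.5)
Your proof is correct and follows the paper's argument. The first-order condition places $\nabla F(X_\star)$ in $\spann(E_\star)+\mathcal{V}^\perp$, and projecting onto $\mathcal{V}$ gives $\Pi_{\mathcal{V}}(\nabla F(X_\star))=\mu E_\star$; non-stationarity then forces $\mu\neq 0$, and dividing by $\mu$ yields the claim. Your extra justifications, namely the clean intersection via Lemma~\ref{lem:sphere and cone}, the identification $\mu=\scalar{\nabla F(X_\star),E_\star}$, and the real inner product for real subspaces, simply make explicit what the paper leaves implicit.
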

\begin{proof}
The stationarity conditions of~\eqref{phistruct} read
\begin{equation} \label{statstruct}
\nabla F(X_\star) = \mu E_\star + V_\perp \quad \text{for some $\mu\in\mathbb{R}$ and $V_\perp \in \mathcal{V}^\perp$},
\end{equation}
i.e., the gradient is in the normal space of the intersection of the sphere $\{\norm{E}_F=1\}$ with $\mathcal{V}$. Taking projections in~\eqref{statstruct} gives $\Pi_{\mathcal{V}} (\nabla F(X_\star)) = \mu E_\star$. It must be the case that $\mu \neq 0$, otherwise $\Pi_{\mathcal{V}} (\nabla F(X_\star))=0$, contradicting the assumptions. Dividing by $\mu$ we get the thesis.
\end{proof}
We restrict in the following to functions with gradient of rank $r=1$ for simplicity, following~\cite{GLS23}; this covers many useful cases such as the (structured) distance to instability, or to singularity. Taking advantage of this rank structure in computations is more complicated than in the unstructured case in Section~\ref{subsec:gradientsystem_unstructured}. Thanks to Lemma~\ref{lem:proj rank structure in ODE}, $E_\star$ lies in $\mathcal{S} \cap \mathcal{M}$, where 
\[
\mathcal{M} = \{E \colon E = \Pi_{\mathcal{V}}(Y) \text{ for a rank-$1$ matrix $Y$}\} \subseteq \mathcal{V}.
\]
Note that $\mathcal{S} \cap \mathcal{M}$ is \emph{not} the intersection of $\mathcal{V}$ with $\mathcal{S} \cap \mathcal{R}_1$:  The projection $\Pi_{\mathcal{V}}$ preserves neither the rank nor the norm. More importantly, integrating the gradient system $\dot{E} = -\Pi_{T_E(\mathcal{S} \cap \mathcal{M})}(\nabla_X F(A+\varepsilon E))$ produces a stationary point of $F$ in $\mathcal{S} \cap \mathcal{M}$, but this is not necessarily also a stationary point of $F$ in the larger manifold $\mathcal{S} \cap \mathcal{V}$. Even when $\mathcal{S} \cap \mathcal{M}$ is a submanifold (which does not necessarily happen), the gradient system on this submanifold will typically not produce the global minimizer $E_\star$ that we are looking for. 

Instead, \citet{GLS23} consider the ODE over the rank-$1$ manifold $\mathcal{R}_1 = \{Y \in \mathbb{F}^{m\times n}\colon \rank(Y)=1\}$
\begin{gather}
    \dot{Y} = - P + \scalar{P, E} Y, \label{gug_structured_ode} \\ G = \nabla_X F(A+\varepsilon E), \quad P = \Pi_{T_Y \mathcal{R}_1}G, \quad E = \Pi_{\mathcal{V}}(Y). \nonumber
\end{gather}
If we choose the initial value $Y(0)$ in such a way that $\|\Pi_{\mathcal{V}}(Y(0))\|_F=1$, the ODE~\eqref{gug_structured_ode} mimics the structure of~\eqref{gradient system on the sphere}, but it is not a gradient system: Indeed, \citet{GLS23} note that $F(A+\varepsilon E)$ is often, but not always, decreasing on the trajectories. Despite that, this ODE shares several properties with the gradient systems that we have seen earlier:

\begin{itemize}
    \item The right-hand side of~\eqref{gug_structured_ode} is in $T_Y(\mathcal{R}_1)$: Indeed, $P \in T_Y \mathcal{R}_1$ by definition, and $Y \in T_Y \mathcal{R}_1$ because it is the tangent at $t=1$ of the radial curve $f(t) = tY \in \mathcal{R}_1$. Hence the trajectories $Y(t)$ never leave $\mathcal{R}_1$. 
    \item Projecting both sides of~\eqref{gug_structured_ode} onto $\mathcal{V}$ and noting that $\scalar{P, E} = \scalar{P, \Pi_{\mathcal{V}}(Y)} = \scalar{\Pi_{\mathcal{V}}(P), Y}$ (since $\Pi_{\mathcal{V}}$ is self-adjoint), we get
    \[
    \dot{E} = -\Pi_{\mathcal{V}}(P) + \scalar{\Pi_\mathcal{V}(P), E}E = -\Pi_{T_E\mathcal{S}} \Pi_{\mathcal{V}}(P).
    \] from which one sees that $\norm{E}_F^2$ is constant on the trajectories. We conclude the unit sphere is invariant, and our choice of initial
value ensures that $\|E(t)\|_F=1$ along the trajectories.
\end{itemize}
In particular, these two properties imply that $E = \Pi_{\mathcal{V}}(Y) \in \mathcal{S} \cap \mathcal{M}$. Moreover, one can show under mild assumptions that all stationary points for~\eqref{gug_structured_ode} are stationary also for~\eqref{gradientsystem_struct} \cite[Theorem~2.8]{GuglielmiLubichBook}. When the objective function only depends on one eigenvalue of the perturbed matrix,  \citet[Lemma~4.5]{GLS23} show that the difference between the projected and original gradients depends quadratically on the distance to a stationary point with a simple target eigenvalue. Under the assumptions of \citet[Theorem~4.4]{GLS23}, this implies that any trajectory that starts close enough to a local minimum converges exponentially towards it. Using the parametrization $Y = \rho uv^*$, with $\norm{u}=1,\norm{v}=1,\rho>0$, one can reduce~\eqref{gug_structured_ode} to the coupled vector ODEs
    \begin{equation} \label{structured tangent system}
\begin{aligned}
    \dot{u} &= -\tfrac{1}{\rho}(I_m-uu^*)Gv - \tfrac{i}{2\rho}\Im (u^*Gv)u,\\
    \dot{v} &= -\tfrac{1}{\rho}(I_n-vv^*)G^*u - \tfrac{i}{2\rho}\Im (v^*G^*u)v.
\end{aligned}
\end{equation}
with $G = \nabla_X F(A+\varepsilon E)$, $ E = \Pi_{\mathcal{V}}(\rho uv^*)$.
There should be also a third equation for $\dot{\rho}$ in the system, but we can omit it, since we know that $\norm{E}_F=1$ on the trajectories and hence we can recover $\rho$ from the relation $\frac{1}{\rho} = \norm{\Pi_{\mathcal{V}}(uv^*)}_F$. Note that one must assume that $\Pi_{\mathcal{V}}(uv^*)\neq 0$ along the trajectory.

The structure of the algorithm hence is the same one seen in Section~\ref{subsec:gradientsystem_unstructured}: For $k=0,1,2,\dots$
\begin{enumerate}
    \item Starting from suitable $u_{k,0}, v_{k,0},$ integrate~\eqref{structured tangent system} (for $\varepsilon=\varepsilon_k$) up to a stationary point, which is hopefully the global minimum of~\eqref{phistruct_tutte}.
    \item If $\varphi(\varepsilon)$ is sufficiently close to $c$, terminate; otherwise, construct $\varepsilon_{k+1}$ using a suitable strategy (e.g., Newton, bisection, \dots) and continue.
\end{enumerate}
For more details, we refer the reader to~\citet[Section~V.2]{GuglielmiLubichBook}. For certain structures, for instance the real structure~\cite{GugL13-low-rank-real-pseudospectra} or the Hamiltonian structure~\cite{GugKL-Hamiltonian-nearness}, $\Pi_{\mathcal{V}}(uv^*)$ has low rank itself, so an alternative approach is generalizing~\eqref{parametrized tangent system} to integration on a rank-$r$ manifold. Algorithms to estimate the real stability radius are developed by \citet{Guglielmi2015}.

\section{Other nearness problems}\label{sec:othernearness}

\subsection{Distance to uncontrollability}

A classic nearness problem, arising from control theory, concerns pairs of matrices  rather than a single one.
\begin{definition}\label{def:controllable}
    Let $A \in \C^{n \times n}, B \in \C^{n \times m}$. The pair $(A,B)$ is  said to be \emph{controllable} if
    \[ \rank \left( \begin{bmatrix}
        A - \lambda I & B
    \end{bmatrix} \right) = n \ \forall \ \lambda \in \C. \]
\end{definition}
Since it is clear that the rank condition in Definition~\ref{def:controllable} is always attained for $\lambda \not \in \Lambda(A)$, it suffices to check it for at most $n$ values of $\lambda$, corresponding to the eigenvalues of $A$. Uncontrollability means that some left eigenvector of $A$ is orthogonal to the range of $B$. Thus, controllability is a generic property, and robustness of control theory algorithms motivates the computation of the distance of a given matrix pair $(A,B)$ to the nearest \emph{uncontrollable} pair, that is, the distance from the nearest matrix pair that does not satisfy Definition~\ref{def:controllable}; see, for example, \citet{Paige81,wicks91, gu00, gu06, mitchell21} and, for an overview, \cite[Chapter 6]{Datta2004}. We can reduce to a classic matrix nearness problem by identifying the pair $(A,B)$ with the matrix $\displaystyle \begin{bmatrix}
    A & B
\end{bmatrix} \in \C^{n \times (m+n)}$. If we then define
\begin{equation}\label{eq:eunc}
   \mathcal{E}_{\mathrm{unc}} = \left\{ \begin{bmatrix} X & Y \end{bmatrix} \in \C^{n \times (n+m)} : \exists \ \lambda \in \C \mid   \rank \begin{bmatrix} X - \lambda I & Y \end{bmatrix} < n \right\}, 
\end{equation}
this reduces to a matrix nearness problem in the sense of \eqref{eq:matrixdistfrob}. Together with Proposition~\ref{prop:lauriwaswrong}, Proposition~\ref{prop:euncclosed} below guarantees the existence of a nearest uncontrollable pair for every input $(A,B)$. 
\begin{proposition}\label{prop:euncclosed}
    Let $\mathcal{E}_{\mathrm{unc}}$ be the set defined as in \eqref{eq:eunc}. Then, $\mathcal{E}_{\mathrm{unc}}$ is closed.
\end{proposition}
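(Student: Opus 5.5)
The plan is to argue sequentially, in the same way as Proposition~\ref{prop:bothclosed}. Take a sequence $\begin{bmatrix} X_k & Y_k\end{bmatrix} \in \mathcal{E}_{\mathrm{unc}}$ converging to $\begin{bmatrix} X & Y\end{bmatrix}$. For each $k$, pick a witness $\lambda_k \in \C$ such that $\rank\begin{bmatrix} X_k-\lambda_k I & Y_k\end{bmatrix} < n$. The goal is to show that some subsequence of $(\lambda_k)_k$ converges to a $\lambda$ which witnesses uncontrollability of the limit.

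The first step is to bound the witnesses. A rank-deficient $n\times(n+m)$ matrix has a nonzero left null vector $w_k$, which we normalise so that $\|w_k\|_2=1$. Then $w_k^*(X_k-\lambda_k I)=0$, so $\lambda_k$ is an eigenvalue of $X_k$, with left eigenvector $w_k$, and moreover $w_k^*Y_k=0$. It follows that $|\lambda_k|\le \|X_k\|_2 \le \|X\|_2+1$ for all $k$ large enough. This is the same argument used in the proof of Proposition~\ref{prop:bothclosed}.

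Next, by compactness (the bounded $\lambda_k$ together with the unit sphere in $\C^n$), I extract a subsequence along which $\lambda_{k_j}\to\lambda$ and $w_{k_j}\to w$, with $\|w\|_2=1$. Passing to the limit in the continuous identities $w_k^*\begin{bmatrix} X_k-\lambda_k I & Y_k\end{bmatrix}=0$ gives $w^*\begin{bmatrix} X-\lambda I & Y\end{bmatrix}=0$. Since $w\neq 0$, this means $\rank\begin{bmatrix} X-\lambda I & Y\end{bmatrix}<n$, so the limit lies in $\mathcal{E}_{\mathrm{unc}}$.

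There is an alternative route that avoids the vectors: use lower semicontinuity of rank, i.e.\ closedness of $\{Z : \rank Z<n\}$ as the zero set of the $n\times n$ minors, applied to $\begin{bmatrix} X_{k_j}-\lambda_{k_j} I & Y_{k_j}\end{bmatrix}\to\begin{bmatrix} X-\lambda I & Y\end{bmatrix}$. The main obstacle, modest here, is the boundedness of the witnesses $\lambda_k$. The set of admissible $\lambda\in\C$ is not compact a priori, and the bound only comes from the observation that any witness must be an eigenvalue of $X_k$. Once that is in place, the rest is routine compactness and continuity.
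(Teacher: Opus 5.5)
Your proof is correct and follows essentially the same argument as the paper: you bound the witnesses $\lambda_k$ by noting they are eigenvalues of $X_k$ (as in Proposition~\ref{prop:bothclosed}), extract a convergent subsequence, and pass to the limit. The paper finishes with lower semicontinuity of the rank, which is your alternative route, and your normalized left null vector argument is simply a concrete proof of that same closedness fact.
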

\begin{proof}
    Consider a sequence $(M_k)_k \subset \mathcal{E}_{\mathrm{unc}}$ with $M_k = \begin{bmatrix}
        X_k & Y_k
    \end{bmatrix}$, $X_k \in \C^{n \times n}$, $Y_k \in \C^{n \times m}$, and suppose $M_k \rightarrow M = \begin{bmatrix}
        X & Y
    \end{bmatrix}$ so that $X_k \rightarrow X, \ Y_k \rightarrow Y$. Then, for all $k$, there exists $\lambda_k \in \C$ such that $\begin{bmatrix}
        X_k -\lambda_k I & Y_k
    \end{bmatrix}$ does not have full row rank. Noting that $\lambda_k$ must be an eigenvalue of $X_k$, following the same argument as in the proof of Proposition~\ref{prop:bothclosed} we can construct a convergent subsequence $(\lambda_{k_j})_j \rightarrow \lambda \in \C$. By the lower semicontinuity of the rank, it then follows that
    \[ \rank \left( \begin{bmatrix}
        X-\lambda I & Y
    \end{bmatrix}\right) \leq \liminf_{j \rightarrow \infty} \rank\left(\begin{bmatrix}
        X_{k_j} - \lambda_{k_j} I & Y_{k_j}
    \end{bmatrix}\right)  < n \Rightarrow M \in \mathcal{E}_{\mathrm{unc}}.    \]
\end{proof}

The same reasoning that led to \eqref{eq:distance to instability criteria} yields a two-level formulation also for the distance to uncontrollability problem, with the inner problem solved by the Schmidt-Eckart-Young-Mirsky Theorem (Example~\ref{ex:seym}). To state Theorem~\ref{thm:eising}, which was first proved by \citet{Eising} let us begin by denoting the distance to uncontrollability of the pair $(A,B)$ as
\begin{equation}\label{def:tau}
\tau(A,B) = \min_{\scriptsize \begin{bmatrix}A+\Delta A & B + \Delta B \end{bmatrix} \in \mathcal{E}_{\mathrm{unc}}} \left\| \begin{bmatrix}
    \Delta A & \Delta B 
\end{bmatrix}\right\| .
\end{equation}

\begin{theorem} \label{thm:eising}

    Let $A \in \C^{n\times n}$ and $B \in \C^{n \times m}$, and specialize \eqref{def:tau} to an arbitrary unitarily invariant norm $\| \cdot \|$ such that $\|e_1 e_1^T \|=1$. Then, denoting by $\sigma_k(M)$ the $k$-th singular value of the matrix $M$, it holds

    \begin{equation} \label{eq:eising}
        \tau(A,B) = \min_{\lambda \in \C} \sigma_{n}\left( \begin{bmatrix} A - \lambda I & B \end{bmatrix}\right).
    \end{equation}

    Moreover, if $\lambda_0$ attains the minimum in \eqref{eq:eising} and $u$ (resp. $v$) is a left (resp. right) singular vector of $\begin{bmatrix} A - \lambda_0 I & B \end{bmatrix}$ associated with the singular value $\sigma_n$, then the rank-one perturbation $\begin{bmatrix} \Delta_A & \Delta_B\end{bmatrix} = -\sigma_n u v^*$, partitioned conformably, is a global minimizer.
\end{theorem}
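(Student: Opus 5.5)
The plan mirrors the chain of equalities \eqref{eq:distance to instability criteria} used for Theorem~\ref{thm:1dmin}: split the minimization over $\mathcal{E}_{\mathrm{unc}}$ according to the eigenvalue $\lambda$ that witnesses the rank deficiency, then solve the inner problem with the Schmidt--Eckart--Young--Mirsky Theorem (Example~\ref{ex:seym}).

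First, I rewrite \eqref{def:tau} as
\[
\tau(A,B)=\inf_{\lambda\in\C}\ \min\Bigl\{\bigl\|\begin{bmatrix}\Delta A & \Delta B\end{bmatrix}\bigr\| \colon \rank\begin{bmatrix}A+\Delta A-\lambda I & B+\Delta B\end{bmatrix}<n\Bigr\}.
\]
This is just the definition \eqref{eq:eunc} with the existential quantifier over $\lambda$ pulled outside. The key observation is that for fixed $\lambda$ the map $\begin{bmatrix}\Delta A & \Delta B\end{bmatrix}\mapsto \begin{bmatrix}A-\lambda I & B\end{bmatrix}+\begin{bmatrix}\Delta A & \Delta B\end{bmatrix}$ is a translation. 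The inner problem is therefore exactly the distance from $M_\lambda:=\begin{bmatrix}A-\lambda I & B\end{bmatrix}\in\C^{n\times(n+m)}$ to the set of matrices of rank at most $n-1$. Here $\nu=n$, since $n\le n+m$. By Example~\ref{ex:seym}, for every normalized unitarily invariant norm this distance equals $\|\diagm(0,\dots,0,\sigma_n(M_\lambda))\|=\sigma_n(M_\lambda)$. The normalization $\|e_1e_1^T\|=1$ is used at exactly this point. The minimum is attained by the rank-one perturbation $-\sigma_n u v^*$, where $u,v$ are singular vectors for $\sigma_n$.

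Second, I justify that the outer infimum over $\lambda$ is a minimum. The function $\lambda\mapsto\sigma_n(M_\lambda)$ is continuous, because singular values are $1$-Lipschitz by Weyl. It is also coercive, since $\sigma_n(M_\lambda)\ge\sigma_n(A-\lambda I)\ge|\lambda|-\|A\|_2$. The first inequality holds because appending the columns of $B$ cannot decrease the $n$-th singular value of an $n$-row matrix; one can see this from the min-max characterization, or from $M_\lambda M_\lambda^*\succeq (A-\lambda I)(A-\lambda I)^*$. Hence a minimizer $\lambda_0$ exists, which is consistent with the closedness in Proposition~\ref{prop:euncclosed}.

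Finally, for the minimizer claim, take $\begin{bmatrix}\Delta_A&\Delta_B\end{bmatrix}=-\sigma_n uv^*$ with $\lambda_0$ optimal. Then $\begin{bmatrix}A+\Delta_A-\lambda_0 I & B+\Delta_B\end{bmatrix}=M_{\lambda_0}-\sigma_n uv^*$ has rank at most $n-1$, so the perturbed pair lies in $\mathcal{E}_{\mathrm{unc}}$. Its norm is $\sigma_n\|uv^*\|=\sigma_n(M_{\lambda_0})=\tau(A,B)$, because rank-one matrices with unit vectors have norm one in any normalized unitarily invariant norm.

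The only mildly delicate step is the coercivity bound that ensures the minimum over $\lambda$ is attained. Everything else is a direct reuse of the Schmidt--Eckart--Young--Mirsky machinery already in the paper.
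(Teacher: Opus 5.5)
Your proposal is correct and follows the paper's proof. For fixed $\lambda$, the inner problem is the distance from $\begin{bmatrix} A-\lambda I & B\end{bmatrix}$ to the matrices of rank at most $n-1$, which the Schmidt--Eckart--Young--Mirsky Theorem solves in every normalized unitarily invariant norm. Your continuity-and-coercivity argument for attaining the outer minimum is the same Weyl-inequality observation the paper makes right after the proof; just take $u,v$ as a matched unit pair, $M_{\lambda_0}v=\sigma_n u$, so that $M_{\lambda_0}-\sigma_n uv^*$ is indeed rank deficient.
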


\begin{proof}

    Arguing as in the proof of Theorem~\ref{thm:1dmin},
    \begin{align*}
        \tau(A,B) &= \min_{\lambda \in \C}\, \min \left\{ \left\|{\begin{bmatrix} \Delta_A & \Delta_B \end{bmatrix}}\right\| : \rank\left(\begin{bmatrix} A - \lambda I + \Delta_A & B + \Delta_B \end{bmatrix}\right) < n \right\}\\
       &= \min_{\lambda \in \C} \sigma_{n}\left( \begin{bmatrix} A - \lambda I & B \end{bmatrix}\right),
    \end{align*}
    where the inner minimum and the form of the minimizer directly follow from Example~\ref{ex:seym} applied to the matrix $\begin{bmatrix} A - \lambda I & B\end{bmatrix}$.

\end{proof}

Equation \eqref{eq:eising} is only superficially similar to \eqref{eq:optimum distance from stability}. However, an important difference is that the minimization is now over the whole complex plane, as opposed to the boundary $\partial \Omega$. This reveals a higher complexity. Indeed, at least for a sufficiently regular target set $\Omega$, one can hope to parametrize its boundary by one real variable; on the other hand, the eigenvalue at which controllability is lost can lie anywhere in $\C$, and therefore $\sigma_n(\lambda)$ in \eqref{eq:eising}
must be minimized as a function of the two real variables $\Re(\lambda)$ and $\Im(\lambda)$. Observe also that, by Weyl's inequalities, for all $|\lambda| > \norm{A}_2 + \norm{B}_2$ it holds
\[
\sigma_n(\lambda)^2 = \lambda_{\min}\left( (A-\lambda I_n)(A - \lambda I_n)^* + BB^* \right)
\geq \sigma_n(A-\lambda I_n)^2 \geq (\abs{\lambda} - \norm{A}_2)^2 > \norm{B}_2^2.
\]
Since the perturbation $\displaystyle \begin{bmatrix}
    \Delta_A & \Delta_B
\end{bmatrix} = \begin{bmatrix}
    0 & -B
\end{bmatrix}$ shows that $\tau(A,B)=\min_{\lambda \in \C} \sigma_n(\lambda)
\leq \norm{B}_2$ (recalling that we assume a normalization of the unitarily invariant norm via $\|e_1 e_1^T\|=1$, by Theorem~\ref{thm:eising}, there is no loss of generality in using the spectral norm), we conclude that the minimum in \eqref{eq:eising} is attained in the compact set
$|\lambda| \leq \norm{A}_2 + \norm{B}_2$. However, the real function $\sigma_n(\Re \lambda, \Im \lambda)$ is not generally convex nor smooth, and may possess many local minima; moreover, no analogue of the Hamiltonian eigenvalue test of Theorem~\ref{thm: Byers trick} is known that certifies global optimality via a single structured eigenvalue computation.

Local optimization methods for \eqref{eq:eising} were analyzed in \citet{wicks91}. \citet{gu00} proposed a test that, given $\delta > 0$, verifies at least one of the two inequalities $\tau(A,B) \leq \delta$ and $\tau(A,B) > \delta/2$; Gu's test is based on a very large structured eigenvalue problem. When enhanced with a bisection scheme, this yields an estimate of $\tau(A,B)$ within a factor of $2$ in polynomial time; a trisection variant that computes $\tau(A,B)$ to arbitrary accuracy was proposed in \citet{BLO04}. The dominant cost of the verification step, a dense eigenvalue problem of dimension $O(n^2)$, was later reduced by \citet{gu06} using shift-and-invert iterations, bringing the complexity down to $O(n^4)$ on average and $O(n^5)$ in the worst case. More recently, \citet{mitchell21} proposed interpolation-based \emph{globality certificates}, which, assuming sufficiently accurate interpolation, certify a posteriori that a candidate local minimizer of \eqref{eq:eising} is global, and which perform well in practice also on larger problems.
Earlier algorithms for computing Kreiss constants, based on large eigenvalue problems, were developed in \citet{mitchell20}.

Several variations on this theme have been studied. If $A$ and $B$ are real and the perturbations are constrained to be real, one obtains the \emph{real} distance to uncontrollability, which can be strictly larger than \eqref{def:tau} and is related to real structured singular values; see \citet{Qiu95}. The discussion above transfers verbatim to the distance to unobservability, by simply taking transpose conjugates of all the involved $n \times (m+n)$ matrices.

\subsection{Nearest defective matrix} \label{sec:nearestdef}
Another classic in numerical linear algebra is \emph{Wilkinson's problem}, which consists in finding the nearest defective matrix, i.e., the nearest matrix that has non-semisimple eigenvalues. Note that the set of defective matrices is not closed: For instance, $\begin{bsmallmatrix}
    a & \varepsilon\\
    0 & a
\end{bsmallmatrix}$ is defective for $\varepsilon \neq 0$, but for $\varepsilon=0$ it has merely a double eigenvalue. The closure of the set $\mathcal{D}$ of defective matrices is precisely
\[
\overline{\mathcal{D}} = \{X \colon \text{$X$ has repeated (i.e., non-distinct) eigenvalues}\};
\]
To ensure that the distance problem $d(A,\mathcal{D})$ has a minimum rather than an infimum, we would need to replace it with $d(A,\overline{\mathcal{D}})$, as discussed in Section~\ref{subsec:distsets}. However, we shall see in the following that, for every $A\in \F^{n\times n}$ with $n$ simple distinct eigenvalues, there are minimizers in $\mathcal{D}$ for every unitarily invariant norm, and in particular for the Frobenius norm all such minimizers are in $\mathcal{D}.$ Recalling the notation $\nu=\min\{m,n\}$,we can argue that this situation mimics what happens for the (unstructured) distance to singularity: For any full-rank $A\in\F^{m\times n}$, its nearest rank-deficient matrices in the Frobenius norm all have rank $\nu-1$, but this is not generally true for every other unitarily invariant norm. Indeed, for example, in the spectral norm $0$ is a nearest singular matrix to the identity.

The distance to the nearest defective matrix is related to the conditioning of the eigenvalue and eigenvector problems, since $\mathcal{D}$ is precisely the set for which they are ill-posed. This motivated the initial interest on this problem by Wilkinson and Demmel~\cite{wilkinson1972,Wilkinson1984,Demmel87}. For instance, defining the condition number of the eigenvalue $\lambda\in\Lambda(A)$ as $\kappa(A,\lambda) = \frac{\norm{y}_2\norm{x}_2}{\abs{y^*x}}$, one has (for the distance in $\norm{\cdot}_2$) the inequalities
\[
 \min_{i\neq j}\frac{\abs{\lambda_i-\lambda_j}}{n(\kappa(A,\lambda_i)+\kappa(A,\lambda_j))} \leq d(A, \mathcal{D}) \leq \min_{\lambda\in\Lambda(A)}\frac{\norm{A}}{(\kappa(A,\lambda)^2-1)^{1/2}}
\]
\cite{demmel1990,wilkinson1972}.

Lemma~\ref{lem:orthoeig} gives a useful equivalent condition to check when $X\in\overline{\mathcal{D}}$ that is used by several algorithms. The result is folklore; for a proof, see for instance~\cite{Kalinina2023}.
\begin{lemma} \label{lem:orthoeig}
    An eigenvalue $\lambda$ of a matrix $X\in\C^{n\times n}$ is non-simple (multiple and/or defective) if and only if there exist a left eigenvector $u$ and a right eigenvector $v$ which are orthogonal, i.e., $v^*u=0$.
\end{lemma}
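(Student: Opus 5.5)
The plan is to reduce everything to the Jordan structure of $X$ at the eigenvalue $\lambda$. Shifting, we may assume $\lambda=0$, and write $N := X-\lambda I$. Then right eigenvectors span $\ker N$, left eigenvectors span $\ker N^*$, and $v^*u=0$ is equivalent to $u^*v=0$. Take a Jordan decomposition $X = SJS^{-1}$ with the block(s) for $\lambda$ listed first. Right eigenvectors of $X$ are $S$ times right eigenvectors of $J$. Left eigenvectors are the vectors $u=S^{-*}w$ with $w^*J=\lambda w^*$, because $u^*X=\lambda u^*$ holds iff $(S^*u)^*J=\lambda(S^*u)^*$. For $v=Sz$ this gives $u^*v=w^*S^{-1}Sz=w^*z$. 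Orthogonality is therefore invariant under the change of basis, and it suffices to prove the claim for $X=J$ in Jordan form.

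For a Jordan matrix the eigenvectors are explicit. If $\lambda$ has Jordan blocks of sizes $k_1,\dots,k_g$, each block contributes one right eigenvector and one left eigenvector. The right eigenvector is $e_{\text{first}}$, the first coordinate of that block. The left eigenvector is $e_{\text{last}}$, the last coordinate of that block. So every right eigenvector is $z=\sum_i \alpha_i e_{f_i}$ and every left eigenvector is $w=\sum_i \beta_i e_{l_i}$, where $f_i$ and $l_i$ are the first and last indices of block $i$.

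Next comes the case split. If $\lambda$ is simple, then $g=1$ and $k_1=1$, so $f_1=l_1$. Hence $w^*z=\bar\beta_1\alpha_1\neq0$ for any nonzero $z,w$, and no orthogonal pair exists. Suppose instead $\lambda$ is non-simple, so that either some $k_i\ge2$ or $g\ge2$. If some $k_i\ge2$, take $z=e_{f_i}$ and $w=e_{l_i}$. Since $f_i\neq l_i$, we get $w^*z=0$. If all $k_i=1$ but $g\ge2$, take $z=e_{f_1}$ and $w=e_{l_2}$, which are distinct coordinates, so again $w^*z=0$. This gives both directions.

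The only point needing care is the invariance step: left eigenvectors transform by $S^{-*}$ and not by $S$. Once that is set up correctly, everything else is bookkeeping on the Jordan form.

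An alternative route would be the implication ``simple $\Rightarrow y^*x\neq0$'' via the spectral projector $xy^*/(y^*x)$. The Jordan argument above handles both directions uniformly, so that is the one to use.
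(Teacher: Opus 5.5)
Your proof is correct and complete. The invariance step is handled properly: left eigenvectors transform as $u=S^{-*}w$, so $u^*v=w^*z$ does not depend on the basis. The $\lambda$-eigenvectors of $J$ are exactly combinations of the first and last coordinates of the $\lambda$-blocks, since blocks for other eigenvalues $\mu\neq\lambda$ contribute nothing ($J_k(\mu)-\lambda I$ is invertible). Your case split separates exactly algebraic multiplicity $1$ from algebraic multiplicity $\geq 2$.

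The paper gives no proof of its own here. It calls the lemma folklore and defers to \citet{Kalinina2023}, so there is no in-paper argument to compare against, and yours serves as a self-contained proof.

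If you want a variant that avoids the Jordan form and the change of basis, set $N=X-\lambda I$ and use $\range(N)=(\ker N^*)^\perp$:
\begin{itemize}
\item \emph{Orthogonal pair $\Rightarrow$ non-simple.} Suppose $u^*v=0$. If $\dim\ker N\geq 2$ there is nothing to prove. Otherwise $\ker N^*=\operatorname{span}(u)$, so $v\in u^\perp=\range(N)$. Then $v=Nw$ with $w\in\ker N^2\setminus\ker N$, and $\lambda$ has algebraic multiplicity at least $2$.
\item \emph{Non-simple $\Rightarrow$ orthogonal pair.} If $\dim\ker N^*\geq 2$, then $\ker N^*\cap v^\perp\neq\{0\}$ for any right eigenvector $v$. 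If instead $\lambda$ is defective, pick $w\in\ker N^2\setminus\ker N$ and set $v=Nw$. Then $u^*v=(N^*u)^*w=0$ for every left eigenvector $u$.
\end{itemize}
This route uses only the range--kernel duality, but your Jordan-form argument is equally valid.
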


\citet{malyshev1999} gives an algebraic characterization for the solution of Wilkinson's problem. 
\begin{theorem}[\citet{malyshev1999}]\label{thm:malyshev}
    Let $A\in\mathbb{C}^{n\times n}$. Define 
    \[
    \mathcal{E}_{0,0} = \{X\in\C^{n\times n} \colon \text{$X$ has a repeated eigenvalue $0$}\}
    \]
    (so that every matrix in $\mathcal{E}_{0,0}$ has either a multiple semisimple eigenvalue in $0$, or at least one nilpotent Jordan block of size at least $2$).
    
    Then, in the distance induced by the spectral norm $\norm{\cdot}_2$,
    \begin{equation}
        d(A, \mathcal{E}_{0,0}) = \max_{\gamma\geq 0} \sigma_{2n-1}(M(A, \gamma)), \quad \text{where }
    M(A, \gamma) = \begin{bmatrix}
            A  & \gamma I_n\\
            0 & A
        \end{bmatrix}.    
    \label{malyshev00}
    \end{equation}
    In particular, this implies that
    \begin{equation} \label{malyshevformula}
    d(A, \overline{\mathcal{D}}) = \min_{\lambda\in\mathbb{C}} d(A-\lambda I_n, \mathcal{E}_{0,0}) = \min_{\lambda\in\mathbb{C}} \max_{\gamma\geq 0} \sigma_{2n-1}(M(A-\lambda I_n, \gamma)).
    \end{equation}
\end{theorem}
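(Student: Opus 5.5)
Two inequalities are needed in \eqref{malyshev00}, and the second display \eqref{malyshevformula} then follows from the first by a shift. For the shift, $X$ has a repeated eigenvalue at some $\lambda$ if and only if $X-\lambda I_n\in\mathcal{E}_{0,0}$. Hence $\overline{\mathcal{D}}=\bigcup_\lambda(\lambda I_n+\mathcal{E}_{0,0})$, and $d(A,\lambda I_n+\mathcal{E}_{0,0})=d(A-\lambda I_n,\mathcal{E}_{0,0})$ by translation invariance of the norm. So the real work is \eqref{malyshev00}.

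\emph{Lower bound ($d\ge\max_\gamma\sigma_{2n-1}$).} Let $X=A-\Delta\in\mathcal{E}_{0,0}$. I will show that $M(X,\gamma)$ has nullity at least $2$ for every $\gamma\ge0$.

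By Lemma~\ref{lem:orthoeig}, or directly from the Jordan/Schur form, there are a left null vector $u$ and a right null vector $v$ of $X$ with $u^*v=0$.

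Kernel vectors of $M(X,\gamma)$ are pairs $(x,y)$ with $Xx+\gamma y=0$ and $Xy=0$. The pair $(v,0)$ always works. For a second independent pair:
\begin{itemize}
\item If $0$ is defective, take $y=v$ with $v\in\range(X)$, solve $Xx=-\gamma v$, and obtain $(x,v)$.
\item If $0$ is semisimple of multiplicity $\ge2$, take $y=0$ and $x$ another independent null vector.
\end{itemize}
In both cases $v\in\range(X)$ exactly when $u^*v=0$ for all left null vectors $u$. I will handle this cleanly by a Schur form that puts the nilpotent part in the leading block.

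Given nullity at least $2$, we have $\sigma_{2n-1}(M(X,\gamma))=0$. Since $M(A,\gamma)-M(X,\gamma)=I_2\otimes\Delta$ has spectral norm $\|\Delta\|_2$, Weyl's inequality gives $\sigma_{2n-1}(M(A,\gamma))\le\|\Delta\|_2$ for all $\gamma$. Minimizing over $\Delta$ yields the bound.

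\emph{Upper bound (the hard direction).} This is the main obstacle: from the max over $\gamma$ I must build a perturbation. Write $g(\gamma)=\sigma_{2n-1}(M(A,\gamma))$ and $\varepsilon=\max_\gamma g(\gamma)$.

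First, the maximum is attained. For $\gamma\to\infty$, a block argument shows that $g(\gamma)$ tends to at most $\sigma_n(A)$, or stays bounded by it. If the supremum is only approached as $\gamma\to\infty$, or at $\gamma=0$, I will check separately that the bound reduces to $d\le\sigma_{n-1}$ or $\sigma_n$-type estimates achieved by rank-two truncation.

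At an interior maximizer $\gamma_\star>0$, take left and right singular vectors $\begin{bsmallmatrix}u_1\\u_2\end{bsmallmatrix}$ and $\begin{bsmallmatrix}v_1\\v_2\end{bsmallmatrix}$ for $g(\gamma_\star)$. Assuming simplicity, stationarity $g'(\gamma_\star)=0$ gives $\Re(u_1^*v_2)=0$; by a phase argument this upgrades to $u_1^*v_2=0$. The singular-vector equations give
\[ Av_1+\gamma v_2=\varepsilon u_1,\quad Av_2=\varepsilon u_2,\quad A^*u_1=\varepsilon v_1,\quad \gamma u_1+A^*u_2=\varepsilon v_2. \]
From these I expect the Gram identities $\|u_1\|=\|v_2\|$, $\|u_2\|=\|v_1\|$ together with $v_1^*v_2=u_1^*u_2$, obtained by combining the equations with the stationarity condition. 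They allow a single $\Delta$ with $\Delta V=\varepsilon U$, where $V=[v_1\ v_2]$ and $U=[u_1\ u_2]$, and with $\|\Delta\|_2=\varepsilon$. Concretely, take $\Delta=\varepsilon UV^\dagger$: the Gram matrices $V^*V=U^*U$ make $UV^\dagger$ a partial isometry.

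Then $X=A-\Delta$ satisfies $Xv_2=0$ and $Xv_1=-\gamma v_2$, a Jordan chain at $0$, so $X\in\mathcal{E}_{0,0}$. The degenerate cases, where $V$ is rank deficient or the singular value is multiple, will be treated by continuity or perturbation.
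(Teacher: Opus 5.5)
Your lower bound and your shift reduction giving \eqref{malyshevformula} are correct and match the paper. The lower bound shows $M(X,\gamma)$ has nullity at least $2$ and then applies Weyl's inequality to $I_2\otimes\Delta$. The paper proves only this direction and defers the converse to Malyshev's case analysis. The gaps are in your converse.

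\emph{Multiple singular value at an interior maximizer.} This is the decisive gap, and you set it aside with ``continuity or perturbation''. It is not a marginal case. For $A=\diagm(2,1)$, $M(A,\gamma)$ decouples into $\begin{bsmallmatrix}2&\gamma\\0&2\end{bsmallmatrix}$ and $\begin{bsmallmatrix}1&\gamma\\0&1\end{bsmallmatrix}$. Then $g$ attains its maximum $\sqrt{5/2}$ at $\gamma_\star=\sqrt{5/2}-\sqrt{2/5}$, exactly where an increasing branch crosses a decreasing one. So $\sigma_2=\sigma_3$ there and $g'(\gamma_\star)$ does not exist.

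A density argument would need a proof that simplicity at the maximizer holds for a dense set of $A$. That would suffice, since both sides of \eqref{malyshev00} are $1$-Lipschitz in $A$, but you give no such proof.

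The missing idea is direct. For any singular pair of $M(A,\gamma)$ with value $\varepsilon>0$, compute $u_1^*Av_1$ in two ways from your four equations. This gives $\gamma\,u_1^*v_2=\varepsilon(\|u_1\|^2-\|v_1\|^2)$, so $u_1^*v_2$ is a real-valued Hermitian form on the singular subspace. The Rellich branches through $(\gamma_\star,\varepsilon)$ have derivatives $u_{j,1}^*v_{j,2}$. Maximality from the left and from the right forces one branch with derivative $\ge0$ and one with derivative $\le0$. Hence the form either vanishes on a branch vector or is indefinite, and in both cases it has a nonzero isotropic vector, exactly as in the proof of Theorem~\ref{thm:Alamortho}. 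The same identity shows that $u_1^*v_2$ is automatically real. So no ``phase argument'' is needed, and none could work, since a common phase leaves $u_1^*v_2$ unchanged.

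\emph{The case $\gamma_\star=0$.} You mishandle it. There you must reach $g(0)=\sigma_n(A)$, but rank-two truncation costs $\sigma_{n-1}(A)$. What is needed is the following. If $\sigma_{n-1}(A)>\sigma_n(A)$, the two branches leaving the double singular value $\sigma_n(A)$ of $I_2\otimes A$ have slopes $\pm|u_n^*v_n|/2$. So maximality at $0$ forces $u_n^*v_n=0$, and then the rank-one $\Delta=\sigma_n u_nv_n^*$ works by Lemma~\ref{lem:orthoeig}. If $\sigma_{n-1}(A)=\sigma_n(A)$, rank-two truncation does cost only $\sigma_n(A)$.

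\emph{Smaller corrections.}
\begin{enumerate}
\item $g(\gamma)=O(1/\gamma)$ as $\gamma\to\infty$: swapping the block columns of $M(A,\gamma)$, the Schur complement of $\gamma I$ is $-A^2/\gamma$. So for $n\ge2$ the maximum is attained and the ``$\gamma\to\infty$'' case never arises.
\item The identities you need are $\|u_1\|=\|v_1\|$ and $\|u_2\|=\|v_2\|$, not $\|u_1\|=\|v_2\|$ and $\|u_2\|=\|v_1\|$. They follow from $u_1^*v_2=0$, the identity above, and $\|u\|_2=\|v\|_2$. Together with $u_1^*u_2=v_1^*v_2$ they give $U^*U=V^*V$.
\item From $U^*U=V^*V$ you get $\ker V=\ker U$, so $UV^\dagger V=U$ and $\|\varepsilon UV^\dagger\|_2\le\varepsilon$ even if $V$ is rank deficient. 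Also $\gamma_\star>0$ forces $v_2\neq0$. So your Jordan-chain conclusion $Xv_2=0$, $Xv_1=-\gamma_\star v_2$ goes through with no further ``degenerate cases''.
\end{enumerate}
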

\begin{proof} \emph{(Sketch)}
Proving the inequality $d(A, \mathcal{E}_{0,0}) \geq \max_{\gamma\geq 0} \sigma_{2n-1}(M(A, \gamma))$ in Theorem~\ref{thm:malyshev} is easy: If $A-\Delta$ has a double eigenvalue $0$ or a nilpotent Jordan block in $0$, then for any given $\gamma$ we can use the eigenvectors or the Jordan chain to construct two linearly independent vectors inside $\ker M(A-\Delta, \gamma)$. Hence $M(A-\Delta, \gamma)$ has rank at most $2n-2$, and
\begin{align*}
\norm{\Delta}_2 &= \norm{M(A-\Delta,\gamma) - M(A,\gamma)}_2 \geq d(M(A,\gamma), \mathcal{R}_{\leq 2n-2}) = \sigma_{2n-1}(M(A,\gamma)).    
\end{align*}
The converse inequality is significantly more involved, and a constructive proof requires explicitly finding a matrix $\Delta$ satisfying both $A-\Delta \in \mathcal{E}_{0,0}$ and $\norm{\Delta}_2 = \max_{\gamma\geq 0} \sigma_{2n-1}(M(A,\gamma))$. The construction in~\citet{malyshev1999} splits into several subcases, distinguishing whether $\gamma=0$ or not in the maximum, and whether $\sigma_{2n-1}(M(A,\gamma)) = \sigma_{2n}(M(A,\gamma))$. We have not been able to find a less technical argument, and so we do not include a proof here.
\end{proof}
We may replace the domain $\gamma \geq 0$ with $\gamma\in\mathbb{R}$ with no loss of generality in the statement of Theorem~\ref{thm:malyshev}, because replacing $\gamma$ with $-\gamma$ does not alter the singular values.

Note the similarity of~\eqref{malyshev00} with formulas such as~\eqref{eq:optimum distance from stability} for the distance to instability. Using a language due to~\citet{alam2023}, we may generalize both. Let $C\in\mathbb{C}^{k\times k}$ be given, with $k\leq n$, and define
\[
\mathcal{E}_{\sim C} = \left\{X\in\mathbb{C}^{n\times n} \colon X = V \begin{bmatrix}
    C & *\\
    0 & *
\end{bmatrix}V^{-1} \text{ for some nonsingular $V\in\mathbb{C}^{n\times n}$} \right\},
\]
where $*$ denotes a block whose exact nature is not relevant. In other words, $\mathcal{E}_{\sim C}$ is the set of matrices similar to a block triangular matrix having $C$ in its upper left corner. Every $X\in \mathcal{E}_{\sim C}$ has the same eigenvalues as $C$, counted with multiplicities, plus others. One has~\cite{alam2023}
\begin{equation*} 
    d(A, \mathcal{E}_{\sim C}) \geq \sigma_{nk-k+1}(I_k\otimes A - C^\top \otimes I_n).
\end{equation*}
so maximizing over one or more parameters $\gamma_i$ appearing in $C$ one can obtain bounds for $d(A, \mathcal{E}_{\sim C})$. Unfortunately, in general this bound is only an inequality: A certain rank assumption on the singular vectors must hold in the general case to construct a matrix at that exact distance. Malyshev's result is recovered with $C = \begin{bsmallmatrix}
    0 & 0\\
    \gamma & 0
\end{bsmallmatrix}$; \citet{gracia2005,lippert2005_fixing_two_eigenvalues} studied in more detail the case of a matrix with two distinct eigenvalues $C=\begin{bsmallmatrix}
    \lambda_1 & 0\\
    \gamma & \lambda_2
\end{bsmallmatrix}$, and \citet{Mengi2011} the case of a multiplicity higher than 2.

A more appealing geometric characterization of the solution of Wilkinson's problem based on pseudospectra is due to~\citet{Lippert1999} (in the regular case) and~\citet{Alam2005} (in the general case). To describe it, we start from some observations on the pseudospectra of a matrix $A\in\mathbb{C}^{n\times n}$ with distinct eigenvalues. First-order eigenvalue perturbation theory tells us that for small $\varepsilon>0$ the pseudospectrum $\Lambda_{\varepsilon}(A)$ is composed of $n$ disjoint connected components $\mathcal{R}_{1,\varepsilon},\dots, \mathcal{R}_{n,\varepsilon}$ each resembling (to first order) a small disk of radius $\kappa(A,\lambda_j)\varepsilon$ centered in an eigenvalue $\lambda_j$ of $A$. No additional connected components can appear as we increase $\varepsilon$: Otherwise, $\norm{(A-zI_n)^{-1}}_2 = \max_{\norm{x}_2=\norm{y}_2=1} \abs{x^*(A-zI_n)^{-1}y}$ would have a local maximum outside $\Lambda(A)$, and this would violate the maximum modulus principle, since it is a maximum of absolute values of holomorphic functions in $z$. So, as we increase $\varepsilon$, these $n$ components become larger, and eventually they touch and merge.  We call the smallest $\varepsilon$ for which $\Lambda_\varepsilon(A)$ has fewer than $n$ connected components the \emph{first coalescence level}, and denote it by $\varepsilon_\star$. A point where the closures of two distinct connected components of the open pseudospectrum $\{z:\sigma_{\min}(A-zI_n)<\varepsilon_\star\}$ meet is called a \emph{first coalescence point} $z_\star$. \citet{Alam2005,Alam2011} prove that $\varepsilon_\star = d(A, \mathcal{D})$, in each unitarily invariant norm (recall that the pseudospectra coincide for all such norms, up to normalization by a multiplicative constant). The key result to prove this equality is the following.
\begin{theorem}[\protect{\citet[Theorem~5]{Alam2011}}] \label{thm:Alamortho}
    Let $A\in\mathbb{C}^{n\times n}$ have distinct eigenvalues. If $z_\star$ is a first coalescence point of the pseudospectrum $\Lambda_{\varepsilon_\star}(A)$, then there exist a pair of singular vectors $u_\star,v_\star\in\mathbb{C}^{n}$ associated to $\varepsilon_\star = \sigma_{n}(A-z_\star I_n)$ such that $v_\star^*u_\star = 0$. 
\end{theorem}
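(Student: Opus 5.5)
The plan is to work with the function $g(z)=\sigma_{\min}(A-zI_n)=\sigma_n(A-zI_n)$ near the first coalescence point $z_\star$, and to extract orthogonality of singular vectors from a first-order optimality (saddle) condition on $g$.

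The first step is to show that $z_\star$ is not a point where $g$ is a strict local minimum other than an eigenvalue. More precisely, $z_\star$ lies on the boundary of two distinct components $\mathcal{R}_{i}$ and $\mathcal{R}_{j}$ of the open sublevel set $\{g<\varepsilon_\star\}$. Hence $g(z_\star)=\varepsilon_\star$, and every neighbourhood of $z_\star$ contains points of both components. The level set $\{g=\varepsilon_\star\}$ near $z_\star$ is therefore not a smooth curve, so $z_\star$ should be a critical point of $g$ in a generalized sense.

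The second step handles the case where $\sigma_n(A-z_\star I_n)$ is simple. Then $g$ is real-analytic near $z_\star$, with singular vectors $u(z),v(z)$ normalized so that $(A-zI_n)v=g\,u$. The standard derivative formula for a simple singular value gives
\[
\frac{\partial g}{\partial \bar z}\;\propto\; -\overline{u^*v},
\]
that is, $\nabla g(z_\star)$ corresponds to the complex number $-\,\overline{u_\star^* v_\star}$ up to convention. If this were nonzero, the implicit function theorem would make $\{g<\varepsilon_\star\}$ locally a half-disc near $z_\star$, which is connected. Two components could then not both touch $z_\star$, a contradiction. Hence $u_\star^*v_\star=0$, which is the desired $v_\star^*u_\star=0$.

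The third step, the multiple singular value case, is the main obstacle. Let $\sigma_n=\sigma_{n-1}=\dots$ at $z_\star$, and let $U_1,V_1$ be bases of the associated left and right singular subspaces, with common dimension $p$. Consider the Hermitian $p\times p$ matrix pencil obtained from the first-order expansion, namely the directional derivative matrices $\Re(e^{i\theta}U_1^*V_1)$-type forms. One needs to show that if no pair $(u,v)=(U_1c,V_1c)$ from the same SVD satisfies $v^*u=0$, then $0\notin W(U_1^*V_1)$, the numerical range. I expect this implication to hold because $c^*U_1^*V_1c=u^*v$ for consistent singular pairs $v=V_1c$, $u=U_1c$.

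Since the numerical range is compact and convex, $0\notin W(U_1^*V_1)$ gives a direction $\theta$ with $\Re(e^{i\theta}c^*U_1^*V_1c)>0$ for all unit $c$. Along the ray $z_\star+te^{i\theta}$, every branch of the analytic singular values (Rellich, as in Lemma~\ref{lem:localminimagood}) emanating from $\varepsilon_\star$ then strictly decreases to first order, uniformly in a cone of directions. This again makes $\{g<\varepsilon_\star\}$ locally connected near $z_\star$, a contradiction. Therefore $0\in W(U_1^*V_1)$, which yields $c$ with $u_\star=U_1c$, $v_\star=V_1c$ and $v_\star^*u_\star=0$.

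The delicate part is making this cone argument rigorous via the min over analytic branches.
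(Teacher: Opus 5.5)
Your contrapositive strategy is sound and genuinely different from the paper's. However, the step you flag as delicate is a real gap as written, and the tool you suggest for it would not close it.

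\emph{Comparison with the paper.} The paper argues constructively. In the simple case it reads $z_\star$ as a mountain-pass point, where $\nabla f=-v^*u$ must vanish. In the multiple case it either extracts limits of singular pairs along points $z_k\to z_\star$ of the sublevel set with $\nabla f(z_k)\to0$, or (quoting Alam et al.) inscribes in the two components two disks tangent at $z_\star$. It then takes limiting gradients from both sides, which point in opposite directions, and combines the two limiting singular pairs into an orthogonal one. You instead note that the pairs $(U_1c,V_1c)$ are singular pairs for $\varepsilon_\star$ (here $\varepsilon_\star>0$, since distinct eigenvalues give $z_\star\notin\Lambda(A)$), so it suffices to show $0\in W(U_1^*V_1)$. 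If that fails, convexity of the numerical range gives a separating direction $e^{i\theta}$ along which the whole cluster decreases to first order. This treats the simple and multiple cases uniformly and avoids both the case split and the tangent-disk lemma. It also shows that the paper's final step (finding $c,s$ with $v_\star^*u_\star=0$) is just convexity of a $2\times2$ numerical range. In nonsmooth language, you are proving that $z_\star$ is a Clarke-critical point of $g$, whose Clarke gradient at $z_\star$ is $-W(V_1^*U_1)$ under $\C\cong\R^2$.

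\emph{The gap.} First-order decrease along rays emanating from $z_\star$, even uniformly in a cone, does not imply that $\{g<\varepsilon_\star\}$ is connected near $z_\star$. It only places a sector around $e^{i\theta}$ inside the sublevel set. Near the two directions where $g'(z_\star;e^{i\phi})=-\lambda_{\max}\bigl(\tfrac12(e^{i\phi}U_1^*V_1+e^{-i\phi}V_1^*U_1)\bigr)$ vanishes, information at $z_\star$ alone cannot exclude a thin piece of the other component reaching $z_\star$. What you need is descent along the fixed direction $d=e^{i\theta}$ from \emph{every} base point near $z_\star$, which Rellich branches along rays through $z_\star$ do not give.

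It is available as follows. Since $\varepsilon_\star>0$ is an isolated cluster of $p$ eigenvalues of the Hermitian dilation $\begin{bsmallmatrix}0&A-zI_n\\(A-zI_n)^*&0\end{bsmallmatrix}$, its Riesz projector yields a smooth $p\times p$ Hermitian compression $H(z)$ with $g(z)=\lambda_{\min}(H(z))$ near $z_\star$. In the basis $\tfrac{1}{\sqrt2}\begin{bsmallmatrix}U_1\\V_1\end{bsmallmatrix}$ its derivative is $DH(z_\star)[d]=-\tfrac12(e^{i\theta}U_1^*V_1+e^{-i\theta}V_1^*U_1)\prec0$. By continuity $DH(z)[d]\preceq-\delta I_p$ near $z_\star$, and Weyl's inequality then gives $g(z+\tau d)\le g(z)-\delta\tau$ for all nearby $z$ and small $\tau>0$.

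Combine this with the $1$-Lipschitz bound on $g$ and take a small box with sides parallel to $d$ and $id$, thin in the $id$ direction. Its upper half lies entirely in $\{g<\varepsilon_\star\}$, and every other point of $\{g<\varepsilon_\star\}$ in the box can be pushed along $d$ into that half. So the sublevel set is connected inside the box, and the two components meeting at $z_\star$ coincide, giving your contradiction. (Equivalently: $0\notin\partial_C g(z_\star)$, upper semicontinuity of $\partial_C g$, and Lebourg's mean value theorem.) With this lemma your proof is complete, and your simple case is just $p=1$.
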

\begin{proof}\emph{(Sketch.)}
    The proof is easier when the smallest singular value $\sigma_{n}(A-z_\star I_n)$ is simple. In this case, let $u,v$ be the associated singular vectors (unique up to a phase). By the Schmidt-Eckart-Young-Mirsky Theorem (Example~\ref{ex:seym}), $A-u\varepsilon_\star  v^*$ is the nearest matrix to $A$ that has an eigenvalue $z_\star$; hence $z_\star$ is on the border of the $\varepsilon_\star $-pseudospectrum. Note that $z_\star = x_\star + i y_\star$ must be a saddle point of the bivariate function $f(x,y) = \sigma_{\min}(A-(x+iy)I_n)$: Indeed, it is a ``mountain pass'', since it is the lowest possible maximum point in any path that connects the two eigenvalues $\lambda_i,\lambda_j \in \Lambda(A)$ whose associated connected pseudospectral components $\mathcal{R}_{i,\varepsilon}$ and $\mathcal{R}_{j,\varepsilon}$ coalesce. First-order perturbation theory for simple singular values can be used to prove that the two partial derivatives are $\frac{\partial f}{\partial x}(x_\star,y_\star) = -\Re(v^*u)$ and $\frac{\partial f}{\partial y}(x_\star,y_\star) = -\Im(v^*u)$. Identifying $\C$ with $\R^2$, we can rewrite these two equalities as $\nabla f(z) = -v^*u$. Since in a (regular) saddle point the partial derivatives vanish, $v^*u=0$.

    The case in which $\sigma_{n-1}(A-z_\star I_n) = \sigma_{n}(A- z_\star I_n)$ is more involved, since $f$ may be non-differentiable. We give here only a sketch of the proof in~\cite{Alam2011}. If there exists a sequence $z_k\to z_\star$ with $f(z_k)<\varepsilon_\star$ and $\nabla f(z_k)\to0$, we can select a subsequence such that the corresponding unit singular vectors $u_k,v_k$ converge to $u_\star,v_\star$. Since $\nabla f(z_k)=-v_k^*u_k$, these vectors satisfy $v_\star^*u_\star=0$. If instead no such sequence exists, let $\mathcal{R}_{i,\varepsilon_\star}$ and $\mathcal{R}_{j,\varepsilon_\star}$ denote two connected
components of the open set $\{z:f(z)<\varepsilon_\star\}$ whose closures meet at $z_\star$. The authors show that these components contain open disks whose closures are tangent at $z_\star$. For each of these two connected components, they consider a sequence of points $z_1,z_2, \dots$ in the interior of  $\mathcal{R}_{i,\varepsilon_\star}$ (resp. $\mathcal{R}_{j,\varepsilon_\star}$), and the associated (simple) singular vectors $u_k,v_k$ of $\sigma_n(A-z_k I_n)$. They consider the limit (which exists up to subsequences) $\lim_{k\to\infty} \nabla f(z_k) = -\lim v_k^*u_k$, which is orthogonal to the boundary $\partial \mathcal{R}_{i,\varepsilon_\star}$ (resp. $\partial \mathcal{R}_{j,\varepsilon_\star}$) in $z_\star$. Since the closures of the two disks meet tangentially, these two normals point in opposite directions of $\mathbb{C} \cong \mathbb{R}^2$; hence there is $\mu \in (0,1)$ such that  $\mu \,v^{(i)*}u^{(i)} + (1-\mu)\, v^{(j)*}u^{(j)}=0$. Using this relation, the authors show that for some $c,s$ with $\abs{c}^2+\abs{s}^2=1$ a combination $u_\star = cu^{(i)} + su^{(j)}, v_\star = cv^{(i)} + sv^{(j)}$ satisfies the required property $v_\star^*u_\star=0$.
\end{proof}
The reader might be tempted to see the case of a repeated singular value as an uninteresting non-generic complication, but we note that it actually arises in cases of practical relevance for applications. For example, it always happens when $A$ is normal; see Example~\ref{ex:nearestdefective_diagonal} below. But first, we conclude the geometric characterization of $d(A,\mathcal{D})$.
\begin{theorem}\label{thm:defectiveuin}
    Let $A$ have distinct eigenvalues. For any unitarily invariant norm, the first coalescence level $\varepsilon_\star$ of the pseudospectrum equals the distance $d(A,\mathcal{D})$. There is a minimizer $X$ that belongs to $\mathcal{D}$ (hence it has a defective eigenvalue, rather than merely a multiple semisimple eigenvalue), and $\Delta = A-X$ has rank 1.
\end{theorem}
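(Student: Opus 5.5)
We prove the two inequalities $d(A,\mathcal{D})\le\varepsilon_\star$ and $d(A,\overline{\mathcal{D}})\ge\varepsilon_\star$, and build the explicit rank-one minimizer along the way. Since $d(A,\mathcal{D})=d(A,\overline{\mathcal{D}})$ by Proposition~\ref{prop:lauriwaswrong}, this gives equality. The construction also settles the claims about membership in $\mathcal{D}$ and the rank of $\Delta$.

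\emph{Upper bound and construction.} Let $z_\star$ be a first coalescence point. By Theorem~\ref{thm:Alamortho} there are unit singular vectors $u_\star,v_\star$ of $A-z_\star I_n$ associated with $\varepsilon_\star=\sigma_n(A-z_\star I_n)$ such that $v_\star^*u_\star=0$. Set $\Delta=\varepsilon_\star u_\star v_\star^*$ and $X=A-\Delta$. Then
\[(X-z_\star I)v_\star=(A-z_\star I)v_\star-\varepsilon_\star u_\star=0,\qquad u_\star^*(X-z_\star I)=\varepsilon_\star v_\star^*-\varepsilon_\star v_\star^*=0 .\]
So $v_\star$ and $u_\star$ are right and left eigenvectors of $X$ for $z_\star$, and they are orthogonal. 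Lemma~\ref{lem:orthoeig} then shows that $z_\star$ is a non-simple eigenvalue of $X$.

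To upgrade this to \emph{defective}, we argue by contradiction. Suppose $z_\star$ were semisimple of multiplicity $\ge2$. Then $\ker(X-z_\star I)$ would be at least two-dimensional, so $\rank(A-z_\star I-\Delta)\le n-2$. Since $\Delta$ has rank one, this forces $\sigma_{n-1}(A-z_\star I)\le\norm{\Delta}_2=\varepsilon_\star$. Hence $\sigma_{n-1}=\sigma_n$, which is the repeated case. In that case we perturb the choice: instead of $u_\star,v_\star$ we use another orthogonal pair inside the two-dimensional singular subspaces. Equivalently, we check that $u_\star^*$ annihilates every vector of $\ker(X-z_\star I)$, so that a Jordan chain exists. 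I expect this to be the delicate step. The fallback is a limiting argument: approximate $A$ by matrices with simple $\sigma_n$ at their coalescence point, where defectiveness follows from simplicity of $\sigma_n$ via the derivative formula in the proof of Theorem~\ref{thm:Alamortho}.

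In every normalized unitarily invariant norm, $\norm{\Delta}=\varepsilon_\star\norm{u_\star v_\star^*}=\varepsilon_\star$ because $\Delta$ has rank one. Hence $d(A,\mathcal{D})\le\varepsilon_\star$.

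\emph{Lower bound.} Let $Y\in\overline{\mathcal{D}}$ with $\norm{A-Y}=\delta$, so $Y$ has a repeated eigenvalue. Consider the path $A_t=A+t(Y-A)$ for $t\in[0,1]$. Each eigenvalue of $A_t$ lies in $\Lambda_\delta(A)$, using $\norm{\cdot}_2\le\norm{\cdot}$ and the fact that pseudospectra are the same for all normalized unitarily invariant norms. By continuity of eigenvalues, the $n$ eigenvalue paths start at the $n$ distinct $\lambda_j$ and two of them meet at $t=1$. Each path stays inside a single connected component of $\Lambda_\delta(A)$. Therefore $\Lambda_\delta(A)$ has fewer than $n$ components, i.e.\ $\delta\ge\varepsilon_\star$. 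This gives $d(A,\overline{\mathcal{D}})\ge\varepsilon_\star$, completing the equality.
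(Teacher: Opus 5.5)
Your lower bound (eigenvalue paths along the segment from $A$ to $Y$, each confined to one component of $\Lambda_\delta(A)$) and your construction $X=A-\varepsilon_\star u_\star v_\star^*$ via Theorem~\ref{thm:Alamortho} and Lemma~\ref{lem:orthoeig} are correct and match the paper. With Proposition~\ref{prop:lauriwaswrong}, these alone already give $\varepsilon_\star=d(A,\overline{\mathcal{D}})=d(A,\mathcal{D})$, together with a rank-one minimizer $X\in\overline{\mathcal{D}}$.

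What you have not proved is $X\in\mathcal{D}$, i.e.\ that $z_\star$ is defective in $X$. Your contradiction attempt throws away the key information. From $\rank(X-z_\star I)\le n-2$ you pass to the \emph{norm} bound $\sigma_{n-1}(A-z_\star I)\le\norm{\Delta}_2=\varepsilon_\star$. That is not a contradiction; it only says $\sigma_{n-1}=\sigma_n$, and this case really occurs, e.g.\ for normal $A$ (Example~\ref{ex:nearestdefective_diagonal}). Your fallbacks also fail. A limiting argument cannot yield membership in $\mathcal{D}$, because $\mathcal{D}$ is not closed: limits of defective matrices can have a multiple \emph{semisimple} eigenvalue. ``Perturbing the choice'' of the orthogonal pair is unnecessary, since every choice works.

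The missing step uses the \emph{rank} of $\Delta$, not its norm. Since $\sigma_n(A-z_\star I)=\varepsilon_\star>0$ (positive because small pseudospectra have $n$ components), $A-z_\star I$ is invertible, i.e.\ $z_\star\notin\Lambda(A)$. Subadditivity of rank then gives $\rank(X-z_\star I)\ge\rank(A-z_\star I)-\rank(\Delta)=n-1$. So $z_\star$ has geometric multiplicity exactly $1$ in $X$, whether or not $\sigma_n(A-z_\star I)$ is simple. Lemma~\ref{lem:orthoeig} gives algebraic multiplicity at least $2$, so $z_\star$ is defective and $X\in\mathcal{D}$. This one-line argument is how the paper closes the proof.
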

\begin{proof}
All matrices of the form $A-\Delta$ with $\norm{\Delta} =\varepsilon < \varepsilon_\star$ have distinct eigenvalues: Indeed, by a continuity argument resembling the proofs of Gershgorin's second theorem, $A-\Delta$ has one eigenvalue in each of the regions $\mathcal{R}_{1,\varepsilon},\dots, \mathcal{R}_{n,\varepsilon}$. Hence, $d(A, \mathcal{D}) \geq \varepsilon_\star$. 

By Theorem~\ref{thm:Alamortho}, one can find $u_\star,v_\star,z_\star$ such that $v_\star^*u_\star=0$ and $X_\star = A - u_\star \varepsilon_\star v_\star^*$ has an eigenvalue $z_\star$ with associated left and right eigenvector $u_\star,v_\star$. By Lemma~\ref{lem:orthoeig}, $z_\star$ is non-simple. 

It remains to prove that $z_\star$ is a defective eigenvalue rather than a multiple semisimple one. Since $z_\star$ is on the boundary of a pseudospectrum, it is not an eigenvalue of $A$; hence, $\rank(A-z_\star I_n) = n$. Thus, its rank-1 modification $X_\star-z_\star I_n$ cannot have rank less than $n-1$; i.e., the eigenvalue $z_\star$ has geometric multiplicity exactly $1$.
\end{proof}
Theorem~\ref{thm:defectiveuin} shows that there always exists a defective minimizer. Actually, in any distance induced by a smooth norm, \emph{all} minimizers are defective; we can prove it using the following lemma.
\begin{lemma}
    Let $X\in\C^{n\times n}$, be such that $X \in \overline{\mathcal{D}} \setminus \mathcal{D}$. Then, $N^r_X \overline{\mathcal{D}} = \{0\}$.
\end{lemma}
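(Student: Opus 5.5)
To prove $N^r_X \overline{\mathcal{D}} = \{0\}$, I will mimic Example~\ref{ex: rank deficient via normal}. There the regular normal cone at a rank-deficient matrix of rank $<\nu-1$ was shown to be trivial by exhibiting, for every rank-one direction $uv^*$, curves inside the set along which we may move in both directions $\pm uv^*$. Here $X\in\overline{\mathcal{D}}\setminus\mathcal{D}$ means that $X$ is diagonalizable but has a repeated eigenvalue $\lambda$ of geometric (hence algebraic) multiplicity $k\ge 2$. The plan is to show that the set of directions $W$ with $X+tW\in\overline{\mathcal{D}}$ for all small $t$ (of both signs) spans the whole matrix space. Then any $\Delta\in N^r_X\overline{\mathcal{D}}$ satisfies $\scalar{\Delta,W}\le 0$ and $\scalar{\Delta,-W}\le 0$ for all such $W$, so $\Delta$ is orthogonal to a spanning set and must be $0$.

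The construction of these directions proceeds as follows. Write $X=S\,\diagm(\lambda,\lambda,\dots,\lambda,\mu_{k+1},\dots)\,S^{-1}$, with the first $k$ diagonal entries equal to $\lambda$. Let $E_\lambda=\range(SP)$ and $F=\range(S(I-P))$, where $P$ is the coordinate projector onto the first $k$ coordinates.

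\begin{itemize}
\item For any rank-one $W=xy^*$ with $y^*$ annihilating a $2$-dimensional subspace of $E_\lambda$: since $k\ge2$, $\dim E_\lambda\cap\ker y^*\ge k-1\ge1$, and for $k\ge3$ this dimension is already $\ge 2$. Then $X+tW$ still has $\lambda$ as an eigenvalue with eigenspace of dimension $\ge 2$, hence lies in $\overline{\mathcal{D}}$.
\item For general $W$ the rank-one trick with a dimension count is not enough when $k=2$. I therefore switch to a more robust argument: the perturbation $t\,SJS^{-1}$, where $J$ has a single $1$ in position $(1,2)$, keeps the double eigenvalue. More generally, I work in the basis given by $S$ and consider every matrix $W=SZS^{-1}$ such that $SZS^{-1}$ preserves the double eigenvalue to all orders. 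These include all $Z$ whose restriction-and-compression to the $\lambda$-block is of the form $\begin{bsmallmatrix} a & b\\ 0 & a\end{bsmallmatrix}$ together with arbitrary off-block entries. The reason is that a Schur-complement or block-triangularization argument shows that $X+tW$ remains similar to a matrix with a $2\times2$ block $\lambda I+t\begin{bsmallmatrix} a&b\\0&a\end{bsmallmatrix}+O(t^2)$-corrected coupling, which still has a repeated eigenvalue.
\end{itemize}

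The main obstacle is the second bullet. Off-block perturbations do split the eigenvalues at second order in general, so I cannot claim that $X+tW$ stays in $\overline{\mathcal{D}}$ exactly along straight lines. Instead I will use curves $z(t)\in\overline{\mathcal{D}}$ with $z(t)=X+tW+o(t)$, which suffices for the regular normal cone, because its definition only involves $o(\norm{z-X})$ errors. Concretely, for arbitrary $W$ I take $z(t)=(I+tK)^{-1}\big(X+tD\big)(I+tK)$, with $K$ and $D$ chosen so that $[X,K]+D$ equals $W$ up to its component in the $\lambda$-block, and with $D$ having a repeated eigenvalue $\lambda+ta$ on the $\lambda$-block. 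Since $z(t)$ is similar to $X+tD\in\overline{\mathcal{D}}$, it lies in $\overline{\mathcal{D}}$.

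The commutator $[X,K]$ ranges over all matrices with zero $\lambda$-$\lambda$ block and zero $\mu_i$-$\mu_i$ blocks, and the latter are absorbed into $D$ freely. The remaining requirement is that the $2\times2$ (or $k\times k$) $\lambda$-block directions of the form $\begin{bsmallmatrix}a&b\\c&d\end{bsmallmatrix}$ be spanned by directions that preserve a repeated eigenvalue. This holds because the matrices $\begin{bsmallmatrix}a&b\\0&a\end{bsmallmatrix}$ and their transposes already span all $2\times 2$ matrices: $I$, $e_1e_2^T$ and $e_2e_1^T$ are obtained directly, and $\diagm(1,-1)$ is obtained as a limit combination of similar conjugates, which can be handled by similarity inside the block. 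Checking this spanning claim, and the $o(t)$ bookkeeping, is the delicate step. Once it is established, the orthogonality argument of the first paragraph finishes the proof.
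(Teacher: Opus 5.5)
Your strategy is the paper's: find directions $W$ such that $X\pm tW$ (up to $o(t)$) lie in $\overline{\mathcal{D}}$, show that these directions span $\C^{n\times n}$ over $\R$, and conclude that any $\Delta\in N^r_X\overline{\mathcal{D}}$ is orthogonal to everything. Your directions are closed under multiplication by $i$, so a complex span suffices. Your first bullet is correct and already settles the case $k\ge3$, since then every rank-one $xy^*$ preserves a two-dimensional $\lambda$-eigenspace. The conjugation curve $z(t)=(I+tK)^{-1}(X+tD)(I+tK)$ is also valid, but unnecessary. Since $\scalar{\Delta,\cdot}$ is linear, you only need a spanning set of directions. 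For every elementary matrix $e_je_k^T$ with $j\neq k$, the matrix $S^{-1}XS+te_je_k^T$ is triangular with unchanged diagonal, so $X+tSe_je_k^TS^{-1}\in\overline{\mathcal{D}}$ exactly, for all $t\in\C$. The second-order splitting you rightly observe only affects general off-block $W$, and those never have to be realized as curves.

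The genuine gap is in the only nontrivial step, the $\lambda$-block when $k=2$, which you leave unverified and justify incorrectly. The matrices $\begin{bsmallmatrix}a&b\\0&a\end{bsmallmatrix}$ and their transposes do not span $\C^{2\times2}$; they span only the three-dimensional subspace of matrices with equal diagonal entries. The missing direction $\diagm(1,-1)$ cannot be obtained as a ``limit combination'', because a finite-dimensional span is closed. So your directions span a real subspace of codimension $2$, and a nonzero $\Delta$ orthogonal to all of them is not excluded.

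What is needed is a block direction with \emph{unequal} diagonal entries that still keeps a repeated eigenvalue. One choice is the nilpotent $N=(e_1-e_2)(e_1+e_2)^T=\begin{bsmallmatrix}1&1\\-1&-1\end{bsmallmatrix}$. Then $\lambda I_2+tN$ has the double eigenvalue $\lambda$ for every $t$ (defective when $t\neq0$), and $\diagm(1,-1)=N-e_1e_2^T+e_2e_1^T$ exactly. This $N$ is the paper's $T_{22}$. Together with $e_1e_1^T+e_2e_2^T$ and all $e_je_k^T$ with $(j,k)\neq(1,1),(2,2)$, conjugated by $S$ and multiplied by $1$ and $i$, it gives $2n^2$ straight-line directions spanning $\C^{n\times n}$ over $\R$. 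This needs no curves and no $o(t)$ bookkeeping.
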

\begin{proof}
    Let us take an arbitrary element of the normal cone $Z \in N_X^r\overline{\mathcal{D}}$. In the following, we shall show that $Z$ has to be orthogonal to many tangent directions inside $\overline{\mathcal{D}}$, and this forces $Z=0$.

    Since $X\not\in \mathcal{D}$, it is diagonalizable, and we may write $X = VDV^{-1}$, where $D=\diagm(\lambda_1,\lambda_1,\lambda_3,\lambda_4,\dots,\lambda_n)$, with the two leading eigenvalues $\lambda_1=\lambda_2$ equal (and possibly other ones as well). We shall exhibit several matrices $T_{jk}$ such that $D+zT_{jk}\in\overline{\mathcal{D}}$ for all $z\in\C$. Namely:
    \begin{itemize}
        \item The matrix $T_{11}=e_1e_1^T + e_2 e_2^T$, for which $D+zT_{11}$ has a double eigenvalue $\lambda_1+z$.;
        \item The matrix $T_{22} = (e_1-e_2)(e_1+e_2)^T$, for which $D+zT_{22}$ has a leading $2\times 2$ defective matrix;
        \item The matrices $T_{jk} = e_j e_k^T$, for every pair $(j,k)$ with $j,k\in\{1,2,\dots, n\}$ and $(j,k)\neq (1,1),(2,2)$, for which $D+zT_{jk}$ is upper triangular, lower triangular, or diagonal with the entries $\lambda_1=\lambda_2$ unchanged.
    \end{itemize}
    In particular, for each $j,k$, the curve $\gamma_{jk}(t) = V(D \pm tT_{jk})V^{-1}$ is tangent to $X$ along the direction $ \pm VT_{jk}V^{-1}$; hence every element $Z \in N_X^r\overline{\mathcal{D}}$ must satisfy $\scalar{\pm VT_{jk}V^{-1},Z}\leq 0$, and therefore $\scalar{VT_{jk}V^{-1},Z} = 0$. A similar argument on the curve $\gamma_{jk}(t) = V(D \pm itT_{jk})V^{-1}$ (where we have multiplied $t$ by the imaginary unit $i$) shows that $\scalar{iVT_{jk}V^{-1},Z} = 0$ as well.

    We have shown that $Z$ is orthogonal to $2n^2$ matrices that span the whole $\C^{n\times n}$ over the reals; hence we conclude that $Z=0$.
\end{proof}

\begin{corollary} \label{cor:allinD}
    Let $A\in\C^{n\times n}$ have distinct eigenvalues. Then, in every smooth norm (including in particular in the Frobenius norm), any nearest matrix to $A$ in $\overline{\mathcal{D}}$ is defective.
\end{corollary}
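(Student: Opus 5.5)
The plan is to copy the proof of Theorem~\ref{thm:rank exactly n-1}, replacing the rank-variety normal cone by the preceding lemma on $N^r_X\overline{\mathcal{D}}$.

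Let $\norm{\cdot}$ be a smooth norm and $F(Y)=\norm{Y}$. Let $X\in\overline{\mathcal{D}}$ minimize $\norm{X-A}$ over $\overline{\mathcal{D}}$; a minimizer exists by Proposition~\ref{prop:lauriwaswrong}. We argue by contradiction and suppose $X\notin\mathcal{D}$, so that $X\in\overline{\mathcal{D}}\setminus\mathcal{D}$.

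First note that $X\neq A$. Indeed, $A$ has distinct eigenvalues, so $A\notin\overline{\mathcal{D}}$, whereas $X\in\overline{\mathcal{D}}$. Hence $X-A\neq0$. Since the norm is smooth, the map $Y\mapsto \norm{Y-A}$ is differentiable on the open neighbourhood $\{Y: Y\neq A\}$ of $X$. We may therefore apply Theorem~\ref{thm:reg min in normal} to the objective $Y\mapsto F(Y-A)$ on the set $\overline{\mathcal{D}}$. It gives
\[
-\nabla F(X-A)\in N^r_X\overline{\mathcal{D}}.
\]

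By the preceding lemma, $N^r_X\overline{\mathcal{D}}=\{0\}$, so $\nabla F(X-A)=0$. This contradicts Lemma~\ref{lem:nonzero gradient}, since $X-A\neq0$. Therefore $X\in\mathcal{D}$, i.e., $X$ is defective.

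The argument is identical over $\F=\R$ and $\F=\C$ once $\C^{n\times n}$ is regarded as a real Hilbert space, which is the convention the lemma uses, as its proof tests against the directions $\pm VT_{jk}V^{-1}$ and $\pm iVT_{jk}V^{-1}$. The Frobenius norm is the particular case $p=2$ of the Schatten norms, so it is covered.

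I do not expect a real obstacle. The only point needing care is checking the differentiability hypothesis of Theorem~\ref{thm:reg min in normal}, namely that $F(\cdot-A)$ is differentiable in a neighbourhood of $X$. This follows because smoothness is assumed at every nonzero matrix and $X-A\neq0$.
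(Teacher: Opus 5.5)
Your proof is correct and follows the same route as the paper's. You suppose a nearest $X$ lies in $\overline{\mathcal{D}}\setminus\mathcal{D}$, apply Theorem~\ref{thm:reg min in normal} to get $-\nabla F(X-A)\in N^r_X\overline{\mathcal{D}}=\{0\}$, and contradict Lemma~\ref{lem:nonzero gradient}. You are slightly more careful than the paper in explicitly checking $X\neq A$ (since $A\notin\overline{\mathcal{D}}$) and the neighbourhood-differentiability hypothesis. Drop the side remark about $\F=\R$: the statement concerns $\C^{n\times n}$ only, and the lemma's proof uses a possibly complex eigendecomposition $X=VDV^{-1}$, so it does not by itself justify a version for real matrices with a real target set.
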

\begin{proof}
    The proof is essentially the same as the one of Theorem~\ref{thm:rank exactly n-1}. Let $X \in \overline{\mathcal{D}} \setminus \mathcal{D}$ be a nearest matrix. By Theorem~\ref{thm:reg min in normal}, $F(X) = \norm{X}$ must satisfy
    \[
    -\nabla_X F(X-A) \in N^r_X \overline{\mathcal{D}} = \{0\},
    \]
    and this implies $X-A=0$ by Lemma~\ref{lem:nonzero gradient}. Hence, there is no $A$ such that $X$ is a minimizer of $d(A,\mathcal{D})$, apart from the trivial case $A=X$.
\end{proof}
The smoothness of the norm is once again crucial: indeed, Example~\ref{ex:nearestdefective_diagonal} shows that this result is false for the spectral norm.

\begin{example} \label{ex:nearestdefective_diagonal}
    Let $A = \diagm(15,9,5,3)$. Recall that the $\varepsilon$-pseudospectrum of a normal matrix is the union of the balls of radius $\varepsilon$ centered in each eigenvalue \cite{TrefethenEmbreeBook}. The first coalescence happens for $\varepsilon_\star=1$, when the balls centered in $5$ and $3$ meet (tangentially) in $z_\star=4$. The matrix $A-z_\star I_4 =\diagm(11,5,1,-1)$ has singular value decomposition
    \[
    A-z_\star I_4 = \underbrace{I_4}_{U} \underbrace{\diagm(11,5,1,1)}_{S} \underbrace{\begin{bmatrix}
        e_1 & e_2 & e_3 & -e_4
    \end{bmatrix}^*}_{V^*}.
    \]
    As a pair of singular vectors associated with $\sigma_3=\sigma_4=1$, one can take not only $u_3=e_3,v_3=e_3$ (with $v_3^*u_3=1$) and $u_4=e_4,v_4=-e_4$ (with $v_4^*u_4=-1$), but also a generic combination $u_\star = cu_3 +su_4$, $v_\star = cv_3 +sv_4$, with $\abs{c}^2+\abs{s}^2=1$: Indeed, $(A-z_\star I_4) v_\star = 1 \cdot u_\star$ and $(A-z_\star I_4)^*u_\star = 1 \cdot v_\star$. Theorem~\ref{thm:Alamortho} predicts the existence of a pair of vectors in this form with $v_\star^* u_\star=0$, and indeed we find one with $c=s=\frac{1}{\sqrt{2}}$. With this choice,
    \[
    X = A - u_\star v_\star^* = 15 \oplus 9 \oplus \frac12\begin{bmatrix}
       9 & 1\\
        -1 & 7
    \end{bmatrix},
    \]
    which has a Jordan block of size 2 with eigenvalue $z_\star = 4$.

Applying Theorem~\ref{thm:defectiveuin}, this example shows that the distance from $A$ to the set $\overline{\mathcal{D}}$ of matrices with multiple eigenvalues is $1$ for every distance induced by a unitarily invariant norm. In particular this holds in the spectral norm, and therefore $Y=\diagm(15,9,4,4) \in \overline{\mathcal{D}}\setminus \mathcal{D}$ is a nearest matrix in the spectral distance. Hence, Corollary~\ref{cor:allinD} does not hold for the spectral norm.
\end{example}

We now turn to describing algorithms for Wilkinson's problem. Before doing so, we note that the problem is non-convex, and that none of the iterative methods discussed can, in general, guarantee convergence to a global minimizer. Direct computation of the extrema of~\eqref{malyshevformula} is not appealing, because of the additional nested minimization when compared to~\eqref{eq:optimum distance from stability}. An older algorithm \cite[Section~3.2]{Lippert1999} is based on obtaining a new guess $z_{k+1}$ by averaging the two closest eigenvalues of $A-u_{\min} \sigma_{\min} v_{\min}^*$, where $u_{\min},\sigma_{\min},v_{\min}$ come from an SVD of $A-z_k I_n$. But it turns out that many of the methods that we have seen in the previous sections can be adapted effectively to Wilkinson's problem.

\citet{Akinola2014} use a variant of the implicit determinant method (Theorem~\ref{thm:implicitdeterminant}) to eliminate the variables $u_\star,v_\star$ and compute directly $x_\star, y_\star, \varepsilon_\star$. We describe their approach: Any pair of singular vectors $u,v$ of $A-(x+iy)I_n$, together with its singular value $\varepsilon$, satisfies $K(x,y,\varepsilon)\begin{bsmallmatrix}
    u\\v
\end{bsmallmatrix}=0$, where
\[
K(x,y,\varepsilon) = 
\begin{bmatrix}
    -\varepsilon I_n & A - (x+iy) I_n\\
    (A-(x+iy)I_n)^* & -\varepsilon I_n
\end{bmatrix}.
\]
Assume that the singular value $\varepsilon$ is positive and simple; then, $K(x,y,\varepsilon)$ has rank $2n-1$, and there exists $c\in\mathbb{C}^{2n}$ such that the bordered $(2n+1)\times (2n+1)$ linear system
\[
M(x,y,\varepsilon)
\begin{bmatrix}
    u\\v\\s
\end{bmatrix} = \begin{bmatrix}
    0_n\\0_n\\1
\end{bmatrix}, \quad M(x,y,\varepsilon) = \begin{bmatrix}
    K(x,y,\varepsilon) & c\\
    c^* & 0
\end{bmatrix}
\]
has an invertible $M(x,y,\varepsilon)$. Then, the last entry of its unique solution is
\[
s(x,y,\varepsilon) = \frac{\det K(x,y,\varepsilon)}{\det M(x,y,\varepsilon)}.
\]
Algebraic manipulations show that $s_x = 2\Re(v^*u)$, $s_y= 2\Im(v^*u)$, where $s_x,s_y$ are partial derivatives of $s(x,y,\varepsilon)$. Hence, by Theorem~\ref{thm:Alamortho}, the first coalescence point is a solution of the nonlinear system
\begin{equation} \label{Akisystem}
    s(x,y,\varepsilon) = s_x(x,y,\varepsilon) = s_y(x,y,\varepsilon) = 0,    
\end{equation}
in the 3 real unknowns $x,y,\varepsilon$. A result analogous to Theorem~\ref{thm:implicitdeterminant} shows that the functions $s, s_x, s_y$, as well as all the derivatives required to run Newton's method, can be computed by solving linear systems with the matrix $M(x,y,\varepsilon)$. The system~\eqref{Akisystem} has other solutions besides $x_\star, y_\star, \varepsilon_\star$, hence a good starting point for Newton's method is necessary.

The implicit determinant method is fast, as it requires only on solving linear systems with $M$, but it is problematic to apply in the case where the critical singular value $\varepsilon_\star$ is repeated, since in that case $M(x_\star,y_\star,\varepsilon_\star)$ is singular in the limit. \citet[Section~4]{Alam2011} suggest a similar looking method, which relies on computing singular values rather than solving linear systems, and can be used also in the case where $\sigma_n(A-z_\star I_n)$ is not simple. Motivated by their proof of Theorem~\ref{thm:Alamortho}, they consider another nonlinear system of 3 equations in 3 real variables, $k(x,y,\mu)=0$, with
\[k(x,y,\mu) = \begin{bmatrix}
    \mu \left(\sigma_{n-1}(A-(x+iy)I_n)- \sigma_{n}(A-(x+iy)I_n)\right)\\
    \mu \nabla \sigma_{n-1}(A-(x+iy)I_n) + (1-\mu) \nabla \sigma_{n}(A-(x+iy)I_n)
\end{bmatrix}.\]
When $\mu=0$, this system reduces to $0 = \nabla \sigma_{n}(A-(x+iy)I_n) = -v_n^*u_n$, which is the condition mandated by Theorem~\ref{thm:Alamortho} when $\sigma_n$ is simple. However, with this formulation the numerical method can also be used when the limit point $z_\star = x_\star + y_\star$ has a repeated singular value $\sigma_{n-1}(A-z_\star I_n) = \sigma_{n}(A-z_\star I_n)$: In this case, the second block equation tells us that we can obtain $0$ as a real linear combination of $v_{n-1}^*u_{n-1}$ and $v_n^*u_n$, and (following again the proof of Theorem~\ref{thm:Alamortho}) this implies that $v^*u = 0$ for a suitable pair of singular vectors $cv_{n-1}+sv_{n}$, $cu_{n-1}+su_n$.

\citet[Section~VI.3]{GuglielmiLubichBook} and \citet{Butta2015} suggest an algorithm based on the function
\begin{equation} \label{defectivity_functional}
    F(X) = \min_{\substack{Xv=\lambda v,\, u^*X=\lambda u^*,\\v^*u>0,\, \norm{u}_2=\norm{v}_2=1}} v^*u,
\end{equation}
where the minimum is taken over a pair $u,v$ of left and right eigenvectors associated to the same eigenvalue $\lambda$, normalized so that $\norm{u}_2=\norm{v}_2=1$ and $v^*u\geq 0$. In view of Lemma~\ref{lem:orthoeig}, the set of zeros of $F(X)$ is precisely $\overline{\mathcal{D}}$. Following the approach in Section~\ref{subsec:gradientsystem_unstructured}, the problem is reduced to minimizing $F(X)$ over the sphere $\{X \colon \norm{X-A}_F=\varepsilon\}$. If the minimum appearing in~\eqref{defectivity_functional} is achieved in a unique, simple eigenvalue $\lambda$ (which is not guaranteed even for $\varepsilon < \varepsilon_\star$, since there could be a tie in the value of $v^*u$ between two simple eigenvalues $\lambda_i$ and $\lambda_j$), then $F(X)$ is differentiable, with $\nabla F(X) = (u^*v) (uu^*M^* + M^* vv^*)$, where $M = (X-\lambda I_n)^\#$ is the group inverse of $X-\lambda I_n$ \cite[Lemma~VI.3.1]{GuglielmiLubichBook}. The book authors show that $\nabla F(X)$ is nonzero whenever $x\neq y$ (so once again normal matrices are problematic), and in this case $\nabla F(X)$ has rank 2; so one can set up a rank-2 generalization of the gradient flow equations~\eqref{parametrized tangent system}.

Another algorithm due to~\citet{NNP26} looks directly for a pair $u,v$ satisfying the conditions of Lemma~\ref{lem:orthoeig}, borrowing ideas from Sections~\ref{sec:Un} and~\ref{subsec:structuredsing}. For any two orthonormal vectors $u,v$, we can compute directly  the function $f(u,v)$ defined as the minimum of $\norm{\Delta}_F^2$ subject to the condition that $u$ (resp. $v$) is a left (resp. right) eigenvector of $A-\Delta$ associated to the same eigenvalue $z\in\Lambda(A-\Delta)$. Indeed, complete $[u,v]$ to an orthonormal basis $Q$; then, we can conveniently apply a simultaneous similarity by setting $B = Q^*AQ$ and $R= Q^*\Delta Q$. Denote now with $\mathcal{E}_T$ the set of matrices $X$ such that $e_1$ is a left eigenvector of $X$ and $e_2$ is a right eigenvector of $X$, both associated with the same eigenvalue $z \in \Lambda(X)$. Let 
\[
f(u,v) = \min_{R \ s.t. \ B-R \in \mathcal{E}_T}\norm{R}_F^2,
\]
and observe that
\[
\mathcal{E}_T  = \{ X\in \C^{n\times n} \colon X_{11}=X_{22}, \, X_{12}=0, X_{3:n,2}=0, \,X_{1,3:n}=0,
\}
\]

where $X_{3:n,2}$ denotes the vector composed of the elements of $X$ in rows 3 to $n$ and column $2$, and analogously for $X_{1,3:n}$. In this way, $f(u,v)$ is the squared distance of $B$ from the linear subspace $\mathcal{E}_T$, and it can be computed directly in closed form. Note that this distance can be expressed in terms of $B_{11} = u^*Au, B_{22} = v^*Av$, $B_{12} = u^*Av$, $\norm{B_{3:n,2}} = \norm{(I_n-\Pi_{\range([u,v])})Av}$, $\norm{B_{1,3:n}} = \norm{(I_n-\Pi_{\range([u,v])})A^*u}$, and hence depends on $u,v$ only and not on the (non-unique) choice of $Q$. To compute
\[
d(A,\mathcal{D})^2 = \min_{[u,v]\in \mathbf{St}_2(\C)} f(u,v),
\]
we can use optimization on manifolds. Both this method based on optimization on manifolds and the gradient system method described earlier can be constrained to a linear structure on $\Delta$ \cite{Butta2015,NNP26}.

\subsection{Stratifications}
A generalization of various different problems that we have studied is \emph{stratifications}, a concept originating in differential topology (see, e.g., \citet{whitney1965}) that has been adopted in numerical linear algebra to study perturbation theory \citep{EEKpart1,EEKpart2}; see also \citet{klimenko2014} for a nice introduction to this topic. A \emph{stratification} of a matrix space $\mathcal{M}$ is a decomposition $\mathcal{M} = \bigcup_{\alpha \in I} \mathcal{S}_\alpha$ into disjoint smooth manifolds $\mathcal{S}_\alpha$ (\emph{strata}), usually associated to a canonical form, where one can put a partial order on the manifolds: certain (lower-dimensional, less generic) manifolds lie inside the topological closure of (higher-dimensional, more generic) others: $\mathcal{S}_\alpha \subseteq \overline{\mathcal{S}_\beta}$.
The simplest example is the \emph{rank stratification}
\[
\F^{m\times n} = \{0\} \cup \mathcal{R}_1 \cup \mathcal{R}_2 \cup \dots \cup \mathcal{R}_\nu.
\]
As usual in this paper, $\mathcal{R}_k$ is the set of matrices of rank exactly $k$, and $\nu = \min(m,n)$. The space $\F^{m\times n}$ is the disjoint union of these rank strata, which in this case form not merely a partial order, but a total one: $\mathcal{R}_k \subsetneq \overline{\mathcal{R}_\ell}$ for each $k<\ell$. We can draw the corresponding \emph{stratification graph}
\begin{center}
    \begin{tikzpicture}[
      x=1.4cm, y=1.1cm,
      orb/.style={draw, rounded corners=2pt, inner sep=3pt,
                  minimum height=6mm, font=\small},
      cov/.style={-{Stealth[length=5pt]}, semithick}
    ]
     
    \node[orb] (r0) at (0,0) {$\{0\}$};
    \node[orb] (r1) at (1,0) {$\mathcal{R}_1$};
    \node[orb] (r2) at (2,0) {$\mathcal{R}_2$};
    \node      (dd) at (3,0) {$\cdots$};
    \node[orb] (rm) at (4,0) {$\mathcal{R}_{\nu-1}$};
    \node[orb] (rn) at (5,0) {$\mathcal{R}_{\nu}$};
     
    \draw[cov] (r0) -- (r1);
    \draw[cov] (r1) -- (r2);
    \draw[cov] (r2) -- (dd);
    \draw[cov] (dd) -- (rm);
    \draw[cov] (rm) -- (rn);
     
    \end{tikzpicture}
\end{center}
with arrows that represent the hierarchy from the least generic to the most generic stratum. We can also see these strata as the equivalence classes (\emph{orbits}) under transformations of the form $A \mapsto PAQ^{-1}$, with $(P,Q)\in GL_m(\F)\times GL_n(\F)$: Each class $\mathcal{R}_k$ corresponds to the canonical form $E_k = \begin{bsmallmatrix}
    I_k & 0\\
    0 & 0
\end{bsmallmatrix}$. 

From the point of view of numerical linear algebra, it is interesting to compute the distances of a matrix $A$ to each stratum of a stratification. In the case of the rank stratification, these distances are given by the Schmidt-Eckart-Young-Mirsky Theorem. For example, in the spectral norm and for $0 \leq k < \nu$, $d(A, \mathcal{R}_k) = \sigma_{k+1}(A)$. These distances provide insight on whether a matrix is a perturbation of a less generic one: For instance, if  $\sigma_3(A)$ is large and $\sigma_4(A)$ is small, then we infer that $A$ might have been produced by perturbing a matrix originally in $\mathcal{R}_3$.

The rank stratification serves as the perfect, simple introductory example, because the stratification graph is a line and all distances can be computed exactly; but 
more interesting examples are stratifications related to Jordan structures of matrices, or the Kronecker structure of pencils; and then the distance problems are more challenging. A classical case study is the Jordan stratification of the set of nilpotent matrices $\mathcal{N} = \{A \in \F^{n\times n} \colon A^n = 0\}$. Each stratum is a possible orbit under transformations of the form $A \mapsto PAP^{-1}$, $P\in GL_n(\F)$; i.e., the set of matrices that have the same Jordan canonical form with all eigenvalues equal to zero. For instance, for $n=6$ one of the strata is formed by all matrices with JCF $B = J_4(0) \oplus J_2(0) \in \F^{6\times 6}$. We denote each stratum with the list of its Jordan block sizes, ordered decreasingly (the \emph{Segre characteristic}): For instance, $B$ corresponds to the list $(4,2)$, and the zero matrix in $\F^{6\times 6}$ to $(1,1,1,1,1,1)=(1^6)$. The stratum with list $s = (s_1,s_2,\dots ,s_k)$ is in the closure of the one with list $t = (t_1,t_2,\dots,t_k)$ (where the lists are padded with zeros, if needed, so that they have the same length) if and only if $t$ majorizes $s$, i.e., $\sum_{i=1}^j t_i \geq \sum_{i=1}^j s_i$ for each $j = 1,\dots, k$. For $n\geq 6$, the resulting stratification graph is not a total order; for instance, for $n=6$ we have
\begin{center}
    \begin{tikzpicture}[
      x=1.4cm, y=1.1cm,
      orb/.style={draw, rounded corners=2pt, inner sep=3pt,
                  minimum height=6mm, font=\small},
      cov/.style={-{Stealth[length=5pt]}, semithick}
    ]
     
    \node[orb] (a)  at (0, 0)    {$(1^6)$};
    \node[orb] (b)  at (1, 0)    {$(2,1^4)$};
    \node[orb] (c)  at (2, 0)    {$(2^2,1^2)$};
    \node[orb] (d1) at (3, 0.7)  {$(3,1^3)$};
    \node[orb] (d2) at (3,-0.7)  {$(2^3)$};
    \node[orb] (e)  at (4, 0)    {$(3,2,1)$};
    \node[orb] (f1) at (5, 0.7)  {$(4,1^2)$};
    \node[orb] (f2) at (5,-0.7)  {$(3^2)$};
    \node[orb] (g)  at (6, 0)    {$(4,2)$};
    \node[orb] (h)  at (7, 0)    {$(5,1)$};
    \node[orb] (i)  at (8, 0)    {$(6)$};
     
    \draw[cov] (a)  -- (b);
    \draw[cov] (b)  -- (c);
    \draw[cov] (c)  -- (d1);
    \draw[cov] (c)  -- (d2);
    \draw[cov] (d1) -- (e);
    \draw[cov] (d2) -- (e);
    \draw[cov] (e)  -- (f1);
    \draw[cov] (e)  -- (f2);
    \draw[cov] (f1) -- (g);
    \draw[cov] (f2) -- (g);
    \draw[cov] (g)  -- (h);
    \draw[cov] (h)  -- (i);
     
    \end{tikzpicture}
\end{center}
The Jordan forms listed in the graph form a stratification of $\mathcal{N}$. 

Given a matrix $A\in\F^{n\times n}$ (not necessarily nilpotent), it is a nontrivial problem to compute its distances to each of these strata. \citet{EJK} suggest two algorithms which deliver respectively a lower and an upper bound. We describe them, starting from the lower bound.
\begin{theorem}[\citet{EEKpart1, EJK}] \label{thm:strata lower bound}
    Let $\mathcal{S} = \{PJP^{-1} \colon P \in GL_n(\F)\}$ be the similarity orbit of a matrix $J\in\F^{n\times n}$, and assume that the dimension over $\F$ of $\mathcal{S}$ as a manifold is $k$. Then, defining $\mathbf{T}_A := I_n \otimes A - A^T \otimes I_n$, we have
    \[
    d(A,\mathcal{S}) \geq \frac{1}{2}\sigma_{k+1}(\mathbf{T}_A)
    \]
    in the spectral norm, and
    \[
    d(A,\mathcal{S}) \geq \frac{1}{\sqrt{2n}} \left(\sum_{j>k} \sigma_j(\mathbf{T}_A)^2 \right)^{1/2}
    \]
    in the Frobenius norm.
\end{theorem}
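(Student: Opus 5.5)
Our plan is to compare the linear operator $\mathbf{T}_X$ (the vectorized commutator $Y\mapsto XY-YX$) for a nearest point $X\in\overline{\mathcal{S}}$ with $\mathbf{T}_A$, and then apply Weyl-type singular value perturbation bounds. First, since $\mathcal{S}$ need not be closed, we replace it by its closure (Proposition~\ref{prop:lauriwaswrong}). Matrices in $\overline{\mathcal{S}}$ lie in orbits of dimension at most $k$: orbit dimension equals $\rank \mathbf{T}_X$, and this rank is lower semicontinuous. So it suffices to show that every $X$ with $\rank\mathbf{T}_X\le k$ satisfies the bounds. To see that the orbit dimension equals $\rank \mathbf{T}_X$, note that the tangent space of the orbit at $X$ is $\{KX-XK : K\in\F^{n\times n}\}$, the range of the commutator map $K\mapsto KX-XK$. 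Up to sign, this map vectorizes to $\mathbf{T}_X = I\otimes X - X^T\otimes I$. Hence $\dim_\F \mathcal{S} = \rank \mathbf{T}_J = k$, and every $X\in\overline{\mathcal{S}}$ has $\rank \mathbf{T}_X \le k$.

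Next, set $\Delta = A - X$. By linearity, $\mathbf{T}_A - \mathbf{T}_X = \mathbf{T}_\Delta = I\otimes\Delta - \Delta^T\otimes I$.

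For the spectral norm, the Schmidt--Eckart--Young--Mirsky theorem (Example~\ref{ex:seym}) gives $\sigma_{k+1}(\mathbf{T}_A)\le \norm{\mathbf{T}_A-\mathbf{T}_X}_2$, because $\mathbf{T}_X$ has rank at most $k$. Then
\[
\norm{\mathbf{T}_\Delta}_2 \le \norm{I\otimes \Delta}_2+\norm{\Delta^T\otimes I}_2 = 2\norm{\Delta}_2 .
\]
Taking $X$ to be a nearest point yields the first bound.

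For the Frobenius norm, the same theorem gives $\left(\sum_{j>k}\sigma_j(\mathbf{T}_A)^2\right)^{1/2}\le \norm{\mathbf{T}_\Delta}_F$. It remains to bound $\norm{\mathbf{T}_\Delta}_F$ by $\sqrt{2n}\,\norm{\Delta}_F$. The triangle inequality gives
\[
\norm{I\otimes\Delta}_F+\norm{\Delta^T\otimes I}_F = 2\sqrt n\,\norm{\Delta}_F,
\]
which loses a factor $\sqrt2$, so we expand instead:
\[
\norm{\mathbf{T}_\Delta}_F^2 = 2n\norm{\Delta}_F^2 - 2\Re\scalar{I\otimes\Delta,\Delta^T\otimes I}.
\]
The cross term is $\traccia\left((I\otimes\Delta)^*(\Delta^T\otimes I)\right)=\traccia(\Delta^T)\,\traccia(\Delta^*)$. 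Its real part is $\abs{\traccia\Delta}^2\ge 0$ in the complex case, and $(\traccia\Delta)^2\ge0$ in the real case. Hence $\norm{\mathbf{T}_\Delta}_F^2\le 2n\norm{\Delta}_F^2$, which gives the second bound.

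We expect the main obstacle to be the first step: justifying carefully that the orbit dimension equals $\rank\mathbf{T}_J$, and that the rank bound passes to the closure via lower semicontinuity. The remaining steps are direct computations.
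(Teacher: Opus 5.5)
Your proof is correct and follows the paper's argument essentially step by step. You identify $\rank\mathbf{T}_X = k$ through the tangent space of the orbit, apply Schmidt--Eckart--Young--Mirsky to $\mathbf{T}_A - \mathbf{T}_X = \mathbf{T}_\Delta$, and then bound $\norm{\mathbf{T}_\Delta}_2 \le 2\norm{\Delta}_2$ by the triangle inequality and $\norm{\mathbf{T}_\Delta}_F^2 = 2n\norm{\Delta}_F^2 - 2\abs{\traccia\Delta}^2$ by expanding the square. The only difference is your detour through $\overline{\mathcal{S}}$ and lower semicontinuity of rank: it is valid but unnecessary, since the paper proves the bound for every $X\in\mathcal{S}$ and then passes directly to the infimum $d(A,\mathcal{S})$.
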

\begin{proof}
Note first that indeed each stratum $\mathcal{S} = \{PJP^{-1} \colon P \in GL_n(\F)\}$ is a manifold; in particular, its tangent space at $X\in\mathcal{S}$ is $T_X \mathcal{S}=\{XZ-ZX \colon Z \in\F^{n\times n}\}$; this is a standard result in matrix Lie groups. In particular, for every $X\in\mathcal{S}$ we have $k = \dim T_X\mathcal{S} = \rank(\mathbf{T}_X)$, where the last equality follows by vectorizing the map $Z \mapsto XZ-ZX$. Write $A = X + \Delta$, with $X\in\mathcal{S}$. In both norms,
\[
\norm{\mathbf{T}_\Delta} = \norm{\mathbf{T}_{A-X}} =  \norm{\mathbf{T}_A - \mathbf{T}_X} \geq d(\mathbf{T}_A, \mathcal{R}_k),
\]
and the latter distance is given by the Schmidt-Eckart-Young-Mirsky Theorem. We still need to relate the norm of $\mathbf{T}_\Delta$ to that of $\Delta$: For the spectral norm, we do it using the triangle inequality
\[
\norm{\mathbf{T}_\Delta}_2 \leq \norm{I_n\otimes \Delta}_2 + \norm{\Delta^T \otimes I_n}_2 \leq 2\norm{\Delta}_2.
\]
For the Frobenius norm, the same argument would give a spurious additional $\sqrt{2}$ factor, so we argue differently:
\begin{align*}
\norm{\mathbf{T}_\Delta}_F^2 &= \scalar{\mathbf{T}_\Delta,\mathbf{T}_\Delta} = \scalar{I_n\otimes \Delta, I_n\otimes \Delta} -2\scalar{I_n\otimes \Delta,\Delta^T \otimes I_n} + \scalar{\Delta^T \otimes I_n, \Delta^T \otimes I_n}\\
&= \norm{I_n\otimes \Delta}_F^2 -2\Re\traccia(\Delta^*\otimes \Delta^T) + \norm{\Delta^T \otimes I_n}_F^2\\
&= n\norm{\Delta}_F^2 - 2\abs{\traccia(\Delta)}^2 + n\norm{\Delta}_F^2 \leq 2n\norm{\Delta}_F^2.
\end{align*}
\end{proof}
Note that the only information used on $\mathcal{S}$ is its dimension $k$, so Theorem~\ref{thm:strata lower bound} returns the same lower bound for all strata with the same dimension; for instance, $(4,1^2)$ and $(3^2)$.

For the upper bound, we start by describing an algorithm to compute the Jordan block sizes using only unitary transformations. Starting from a nilpotent matrix $A$, compute an orthonormal basis for its kernel $Q_1 \in \F^{n\times w_1}$, and complete it to a unitary matrix $Q = \begin{bsmallmatrix}
    Q_1 & *
\end{bsmallmatrix} \in \F^{n\times n}$. Then, $A$ is unitarily similar to
\[
Q^*AQ = \begin{bmatrix}
    0 & *\\
    0 & A_{2}
\end{bmatrix}.
\]
We then repeat the transformation iteratively on $A_2 \in \F^{(n-w_1)\times (n-w_1)}$; the final result is a matrix unitarily similar to $A$ which is in \emph{staircase form}:
\begin{equation} \label{staircase_form}
    \begin{bmatrix}
        0_{w_1\times w_1} & S_{12} & \dots & S_{1\ell}\\
        0 & 0_{w_2 \times w_2} & \ddots & \vdots\\
        \vdots & \ddots & \ddots & S_{\ell-1,\ell}\\
        0 & \dots & 0 & 0_{w_\ell \times w_\ell}
    \end{bmatrix}.    
\end{equation}
The construction also ensures the blocks $S_{i,i+1}$ in the first superdiagonal have full column rank. The list of obtained kernel dimensions $(w_1,\dots,w_\ell)$ (the \emph{Weyr characteristic}) then determines uniquely the sizes of the Jordan blocks of $A$; indeed, $w_j-w_{j+1}$ is the number of Jordan blocks of size $j$ (having defined $w_{\ell+1}:=0$).

Changing certain steps, the construction in this algorithm can be used to determine, given a starting matrix $A\in\F^{n\times n}$, a nearby matrix $X$ in the closure of the stratum $\mathcal{S}$ associated to a prescribed Weyr characteristic $(w_1,\dots,w_\ell)$. This time, the kernel dimensions $w_1,\dots,w_\ell$ are enforced, rather than computed from $A$. In the first step, given $A$ and a dimension $w_1$, we construct a change of basis $Q$ such that in the block matrix
\[
Q^*AQ = \begin{bmatrix}
    \Delta_{11} & *\\
    \Delta_{21} & A_2
\end{bmatrix}
\]
the norm of the first block column $\begin{bsmallmatrix}
    \Delta_{11}\\
    \Delta_{21}
\end{bsmallmatrix} \in \F^{n\times w_1}$ is as small as possible: We achieve this by taking the first $w_1$ columns of $Q$ to be the last $w_1$ right singular vectors of $A$. Then, we replace $\Delta_{11},\Delta_{21}$ with zeros, obtaining
\[
Q^*X_1Q = \begin{bmatrix}
    0 & *\\
    0 & A_2
\end{bmatrix},
\]
and we continue in this way on $A_2$: We obtain a new matrix $X_2$ with a change of basis built using the last $w_2$ right singular vectors of $A_2$, and so on. The algorithm produces at the end a matrix $X_\ell$ in the staircase form~\eqref{staircase_form} with block sizes $(w_1,\dots,w_\ell)$. This matrix $X_\ell$ is only guaranteed to be in $\overline{\mathcal{S}}$, not necessarily in the stratum $\mathcal{S}$, because the blocks in the first superdiagonal $S_{i,i+1}$ may not have full column rank, and hence $X_\ell$ could have a ``less generic'' Jordan structure. Moreover, these perturbations are chosen greedily: In the first step of the algorithm, we change basis so that the norm of $\begin{bsmallmatrix}
    \Delta_{11}\\
    \Delta_{21}
\end{bsmallmatrix}$ is as small as possible, without looking forward and considering the effect of this change of basis on the next steps. Hence, $\norm{A-X_\ell}$ is only an upper bound for $d(A, \overline{\mathcal{S}}) = d(A,\mathcal{S})$.

See \citet[Table~2]{EJK} for a practical example that shows that the lower and upper bounds that we have described can be quite close.

This \emph{staircase algorithm} dates back to~\citet{kubla}; see also~\citet{ruhe1970,vando1979} for other early works. It can be generalized to many settings, the most famous version being the GUPTRI algorithm~\cite{guptri1, guptri2}, which determines the singular Kronecker structure of a matrix pencil via unitary transformations from the left and the right.

When the eigenvalues are allowed to change as well, one can build stratifications by considering matrices with eigenvalues with higher multiplicities as less generic strata; see, e.g., \citet{klimenko2014}. However,  computing distances becomes more complicated, as the reader can infer from our earlier presentation of the distance to defectivity, which is only a subproblem. 

Stratifications have been studied also for various other spaces and canonical forms: for instance, matrix congruence \cite{straticong}, structured pencils \cite{DK_stratiskew}, matrix polynomials \cite{DJKV_stratipoly}, and
linear dynamical systems \cite{DJK_systems}.

\subsection{Miscellanea}
The line of research of matrix nearness problems is vast, and includes some topics that do not fit well in the categories that we have treated in the previous sections. We conclude our discussion of matrix problems (before having a look at what happens beyond matrices) with a few examples of nearness problems having a different character.

\subsubsection{Reversible Markov chains.}
Given a transition matrix $P$ and a prescribed positive probability vector
$\pi$, one can seek a nearby stochastic matrix satisfying various constraints:
\[
  \min_{X\in\mathbb{R}^{n\times n}}\|X-P\|_F^2
  \quad\text{s.t.}\quad
  X\mathbf{1}=\mathbf{1},\quad X\geq0,\quad
  D_\pi X=X^T D_\pi,
  \qquad D_\pi=\diagm(\pi).
\]
Here the inequality is elementwise, and reducible matrices are allowed. \citet{NW15} reformulate this task as a convex optimization problem.  \citet{durastante2025riemannian} develop a Riemannian approach using
the Fisher metric, while \citet{cipolla2026nearest} study a quadratic
programming formulation with sparsity constraints.
A related problem aims at a reduction in Kemeny's constant, which
measures an average hitting time, while simultaneously minimizing the size of the perturbation.
\citet{durastante2025kemeny} study this double-objective problem.

\subsubsection{Stabilization with nonnegative entries}
A natural structured variant of the nearest weakly Schur stable matrix
problem is
\[
  \min\{\|X-A\|:X\geq0,\ \rho(X)\leq1\},
\]
where $A\geq0$ and $\rho$ denotes the spectral radius. 
Although the feasible set is nonconvex, \citet{Nesterov2020} obtain global
solution methods for the max norm (provably in polynomial time), and the induced matrix
$1$- and $\infty$-norms. Their approach is based on properties of the Perron eigenvalue.

\subsubsection{Matrix compressions}
An application to atomic physics motivates the nearness problem
\[
  \min_{Y\in\mathbb{R}^{n\times p},\,Y^TY=I_p}
      \|Y^TNY-T\|_F^2,
\]
where $N \geq 0$ is a given symmetric positive definite matrix and $T$ is diagonal.
\citet{Borsdorf2014} construct an optimal solution and study related
problems using Riemannian
geometry and an augmented Lagrangian method.

\subsubsection{Parameter optimization}
Some instances of nearness problems can involve a family of matrices depending on just a few parameters, say, $A(\theta)$, where the optimization only involves the vector $\theta \in \R^m$. For example, \citet{Mengi2018} develop various
optimization methods to maximize the distance from instability for matrices depending on a few parameters; these algorithms come with convergence guarantees.

\section{Beyond matrices}\label{sec:beyond}

A common generalization of \eqref{eq:matrixdistfrob} consists of replacing $\F^{m \times n}$ by the vector space of polynomial matrices of degree at most $k$, and taking $\mathcal{E}$ to be a closed subset of this vector space. (Arguably, the case of highest interest in practice is that of \emph{pencils}, where $k=1$.) These are polynomials with matrix coefficients,
\[ P(z) = A_0 + A_1 z + \dots + A_k z^k, \quad A_i \in \F^{m \times n}\  \forall \ 0 \leq i \leq k.  \]
If we consider the norm
\begin{equation}\label{eq:normpoly}
    \| P(z) \|_F := \sqrt{\sum_{i=0}^k\| A_i \|_F^2    },
\end{equation}
it is clear once again that Theorem~\ref{thm:ae} and Corollary~\ref{cor:erdos} apply because this metric vector space is isomorphic to $\F^{(k+1)mn}$ equipped with the Frobenius distance. Hence, with the distance induced by \eqref{eq:normpoly}, it also makes sense to pose the nearness problem \eqref{eq:matrixdistfrob} for matrix pencils, or generally polynomial matrices, thus going beyond single matrices.

Other objects on which nearness problems have appeared include state-space realizations of transfer functions or tensors. It is undeniable that these topics are close neighbors of matrix nearness problems but, to avoid an incontrollable growth of this survey's length, we will not discuss them at the same level of detail as we did for matrices. Instead, in this section we only provide a brief, and necessarily partial, overview of selected results in these adjacent areas.

\subsection{Distance from singularity}

Let us start with the case of polynomial matrices $P(z)=\sum_{i=0}^k A_i z^i$. Recall \cite{Noferini2024} that a square polynomial matrix $P(z)$ is called singular if $\det P(z)$ is the zero polynomial, and regular otherwise.

Possibly, the simplest possible extension to polynomials of the class of spectrally constrained nearness problems is to require that $P(z)$ becomes singular at a prescribed point $\lambda \in \C$; for a regular polynomial, this is tantamount to asking that $\lambda$ is a finite eigenvalue (whereas, if the input is singular, then the distance is $0$ and the problem is trivial). This problem admits a closed-form solution, that we describe in Proposition~\ref{prop:tisseur}, whose proof follows a topos in polynomial eigenvalue perturbation theory; see, e.g., \citet[Theorem 5]{Tisseur00} or \citet[Theorem 1]{NNT15}. For a connection with pseudospectral theory, see \citet{TH01}.
\begin{proposition}\label{prop:tisseur}
    Let $P(z)=\sum_{i=0}^k A_i z^i$ with $A_i \in \C^{n \times n}$ for $i=0,\dots,k$. The squared distance of $P(z)$ from the set of polynomial matrices that are singular at $\lambda$, measured in the distance induced by the Frobenius norm \eqref{eq:normpoly}, is given by
    \[  \| \Delta(z)\|_F^2 =   \frac{\sigma_{\min}^2(P(\lambda))}{\eta(\lambda)},  \]
    where $\sigma_{\min}(X)$ denotes the smallest singular value of the matrix $X$ and $\eta(\lambda):=\sum_{j=0}^k |\lambda|^{2j}$.
\end{proposition}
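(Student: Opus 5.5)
The plan is to reduce the problem to a linear, structured distance from singularity at the fixed point $\lambda$, and then compute that distance in closed form, in the spirit of Lemma~\ref{lem:weakrk1}. The first observation is that $(P-\Delta)(\lambda)=P(\lambda)-\Delta(\lambda)$, where $\Delta(\lambda)=\sum_{i=0}^k \Delta_i\lambda^i$. Consequently, $P-\Delta$ is singular at $\lambda$ if and only if there is a unit vector $v$ with $P(\lambda)v=\Delta(\lambda)v$. I will therefore split the minimization into two nested problems: an outer minimization over unit vectors $v$, and an inner minimization of $\sum_i\|\Delta_i\|_F^2$ subject to the linear constraint $\sum_i \lambda^i\Delta_i v = r$, where $r:=P(\lambda)v$.

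The inner problem is a minimum-norm problem under linear constraints. I plan to solve it with Cauchy--Schwarz rather than with explicit pseudoinverses. Pick any unit vector $u$ and let $\mu=u^*r$. Then
\[ |\mu|=\Bigl|\sum_i \lambda^i u^*\Delta_i v\Bigr|\le \sum_i|\lambda|^i\|\Delta_i\|_F\le \eta(\lambda)^{1/2}\Bigl(\sum_i\|\Delta_i\|_F^2\Bigr)^{1/2}. \]
Taking $u=r/\|r\|_2$ gives the lower bound $\|\Delta\|_F^2\ge \|P(\lambda)v\|_2^2/\eta(\lambda)$.

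To show that this bound is attained, I will use the equality case of Cauchy--Schwarz. The candidate is $\Delta_i=\overline{\lambda}^{\,i}\,r v^*/\eta(\lambda)$. With this choice, $\sum_i\lambda^i\Delta_i v=\sum_i|\lambda|^{2i} r/\eta(\lambda)=r$, so the constraint holds, and $\sum_i\|\Delta_i\|_F^2=\|r\|_2^2\sum_i|\lambda|^{2i}/\eta(\lambda)^2=\|r\|_2^2/\eta(\lambda)$. This matches the lower bound.

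For the outer minimization, I minimize $\|P(\lambda)v\|_2^2$ over unit vectors $v$; the minimum value is $\sigma_{\min}^2(P(\lambda))$, attained at a right singular vector. The optimal perturbation is then the rank-one family $\Delta_i=\overline{\lambda}^{\,i}\sigma_{\min}u_{\min}v_{\min}^*/\eta(\lambda)$, which is consistent with the Schmidt--Eckart--Young--Mirsky structure of Example~\ref{ex:seym}.

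I expect the main obstacle to be the lower-bound step. Specifically, I need to confirm that it holds for an arbitrary admissible $\Delta$ and not only at the optimal $v$. This is resolved by noting that any admissible $\Delta$ determines some unit $v$ in the kernel of $(P-\Delta)(\lambda)$, and the inner bound applies to that $v$. Finally, when $k=0$ we have $\eta=1$, and the result reduces to \eqref{eq:distance from singularity}.
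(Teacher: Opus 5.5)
Your proof is correct and follows essentially the same route as the paper. Both use a Cauchy--Schwarz lower bound $\sigma_{\min}(P(\lambda))\le\|\Delta(\lambda)\|\le\sqrt{\eta(\lambda)}\,\|\Delta(z)\|_F$ and attain it with the same rank-one perturbation $\Delta_i=\overline{\lambda}^{\,i}\sigma_{\min}u_{\min}v_{\min}^*/\eta(\lambda)$; you get the first inequality directly from a kernel vector $v$ instead of quoting the Schmidt--Eckart--Young--Mirsky Theorem, and the nested inner/outer framing is harmless but unnecessary (only the trivial case $r=0$ needs setting aside when you take $u=r/\|r\|_2$).
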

\begin{proof}
    Let $\Delta(z) = \sum_{i=0}^k E_i z^i$ be such that $P(\lambda)+\Delta(\lambda)$ is singular. Then, using the Schmidt-Eckart-Young-Mirsky Theorem (Example~\ref{ex:seym}) and the Cauchy-Schwarz inequality,
    \[ \sigma_{\min}(P(\lambda)) \leq \| \Delta(\lambda) \|_F \leq \sqrt{\eta(\lambda)} \cdot  \| \Delta(z)\|_F.   \]
    Hence, there cannot be a $\Delta(z)$ having smaller norm. It remains to find a polynomial having precisely that norm; to this goal, let $u,v$ be unit left and right singular vectors of $P(\lambda)$ associated with $\sigma:=\sigma_{\min}(P(\lambda))$, and take \[E_i : = -\frac{\sigma (\overline{\lambda})^i}{\eta(\lambda)} u v^*, \quad i=0,\dots,k.\]
\end{proof}
In contrast with the simple Proposition~\ref{prop:tisseur}, the structured variant of this problem, for example when a linear constraint is imposed on the coefficients of $\Delta(z)$, already grows in complexity \cite{TH01}.

The prescription of one eigenvalue can be made more demanding by moving in at least two different directions. Fixing not just one but several eigenvalues, \citet{KMNT14} study the distance in the spectral norm from a given pencil to pencils with (partially) prescribed spectrum,  by only perturbing the trailing matrix coefficient. Under suitable assumptions, they characterize the distance by optimizing singular values; the analogous problem for higher-degree polynomial matrices is studied by \citet{KM15}. Or one can prescribe an elementary divisor and not just an eigenvalue. \citet{dasbora20} study the distance to matrix polynomials having an elementary divisor $(z-\lambda)^j$ with $j\geq r$, for prescribed $\lambda \in\C$ and $2\leq r\leq nk$. For regular polynomials, this requires a Jordan chain of length at least $r$ at $\lambda$. They characterize the distance through structured low-rank approximation of a block Toeplitz matrix built from derivatives of the polynomial at $\lambda$, obtaining both optimization algorithms and exact bounds.

A stronger requirement is to make the polynomial matrix singular, so that its determinant vanishes at all points. If the maximal degree of a perturbed polynomial matrix $P(z)-\Delta(z)$ is fixed, then the subset of singular polynomial matrices is closed and therefore a least norm perturbation $\Delta(z)$ that makes $P(z)-\Delta(z)$ singular exists. For pencils, the problem of finding the distance to singularity is studied by \citet{Byers1998}, who give several characterizations and bounds. The ODE-based method of \citet{glm17} is applicable both to finding the nearest singular pencil and to finding the nearest pencil having a constant vector in its kernel (right minimal index zero). The two problems are not the same, because for a pencil $A_0 + A_1 z$ to be singular the existence of a shared vector in the null spaces of $A_0$ and $A_1$ is a sufficient, but not necessary condition. This is illustrated by the example
\[ \ker \left( \begin{bmatrix}
    1 & -z & 0\\
    0 & 0 & -z\\
    0&0&1
\end{bmatrix} \right) = \spann \left( \begin{bmatrix}
    z\\
    1\\
    0
\end{bmatrix} \right).   \]
Generally, the least possible degrees in a polynomial basis for the null space of a polynomial matrix are called \emph{minimal indices}; see \citet{Jr.1975, Noferini2024} and the references therein. A method for the nearest singular pencil based on Riemannian optimization over unitary matrices is proposed by \citet{DNN24}. Every pair of matrices can be brought to triangular form by unitary equivalence, that is, $T_0 = Q A_0 Z, T_1 = Q A_1 Z$ for upper triangular $T_0,T_1$ and unitary $Q,Z \in U(n)$. On the other hand, an upper triangular pencil is singular if and only if at least one of its diagonal elements is zero. Hence, a pencil version of \eqref{eq:schurtrick} can be written, leading to the optimization over $(Q,Z) \in U(n) \times U(n)$ of the objective function
\[f(Q,Z)=\sum_{i>j}\left(|(QA_0Z)_{ij}|^2+| (QA_1Z)_{ij}|^2\right)
 +\min_{1\leq k\leq n}
 \left(| (QA_0Z)_{kk}|^2+|(QA_1Z)_{kk}|^2\right).\]
 In \cite{DNN24}, various algorithms to minimize $f(Q,Z)$ are proposed, including smooth alternatives and variants involving
prescribed minimal indices.

For higher-degree polynomials, a useful connection with structured
low-rank approximation comes from polynomial null vectors. If $v(\lambda)=\sum_{j=0}^{\ell}\lambda^j v_j$, then
\[P(\lambda)v(\lambda)\equiv0
 \Leftrightarrow\quad
 \sum_{\substack{i+j=r\\0\leq i\leq k\\0\leq j\leq\ell}}
 A_i v_j=0,
 \quad r=0,\ldots,k+\ell. \]
These equations form a block Toeplitz linear system in
the coefficients $\{v_j\}_{j=0}^{\ell}$.
Both \citet{Das2023} and \citet{Oracle} start from these block Toeplitz convolutions to characterize
distances and derive
optimization formulations and bounds; the resulting algorithms are however different, with \citet{Das2023} exploiting Euclidean optimization and ad hoc formulae and \citet{Oracle} adopting Riemannian optimization and Tikhonov regularization. An alternative approach is to enforce singularity at sufficiently many evaluation
points. Since $\deg\det P\leq nk$, a degree-$k$ $n \times n$ polynomial $P$ is singular if and
only if $P(\lambda_i)$ is singular at $nk+1$ distinct evaluation points $\{\lambda_i\}_{i=0}^{nk}$. 
This approach is developed by \citet{GG25}, who design an algorithm based on sampling the smallest singular value at sufficiently many evaluation points and driving these values to zero through a structured gradient flow. Adding extra structure can sometimes make these problems more tractable. One example arises when studying dissipative Hamiltonian pencils, for which \citet{mehl2021distance} obtain explicit characterizations of the structured distance to singularity. \citet{dmytryshyn2026distance} consider instead the distance to the nearest skew-symmetric polynomial matrix of prescribed normal rank, and tackle it numerically by exploiting a rank-revealing factorization.

\subsection{Approximate polynomial GCD}

A notable application of the structured distance to singularity problem~\eqref{structured distance from singularity} is the approximate polynomial GCD. The problem is as follows: Given a pair of homogeneous bivariate scalar polynomials $\hat{p}(x,z),\hat{q}(x,z)$ (of degrees $d_p,d_q$), find the nearest pair  of polynomials $p(x,z),q(x,z)$ that have a greatest common divisor (GCD) of degree at least $d$. The distance between pairs of polynomials here is measured using the distance induced by the 2-norm of their coefficients 
\[
\norm{(p,q)}^2 = \norm{p}^2 + \norm{q}^2 = \sum_{j=0}^{d_p} \abs{p_j}^2 + \sum_{j=0}^{d_q} \abs{q_j}^2
\]

It can be shown that two homogeneous polynomials $p(x,z),q(x,z)$ (of degrees $d_p,d_q$) have a homogeneous GCD of degree at least $d$ if and only if the \emph{Sylvester matrix}
\begin{equation} \label{sylvesters}
S(p,q) = \left[\begin{array}{cccc|ccccc}
    p_0 & & & & q_0\\
    p_1 & p_0 & & &\vdots & \ddots\\
    \vdots & p_1 & \ddots & & q_{d_q} & \ddots & \ddots \\
    p_{d_p} & \vdots & \ddots & p_0 & & \ddots & \ddots & \ddots\\
    & p_{d_p} & \ddots & p_1& & & \ddots & \ddots& q_0 \\
    & & \ddots & \vdots & & & & \ddots & \vdots\\
    & & & p_{d_p} & & & & & q_{d_q}
\end{array}\right]    
\end{equation}
(where the two blocks have $d_q-d+1$ and $d_p-d+1$ columns, respectively) has a right kernel.  Note that $\norm{S(p,q)}_F^2 = (d_q-d+1)\norm{p}^2 + (d_p-d+1)\norm{q}^2$, hence one needs to rescale suitably the polynomials in $S(p,q)$ so that the target norms of the two problems agree; but once we do that, the approximate GCD problem can be converted into a structured distance from singularity problem: 
\begin{multline} \label{gcd reformulation}
\min \norm{(p-\hat{p}, q-\hat{q})}^2 \subjectto \text{$p,q$ have a GCD of degree at least $d$} \\ = d\left(S\left(\frac{1}{\sqrt{d_q-d+1}} \hat{p}, \frac{1}{\sqrt{d_p-d+1}} \hat{q}\right),\, \mathcal{E}_0 \cap \mathcal{V}\right)^2,    
\end{multline}
where $\mathcal{V}$ is the space of matrices $S(\cdot, \cdot)$ with the structure~\eqref{sylvesters}. 

\begin{remark}
    One can dehomogenize the involved polynomials, recasting the problem as one about univariate polynomials; indeed, this is the most common formulation in the literature. However, in this setting the ``if and only if" characterization discussed above no longer holds, due to the possibility of a shared root at infinity. Consider for example the dehomogenized pair $\hat{p}=2+x$ and $\hat{q}=2-x$. Then the input  Sylvester matrix is $\begin{bsmallmatrix}
        2 & 2\\
        1 & -1
    \end{bsmallmatrix}$, and its (unique) nearest singular Sylvester matrix in the  Frobenius norm is $\begin{bsmallmatrix}
        2&2\\
        0&0
    \end{bsmallmatrix}$. The latter matrix corresponds to $p=q=2$, whose GCD has degree $0$ and not $1$. The issue here is that the homogenized analogues of the output polynomials are $p(x,z)=q(x,z)=2z$, that share the projective root $(x,z)=(1,0)$, and this root blows up to infinity in the affine line.
\end{remark}

The formulation based on the Sylvester matrix plays also a key role in the structured total least norm algorithms of \citet{Kaltofen2007}. Specialized algorithms from polynomial algebra exist for this problem, so it is a good test bench to assess the effectiveness of the algorithms that we have discussed in Section~\ref{sec:structuredsingular}; see also \citet{Nagasaka2021} for a comparison of algorithms for approximate polynomial GCDs. \citet{Usevich2017} and~\citet{Oracle} test SLRA methods on~\eqref{gcd reformulation}, and conclude that unfortunately these methods may have accuracy issues when $d^2(A, \mathcal{E}_0 \cap (A+ \mathcal{V}))$ becomes the order of machine precision or smaller; possibly this is due to the conditioning of the problem reformulation with the Sylvester matrix~\eqref{sylvesters}. Two more effective methods for this problem that do not use~\eqref{sylvesters} appear also in~\citet{Usevich2017} with the names $VP_h$ and  $VP_g$: They still rely on variable projection ideas, but they are no longer based on solving a matrix distance problem.  \citet{fazzi2019} use a variant of the methods that we described in Section~\ref{subsec:gradientsystem_structured}, with target function $F(A) = \sigma_{\min}(A)$. The authors compare to SLRA, and the experiments display the fact that SLRA without the regularization technique in~\eqref{fepsv} may miss the global minimum and converge to a worse local one.

\subsection{Stability, hyperbolicity, and more}

The eigenvalues of a regular matrix pencil $A_0+z A_1$ are usually defined on the
Riemann sphere; an infinite eigenvalue corresponds to 
a zero eigenvalue of the reversed pencil $A_1+z A_0$. If we fix a nonempty closed subset
$\Omega$ of the Riemann sphere, the set of the regular pencils with all eigenvalues in
$\Omega$ is not closed, but one can still compute the distance to this set as well as a nearest pencil in the closure. A Riemannian optimization approach, based on optimization over $U(n) \times U(n)$ is developed by \citet{NN25}. This method is again based on a variant of \eqref{eq:schurtrick}, with the inner minimization solved based on projective projections of pairs of complex numbers. Related nearness problems for stable descriptor systems are
studied by \citet{Gillis2018} for Hurwitz stability and by
\citet{Gillis2020} for Schur stability, the latter with further rank constraints.

Other distance problems arise in the context of Hermitian polynomial matrices.
For example, a quadratic polynomial matrix $P(z)=A_0 + A_1 z + A_2 z^2$ \cite{Tisseur2001} with $A_2 \succ 0$ is called hyperbolic if $x^* P(z) x$ has two distinct real roots for every nonzero vector $x$. Hyperbolicity is equivalent to the inequality
\[      (x^* A_1 x)^2 - 4 (x^* A_0x)(x^* A_2 x) > 0  \quad \forall \ x \neq 0.\]
Analogously,  a Hermitian quadratic polynomial matrix with $A_2 \succ 0$ is called elliptic if  $(x^* A_1 x)^2-4(x^* A_0x)(x^* A_2x)<0$ for every nonzero vector $x$. \citet{HTV02} study 
the distance from a hyperbolic quadratic polynomial to one outside the class, together
with related problems for definite Hermitian pairs and elliptic quadratic
polynomials. \citet{HachezV03} prove more related results, both on the distance to non-hyperbolic and non-elliptic Hermitian polynomials and also on the two reverse nearness problems (distance to hyperbolic polynomials and distance to elliptic polynomials).

State-space realizations provide a further example of a nearness problem. The transfer function of a continuous time realization $(A,B,C,D)$ is defined as the rational matrix
\[
 G(s)=C(sI-A)^{-1}B+D.
\]
In applications, a property called bounded-realness requires that $G$ is analytic on the closed right half-plane and satisfies $G(i\omega)^*G(i\omega)\preceq I$ for every 
$\omega \in \R$. \citet{CGS25} study the distance to bounded-real systems via port-Hamiltonian representations and numerical
optimization.

\subsection{Matrix-valued functions and tensors}

To extend nearness problems beyond polynomial matrices, we can replace the monomials $\{z^j\}_{j=0}^k$ with a finite set $\{f_j(z)\}_{j=0}^k$ of linearly independent entire functions. Thus, a polynomial matrix $P(z)$ is replaced by an analytic matrix-valued function $F(z)=\sum_{j=0}^k A_j f_j(z)$, and perturbations take the form $\Delta(z)=\sum_{j=0}^k E_j f_j(z)$. The norm \eqref{eq:normpoly} extends trivially to this setup. One example of an application comes from time-delay equations, where $F(z)$ takes the form $zI -A_0 - e^{-\tau z} A_1$. \citet{gnazzo25}
develop an approximation method, based on analytic approximation, to compute the distance to singular
matrix-valued functions ($\det F(z) \equiv 0$), including the case where
the coefficient matrices of  the perturbation $\Delta(z)$ must satisfy further linear constraints.  A different variation on this theme is studied by \citet{KKM14}, who look for a constant matrix $E$ of least spectral norm such that $F(z)+E$ has prescribed eigenvalues with prescribed algebraic multiplicities.

Finally, the boldest numerical linear algebraist might dare make the leap from matrices to tensors. Tensor nearness problems provide an extension in which even existence of a best
low-rank approximation can fail. The CP rank of a tensor is the least
number of elementary tensor products needed to express it as a sum.
For order three, the set of tensors of CP rank at most two need not be
closed \citep{SL08}. For instance, let $e_1,e_2$ be the
vectors in the canonical basis of $\R^2$ and write $v^{\otimes 3}=v\otimes v \otimes v$. Then, consider the sequence $(W_k)_{k \geq 1}$ such that
\[
 W_k=
 k\left((e_1+\tfrac1k e_2)^{\otimes3}-e_1^{\otimes3}\right)
 \longrightarrow
 W=e_2\otimes e_1\otimes e_1
  +e_1\otimes e_2\otimes e_1
  +e_1\otimes e_1\otimes e_2.
\]
For all $k \ge 1$, $W_k$ has CP rank at most $2$, but its limit $W$ has CP rank $3$.
Therefore, the distance from $W$ to the set of tensors of CP rank at most $2$ is zero (and there is no nearest such tensor). This behavior is of course very different from that of matrices. To further complicate matters, different notions of tensor rank exist, leading to different results about the existence, or lack thereof, of low-rank tensor approximations. For example, the multilinear rank of a tensor of order $3$ is the triple of matrix ranks obtained by flattening the tensor in three different ways (isolating one index at a time against the other two). In high contrast with the CP rank, a nearest (in the Euclidean distance) tensor always exists among those whose multilinear rank is elementwise bounded above by a triple $(r_1,r_2,r_3)$. For further related notions, we refer to \citet{KB09} for a
survey of tensor decompositions and algorithms.

\section{Conclusions}\label{sec:conc}

Matrix nearness problems combine a simple formulation with a rich variety of theoretical and computational challenges. In this review, we have emphasized a few aspects in particular, such as the geometry of the feasible set, the choice of the distance, and the effect of imposing additional structure on admissible perturbations. We have also examined the interplay between the properties of these problems and the features of the algorithms used to solve them.

Among the (in our opinion) most useful ideas that we hope to have illustrated are matrix decompositions as a source of useful parametrizations, variable projection as a way to reduce dimensionality, and the use of geometric tools such as normal spaces and orthogonal projections. We have also pointed out several subtleties, particularly concerning existence, regularity, and the distinction between local and global optimality. Along the way, we have also filled a few gaps in the literature (see page~\pageref{intro:novelties}), and we are especially proud of this outcome.

Even before starting this review, we knew that the literature on matrix nearness problems was vast, but only while exploring did we realize how many interesting papers we had not encountered before. Unlike mathematics itself, mathematical papers must unfortunately be finite, and therefore we have had to be selective. We apologize to colleagues whose relevant contributions to nearness problems we have overlooked or have been unable to include.

We hope that the mathematical and algorithmic connections presented here will provide a useful guide for future research on matrix nearness problems. We had a lot of fun exploring the literature and writing this article, and we hope that our readers will share some of that enjoyment.

\section*{AI Statement}

The writing, ideas, and (where they differ from the literature) mathematical statements and proofs in this survey paper are human. We acknowledge use of ChatGPT 6.0 and Claude Opus 4.8--5.5 for literature review, generating the TikZ code of figures, and auditing the proofs and the exposition. AI found some minor mistakes, that were then fixed by the human authors.

\section*{Acknowledgements}

 We are grateful to several colleagues for reading a preliminary version of this manuscript and sharing insightful comments, including Nicolas Gillis, Miryam Gnazzo, Nicola Guglielmi, Christian Lubich, Emre Mengi, Lauri Nyman, Michael Overton, Stefano Sicilia, Nick Trefethen, and Paul Van Dooren. We also thank Alice Cortinovis for useful discussions on the literature on randomized algorithms for least-squares problems.

\clearpage 
\addcontentsline{toc}{section}{References}
\bibliography{biblio}
\label{lastpage}

\end{document}